\tikzset{
>=stealth',
  punktchain/.style={
    rectangle,
    rounded corners,
    draw=black, thick,
    minimum height=3em,
    text centered,
    on chain},
  line/.style={draw, thick, <-},
  element/.style={
    tape,
    top color=white,
    bottom color=blue!50!black!60!,
    minimum width=8em,
    draw=blue!40!black!90, very thick,
    text width=10em,
    minimum height=3.5em,
    text centered,
    on chain},
  every join/.style={->, thick,shorten >=1pt},
  decoration={brace},
  tuborg/.style={decorate},
  tubnode/.style={midway, right=2pt},
}
\def\C{\ensuremath{\mathbb{C}}}
\def\H{\ensuremath{\mathbb{H}}}
\def\P{\ensuremath{\mathbb{P}}}
\def\Q{\ensuremath{\mathbb{Q}}}
\def\R{\ensuremath{\mathbb{R}}}
\def\Z{\ensuremath{\mathbb{Z}}}
\def\alg{\mathrm{alg}}
\def\Amp{\mathrm{Amp}}
\def\Aut{\mathop{\mathrm{Aut}}\nolimits}
\def\ch{\mathop{\mathrm{ch}}\nolimits}
\def\Coh{\mathop{\mathrm{Coh}}\nolimits}
\def\cone{\mathop{\mathrm{cone}}}
\def\deg{\mathop{\mathrm{deg}}}
\def\dim{\mathop{\mathrm{dim}}\nolimits}
\def\ev{\mathop{\mathrm{ev}}\nolimits}
\def\Ext{\mathop{\mathrm{Ext}}\nolimits}
\def\GL{\mathop{\mathrm{GL}}}
\def\Hilb{\mathop{\mathrm{Hilb}}\nolimits}
\def\Hom{\mathop{\mathrm{Hom}}\nolimits}
\def\RlHom{\mathop{\mathbf{R}\mathcal Hom}\nolimits}
\def\id{\mathop{\mathrm{id}}\nolimits}
\def\Nef{\mathrm{Nef}}
\def\num{\mathop{\mathrm{num}}\nolimits}
\def\NS{\mathop{\mathrm{NS}}\nolimits}
\def\perf{\mathop{\mathrm{perf}}}
\def\Pic{\mathop{\mathrm{Pic}}}
\def\rk{\mathop{\mathrm{rk}}}
\def\ST{\mathop{\mathrm{ST}}\nolimits}
\def\td{\mathop{\mathrm{td}}\nolimits}
\newenvironment{Prf}{\textit{Proof.}\/}{\hfill$\Box$}
\def\MG13{\ensuremath{{\mathcal M}_{\Gamma_1(3)}}}
\def\tildeMG13{\ensuremath{\widetilde{\mathcal M}_{\Gamma_1(3)}}}
\def\Stab{\mathop{\mathrm{Stab}}}
\def\into{\ensuremath{\hookrightarrow}}
\def\onto{\ensuremath{\twoheadrightarrow}}
\def\blank{\underline{\hphantom{A}}}
\def\Db{\mathrm{D}^{\mathrm{b}}}
\def\abs#1{\left\lvert#1\right\rvert}
\newcommand\stv[2]{\left\{#1\,\colon\,#2\right\}}
\newtheorem*{rep@theorem}{\rep@title}
\newcommand{\newreptheorem}[2]{%
\newenvironment{rep#1}[1]{%
 \def\rep@title{#2 \ref{##1}}%
 \begin{rep@theorem}}%
 {\end{rep@theorem}}}
\newtheorem{Thm}{Theorem}[section]
\newtheorem{Prop}[Thm]{Proposition}
\newtheorem{PropDef}[Thm]{Proposition and Definition}
\newtheorem{Lem}[Thm]{Lemma}
\newtheorem{PosLem}[Thm]{Positivity Lemma}
\newtheorem{Cor}[Thm]{Corollary}
\newtheorem{Con}[Thm]{Conjecture}
\newtheorem{thm-int}{Theorem}
\theoremstyle{definition}
\newtheorem{Def-s}[Thm]{Definition}
\newtheorem{Def}[Thm]{Definition}
\newtheorem{Rem}[Thm]{Remark}
\newtheorem{Ex}[Thm]{Example}
\def\C{\ensuremath{\mathbb{C}}}
\def\H{\ensuremath{\mathbb{H}}}
\def\P{\ensuremath{\mathbb{P}}}
\def\Q{\ensuremath{\mathbb{Q}}}
\def\R{\ensuremath{\mathbb{R}}}
\def\Z{\ensuremath{\mathbb{Z}}}
\def\AA{\ensuremath{\mathcal A}}
\def\CC{\ensuremath{\mathcal C}}
\def\EE{\ensuremath{\mathcal E}}
\def\FF{\ensuremath{\mathcal F}}
\def\HH{\ensuremath{\mathcal H}}
\def\II{\ensuremath{\mathcal I}}
\def\LL{\ensuremath{\mathcal L}}
\def\NN{\ensuremath{\mathcal N}}
\def\OO{\ensuremath{\mathcal O}}
\def\PP{\ensuremath{\mathcal P}}
\def\TT{\ensuremath{\mathcal T}}
\def\ZZ{\ensuremath{\mathcal Z}}
\def\LLL{\mathfrak L}
\def\MMM{\mathfrak M}
\def\simpos{\sim_{\R^+}}
\newcommand{\ignore}[1]{}
\begin{document}

\title{Projectivity and Birational Geometry of Bridgeland moduli spaces}

\author{Arend Bayer}
\address{Department of Mathematics, University of Connecticut U-3009, 196 Auditorium Road, Storrs, CT 06269-3009, USA}
\curraddr{School of Mathematics,
The University of Edinburgh,
James Clerk Maxwell Building,
The King's Buildings, Mayfield Road, Edinburgh, Scotland EH9 3JZ,
United Kingdom}
\email{arend.bayer@ed.ac.uk}
\urladdr{http://www.maths.ed.ac.uk/~abayer/}

\author{Emanuele Macr\`i}
\address{Department of Mathematics, The Ohio State University, 231 W 18th Avenue, Columbus, OH 43210-1174, USA}
\email{macri.6@math.osu.edu}
\urladdr{http://www.math.osu.edu/~macri.6/}

\keywords{
Bridgeland stability conditions,
Derived category,
Moduli spaces of complexes,
Mumford-Thaddeus principle}

\subjclass[2010]{14D20, (Primary); 18E30, 14J28, 14E30 (Secondary)}

\begin{abstract}
We construct a family of nef divisor classes on every moduli space of stable complexes in the sense of Bridgeland.
This divisor class varies naturally with the Bridgeland stability condition. 
For a generic stability condition on a K3 surface, we prove that this class is ample, thereby generalizing a result of Minamide, Yanagida, and Yoshioka.
Our result also gives a systematic explanation of the relation between wall-crossing for
Bridgeland-stability and the minimal model program for the moduli space.

We give three applications of our method for classical moduli spaces of sheaves
on a K3 surface:

1. We obtain a region in the ample cone in the moduli space of Gieseker-stable sheaves
only depending on the lattice of the K3.

2. We determine the nef cone of the Hilbert scheme of $n$ points on a K3 surface of Picard rank one when $n$ is large compared to the genus.

3. We verify the ``Hassett-Tschinkel/Huybrechts/Sawon'' conjecture on 
the existence of a birational Lagrangian fibration
for the Hilbert scheme in a new family of cases.
\end{abstract}

\vspace{-1em}

\maketitle

\setcounter{tocdepth}{1}
\tableofcontents

\section{Introduction}\label{sec:intro}

In this paper, we give a canonical construction of determinant line bundles on any moduli space $M$ of
Bridgeland-semistable objects. Our construction has two advantages over the classical construction
for semistable sheaves: our divisor class varies naturally with the stability condition, and we can
show that our divisor is automatically nef.

This also explains a picture envisioned by Bridgeland, and observed in examples by
Arcara-Bertram and others, that relates wall-crossing under a change of stability condition to the
birational geometry and the minimal model program of $M$.  As a result, we can \emph{deduce}
properties of the birational geometry of $M$ from wall-crossing; this leads to new results even when
$M$ coincides with a classical moduli space of Gieseker-stable sheaves. 

\subsection*{Moduli spaces of complexes}
Moduli spaces of complexes first appeared in \cite{Bridgeland:Flop}:
the flop of a smooth threefold $X$ can be constructed as a moduli space parameterizing
\emph{perverse ideal sheaves} in the derived category of $X$.
Recently, they have turned out to be extremely useful 
in Donaldson-Thomas theory; see \cite{Toda:Survey} for a survey.


Ideally, the necessary notion of stability of complexes can be given in terms of
Bridgeland's notion of a stability condition on the derived category, introduced in
\cite{Bridgeland:Stab}. Unlike other constructions 
(as in \cite{Toda:limit-stable, large-volume}), the space of Bridgeland
stability conditions admits a well-behaved wall and chamber structure: the moduli
space of stable objects with given invariants remains unchanged unless the stability
condition crosses a wall.
However, unlike Gieseker-stability for sheaves, Bridgeland stability is not a priori
connected to a GIT problem. As a consequence, while established methods
(\cite{Inaba,Lieblich:mother-of-all,Toda:K3, Abramovich-Polishchuk:t-structures})
can prove existence of moduli spaces as algebraic spaces or Artin stacks,
there are so far only ad-hoc methods to prove that they are projective, or to construct coarse moduli spaces.


In this paper, we propose a solution to this problem by constructing a family of numerically
positive divisor classes on any moduli space of Bridgeland-stable complexes.


\subsection*{A family of nef divisors on Bridgeland-moduli spaces}

Let $X$ be a smooth projective variety over $\C$.
We denote by $\Db(X)$ its bounded derived category of coherent sheaves, and by $\Stab(X)$ the
space of Bridgeland stability conditions on $\Db(X)$, see Section \ref{sec:Bridgeland}. We refer to
p.~\pageref{subsec:notation} for an overview of notations.

Let $\sigma = (Z, \AA) \in \Stab(X)$ be a stability condition,
and $v$ a choice of numerical invariants.
Assume that we are given a family $\EE \in \Db(S \times X)$
of $\sigma$-semistable objects of class $v$ parameterized by
a proper algebraic space $S$ of finite type over $\C$; for example, $S$ could be a fine moduli space
$M_{\sigma}(v)$ of stable objects.
We define a numerical Cartier divisor class
$\ell_{\sigma, \EE}\in N^1(S) = \Hom(N_1(S), \R)$ as follows:
for any projective integral curve $C\subset S$, we set
\begin{equation} \label{eq:firstellDef}
\ell_{\sigma,\EE}([C]) =  \ell_{\sigma,\EE}.C := \Im \left(-\frac{Z\bigl(\Phi_{\EE}(\OO_C)\bigr)}{Z(v)}\right)
= \Im \left(-\frac{Z\bigl( (p_X)_* \EE|_{C \times X}\bigr)}{Z(v)} \right)
\end{equation}
where $\Phi_\EE\colon \Db(S)\to\Db(X)$ is the Fourier-Mukai functor with kernel $\EE$,
and $\OO_C$ is the structure sheaf of $C$. It is easy to prove that \eqref{eq:firstellDef} defines a
numerical divisor class $\ell_{\sigma, \EE} \in N^1(S)$.
Our main result, the Positivity Lemma \ref{Positivity}, implies the positivity of this divisor:
\begin{Thm}\label{thm:main1}
The divisor class $\ell_{\sigma, \EE}$ is nef.
Additionally, we have $\ell_{\sigma, \EE}.C = 0$ if and only if for two general points $c, c' \in C$, the corresponding objects $\EE_c, \EE_{c'}$ are $S$-equivalent.
\end{Thm}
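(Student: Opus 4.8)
The plan is to first understand the definition geometrically, then reduce the nefness to a local positivity statement, and finally extract the equality characterization from the extremal case.

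The plan is to interpret $\ell_{\sigma,\EE}.C$ as a condition on the phases of the Harder--Narasimhan factors of the single object $F:=\Phi_{\EE}(\OO_C)=(p_X)_*\bigl(\EE|_{C\times X}\bigr)\in\Db(X)$, and to extract both the nefness and the equality case from it. After rotating the central charge I may assume $Z(v)\in\R_{<0}$, so that the common phase of the fibres $\EE_c$ is $\phi_v=1$ and $\ell_{\sigma,\EE}.C=\Im Z(F)$. Writing the $\sigma$-cohomology objects of $F$ with respect to the heart $\AA$ as $\HH^i_{\AA}(F)\in\AA$ and using additivity of $Z$ in the Grothendieck group,
\[
\ell_{\sigma,\EE}.C=\Im Z(F)=\sum_i(-1)^i\,\Im Z\bigl(\HH^i_{\AA}(F)\bigr).
\]
Since every object of $\AA$ has $\Im Z\ge 0$, the odd-degree terms carry the wrong sign, and the whole statement is reduced to controlling where these cohomology objects sit.

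The key lemma I would prove is a \emph{phase confinement} for $F$: (a) $\HH^i_{\AA}(F)=0$ for $i\notin\{0,1\}$, and (b) $\HH^1_{\AA}(F)$ is $\sigma$-semistable of phase exactly $\phi_v$ (equivalently $\Im Z(\HH^1_{\AA}(F))=0$). Granting this, the display collapses to $\ell_{\sigma,\EE}.C=\Im Z(\HH^0_{\AA}(F))\ge 0$, which is the nefness in Theorem \ref{thm:main1}. Part (a) is a cohomological-amplitude statement for $\Phi_{\EE}$ between the standard $t$-structure on $\Db(S)$ and the $\sigma$-$t$-structure on $\Db(X)$: it is anchored by $\Phi_{\EE}(\OO_c)=\EE_c\in\PP(\phi_v)\subset\AA$ for a point $c$, and the spread $[0,1]$ comes from $\dim C=1$; I would deduce it by d\'evissage of $\OO_C$ from skyscrapers. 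Part (b) is the heart of the matter: the top cohomology of the pushforward is forced to have the same phase as the fibres. I would approach it via relative Serre duality for $p_X\colon C\times X\to X$, which identifies $\HH^1_{\AA}(F)$ with (the dual of) an $\HH^0$ of the pushforward of the family twisted by a line bundle pulled back from $C$; such a twist leaves the fibres, and hence their phase $\phi_v$, unchanged, and an $\HH^0$ of a fibrewise-phase-$\phi_v$ family has phase $\le\phi_v$, so dualizing pins $\HH^1_{\AA}(F)$ to phase exactly $\phi_v$.

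For the equality statement, the same collapse shows $\ell_{\sigma,\EE}.C=0$ iff $\HH^0_{\AA}(F)$ is also $\sigma$-semistable of phase $\phi_v$ (or zero), i.e. iff both cohomology objects of $F$ lie in the single slice $\PP(\phi_v)$. It remains to match this with $S$-equivalence of the general fibres. In one direction, if the general $\EE_c,\EE_{c'}$ share the same Jordan--H\"older factors, I can replace $\EE$ along $C$ by the associated-graded (split) family; this does not change the numerical class $[F]$, hence not $\ell_{\sigma,\EE}.C$, and the split family reduces to the constant computation in which $\HH^\bullet_{\AA}(F)$ is pure of phase $\phi_v$ and $\ell_{\sigma,\EE}.C=0$. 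Conversely, I would show that $\HH^0_{\AA}(F)\in\PP(\phi_v)$ forces the classifying morphism $C\to M_\sigma(v)$ to be constant on $S$-equivalence classes: a stable Jordan--H\"older factor that genuinely varied with $c$ would contribute an HN factor of $F$ of phase strictly between $\phi_v-1$ and $\phi_v$, making $\Im Z(\HH^0_{\AA}(F))>0$.

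The hard part will be part (b) of the phase confinement together with the forward direction of the equality (vanishing $\Rightarrow$ $S$-equivalence): both require translating a relative, family-level statement into a pointwise semistability/phase statement, and it is here that the one-dimensionality of $C$ and the genericity of the two points $c,c'$ are used in an essential way. The amplitude bound (a) and the converse of the equality are comparatively formal once the numerical invariance of $\ell_{\sigma,\EE}.C$ and the anchoring $\Phi_{\EE}(\OO_c)=\EE_c$ are in place.
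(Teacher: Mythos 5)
Your reformulation is sound: after normalizing $Z(v)=-1$, the theorem is indeed equivalent to your two phase-confinement claims for $F=\Phi_{\EE}(\OO_C)$, and both claims are true --- they are exactly what the paper extracts from its key Lemma \ref{lem:OCninAA}, which says that $\Phi_{\EE}(L)\in\AA$ for every line bundle $L$ on $C$ of large degree (from the triangle $\Phi_\EE(\OO_C)\to\Phi_\EE(L)\to\Phi_\EE(T)$, with $T$ torsion, one gets amplitude $\{0,1\}$ and exhibits $\HH^1_{\AA}(F)$ as a quotient in $\AA$ of $\Phi_\EE(T)\in\PP(1)$, hence lying in $\PP(1)$; in fact for nefness the paper does not even need this, since $\Im Z(\Phi_\EE(\OO_C))=\Im Z(\Phi_\EE(L))\geq 0$ directly). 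The genuine gap is that your proposed proofs of (a) and (b) do not supply this input. ``D\'evissage of $\OO_C$ from skyscrapers'' cannot work: $\OO_C$ is not a finite iterated extension of skyscrapers (rank is additive on triangles, so the triangulated subcategory generated by skyscrapers contains only rank-zero objects); the available exact sequences such as $0\to\OO_C\to L\to T\to 0$ only relate $\OO_C$ to line bundles of large degree plus torsion, so without knowing a priori where $\Phi_\EE(L)$ sits you get no absolute amplitude bound --- and that is precisely the non-formal content. The paper obtains it from the Abramovich--Polishchuk constant t-structure $\AA_C$ on $\Db(C\times X)$ (Theorem \ref{thm:APP}): flat families lie in $\AA_C$, and $\AA_C$ is characterized by $(p_X)_*(\EE\otimes p_C^*\OO_C(n))\in\AA$ for $n\gg0$; this requires $\AA$ Noetherian, whence the preliminary reduction to algebraic stability conditions (Lemma \ref{lem:Yukinobualgebraic}), a step absent from your plan. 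Your route to (b) is also flawed as stated: relative Serre duality for $p_X$ trades $\EE$ for its derived dual twisted by $p_C^*\omega_C$, whose fibres $\EE_c^\vee$ do \emph{not} lie in $\AA$ (dualizing does not preserve the heart or the phases of $\sigma$), so the principle ``an $\HH^0$ of a fibrewise-phase-$\phi_v$ family has phase $\le\phi_v$'' cannot be applied to it; moreover that principle is itself a family-to-fibre statement requiring a restriction theorem (of the type of Lieblich's Proposition 2.2.3, which the paper invokes for exactly such steps), not a formality.

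The equality case has the same defect on both sides. For ($S$-equivalence $\Rightarrow$ vanishing), replacing $\EE$ by ``the associated-graded family'' presupposes a global filtration of $\EE$ in $\AA_C$ with quotients $F_i\boxtimes\LL_i$ inducing fibrewise Jordan--H\"older filtrations; this is the paper's Lemma \ref{lem:JH-in-families}, a substantive result whose proof uses Lemma \ref{lem:OCninAA}, an $\Ext^1$-counting argument to produce a nonzero map $F_1\boxtimes\LL_1\to\EE$, Lieblich's restriction theorem, and Lemma \ref{lem:simple}; fibrewise $S$-equivalence alone does not produce such a filtration. For (vanishing $\Rightarrow$ $S$-equivalence), your claim that a varying stable factor ``would contribute an HN factor of intermediate phase'' is an assertion, not an argument; the paper instead uses $Z(\Phi_\EE(L))\in\R_{<0}$ to conclude $\Phi_\EE(L)\in\PP(1)$, realizes every fibre as a quotient of this single semistable object via $0\to\Phi_\EE(L(-c))\to\Phi_\EE(L)\to\EE_c\to0$, so that the Jordan--H\"older factors of all fibres come from one fixed finite set, and then applies the semicontinuity Lemma \ref{lem:finiteJH}. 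In short: your reduction and your two intermediate claims are correct, but at every point where fibrewise information must be promoted to a statement about the family or its pushforward --- which is the actual content of the Positivity Lemma --- the proposal either asserts the step or attacks it with a tool that does not apply; filling these gaps essentially forces you to rebuild the paper's machinery (the Abramovich--Polishchuk t-structure together with Lieblich's restriction result).
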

(Two semistable objects are $S$-equivalent if their Jordan-H\"older filtrations 
into stable factors of the same phase have identical stable factors.)
The class $\ell_{\sigma, \EE}$ can also be given as a determinant line bundle. The
main advantage of our construction 
is that we can show its positivity property directly, without using GIT; instead,
the proof is based on a categorical construction by Abramovich and Polishchuk
\cite{Abramovich-Polishchuk:t-structures, Polishchuk:families-of-t-structures}.
Our construction also avoids any additional choices: it depends only on $\sigma$.


\subsection*{Chambers in $\Stab(X)$ and the nef cone of the moduli spaces}
Consider a chamber $\CC$ for the wall-and-chamber decomposition with respect to $v$; then (assuming
its existence) the moduli space $M_{\CC}(v):=M_{\sigma}(v)$ of $\sigma$-stable objects of class $v$
is constant for $\sigma \in \CC$. Also
assume for simplicity that it admits a universal family $\EE$. Theorem \ref{thm:main1}
yields an essentially linear map 
\begin{equation} \label{eq:lasmap}
l \colon \overline{\CC} \to \Nef(M_{\CC}(v)), \quad l(\sigma) = \ell_{\sigma, \EE}.
\end{equation}
This immediately begs for the following two questions:
\begin{description}
\item[Question 1] Do we actually have $l(\CC) \subset \Amp(M_{\CC}(v))$?
\item[Question 2] What will happen at the walls of $\CC$?
\end{description}

\subsection*{K3 surfaces: Overview}
While our above approach is very general, we now restrict to the case where $X$ is a 
smooth projective K3 surface.
In this situation, Bridgeland described (a connected component of) the space of stability conditions in
\cite{Bridgeland:K3}, and Toda proved existence results for moduli spaces in
\cite{Toda:K3}; see Section \ref{sec:reviewK3}. The following paraphrases a conjecture proposed by Bridgeland
in Section 16 of the arXiv-version of \cite{Bridgeland:K3}:

\begin{Con}[Bridgeland]\label{conj:Bridgeland}
Given a stability condition $\sigma$ on a K3 surface, and a numerical class $v$, there exists a coarse
moduli space $M_{\sigma}(v)$ of $\sigma$-semistable complexes with class $v$.
Changing the stability condition produces birational maps between the coarse moduli spaces.
\end{Con}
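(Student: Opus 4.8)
The plan is to separate the two claims of Conjecture \ref{conj:Bridgeland}. Existence of $M_\sigma(v)$ as a proper algebraic space is essentially known: combining Toda's analysis of stability on K3 surfaces \cite{Toda:K3} with the general representability results of Inaba \cite{Inaba} and Lieblich \cite{Lieblich:mother-of-all} presents the stack of $\sigma$-semistable objects of class $v$ together with its coarse space. The real content is therefore to make this space projective, for which I would try to exhibit an ample class, the natural candidate being $\ell_{\sigma,\EE}$ from Theorem \ref{thm:main1}. For a \emph{generic} $\sigma$, semistability coincides with stability, so distinct points parameterize non-isomorphic --- hence non-$S$-equivalent --- objects, and the second clause of Theorem \ref{thm:main1} forces $\ell_{\sigma,\EE}.C>0$ for every curve $C\subset M_\sigma(v)$. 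What is not free is the passage from ``nef and strictly positive on curves'' to ``ample'': this implication fails on a general proper algebraic space, so I would not argue ampleness abstractly.

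Instead I would anchor projectivity in the \emph{large volume limit}. Using Bridgeland's description of the relevant component of $\Stab(X)$ \cite{Bridgeland:K3}, there is a chamber in which $\sigma$-stability reduces to (twisted) Gieseker stability, whose moduli space is a projective GIT quotient carrying $\ell_{\sigma,\EE}$ as its usual ample determinant polarization; applying an autoequivalence when necessary, every primitive $v$ can be brought within reach of such a chamber. To treat an arbitrary generic $\sigma$ I would connect its chamber to the Gieseker chamber by a path meeting only finitely many walls and propagate projectivity across each. This simultaneously produces the birational statement: fixing a wall $W$ between chambers $\CC_+$ and $\CC_-$ and a generic $\sigma_0\in W$, the strictly $\sigma_0$-semistable objects form a proper closed locus (a destabilizing sub- or quotient object must have central charge proportional to that of $v$ at $\sigma_0$), so its complement $U$ is a common dense open subset of $M_{\CC_+}(v)$ and $M_{\CC_-}(v)$. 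Identifying the two copies of $U$ gives the birational map $M_{\CC_+}(v)\dashrightarrow M_{\CC_-}(v)$. By Theorem \ref{thm:main1} the wall-class $\ell_{\sigma_0,\EE}$ is nef on each side and has degree zero precisely on curves whose general members become $S$-equivalent at $\sigma_0$; if it is the pullback of an ample class under a contraction $M_{\CC_\pm}(v)\to M_{\sigma_0}(v)$ onto the space of $S$-equivalence classes, the wall-crossing is realized as a flop-type diagram over $M_{\sigma_0}(v)$, which transports projectivity from one chamber to the next.

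The main obstacle is exactly the step I flagged: Theorem \ref{thm:main1} supplies nefness and the sharp vanishing criterion for free, but upgrading $\ell_{\sigma_0,\EE}$ to a \emph{semiample} class --- so that the contraction $M_{\CC_\pm}(v)\to M_{\sigma_0}(v)$ genuinely exists as a morphism of projective schemes, rather than merely a birational map of algebraic spaces --- is the crux, and is what underlies Question 1. I expect the specific geometry of K3 moduli spaces --- smoothness and the holomorphic symplectic structure coming from the Mukai pairing, together with the identification of one chamber with a projective Gieseker space --- to be essential in forcing semiampleness, and hence ampleness in the interior of each chamber.
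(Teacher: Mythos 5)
Your framing is partly on target---existence as a proper algebraic space via Toda/Inaba/Lieblich, the correct observation that nefness plus strict positivity on curves does not give ampleness on a proper algebraic space, and the idea of anchoring everything in the projectivity of Gieseker moduli spaces are all ingredients of the paper. But the core mechanism you propose---connecting an arbitrary chamber to the large-volume chamber by a path and propagating projectivity wall by wall---is not the paper's route, and it has gaps you cannot close with the tools you allow yourself. First, the path may cross \emph{totally semistable} walls, where no object of class $v$ is $\sigma_0$-stable; then there is no common dense open subset of the two moduli spaces, hence no birational map and no flop diagram, and the induction halts. The paper explicitly excludes exactly this case from Theorem \ref{thm:MMP} and defers it to \cite{BM:walls}. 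Second, even at a wall with a large common stable locus, ``transporting projectivity'' is circular: to realize the wall-crossing as a flop over a common base you need contraction morphisms on \emph{both} sides, but on the far side $M_{\CC_-}(v)$ you only have a proper algebraic space, and the Base Point Free theorem---your only source of semiampleness---requires projectivity. Worse, the bigness input $q(\ell_{\sigma_0,\EE_\pm})>0$ comes from the Beauville--Bogomolov identification of Theorem \ref{thm:Yoshioka}, whose proof in the paper already presupposes the identification of the moduli space with a Gieseker moduli space on a Fourier--Mukai partner; so bigness is not available to you before projectivity is settled.

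The missing idea, which is the paper's actual proof of Theorem \ref{thm:ProjK3} (Section \ref{sec:ProjK3}, generalizing Minamide--Yanagida--Yoshioka), is that one never needs to walk to the Gieseker chamber of $X$: \emph{every} chamber is itself a Gieseker chamber, after changing the surface. Lemma \ref{lem:MYY} produces, densely in the given chamber, a stability condition $\tau$ and a primitive isotropic class $w$ (so $w^2=0$) with $Z_\tau(w)$ and $Z_\tau(v)$ on the same ray; by Lemma \ref{lem:isotropic} the moduli space $Y=M_\tau(w)$ is a (twisted) K3 surface, and the induced equivalence $\Phi\colon\Db(X)\to\Db(Y,\alpha)$ sends $\tau$ into the geometric chamber $U(Y,\alpha)$, since the objects of class $w$ become skyscraper sheaves. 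One then checks that $\Phi\circ[-1]$ identifies $\MMM_\tau(v)$ with a moduli stack of twisted Gieseker-semistable sheaves on $(Y,\alpha)$, whose coarse moduli space is projective by classical GIT; this settles projectivity chamber by chamber, with no wall-crossing at all. Only \emph{after} this does the paper run the analysis you sketched---big and nef via Theorem \ref{thm:Yoshioka}, Base Point Free theorem on the now-projective $K$-trivial moduli spaces, contractions $\pi_{\sigma_\pm}$ and the flop diagram of Theorem \ref{thm:MMP}---and ampleness of $\ell_\sigma$ for generic $\sigma$ follows as you anticipated in the primitive case, and by a comparison with the determinant classes $\LL_0,\LL_1$ of Huybrechts--Lehn in the non-primitive case.
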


Our main results give a partial proof of this conjecture, and answers to the above questions:
Theorem \ref{thm:ProjK3} answers Question 1 and proves existence of coarse moduli spaces;
Theorem \ref{thm:MMP} partially answers Question 2 and partially proves the second statement
of the conjecture.
They also give a close relation between walls in $\Stab(X)$, and walls in
the movable cone of the moduli space separating nef cones of different birational models.

\subsection*{Projectivity of the moduli spaces}\label{subsec:IntroK3Surfaces}
Assume that $\sigma$ is \emph{generic}, which means that it does not lie on a wall
with respect to $v$.

\begin{Thm}\label{thm:ProjK3}
Let $X$ be a smooth projective K3 surface, and let $v\in H^*_{\alg}(X, \Z)$.
Assume that the stability condition $\sigma$ is generic with respect to $v$.
Then:
\begin{enumerate}
\item The coarse moduli space $M_{\sigma}(v)$ of $\sigma$-semistable objects with Mukai
vector $v$ exists as a normal projective irreducible variety with $\Q$-factorial singularities.
\item \label{enum:ample}
The assignment \eqref{eq:firstellDef} induces an ample divisor class $\ell_{\sigma}$ on $M_{\sigma}(v)$.
\end{enumerate}
\end{Thm}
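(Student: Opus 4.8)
The plan is to reduce everything to the ampleness statement (2), since an ample line bundle on a proper algebraic space forces it to be a projective scheme; the normality, irreducibility and $\Q$-factoriality in (1) are then separate, local statements. As a starting point I would take the existence of $M_\sigma(v)$ as a proper algebraic space of finite type over $\C$: for generic $\sigma$ this is supplied by the cited existence results (Toda, Lieblich, Abramovich--Polishchuk), together with boundedness and the valuative criterion for Bridgeland-semistable objects. The whole weight of the theorem then rests on promoting the nef class $\ell_\sigma$ of Theorem \ref{thm:main1} to an ample one.

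The first two inputs are immediate from Theorem \ref{thm:main1}. Nefness of $\ell_\sigma$ is exactly its first assertion; since $\ell_\sigma$ is a determinant class it is defined on every family and descends to the coarse space even in the absence of a global universal family, so this causes no trouble. For strict positivity I would argue as follows: let $C \subset M_\sigma(v)$ be any projective integral curve. By the second assertion of Theorem \ref{thm:main1}, $\ell_\sigma.C = 0$ would force the objects $\EE_c$ and $\EE_{c'}$ at two general points $c, c' \in C$ to be $S$-equivalent. But the coarse moduli space separates $S$-equivalence classes by construction, and a curve has distinct general points, so $\EE_c$ and $\EE_{c'}$ lie in distinct classes---a contradiction. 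Hence $\ell_\sigma.C > 0$ for every curve $C$.

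The main obstacle is the passage from ``nef and strictly positive on all curves'' to ``ample'': on a proper algebraic space a strictly nef class need not be ample (Mumford's examples already show this for smooth projective surfaces). To close this gap I would aim to prove that $\ell_\sigma$ is in fact \emph{semi-ample}, i.e.\ that a positive multiple is globally generated and defines a morphism $\pi \colon M_\sigma(v) \to N$ to a projective scheme contracting precisely the $\ell_\sigma$-trivial curves. Strict positivity on curves would then make $\pi$ quasi-finite, hence finite (as $M_\sigma(v)$ is proper), so that $\ell_\sigma \cong \pi^* H$ for an ample class $H$ on $N$ is itself ample and $M_\sigma(v)$ is projective. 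The semi-ampleness is the crux; the natural source for it is the determinant-line-bundle nature of $\ell_\sigma$ together with the Abramovich--Polishchuk machinery underlying the Positivity Lemma \ref{Positivity}, which should produce $N$ as the projective space of $S$-equivalence classes directly, rather than merely the nef estimate. Failing a direct construction, I would fall back on the primitive case: reduce to $v$ primitive (so that semistable $=$ stable and $M_\sigma(v)$ is smooth), connect $\sigma$ to a stability condition in the large-volume Gieseker chamber through a finite chain of walls, and transport projectivity across each wall, using that the intermediate models are again proper and carry the nef class $\ell$.

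Finally, for the remaining assertions of (1) I would analyze the moduli space étale- (or analytically-) locally near a polystable object $E = \bigoplus_i E_i^{\oplus a_i}$. Deformation theory of Bridgeland-stable complexes on a K3, governed by Mukai's symplectic pairing, identifies this local model with a neighbourhood in the moduli of representations of the $\Ext$-quiver of the $E_i$, i.e.\ a holomorphic symplectic quotient. This yields smoothness of dimension $\langle v, v \rangle + 2$ when $v$ is primitive, and symplectic---hence normal and $\Q$-factorial---singularities in general; irreducibility follows from connectedness of the moduli space in the expected dimension.
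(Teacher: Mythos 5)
Your reduction to statement (2) and your use of Theorem \ref{thm:main1} for nefness and strict positivity on curves are fine for primitive $v$, but the proposal has two genuine gaps, and the second one is exactly the crux of the theorem. First, for non-primitive $v$ your starting point already fails: when strictly $\sigma$-semistable objects exist, there is no available construction of the coarse moduli space, even as a proper algebraic space (Inaba/Toda and Lemma \ref{lem:properness} only cover the case $\MMM_\sigma(v)=\MMM^s_\sigma(v)$); the existence of $M_\sigma(v)$ is part of the conclusion, not an input. Second, and more seriously, you never actually close the gap from ``strictly nef'' to ``ample''. The Abramovich--Polishchuk machinery underlying the Positivity Lemma \ref{Positivity} produces a t-structure and hence numerical positivity; it produces no sections of $\ell_\sigma$ and no projective target parameterizing $S$-equivalence classes, so your ``natural source'' of semi-ampleness is not an argument. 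Your fallback --- connect $\sigma$ to the Gieseker chamber by a finite chain of walls and ``transport projectivity across each wall'' --- is circular: crossing a wall can flop the moduli space, a flop of a projective symplectic variety need not be projective, and a strictly nef class does not rule this out. In the paper the logical order is the opposite: the wall-crossing statement (Theorem \ref{thm:MMP}) is \emph{deduced from} projectivity, because its proof needs the Base Point Free Theorem and Yoshioka's description of $\NS(M_\sigma(v))$, both of which presuppose that $M_\sigma(v)$ is a projective variety.

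The missing idea is the paper's Fourier--Mukai reduction, generalizing Minamide--Yanagida--Yoshioka. By Lemma \ref{lem:MYY}, densely in the chamber of $\sigma$ one finds a stability condition $\tau$ and a primitive isotropic vector $w$ ($w^2=0$) with $Z_\tau(w)$ and $Z_\tau(v)$ on the same ray, such that $Y:=M_\tau(w)$ is a smooth projective K3 surface. The induced equivalence $\Phi\colon \Db(X)\to\Db(Y,\alpha)$ sends $\tau$ into the geometric chamber of $(Y,\alpha)$, and one checks that $\Phi\circ[-1]$ identifies $\MMM_\tau(v)$ with a moduli stack of twisted Gieseker-semistable sheaves on $(Y,\alpha)$, preserving $S$-equivalence. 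All of part (1) --- existence of the coarse space, projectivity, normality, irreducibility, $\Q$-factoriality --- then comes from the classical GIT theory of sheaves (Yoshioka, Kaledin--Lehn--Sorger, Perego--Rapagnetta), not from a local Ext-quiver analysis. Ampleness is then proved with projectivity already in hand: for primitive $v$ one combines $q(\ell_\sigma)=w_\sigma^2>0$ (Lemma \ref{lem:PositiveInters} plus Theorem \ref{thm:Yoshioka}) with the Base Point Free Theorem on the $K$-trivial projective variety $M_\sigma(v)$; for non-primitive $v$ one compares $\ell_\sigma$ with the classical determinant classes $\LL_0,\LL_1$ of Le Potier and Huybrechts--Lehn and uses that $\LL_0\otimes\LL_1^{\otimes m}$ is ample for $m\gg 0$, together with a density argument in the chamber. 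Without some substitute for this identification with a Gieseker moduli space, your outline cannot be completed.
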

This generalizes \cite[Theorem 0.0.2]{MYY2}, which shows projectivity of $M_{\sigma}(v)$
in the case where $X$ has Picard rank one.

\subsection*{Wall-crossing and birational geometry of the moduli spaces}
We also use Theorem \ref{thm:main1} to study the wall-crossing behavior of the moduli space
under deformations of $\sigma$.

Assume that $v$ is a primitive class.
Let $W$ be a wall of the chamber decomposition for $v$.  Let $\sigma_0$ be a generic point of $W$,
and let $\sigma_{+}, \sigma_{-}$ be two stability conditions nearby on each side of the wall.
By Theorem \ref{thm:ProjK3} and its proof, they are smooth projective Hyperk\"ahler varieties.
Since being semistable is a closed condition in $\Stab(X)$, the (quasi-)universal families $\EE_{\pm}$ on
$M_{\sigma_{\pm}}(v)$ are also families of $\sigma_0$-semistable objects. Theorem \ref{thm:main1} also
applies in this situation, and thus $\sigma_0$ produces nef divisor classes $\ell_{\sigma_0,\EE_\pm}$ on
$M_{\sigma_{\pm}}(v)$.  In Section \ref{sec:flops}, we prove:

\begin{Thm}\label{thm:MMP}
Let $X$ be a smooth projective K3 surface, and $v\in H^*_{\alg}(X, \Z)$ be a primitive class.
\begin{enumerate*}
\item \label{enum:contraction}
The classes $\ell_{\sigma_0,\EE_\pm}$ are big and nef, and induce birational contraction morphisms
\[
\pi_{\sigma_\pm}\colon M_{\sigma_{\pm}}(v) \to Y_{\pm},
\]
where $Y_{\pm}$ are normal irreducible projective varieties.
\item \label{enum:flop}
If there exist $\sigma_0$-stable objects, and if their complement in $M_{\sigma_{\pm}}(v)$ has
codimension at least two $2$, then one of the following two possibilities holds:
\begin{itemize*}
\item Both $\ell_{\sigma_0,\EE_+}$ and $\ell_{\sigma_0,\EE_-}$ are ample, and the birational map
\[
f_{\sigma_0} \colon M_{\sigma_{+}}(v) \dashrightarrow M_{\sigma_{-}}(v),
\]
obtained by crossing the wall in $\sigma_0$, extends to an isomorphism.
\item Neither $\ell_{\sigma_0,\EE_+}$ nor $\ell_{\sigma_0, \EE_-}$ is ample, and $f_{\sigma_0} \colon M_{\sigma_{+}}(v) \dashrightarrow M_{\sigma_{-}}(v)$
is the flop induced by $\ell_{\sigma_0,\EE_+}$: we have a commutative diagram of birational maps
\begin{equation*}
\xymatrix{ M_{\sigma_{+}}(v)\ar@{-->}[rr]^{f_{\sigma_0}}\ar[dr]_{\pi_{\sigma_+}} && M_{\sigma_{-}}(v)\ar[dl]^{\pi_{\sigma_-}}\\
& Y_+=Y_{-} &
},
\end{equation*}
and $f_{\sigma_0}^*\ell_{\sigma_0,\EE_-}=\ell_{\sigma_0,\EE_+}$.
\end{itemize*}
\end{enumerate*}
\end{Thm}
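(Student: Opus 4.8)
The plan is to build everything on the nef divisor classes $\ell_{\sigma_0,\EE_\pm}$ produced by Theorem \ref{thm:main1}, and to use the intersection-theoretic characterization of their zero locus. First I would establish part \eqref{enum:contraction}: the classes are nef by Theorem \ref{thm:main1}, so the main point is bigness. Since $v$ is primitive and $\sigma_\pm$ are generic, $M_{\sigma_\pm}(v)$ are smooth projective Hyperk\"ahler varieties (by Theorem \ref{thm:ProjK3} and its proof), and I would compute the self-intersection (or top power) of $\ell_{\sigma_0,\EE_\pm}$ using the Beauville--Bogomolov form. Concretely, $\ell_{\sigma_0,\EE_\pm}$ is the image under $l$ of a point $\sigma_0$ that still lies in the closure of the ample chamber, so it is a limit of ample classes; the key computation is that its Beauville--Bogomolov square is strictly positive, which forces bigness. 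The associated semi-ample (or at least nef and big) class then yields, via the basepoint-free theorem on the Hyperk\"ahler variety, a birational contraction morphism $\pi_{\sigma_\pm}\colon M_{\sigma_\pm}(v)\to Y_\pm$ to a normal projective variety $Y_\pm = \Proj\bigoplus_m H^0(M_{\sigma_\pm}(v),\ell_{\sigma_0,\EE_\pm}^{\otimes m})$.

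Next I would analyze the contracted locus. By the second half of Theorem \ref{thm:main1}, a curve $C$ satisfies $\ell_{\sigma_0,\EE_\pm}.C=0$ precisely when the generic objects parameterized by $C$ are $S$-equivalent with respect to $\sigma_0$, i.e.\ $C$ is contracted by $\pi_{\sigma_\pm}$ if and only if it parameterizes $\sigma_0$-semistable objects sharing the same Jordan--H\"older factors. Thus $\pi_{\sigma_\pm}$ contracts exactly the locus of strictly $\sigma_0$-semistable objects, and the contracted fibers are identified with the $S$-equivalence classes. Under the hypothesis of part \eqref{enum:flop}, namely that $\sigma_0$-stable objects exist and the strictly-semistable locus has codimension at least two, the morphism $\pi_{\sigma_\pm}$ is an isomorphism away from a codimension-$\geq 2$ set. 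This gives the dichotomy: either the strictly semistable locus is empty, in which case $\ell_{\sigma_0,\EE_\pm}$ is already ample (as $\pi_{\sigma_\pm}$ has no contracted curves), or it is nonempty of codimension $\geq 2$, in which case $\pi_{\sigma_\pm}$ is a small contraction.

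For the identification of the two sides, I would argue that both $\pi_{\sigma_+}$ and $\pi_{\sigma_-}$ factor through the same target. The point is that the set of $\sigma_0$-semistable objects, and their $S$-equivalence classes, is intrinsic to $\sigma_0$ and does not depend on which side of the wall we approach from. On the open locus of $\sigma_0$-stable objects, which is common to $M_{\sigma_+}(v)$ and $M_{\sigma_-}(v)$ and has complement of codimension $\geq 2$ on both, the two moduli spaces are canonically identified; this gives the birational map $f_{\sigma_0}$. Since $Y_\pm$ is, set-theoretically, the space of $\sigma_0$-polystable (S-equivalence class) objects in both cases, the natural map $Y_+\to Y_-$ is an isomorphism, so I can set $Y_+=Y_-=:Y$. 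In the ample case $f_{\sigma_0}$ extends to an isomorphism (both contractions being trivial); in the small-contraction case, the diagram
\begin{equation*}
\xymatrix{ M_{\sigma_{+}}(v)\ar@{-->}[rr]^{f_{\sigma_0}}\ar[dr]_{\pi_{\sigma_+}} && M_{\sigma_{-}}(v)\ar[dl]^{\pi_{\sigma_-}}\\
& Y &
}
\end{equation*}
realizes $f_{\sigma_0}$ as a flop. To see it is the flop \emph{of} $\ell_{\sigma_0,\EE_+}$, I would pull back: the nef class $\ell_{\sigma_0,\EE_-}$ on $M_{\sigma_-}(v)$ is the pullback of an ample class on $Y$, and $\ell_{\sigma_0,\EE_+}$ is the pullback of the same class, so $f_{\sigma_0}^*\ell_{\sigma_0,\EE_-}=\ell_{\sigma_0,\EE_+}$; that $\ell_{\sigma_0,\EE_+}$ is \emph{not} nef on $M_{\sigma_-}(v)$ (equivalently $\ell_{\sigma_0,\EE_-}$ not nef on $M_{\sigma_+}(v)$) is exactly what makes this a flop rather than an isomorphism, and follows from the deformation-invariance of the Beauville--Bogomolov square together with the fact that $\ell_{\sigma_\pm,\EE_\pm}$ are genuinely ample on their own side.

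The step I expect to be the main obstacle is the bigness of $\ell_{\sigma_0,\EE_\pm}$ and the precise control of the contracted locus: establishing that $\ell_{\sigma_0,\EE_\pm}$ lies on the boundary of the ample cone (rather than deep in its interior or outside the movable cone) requires a careful deformation argument for the divisor class as $\sigma$ crosses the wall, and the codimension-two hypothesis must be used exactly to invoke the Hyperk\"ahler flop structure and to identify the two birational models. Verifying that the common target $Y$ is genuinely the same for both sides — which relies on the $S$-equivalence classes being independent of the approach direction — is the crux that ties the two contractions into a single flop diagram.
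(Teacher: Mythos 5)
Your part \eqref{enum:contraction} matches the paper's argument: nefness from Theorem \ref{thm:main1}, bigness because $q(\ell_{\sigma_0,\EE_\pm})=w_{\sigma_0}^2>0$ (Lemma \ref{lem:PositiveInters} combined with Theorem \ref{thm:Yoshioka}), and the Base Point Free Theorem on the $K$-trivial varieties $M_{\sigma_\pm}(v)$ to produce $\pi_{\sigma_\pm}$. The genuine gap is in part \eqref{enum:flop}, exactly at the step you call the crux: you identify $Y_+$ with $Y_-$ by asserting that $Y_\pm$ is, set-theoretically, the space of $S$-equivalence classes of $\sigma_0$-semistable objects, hence intrinsic to $\sigma_0$. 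This is not available: it is essentially the unproven part of Conjecture \ref{conj:Bridgeland} (existence of a coarse moduli space \emph{on} the wall), and the paper states explicitly that the best it can prove in general is the weaker universal property of Proposition \ref{prop:CoarseModuli}. What Theorem \ref{thm:main1} actually gives is that a \emph{curve} is $\ell_{\sigma_0,\EE_\pm}$-trivial if and only if its two \emph{general} points are $S$-equivalent; a fiber of $\pi_{\sigma_\pm}$ is a connected chain of such curves, the Jordan--H\"older factors may jump at special points of those curves, and conversely two $S$-equivalent objects need not be connected by any curve of $S$-equivalent objects, so fibers of $\pi_{\sigma_\pm}$ need not be $S$-equivalence classes. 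Even granting a set-theoretic bijection $Y_+\leftrightarrow Y_-$, that alone is not an isomorphism of varieties. Moreover your formula $f_{\sigma_0}^*\ell_{\sigma_0,\EE_-}=\ell_{\sigma_0,\EE_+}$ is \emph{deduced} from $Y_+=Y_-$ (``pullbacks of the same ample class''), so it inherits the gap.

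The paper runs the implication in the opposite direction, and that is the fix. Under the codimension-two hypothesis, numerical divisor classes on the projective varieties $M_{\sigma_\pm}(v)$ are determined by their degrees on curves contained in the common open subset $M^0_{\sigma_\pm}(v)$ of $\sigma_0$-stable objects, and the (quasi-)universal families can be chosen to agree there; this yields directly the comparison $f_{\sigma_0}^*\ell_{\sigma_{-},\EE_-}=\ell_{Z_{-},\EE_+}$ of equation \eqref{eq:flopcomparison}, and in particular $f_{\sigma_0}^*\ell_{\sigma_0,\EE_-}=\ell_{\sigma_0,\EE_+}$, \emph{before} anything is known about $Y_\pm$. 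Since $f_{\sigma_0}$ is an isomorphism in codimension one, this identifies the section rings of the two semi-ample classes, and $Y_+=Y_-$ follows because both are $\Proj$ of the same ring; the commutativity of the diagram is then automatic. The dichotomy also falls out: if $\ell_{\sigma_0,\EE_+}$ is not ample, then by convexity of the nef cone the class $\ell_{Z_-,\EE_+}$, lying beyond the boundary point $\ell_{\sigma_0,\EE_+}$ on the line through the ample class $\ell_{\sigma_+,\EE_+}$, cannot be nef, so $f_{\sigma_0}$ cannot extend to an isomorphism. A secondary error in your write-up: you key the dichotomy to emptiness of the strictly semistable locus, but the paper's ``fake walls'' have a nonempty strictly semistable locus with no contracted curves, in which case $\ell_{\sigma_0,\EE_\pm}$ is ample; what governs the dichotomy is the existence of curves of $S$-equivalent objects, not of strictly semistable points.
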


Note that our theorem does not cover the cases where there are no
$\sigma_0$-stable objects, or the case where their complement has codimension one\footnote{Our only
result in the latter case is Lemma \ref{lem:continuous}, which shows 
$f_{\sigma_0}^* \ell_{\sigma_0,\EE_-} = \ell_{\sigma_0,\EE_+}$.}

In some examples (including those considered in \cite{Aaron-Daniele, MaciociaMeachan}), we can show that
$Y_{+} = Y_-$ is a connected component of the coarse moduli space of $\sigma_0$-semistable objects of
class $v$. The contraction from the moduli space of 
Gieseker-stable sheaves to the Uhlenbeck moduli space of $\mu$-semistable sheaves
(see \cite{LePotier:Determinant,JunLi:Uhlenbeck}) is another particular example of the contraction morphism
$\pi_{\sigma_\pm}$; this was observed on the level of sets of semistable objects in
\cite{LoQin:miniwalls}, and on the level of moduli spaces in the recent preprint
\cite{Jason:Uhlenbeck}. 

We study many more examples of wall-crossing behavior in Sections \ref{sec:K3sheaves} and \ref{sec:Hilbert}.

\subsection*{Nef cones of moduli spaces of stable sheaves}
Our Theorem \ref{thm:main1} can give new results on the nef cone
of $M_\sigma(v)$ even in the situation where it agrees with a classical moduli space of 
sheaves. We give two examples of such applications:
\begin{itemize}
\item In \textbf{Corollary \ref{Cor:amplecone}}, we determine a region 
of the ample cone of the moduli space of Gieseker-stable sheaves on $X$ that depends only on the lattice 
of $X$.
\item In \textbf{Proposition \ref{prop:Hilbnefcone}}, we determine the nef cone of the Hilbert scheme
of $n$ points on a K3 surface of Picard rank one and genus $g$ for $n \ge \frac g2 + 1$.
Our result shows in particular that \cite[Conjecture 1.2]{HassettTschinkel:ExtremalRays}
will need to be modified (see Remark \ref{rem:HT_ER_conjecture}).
\end{itemize}
The strength of our approach is that it simultaneously produces nef divisors (by Theorem \ref{thm:main1})
and extremal rays of the Mori cone (as curves of $S$-equivalent objects on the wall).

\subsection*{Lagrangian fibrations for the Hilbert scheme} 
Let $X$ be a K3 surface with $\Pic X = \Z \cdot H$, and of degree $d = \frac 12 H^2$. 
We consider the Hilbert scheme $\Hilb^n(X)$ of $n$ points on $X$.
According to a long-standing conjecture for Hyperk\"ahler varieties, it has a birational model
admitting a Lagrangian fibration if and only if $d = \frac{k^2}{h^2}(n-1)$ for some integers $k, h$.
The following theorem solves the conjecture in the case $h = 2$, and interprets all birational models as moduli
spaces of Bridgeland-stable objects.

We denote by $\tilde H \subset \Hilb^n(X)$ the divisor of subschemes intersecting a given curve in
the linear system $\abs{H}$, and by $B \subset \Hilb^n(X)$ the reduced divisor of non-reduced
subschemes.
\begin{repThm}{thm:TBHTHS}
Let $X$ be a K3 surface with $\Pic X = \Z \cdot H$ and
$H^2 = 2d$. Assume that there is an odd integer $k$ with $d = \frac{k^2}4(n-1)$ for some
integer $n$. Then:
\begin{enumerate}
\item
The movable cone $\mathrm{Mov}(\mathrm{Hilb}^n(X))$ is generated by $\tilde H$ and $2 \tilde H - k B$.
\item 
The morphism induced by $\tilde H$ is the Hilbert-to-Chow morphism, while the one induced by $2 \tilde H - k B$ is a Lagrangian fibration on a minimal model for $\Hilb^n(X)$.
\item
All minimal models for $\mathrm{Hilb}^n(X)$ arise as moduli spaces of stable objects in $\Db(X)$ and their birational transformations are induced by crossing a wall in $\Stab^\dagger(X)$.
\end{enumerate}
\end{repThm}

\subsection*{Some relations to existing work}
\subsubsection*{Wall-crossing}
Our construction was directly inspired by the results of \cite{ABCH:MMP}. The authors studied
wall-crossing for the Hilbert scheme of points on $\P^2$, and found a surprisingly direct relation
between walls $\Stab(\P^2)$ and walls in the movable cone of the Hilbert scheme separating nef
cones of different birational models.

In their case, the variation of moduli spaces can also be seen as a variation of GIT parameters,
via the classical monad construction.  More precisely, stable complexes with respect to a Bridgeland
stability condition can be seen as $\theta$-stable representations of a Beilinson quiver for $\P^2$,
in the sense of King \cite{King:QuiverStability}.  In this case, our divisor class $\ell_\sigma$
agrees with the ample divisor coming from the affine GIT construction of these moduli spaces. More
generally, our family of nef divisors generalizes a construction by Craw and Ishii in
\cite{Alastair-Ishii} that produces a family of nef divisors on moduli spaces of $\theta$-stable
quiver representations.

The analogue of Theorem \ref{thm:ProjK3} for abelian surfaces has been proved in \cite{MYY2}; 
a different method to prove projectivity was established in \cite{MaciociaMeachan, Maciocia:walls}.

Our main result implies that knowing the precise location of walls in $\Stab(X)$ has immediate
applications to the geometry of the nef cone and the movable cone of the moduli spaces of stable
objects. Various authors have considered the geometry of walls in $\Stab(X)$ explicitly: first
examples were found by Arcara and Bertram in \cite{Aaron-Daniele, AB:Reider}; the case where $X$ is
an abelian surface has been studied by Minamide, Yanagida and Yoshioka in
\cite{Minamide-Yanagida-Yoshioka:wall-crossing, MYY2,
YY:abeliansurfaces}, and  by Meachan and Maciocia in \cite{MaciociaMeachan, Maciocia:walls};  Lo and
Qin studied the case of arbitrary surfaces in \cite{LoQin:miniwalls}. As an example, our Corollary
\ref{Cor:amplecone} is a straightforward combination of our main result and the main result
of Kawatani in \cite{Kawatani:Gieseker_vs_Bridgeland}; the above-mentioned authors had all
used similar methods.

\subsubsection*{Proof of projectivity} 
In \cite{Minamide-Yanagida-Yoshioka:wall-crossing,MYY2}, the authors use a beautiful observation
to reinterpret any moduli space of Bridgeland-semistable complexes on a K3 surface with
Picard rank one as a moduli space of Gieseker-semistable sheaves on a Fourier-Mukai partner $Y$ of $X$.
Our proof of Theorem \ref{thm:ProjK3} is directly based on a generalization of their idea to
K3 surfaces of arbitrary Picard rank.

\subsubsection*{Strange duality}
Our construction, and specifically Corollary \ref{Cor:amplecone} may prove useful for Le Potier's
Strange Duality Conjecture, see \cite{LePotier:StrangeDuality} and
\cite{Alina-Dragos-Yoshioka:Genericstrangeduality}. While Le Potier's construction produces line
bundles with sections, it is more difficult to show that these line bundles are nef; the Positivity
Lemma can fill this gap.

\subsubsection*{Markman's monodromy operators}
It would be very interesting to relate our picture to results by Markman on the movable cone in
\cite{Eyal:survey}. Markman proves that the closure of the movable cone is a fundamental domain for
a natural group action on the cone of big divisors. The group is generated by reflections, which
presumably correspond to walls where $\ell_{\sigma_0, \EE_\pm}$ induce divisorial contractions; we expect them to
behave similarly to the ``bouncing walls'' appearing in Sections \ref{sec:K3sheaves} and
\ref{sec:Hilbert}. The two maps $l_{\pm}$ of equation \eqref{eq:lasmap} for the two adjacent
chambers can likely be identified via the monodromy operators introduced in \cite{Eyal:monodromy,
Eyal:monodromy2}.

\subsubsection*{Variation of GIT}
The idea of varying a Bridgeland stability condition is a direct generalization of varying the polarization for Gieseker stability on sheaves.
The latter was studied for surfaces in \cite{EllingsrudGottsche:Variation, FriedmanQin:Variation,
MatsukiWenthworth:TwistedVariation}, using variation of GIT \cite{Thaddeus:GIT-flips, DolgachevHu:Variation}.
For K3 surfaces, the advantage of Bridgeland stability arises since the space of stability conditions has the correct dimension to span an open subset of the movable cone of every moduli space.

\subsubsection*{Lagrangian fibrations}
As indicated above, Theorem \ref{thm:TBHTHS} solves a special case of a 
conjecture on Lagrangian fibrations for arbitrary Hyperk\"ahler varieties. 
This appeared in print in articles
by Hassett-Tschinkel \cite{HassettTschinkel:RationalCurves}, Huybrechts \cite{GrossHuybrechtsJoyce}
and Sawon \cite{Sawon:AbelianFibred}, and had been proposed earlier (see
\cite{Verbitsky:HyperkaehlerSYZ}).
In the case of the  Hilbert scheme on a K3 surface with Picard rank one, it was
proved for $d = k^2(n-1)$ independently 
by Markushevich \cite{Markushevich:Lagrangian} and Sawon \cite{Sawon:LagrangianFibrations}, and
the case of $d = \frac 1{h^2} (n-1)$ by Kimura
and Yoshioka \cite{KY:birationalmapsVB}. Our proof is based on a Fourier-Mukai transform just as the
proofs of Sawon and Markushevich; Bridgeland stability gives a more systematic tool to control
generic preservation of stability under the transform.


\subsection*{Open questions} Theorem \ref{thm:MMP}, and in particular case \eqref{enum:flop}, does
not treat the case of ``totally semistable walls'', which is the case where there is no
$\sigma_0$-stable complex.  Proving a similar result in general would lead to further progress
towards determining the movable cone for moduli spaces of stable sheaves (for general results and
conjectures on the movable cone for an Hyperk\"ahler manifold, see
\cite{HassettTschinkel:MovingCone, HassettTschinkel:ExtremalRays}); in particular, it would likely
imply the above-mentioned conjecture on Lagrangian fibrations for any moduli space of
Gieseker-stable sheaves on a K3 surface.

We will treat this case in \cite{BM:walls}.

\subsection*{Outline of the paper}
Sections \ref{sec:Bridgeland}---\ref{sec:DivisorClassModuliSpace} treat the case of an arbitrary 
smooth projective variety $X$, while Sections \ref{sec:ReviewGieseker}---\ref{sec:Hilbert} are devoted
to the case of K3 surfaces.

Section \ref{sec:Bridgeland} is a brief review of the notion of Bridgeland stability condition, and
sections \ref{sec:MainConstr} and \ref{sec:DivisorClassModuliSpace} are devoted to the proof of 
Theorem \ref{thm:main1}. 
The key ingredient is a construction by Abramovich and Polishchuk
in \cite{Abramovich-Polishchuk:t-structures, Polishchuk:families-of-t-structures}:
Given the t-structure on $X$ associated to a Bridgeland stability condition, their construction produces a
t-structure on $\Db(S\times X)$, for any scheme $S$.  This categorical ingredient allows us to transfer the basic
positivity of the ``central charge'', see equation \eqref{eq:Zpositivity}, to the positivity of
$\ell_{\sigma,\EE}$ as a divisor.

As indicated above, the second part of the paper is devoted to the case of K3 surfaces.
Sections \ref{sec:ReviewGieseker} and \ref{sec:reviewK3} recall background about 
Gieseker-stability and Bridgeland stability conditions, respectively.

We prove Theorem \ref{thm:ProjK3} in Section \ref{sec:ProjK3}. We show projectivity of the moduli space
by identifying it with a moduli space of semistable sheaves via a Fourier-Mukai transform,
generalizing an idea in \cite{MYY2}. 
Section \ref{sec:flops} contains the proof of Theorem \ref{thm:MMP}. 
Since the moduli spaces are $K$-trivial and $\ell_{\sigma_0,\EE_\pm}$ are nef by Theorem
\ref{thm:main1}, it only remains to prove that they are big; the proof is based on Yoshioka's
description of the Beauville-Bogomolov form on the moduli space in terms of the Mukai pairing
of the K3 surface in \cite{Yoshioka:Abelian}.

The final two sections \ref{sec:K3sheaves} and \ref{sec:Hilbert} are devoted to applications to moduli
spaces of sheaves and to the Hilbert scheme, respectively.

\subsection*{Acknowledgements}
The authors would like to thank in particular Aaron Bertram and Daniel Huybrechts for many
insightful discussions related to this article; the first author would also like to thank
Alastair Craw for very useful discussions on our main construction of nef divisors in different
context. 
We are also grateful to Izzet Coskun, Alina Marian, Eyal Markman, Dragos Oprea,
Paolo Stellari, and Jenia Tevelev for comments and discussions, to Eyal Markman and K\=ota
Yoshioka for very useful comments on an earlier version of this article, and to the referee for very helpful comments which greatly improved the exposition.

This project got started while the first author was visiting the programme on moduli spaces at the
Isaac Newton Institute in Cambridge, England, and he would like to thank the institute for its warm
hospitality and stimulating environment. The collaboration continued during a visit of both authors 
to the Hausdorff Center for Mathematics, Bonn, and we would like to thank the HCM for its support.
A.~B.~ is partially supported by NSF grant DMS-1101377.
E.~M.~ is partially supported by NSF grant DMS-1001482/DMS-1160466, Hausdorff Center for Mathematics, Bonn, and by SFB/TR 45.

\subsection*{Notation and Convention} \label{subsec:notation}

For an abelian group $G$ and a field $k(=\Q,\R,\C)$, we denote by $G_k$ the $k$-vector space $G\otimes k$.

Throughout the paper, $X$ will be a smooth projective variety over the complex numbers.
For a (locally-noetherian) scheme (or algebraic space) $S$, we will use the notation $\Db(S)$ for its bounded derived category of coherent sheaves, $\mathrm{D}_{qc}(S)$ for the unbounded derived category of quasi-coherent sheaves, and $\mathrm{D}_{S\text{-}\mathrm{perf}}(S\times X)$ for the category of $S$-\emph{perfect complexes}.
(An $S$-perfect complex is a complex of $\OO_{S\times X}$-modules which locally, over $S$, is quasi-isomorphic to a bounded complex of coherent shaves which are flat over $S$.)

We will abuse notation and denote all derived functors as if they were underived.
We denote by $p_S$ and $p_X$ the two projections from $S\times X$ to $S$ and $X$, respectively.
Given $\EE\in\mathrm{D}_{qc}(S\times X)$, we denote the Fourier-Mukai functor associated to $\EE$ by
\[
\Phi_\EE (\blank):= (p_X)_*\left( \EE\otimes p_S^*(\blank)\right).
\]

We let $K_{\num}(X)$ be the numerical Grothendieck group of $X$ and denote by $\chi(-)$ (resp., $\chi(-,-)$) the Euler characteristic on $K_{\num}(X)$: for $E,F\in\Db(X)$,
\[ \chi(E)=\sum_p (-1)^p\, h^p(X,E) \quad \text{and} \quad
\chi(E,F)=\sum_p (-1)^p\, \mathrm{ext}^p(E,F).
\]

We denote by $\mathrm{NS}(X)$ the N\'eron-Severi group of $X$, and write $N^1(X):=\mathrm{NS}(X)_\R$.
The space of full numerical stability conditions on $\Db(X)$ will be denoted by $\Stab(X)$.

Given a complex $E\in\Db(X)$, we denote its cohomology sheaves by $\HH^*(E)$.
The skyscraper sheaf at a point $x\in X$ is denoted by $k(x)$.
For a complex number $z\in\mathbb{C}$, we denote its real and imaginary part by $\Re z$ and $\Im z$, respectively.

For a K3 surface $X$, we denote the Mukai vector of an object $E \in \Db(X)$ by
$v(E)$. We will often write it as $v(E) = (r, c, s)$, where
$r$ is the rank of $E$, $c \in \NS(X)$, and $s$ the degree of $v(E)$. For a spherical object
$S$ we denote the spherical twist at $S$ by $\ST_S(\blank)$, defined in
\cite{Seidel-Thomas:braid} by the exact triangle, for all $E\in\Db(X)$,
\[
\Hom^\bullet(S, E) \otimes S \to E \to \ST_S(E).
\]

\section{Review: Bridgeland stability conditions}\label{sec:Bridgeland}

In this section, we give a brief review of stability conditions on derived categories,
as introduced in \cite{Bridgeland:Stab}.

Let $X$ be a smooth projective variety, and denote by $\Db(X)$ its bounded derived category of coherent sheaves.
A \emph{full numerical stability condition} $\sigma$ on $\Db(X)$ consists of a pair $(Z,\AA)$, where
$Z\colon K_{\num}(X)\to\C$ 
is a group homomorphism (called \emph{central charge}) and $\AA\subset\Db(X)$ 
is the \emph{heart of a bounded t-structure}, satisfying the following three properties:
\begin{enumerate}
\item For any $0 \neq E\in\AA$ the central charge $Z(E)$ lies in the following semi-closed
upper half-plane:
\begin{equation} \label{eq:Zpositivity}
Z(E) \in \H := \HH \cup \R_{<0} = \R_{>0} \cdot e^{(0,1]\cdot i\pi}
\end{equation}
\suspend{enumerate}
This positivity condition is the essential ingredient for our positivity result.
One could think of it as two separate positivity conditions: $\Im Z$ defines a rank function
on the abelian category $\AA$, i.e., a non-negative function $\rk \colon \AA \to \R_{\ge 0}$
that is additive on short exact sequences.
Similarly, $-\Re Z$ defines a degree function $\deg \colon \AA \to \R$, which has the property that
$\rk(E) = 0 \Rightarrow \deg(E) > 0$. 
We can use them to define a notion of slope-stability with respect
to $Z$ on the abelian category $\AA$ via the slope $\mu(E) = \frac{\deg(E)}{\rk(E)}$.
\resume{enumerate}
\item With this notion of slope-stability, every object in $E \in \AA$ has a Harder-Narasimhan
filtration $0 = E_0 \into E_1 \into \dots \into E_n = E$ such that the $E_i/E_{i-1}$'s are $Z$-semistable,
with $\mu(E_1/E_0) > \mu(E_2/E_1) > \dots > \mu(E_n/E_{n-1})$.
\item There is a constant $C>0$ such that, for all $Z$-semistable object $E\in \AA$, we have
\begin{align*}
\lVert E \rVert \le C \lvert Z(E) \rvert,
\end{align*}
where $\lVert \ast \rVert$ is a fixed norm on $K_{\num}(X)\otimes\R$.
\end{enumerate}
The last condition was called the \emph{support property} in \cite{Kontsevich-Soibelman:stability},
and is equivalent (see \cite[Proposition B.4]{localP2})
to Bridgeland's notion of a \emph{full} stability condition.

\begin{Def} \label{def:algebraic}
A stability condition is called \emph{algebraic} if its central charge takes values in $\Q\oplus\Q
\sqrt{-1}$.
\end{Def}
As $K_{\num}(X)$ is finitely generated, for an algebraic stability condition the 
image of $Z$ is a discrete lattice in $\C$.

Given $(Z, \AA)$ as above, one can extend the notion of stability to $\Db(X)$ as follows:
for $\phi \in (0, 1]$,
we let $\PP(\phi) \subset \AA$ be the full subcategory $Z$-semistable objects
with $Z(E) \in \R_{>0} e^{i\phi\pi}$; for general $\phi$, it is defined
by the compatibility $\PP(\phi + n) = \PP(\phi)[n]$.
Each subcategory $\PP(\phi)$ is extension-closed and abelian. Its nonzero objects are called
$\sigma$-\emph{semistable} of phase $\phi$, and its simple objects are called
$\sigma$-\emph{stable}. Then each object $E \in \Db(X)$ has a 
\emph{Harder-Narasimhan filtration}, where the inclusions $E_{i-1} \subset E_i$ are replaced by
exact triangles $E_{i-1} \to E_i \to A_i$, and where the $A_i$'s are $\sigma$-semistable of decreasing phases
$\phi_i$.  The
category $\PP(\phi)$ necessarily has finite length. Hence every object in $\PP(\phi)$ has a finite
Jordan-H\"older filtration, whose filtration quotients are $\sigma$-stable objects of the phase
$\phi$.  Two objects $A,B\in\PP(\phi)$ are called $S$-\emph{equivalent} if their Jordan-H\"older
factors are the same (up to reordering).

The set of stability conditions will be denoted by $\Stab(X)$.
It has a natural metric topology (see \cite[Prop.\ 8.1]{Bridgeland:Stab} for the explicit form of
the metric). Bridgeland's main theorem is the following:

\begin{Thm}[Bridgeland] \label{thm:Bridgeland-deform}
The map
\begin{align*}
\ZZ \colon \Stab(X) \to \Hom(K_{\num}(X), \C), \qquad (Z, \AA)\mapsto Z, 
\end{align*}
is a local homeomorphism.
In particular, $\Stab(X)$ is a complex manifold of finite dimension equal to the rank of $K_{\num}(X)$.
\end{Thm}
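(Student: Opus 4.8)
The plan is to follow Bridgeland's original deformation argument, using the support property (condition (3)) as the mechanism that provides uniform control over all semistable objects at once. First I would rephrase the data $(Z,\AA)$ in the equivalent language of a \emph{slicing} $\PP$, which assigns to each $\phi \in \R$ the full additive subcategory $\PP(\phi)$ of $\sigma$-semistable objects of phase $\phi$ (as recalled before Definition \ref{def:algebraic}). Via its Harder--Narasimhan filtration, every nonzero $E \in \Db(X)$ acquires a maximal and a minimal phase $\phi^+_\sigma(E) \ge \phi^-_\sigma(E)$, and $\Stab(X)$ carries the metric topology recalled above. In this language, continuity of $\ZZ$ is immediate. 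Local injectivity is also straightforward: if $\sigma = (Z,\PP)$ and $\tau = (Z,\QQ)$ share the same central charge and are close in the metric, then for any $\sigma$-stable $E$ both slicings assign $E$ a phase whose residue modulo $2$ is pinned down by the argument of $Z(E)$; since these two phases are close, they are equal, and an induction on Harder--Narasimhan length forces $\PP = \QQ$.

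The substance of the theorem is local surjectivity: every $W \in \Hom(K_{\num}(X),\C)$ sufficiently close to $Z$ is the central charge of a (necessarily unique) stability condition $\tau$ close to $\sigma$. This is where the support property enters decisively. By condition (3) there is a constant $C > 0$ with $\lVert E \rVert \le C \abs{Z(E)}$ for every $\sigma$-semistable $E$; since $K_{\num}(X)$ has finite rank, the perturbation $W - Z$ has finite operator norm $\lVert W - Z \rVert_{\mathrm{op}}$ with respect to $\lVert \cdot \rVert$, whence
\[
\abs{(W-Z)(E)} \le \lVert W - Z \rVert_{\mathrm{op}} \cdot \lVert E \rVert \le C\,\lVert W - Z \rVert_{\mathrm{op}}\cdot \abs{Z(E)}
\]
\emph{uniformly} over all $\sigma$-semistable $E$. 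Thus, given small $\eta > 0$, once $\lVert W - Z \rVert_{\mathrm{op}} < \frac{\sin(\pi\eta)}{C}$ we obtain $\abs{(W-Z)(E)} < \sin(\pi\eta)\,\abs{Z(E)}$ for every semistable $E$ at once, so $W(E)$ lies within angle $\pi\eta$ of $Z(E)$ and in particular still satisfies the positivity \eqref{eq:Zpositivity}.

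I would then construct the deformed stability condition $\tau = (W, \QQ)$ by perturbing phases: the new slicing $\QQ(\psi)$ is assembled from $\sigma$-semistable objects whose $Z$-phase lies in a narrow window and whose $W$-phase equals $\psi$, and one checks that any $\sigma$-semistable object of phase $\phi$ decomposes into $W$-semistable pieces of phases within $(\phi - \eta, \phi + \eta)$. Refining the existing Harder--Narasimhan filtrations of arbitrary objects by these local decompositions produces Harder--Narasimhan filtrations for $(W,\QQ)$, and the uniform inequality above transfers the support property from $\sigma$ to $\tau$ with a slightly enlarged constant, so that $\tau$ is again a full stability condition with $d(\sigma,\tau) \le \eta$. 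I expect \textbf{the construction of this perturbed slicing to be the main obstacle}: one must verify that the HN property genuinely survives the perturbation --- that the refined filtrations have strictly decreasing $W$-phases and that no infinite descending chains of semistable factors appear --- and this finiteness is precisely what the support property rescues, by preventing the phases and classes of the factors from accumulating.

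Finally, combining local injectivity with local surjectivity shows that $\ZZ$ restricts to a homeomorphism from a neighborhood of $\sigma$ onto an open subset of $\Hom(K_{\num}(X),\C)$, so $\ZZ$ is a local homeomorphism. Since the target is a finite-dimensional complex vector space of dimension $\rk K_{\num}(X)$, pulling back its complex-manifold structure through the local homeomorphism endows $\Stab(X)$ with the structure of a complex manifold of the same dimension, as claimed.
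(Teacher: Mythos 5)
This is a review statement in the paper: Theorem \ref{thm:Bridgeland-deform} is quoted from Bridgeland's paper \cite{Bridgeland:Stab} (in the form made cleaner by the support property, cf.\ \cite{Kontsevich-Soibelman:stability} and \cite[Proposition B.4]{localP2}), and the paper itself gives no proof. Your outline follows exactly that cited route: continuity and local injectivity in the metric topology, then local surjectivity via Bridgeland's deformation argument, with the support property supplying the uniform bound $\abs{(W-Z)(E)} \le C\,\lVert W-Z\rVert_{\mathrm{op}}\,\abs{Z(E)}$ over all semistable $E$, which is precisely how the ``full''/support-property hypothesis is used to make the deformation theorem quantitative. Two caveats on your write-up: first, the step you flag as the main obstacle --- constructing the perturbed slicing $\QQ$, checking that the refined filtrations have strictly decreasing $W$-phases, and re-establishing the support property for $\tau$ --- is not a side issue but the entire content of Bridgeland's Theorem 7.1, so as written your proposal is an accurate roadmap rather than a complete proof; second, your local injectivity sketch is slightly imprecise, since a $\sigma$-stable object need not a priori be $\tau$-semistable even when the two slicings are close, so the actual argument (Bridgeland's Lemma 6.4) must run through the Harder--Narasimhan factors and the vanishing $\Hom(\PP(\phi_1),\PP(\phi_2))=0$ for $\phi_1>\phi_2$, rather than comparing phases of the same object directly. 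With those points repaired along the lines of the cited references, the argument is the standard one.
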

In other words, a stability condition $(Z, \AA)$ can be deformed uniquely given a small deformation
of its central charge $Z$.

Let us now fix a class $v \in K_{\num}(X)$, and consider the set of
$\sigma$-semistable objects $E \in \Db(X)$ of class $v$ as $\sigma$ varies. The proof of the
following statement is essentially contained in \cite[Section 9]{Bridgeland:K3}; see
also \cite[Proposition 3.3]{localP2} and \cite[Prop 2.8]{Toda:K3}:

\begin{Prop} \label{prop:chambers}
There exists a locally finite set of \emph{walls} (real codimension one submanifolds with boundary) 
in $\Stab(X)$, depending only on $v$, with the following properties: 
\begin{enumerate}
\item When $\sigma$ varies within a chamber, the sets of $\sigma$-semistable and
$\sigma$-stable objects of class $v$ does not change.
\item When $\sigma$ lies on a single wall $W \subset \Stab(X)$,
then there is a $\sigma$-semistable object
that is unstable in one of the adjacent chambers, and semistable in the other adjacent chamber.
\item When we restrict to an intersection of finitely many walls $W_1, \dots, W_k$, we
obtain a wall-and-chamber decomposition on $W_1 \cap \dots \cap W_k$ with the same properties,
where the walls are given by the intersections $W \cap W_1 \cap \dots \cap W_k$ for any
of the walls $W \subset \Stab(X)$ with respect to $v$.
\end{enumerate}
\end{Prop}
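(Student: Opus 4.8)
The plan is to prove the statement locally: it suffices to construct, for each $\sigma_0 \in \Stab(X)$, a neighborhood $B$ with compact closure on which there are only finitely many walls, and then to observe that these local pieces patch together into a globally locally-finite collection. The walls will be cut out by the vanishing of $\Im\bigl(Z(w)/Z(v)\bigr)$ for a finite set of classes $w$; since $\ZZ$ is a local homeomorphism by Theorem \ref{thm:Bridgeland-deform}, each such locus is the preimage of a real-analytic condition on the central charge and hence a real-codimension-one submanifold (the boundary arising from the requirement that $Z(w)$ be a \emph{positive}, rather than negative, multiple of $Z(v)$). Finiteness of the collection then yields the local finiteness of the wall-and-chamber decomposition. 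The whole argument rests on the support property, condition (3) in the definition of a stability condition, which is exactly what forces the relevant classes to be finite in number.

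The crux, which I would establish first, is a boundedness statement. Shrinking $B$ if necessary, I may assume the support-property constant $C$ can be chosen uniformly on $\overline{B}$ (fullness being preserved under small deformation) and that $\abs{Z_\sigma(v)} \le M$ there. Let $E$ be $\sigma$-semistable of class $v$ for some $\sigma \in \overline{B}$, and let $A$ be a Jordan--H\"older factor of $E$ in the phase category $\PP(\phi_\sigma(v))$. Since $A$ and $E/A$ both lie in $\PP(\phi_\sigma(v))$, their central charges lie on the ray $\R_{>0}\, e^{i\pi\phi_\sigma(v)}$ and sum to $Z_\sigma(v)$; hence $\abs{Z_\sigma(A)} \le \abs{Z_\sigma(v)} \le M$, and the support property gives $\lVert v(A) \rVert \le CM$. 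As $K_{\num}(X)$ is a finitely generated lattice, the set $\Sigma_0$ of classes $w$ with $\lVert w \rVert \le CM$ is finite. Thus for every $\sigma \in \overline{B}$, every stable factor of every $\sigma$-semistable object of class $v$ has class in the fixed finite set $\Sigma_0$. I then declare the candidate walls to be the finitely many loci $W_w := \{\sigma \in B : \Im(Z_\sigma(w)/Z_\sigma(v)) = 0\}$ for $w \in \Sigma_0$ not proportional to $v$, discarding those across which no object of class $v$ actually changes stability.

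Next I would prove constancy on chambers. Let $\CC$ be a connected component of $B \setminus \bigcup_w W_w$, and let $E$ be $\sigma$-stable of class $v$ at some $\sigma \in \CC$. The set of points of $\CC$ where $E$ is stable is open, since stability is an open condition in $\sigma$ once the potential destabilizing classes are confined to the discrete set $\Sigma_0$. It is also closed in $\CC$: if $E$ failed to be stable at some $\sigma_1 \in \CC$, then along a path approaching $\sigma_1$ a destabilizing subobject would first reach phase $\phi(E)$, and this subobject may be taken to be a stable factor, of class $w \in \Sigma_0$; but $\phi_{\sigma_1}(w) = \phi_{\sigma_1}(v)$ forces $\sigma_1 \in W_w$, contradicting $\sigma_1 \in \CC$. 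Hence $E$ is stable throughout $\CC$, and the same argument with semistability (which is closed in $\sigma$ by the boundedness above) shows the set of semistable objects is likewise constant, giving property (1). Property (2) holds by construction, as we kept only the loci across which stability genuinely changes. Property (3) follows by rerunning the identical argument after restricting $Z$, via the local homeomorphism $\ZZ$, to the submanifold $W_1 \cap \dots \cap W_k$: the conditions $\Im(Z(w)/Z(v)) = 0$ are linear in $Z$, so they cut out the induced structure there, with the same finite set $\Sigma_0$ controlling everything.

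The main obstacle is the boundedness step. Everything else is soft once one knows that only finitely many classes occur as stable factors near $\sigma_0$. The delicate points are that the support-property constant must be taken uniform over the neighborhood, and that, although a destabilizing subobject \emph{away} from a wall could a priori have arbitrarily large norm, only its behavior at the first wall-crossing is relevant, where it has the same phase as $v$ and is therefore controlled by the same-phase estimate.
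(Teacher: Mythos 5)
Your strategy coincides with the proof in the sources this paper cites for the proposition (Bridgeland's argument in Section 9 of his K3 paper, and the versions in \cite{localP2} and \cite{Toda:K3}): a uniform support constant on a compact neighborhood, the same-phase bound $|Z_\sigma(A)|\le|Z_\sigma(v)|$ for Jordan--H\"older factors producing a finite set $\Sigma_0$ of relevant classes, walls cut out by $\Im\bigl(Z_\sigma(w)/Z_\sigma(v)\bigr)=0$, and an open--closed argument inside each chamber; you also correctly isolate the crucial point that destabilizers need only be controlled at the moment of crossing. However, your chamber-constancy argument has a genuine gap, and it sits exactly in the case that distinguishes this proposition from its naive version: $v$ non-primitive, which the paper needs (for instance for Theorem \ref{thm:ProjK3}, where $v = mv_0$).

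You define walls only for $w\in\Sigma_0$ \emph{not proportional} to $v$ --- necessarily so, since $\Im(Z(w)/Z(v))\equiv 0$ when $w\in\Q\cdot v$. But then the closedness step breaks down: if the stable factor appearing at $\sigma_1$ has class $w$ proportional to $v$, the equality $\phi_{\sigma_1}(w)=\phi_{\sigma_1}(v)$ holds identically and does \emph{not} force $\sigma_1$ onto any wall, so no contradiction arises. Concretely, for $v=mv_0$ with $m\ge 2$ and $v_0$ primitive, a stable object of class $v$ could a priori become strictly semistable at $\sigma_1$ with all stable factors of classes $m_iv_0$, a degeneration invisible to your walls. (For primitive $v$ this cannot occur: an integral class proportional to $v$ that is the class of a proper same-phase subobject would exhibit $v$ as divisible, so your argument is complete in that case.) The missing step is: if all Jordan--H\"older factors of $E$ at $\sigma_1$ have class proportional to $v$, then each factor is $\sigma_1$-stable, hence stable in a neighborhood by openness of stability; since their central charges are fixed positive multiples of one another, their phases differ by even integers and agree at $\sigma_1$, hence agree on a neighborhood by continuity --- so $E$ is strictly semistable on a whole neighborhood of $\sigma_1$, contradicting the assumption that $E$ is stable arbitrarily close by. Moreover, this is not a side case: at any point lying on none of your walls, \emph{every} strictly semistable object of class $v$ has all its Jordan--H\"older factors of class proportional to $v$ (any other factor class would place the point on a wall), so the constancy of the set of \emph{semistable} objects --- which you dispose of with ``the same argument'' --- rests entirely on the case your argument omits.
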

If $v$ is primitive, then $\sigma$ lies on a wall if and only if there exists a
strictly $\sigma$-semistable object of class $v$. 
The Jordan-H\"older filtration of $\sigma$-semistable objects
does not change when $\sigma$ varies within a chamber.

\begin{Def}\label{def:generic}
Let $v\in K_{\num}(X)$.
A stability condition is called \emph{generic} with respect to $v$ if it does not lie on a wall
in the sense of Proposition \ref{prop:chambers}.
\end{Def}

We will also need the following variant of \cite[Lemma 2.9]{Toda:K3}:
\begin{Lem} \label{lem:Yukinobualgebraic}
Consider a stability condition $\sigma = (Z, \AA)$ with $Z(v) = -1$. Then there are algebraic
stability conditions $\sigma_i = (Z_i, \AA_i)$ for $i = 1, \dots, m$ nearby $\sigma$
with $Z_i(v) = -1$ such that:
\begin{enumerate}
\item For every $i$ the following statement holds: an object of class $v$ is $\sigma_i$-stable (or
$\sigma_i$-semistable) if and only if it is $\sigma$-stable (or $\sigma$-semistable, respectively).
\item The central charge $Z$ is in the convex hull of $\{Z_1, \dots, Z_n\}$.
\end{enumerate}
\end{Lem}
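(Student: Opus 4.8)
The plan is to reduce the statement to elementary convex geometry on the affine slice of central charges normalized by $W(v)=-1$, exploiting that on this slice the walls of Proposition \ref{prop:chambers} become \emph{linear} and \emph{rational}, and then to produce the $\sigma_i$ by lifting rational points via Bridgeland's deformation theorem (Theorem \ref{thm:Bridgeland-deform}).

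First I would work in the affine subspace
\[
U = \{W \in \Hom(K_{\num}(X),\C) : W(v) = -1\},
\]
which contains $Z$ and is cut out over $\Q$. By Theorem \ref{thm:Bridgeland-deform} there is a neighborhood $V$ of $\sigma$ on which $\ZZ$ restricts to a homeomorphism onto its image; hence every $W \in U$ close enough to $Z$ lifts to a unique $\sigma_W \in V$ with central charge $W$, and by Definition \ref{def:algebraic} this $\sigma_W$ is algebraic exactly when $W$ takes values in $\Q \oplus \Q\sqrt{-1}$. Since $U$ is defined over $\Q$, such rational $W$ are dense in $U$.

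The key point is that walls are linear on $U$. A numerical wall for a potential destabilizing class $w$ is the locus where $w$ and $v$ have aligned phase, i.e. $\Im\big(W(w)\overline{W(v)}\big)=0$; on $U$ we have $\overline{W(v)}=-1$, so this becomes the linear condition $\Im W(w) = 0$, which is moreover rational because $w \in K_{\num}(X)$ is integral. By local finiteness (Proposition \ref{prop:chambers}) I may shrink $V$ so that only finitely many walls meet it; let $w_1,\dots,w_k$ be the classes of those passing through $\sigma$ and put
\[
L = \{W \in U : \Im W(w_j) = 0,\ j=1,\dots,k\}.
\]
Then $L$ is a $\Q$-rational affine subspace through $Z$, and near $Z$ it coincides with the intersection of the walls through $\sigma$. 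Because the remaining walls avoid $Z$, Proposition \ref{prop:chambers}(1),(3) shows that a whole neighborhood $N$ of $Z$ inside $L$ lies in the same stratum as $\sigma$, so every point of $N$ has exactly the same $\sigma$-stable and $\sigma$-semistable objects of class $v$. This yields requirement (1), and rationality of $L$ is precisely what allows these points to be chosen algebraic, since the rational points are dense in $L$ (this would fail for a generic curved stratum, which is why the linearity matters).

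For requirement (2) I would use that $Z$ lies in the relative interior of $N$ inside the affine space $L$: choosing rational points $W_1,\dots,W_m \in N$ that span a nondegenerate simplex in $L$ containing $Z$ in its interior places $Z$ in their convex hull, and the lifts $\sigma_i := \sigma_{W_i}$ are then algebraic, lie near $\sigma$, and satisfy $Z_i(v)=W_i(v)=-1$ since $W_i \in U$ (if $\dim L = 0$ then $L=\{Z\}$ is itself a rational point, so $Z$ is already algebraic and $m=1$ suffices). The step I expect to require the most care is verifying that movement within $L$ near $Z$ genuinely preserves the stability data: I must check that after shrinking, the actual walls through $\sigma$ agree locally with their linear numerical counterparts, and that $\sigma$ sits in a chamber — not on a further sub-wall — of the decomposition induced on $W_1 \cap \dots \cap W_k$, so that Proposition \ref{prop:chambers}(3) applies cleanly and no hidden wall is crossed.
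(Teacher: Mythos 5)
Your proposal is correct and follows essentially the same route as the paper: the authors' proof likewise restricts to the affine slice $Z(v)=-1$, observes that on this slice every wall is locally a rational linear equation $\Im Z(w)=0$, and then combines density of rational points in the resulting rational stratum with Theorem \ref{thm:Bridgeland-deform} and Proposition \ref{prop:chambers} to lift nearby algebraic central charges whose convex hull contains $Z$. Your write-up simply makes explicit the details (the subspace $L$, the simplex of rational points, and the appeal to part (3) of Proposition \ref{prop:chambers}) that the paper compresses into the phrase ``the claim follows similarly.''
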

\begin{Prf}
If $v$ is generic, this follows immediately from Theorem \ref{thm:Bridgeland-deform} and Proposition
\ref{prop:chambers}, and the density of algebraic central charges
$\Hom(K_{\num}(X), \Q \oplus i\Q)$ inside the vector space $\Hom(K_{\num}(X), \C)$.
Once we restrict to the subset $Z(v) = -1$, any wall is locally defined by a
linear rational equation of the form $\Im Z(w) = 0$, where $w \in K_{\num}(X)$ is the class of a destabilizing
subobject, and thus the claim follows similarly.
\end{Prf}

\begin{Rem}\label{rmk:GroupAction}
There are two group actions on $\Stab(X)$, see \cite[Lemma 8.2]{Bridgeland:Stab}:
the group of autoequivalences $\Aut(\Db(X))$ acts on the left via
$\Phi(Z,\AA)=(Z\circ\Phi_*^{-1},\Phi(\AA))$, where $\Phi \in \Aut(\Db(X))$ and
$\Phi_*$ is the automorphism induced by $\Phi$ at the level of numerical Grothendieck groups.
We will often abuse notation and denote $\Phi_*$ by $\Phi$, when no confusion arises.
The universal cover $\widetilde\GL^+_2(\R)$ of the group 
$\GL^+_2(\R)$ of matrices with positive determinant acts on the right as a lift of the
action of $\GL^+_2(\R)$ on
$\Hom(K_{\num}(X), \C) \cong \Hom(K_{\num}(X), \R^2)$. We typically only use the action of the
subgroup $\C \subset \widetilde\GL^+_2(\R)$ given as the universal cover of $\C^* \subset \GL^+_2(\R)$: given
$z \in \C$, it acts on $(Z, \AA)$ by $Z \mapsto e^{2\pi i z}\cdot Z$, and by modifying
$\AA$ accordingly.
\end{Rem}

\section{Positivity}\label{sec:MainConstr}

In this section we prove our main result, Positivity Lemma \ref{Positivity}.

We consider any smooth projective variety $X$ with a numerical stability condition $\sigma=(Z,\AA)$
on $\Db(X)$. Let us first recall the definition of flat families, due to Bridgeland:
\begin{Def}\label{def:flatness}
Let $\AA\subset\Db(X)$ be the heart of a bounded t-structure on $\Db(X)$.
Let $S$ be an algebraic space of finite type over $\C$, and let $\EE\in\mathrm{D}_{S\text{-}\mathrm{perf}}(S\times X)$.
We say that $\EE$ is \emph{flat} with respect to $\AA$ if, for every closed point $s\in S$, the derived restriction $\EE_s$ belongs $\AA$.
\end{Def}

Let $v \in K_{\num}(X)$; we use the action of $\C$ on $\Stab(X)$ described in Remark
\ref{rmk:GroupAction} to assume $Z(v)=-1$.
We denote by $\MMM_{\sigma}(v)$ be the moduli stack of flat
families of $\sigma$-semistable objects of class $v$ and phase $1$.  Our construction in this
section gives a version of Theorem \ref{thm:main1} for the stack $\MMM_{\sigma}(v)$; we will discuss how it
extends to the coarse moduli space (when it exists) in Section \ref{sec:DivisorClassModuliSpace}.

\begin{PropDef} \label{def:basic}
Let $C \to \MMM_{\sigma}(v)$ be an integral projective curve over $\MMM_{\sigma}(v)$, with induced universal family
$\EE \in \Db(C \times X)$, and associated Fourier-Mukai transform
$\Phi_{\EE} \colon \Db(C) \to \Db(X)$. To such a $C$ we associate
a number $\LLL_\sigma.C \in \R$ by
\begin{equation} \label{eq:DEF}
\LLL_\sigma.C := \Im Z(\Phi_\EE(\OO_C)).
\end{equation}
This has the following properties:
\begin{enumerate}
\item \label{enum:family}
Modifying the universal family by tensoring with a pull-back of a line bundle
on $C$ does not modify $\LLL_\sigma.C$.
\item \label{enum:twist}
We can replace $\OO_C$ in equation \eqref{eq:DEF} by any line bundle on $C$, without 
changing $\LLL_\sigma.C$.
\end{enumerate}
\end{PropDef}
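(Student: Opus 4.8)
The plan is to notice that the two listed properties are really a single statement, and then to reduce that statement to a computation in the (numerical) Grothendieck group. Unwinding the Fourier--Mukai functor, $\Phi_\EE(M) = (p_X)_*(\EE \otimes p_C^* M)$, and since $p_C^*\OO_C = \OO_{C\times X}$ we get $\Phi_{\EE\otimes p_C^*L}(\OO_C) = (p_X)_*(\EE\otimes p_C^*L) = \Phi_\EE(L)$. Thus property \ref{enum:family}, applied to a line bundle $L$, asserts exactly the same equality as property \ref{enum:twist} applied to $L$, and everything comes down to proving
\[
\Im Z(\Phi_\EE(L)) = \Im Z(\Phi_\EE(\OO_C)) \quad \text{for every line bundle } L \text{ on } C.
\]

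Since $Z$ is a group homomorphism on $K_{\num}(X)$ and $\Phi_\EE$ is exact, the assignment $M \mapsto Z(\Phi_\EE(M))$ factors through a homomorphism $K_0(C) \to \C$; hence it is enough to control the class $[L] - [\OO_C] \in K_0(C)$. On the integral projective curve $C$ I would write $L = \OO_C(D_+ - D_-)$ with $D_\pm$ effective Cartier divisors, obtained as zero loci of sections of $L\otimes\OO_C(NH)$ and of $\OO_C(NH)$ for an ample $H$ and $N\gg 0$. The standard sequence $0 \to \OO_C \to \OO_C(D) \to \OO_D \to 0$ for an effective Cartier divisor $D$ gives $[\OO_C(D)] - [\OO_C] = [\OO_D]$, and $\OO_D$ is a finite-length sheaf, hence filtered with subquotients $k(p_i)$. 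Consequently $[L] - [\OO_C]$ is a $\Z$-linear combination of skyscraper classes $[k(p)]$, $p \in C$ a closed point.

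The punchline uses the hypothesis that $C$ maps to $\MMM_\sigma(v)$ together with the normalization $Z(v) = -1$. Indeed $\Phi_\EE(k(p)) = \EE_p$ is the derived restriction of $\EE$ to $\{p\}\times X \cong X$; by flatness (Definition \ref{def:flatness}) and the definition of $\MMM_\sigma(v)$ it is a $\sigma$-semistable object of class $v$, so $[\EE_p] = v$ and $Z(\Phi_\EE(k(p))) = Z(v) = -1 \in \R$. Therefore $\Im Z(\Phi_\EE(k(p))) = 0$ for every closed point $p$, and by additivity of $\Im Z \circ \Phi_\EE$ we conclude $\Im Z(\Phi_\EE([L]-[\OO_C])) = 0$, which is the desired equality.

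The step I would be most careful with is the reduction to point classes when $C$ is singular: one must check that every line bundle is a difference of effective Cartier divisors and that each $\OO_D$ is honestly filtered by skyscrapers. A cleaner but less elementary alternative is to work directly in $K_{\num}(C) \cong \Z^2$, where Riemann--Roch gives $[L] - [\OO_C] = \deg(L)\cdot[k(p)]$ outright; this needs only that $\Phi_\EE$ descends to numerical Grothendieck groups (it preserves the radical of the Euler pairing, its adjoints again being Fourier--Mukai functors). Either way, the crucial input is simply that all fibres $\EE_p$ share the same class $v$ with $Z(v)$ real.
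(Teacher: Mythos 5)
Your proof is correct and follows essentially the same route as the paper's: both arguments come down to the observation that $\Phi_\EE$ sends skyscraper classes $[k(c)]$ to $v$, so that tensoring by a line bundle (equivalently, replacing $\OO_C$ by a line bundle) changes the class $[\Phi_\EE(\OO_C)]$ only by integer multiples of $v$, which $\Im Z$ kills because of the normalization $Z(v)=-1$. The only difference is that you spell out the reduction of an arbitrary line bundle on a possibly singular integral curve to a difference of effective Cartier divisors filtered by skyscrapers, a step the paper compresses into ``and similarly.''
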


We will think of $\LLL_\sigma$ as a divisor class in $N^1(\MMM_{\sigma}(v))$.

\begin{Prf}
If $c \in C$, then replacing the universal family by $\EE' = \EE \otimes p^*\OO_C(c)$ effects
the Fourier-Mukai transform by
\[ [\Phi_{\EE'}(\OO_C)] = [\Phi_\EE(\OO_C)] + [\Phi_\EE(k(c))] = [\Phi_\EE(\OO_C)] + v.
\]
As $\Im Z(v) = 0$, this proves claim \eqref{enum:family}, and similarly \eqref{enum:twist}.
\end{Prf}

\begin{PosLem} \label{Positivity}
The divisor class $\LLL_\sigma$ is nef: $\LLL_\sigma.C \ge 0$.
Further, we have $\LLL_\sigma.C > 0$ if and only
if for two general closed points $c, c' \in C$, the corresponding objects $\EE_c, \EE_{c'}\in\Db(X)$ are not
$S$-equivalent.
\end{PosLem}
We first point out that by Lemma \ref{lem:Yukinobualgebraic}, we can immediately restrict to the
case where $\sigma$ is an algebraic stability condition. This implies that the heart
$\AA$ is Noetherian, by \cite[Proposition 5.0.1]{Abramovich-Polishchuk:t-structures}.

The essential ingredient in the proof is the construction and description by Abramovich and Polishchuk of a constant family of t-structures on $S \times X$ induced by $\AA$, for smooth $S$ given in \cite{Abramovich-Polishchuk:t-structures} and extended to singular $S$ in \cite{Polishchuk:families-of-t-structures}.
For any scheme $S$ of finite type over $\C$, we denote by $\AA_S$ the heart of the ``constant t-structure'' on $\Db(S \times X)$ given by \cite[Theorem 3.3.6]{Polishchuk:families-of-t-structures}.
The heart $\AA_S$ could be thought of as $\Coh S \boxtimes \AA$, since it behaves like $\AA$ with respect to $X$, and like $\Coh S$ with respect to $S$.
For example, whenever $F \in \Coh S$ and $E \in \AA$, we have $F \boxtimes E \in \AA_S$; also, $\AA_S$ is invariant under tensoring with line bundles pulled back from $S$.
It is characterized by the following statements (which paraphrase \cite[Theorem 3.3.6]{Polishchuk:families-of-t-structures}):

\begin{Thm} \label{thm:APP}
Let $\AA$ be the heart of a Noetherian bounded t-structure on $\Db(X)$. 
Denote by $\AA^{qc} \subset \mathrm{D}_{qc}(X)$ the closure of $\AA$ under infinite coproducts 
in the derived category of quasi-coherent sheaves.
\begin{enumerate}
\item \label{enum:deftstruct}
For any scheme $S$ of finite type of $\C$ there is a Noetherian bounded t-structure on $\Db(S\times X)$, whose
heart $\AA_S$ is characterized by 
\[ \EE \in \AA_S \Leftrightarrow p_* \EE |_{X \times U} \in \AA^{qc} \quad \text{for every open affine
$U \subset S$}
\]
\item \label{enum:sheaf}
The above construction defines a sheaf of t-structures over $S$: when $S = \bigcup_i U_i$
is an open covering of $S$, then $\EE \in \AA_S$ if and only if
$\EE |_{X \times U_i} \in \AA_{U_i}$ for every $i$.
\item \label{enum:Sprojective}
When $S$ is projective and $\OO_S(1)$ denotes an ample divisor, then
\[ \EE \in \AA_S \Leftrightarrow (p_X)_* (\EE \otimes p_S^* \OO_S(n)) \in \AA \quad \text{for 
all $n \gg 0$}. \]
\end{enumerate}
\end{Thm}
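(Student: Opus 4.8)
The plan is to reduce the entire statement to the affine case together with a gluing argument, and to prove the three parts in the order \eqref{enum:deftstruct} (first for affine $S$), then \eqref{enum:sheaf}, then \eqref{enum:Sprojective}. Following the strategy of Abramovich and Polishchuk \cite{Abramovich-Polishchuk:t-structures, Polishchuk:families-of-t-structures}, the necessary preliminary step is to promote the given bounded t-structure on $\Db(X)$ to one on $\mathrm{D}_{qc}(X)$ with heart $\AA^{qc}$. Here I would use that $\AA$ is Noetherian to identify $\AA^{qc}$ with the category of filtered colimits of objects of $\AA$, so that it is a locally Noetherian Grothendieck abelian category; combined with the finite amplitude of $\AA$ relative to the standard t-structure (so that $\AA \subset \langle \Coh(X)[-N], \dots, \Coh(X)[N]\rangle$), this lets me define truncation functors on $\mathrm{D}_{qc}(X)$ by closing the given truncations under coproducts, check non-degeneracy, and confirm that the extended t-structure restricts to $\AA$ on $\Db(X)$.

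For the affine case $S = \Spec R$, with $R$ a finitely generated (hence Noetherian) $\C$-algebra, the projection $p_X \colon \Spec R \times X \to X$ is affine, so $(p_X)_*$ is exact for the standard t-structures and identifies $\mathrm{D}_{qc}(S \times X)$ with the derived category of $R$-module objects in $\mathrm{D}_{qc}(X)$. I would then transport the $\AA^{qc}$-t-structure by declaring $\EE \in \mathrm{D}^{\le 0}$ (resp.\ $\mathrm{D}^{\ge 0}$) precisely when $(p_X)_*\EE$ lies in the corresponding part of the $\AA^{qc}$-t-structure. The essential point is that the $\AA^{qc}$-truncation functors are $\C$-linear and functorial, hence carry $R$-module objects to $R$-module objects (the $R$-action, being a family of endomorphisms, is preserved by any functor); this yields truncation triangles on $\mathrm{D}_{qc}(S \times X)$ and exhibits $\mathrm{D}^{\le 0}, \mathrm{D}^{\ge 0}$ as a t-structure with heart $\AA_S = \{\EE : (p_X)_*\EE \in \AA^{qc}\}$. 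I would then check that it has finite amplitude relative to the standard t-structure on $\Db(S\times X)$, so that it restricts to $\Db(S\times X)$, and that its heart is Noetherian, using the Noetherianity of both $R$ and $\AA$.

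To obtain \eqref{enum:sheaf} and the case of general finite-type $S$, I would prove that the affine construction is compatible with localization: for an inclusion $U' \subset U$ of affine opens, restriction is flat and commutes with $(p_X)_*$ and with the truncation functors, so $\EE|_{U \times X} \in \AA_U$ forces $\EE|_{U' \times X} \in \AA_{U'}$. This makes $\{\AA_U\}$ into a sheaf of hearts, and descent glues the affine-local t-structures into a global bounded Noetherian t-structure on $\Db(S\times X)$ characterized exactly as stated. For the projective reformulation \eqref{enum:Sprojective}, with $\OO_S(1)$ ample, the implication $\EE \in \AA_S \Rightarrow (p_X)_*(\EE \otimes p_S^*\OO_S(n)) \in \AA$ for $n \gg 0$ would follow from relative Serre vanishing: for $n \gg 0$ the higher direct images along $p_X$ vanish, so the pushforward is concentrated in a single degree and, being coherent on the proper variety $X$, lands in $\AA^{qc} \cap \Db(X) = \AA$. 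The converse would use that the line bundles $\OO_S(-n)$ generate $\Db(S)$, together with the extension-closedness and boundedness of the t-structure, to recover the affine-local membership condition from the family of pushforwards.

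The main obstacle is the affine case, and specifically the verification that the transported t-structure is genuinely a t-structure that restricts to $\Db(S\times X)$ with Noetherian heart. The delicate point is that $(p_X)_*$ is exact only for the \emph{standard} t-structures, whereas I am transporting the \emph{different} t-structure with heart $\AA^{qc}$; keeping the abstract truncation functors compatible with the $R$-linear geometric structure, and controlling amplitudes so that boundedness survives passage to $S \times X$, is exactly where the Noetherian hypothesis on $\AA$ is indispensable, since it guarantees both that $\AA^{qc}$ is well-behaved under coproducts and that $\AA_S$ remains Noetherian. This is the technical heart of \cite{Polishchuk:families-of-t-structures}; once the affine statement is established, the gluing in \eqref{enum:sheaf} and the projective reformulation in \eqref{enum:Sprojective} are comparatively formal.
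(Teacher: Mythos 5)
You should first note what the benchmark here is: the paper does not prove Theorem \ref{thm:APP} at all --- it states it explicitly as a paraphrase of \cite[Theorem 3.3.6]{Polishchuk:families-of-t-structures}, so your proposal has to be measured against the actual proofs in \cite{Abramovich-Polishchuk:t-structures, Polishchuk:families-of-t-structures}. Your skeleton (extend $\AA$ to a coproduct-compatible t-structure on $\mathrm{D}_{qc}(X)$ using Noetherianity, treat affine $S$ via the affine projection $p_X$, glue to get the sheaf property, then deduce the projective characterization from ampleness) is indeed their route. However, your central affine step contains a genuine gap. The identification of $\mathrm{D}_{qc}(\Spec R\times X)$ with ``$R$-module objects in $\mathrm{D}_{qc}(X)$'' is valid only at the abelian level: $\mathrm{QCoh}(\Spec R\times X)$ is the category of $R$-modules in $\mathrm{QCoh}(X)$, but an object of the \emph{derived} category $\mathrm{D}_{qc}(\Spec R\times X)$ is strictly more data than its pushforward together with a ring map $R\to\End((p_X)_*\EE)$ in the homotopy category. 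Functoriality of the truncation $\tau^{\le 0}$ therefore only equips $\tau^{\le 0}(p_X)_*\EE$ with a homotopy-category $R$-action; that is not an object of $\mathrm{D}_{qc}(\Spec R\times X)$, it does not determine one, and cones/triangles of maps of such homotopy module objects need not carry module structures. So ``transport of structure'' as you describe it never produces the truncation triangles upstairs, and this is precisely the point where your sketch would fail. What Abramovich--Polishchuk actually do is different in mechanism: having shown (via Noetherianity of $\AA$) that the truncations of the extended t-structure on $\mathrm{D}_{qc}(X)$ commute with coproducts, they \emph{generate} an aisle in $\mathrm{D}_{qc}(S\times X)$ from a small set of objects (using the theorem of Alonso Tarr\'io--Jerem\'ias L\'opez--Souto Salorio that any set of objects in the derived category of a Grothendieck abelian category generates a t-structure), and only afterwards identify the resulting aisle with $\{\EE \,:\, (p_X)_*\EE \in (\AA^{qc})^{\le 0}\}$, using that $(p_X)_*$ is conservative, t-exact for the standard structures, and commutes with coproducts for affine $p_X$. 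Your proposal is missing this existence mechanism entirely.

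A second, smaller gap sits in your treatment of part \eqref{enum:Sprojective}: the claim that relative Serre vanishing makes $(p_X)_*(\EE\otimes p_S^*\OO_S(n))$ ``concentrated in a single degree, hence in $\AA$'' conflates the standard t-structure with the exotic one. An object $\EE\in\AA_S$ is in general a genuine complex, Serre vanishing applies to its standard cohomology sheaves rather than to $\EE$ itself, and concentration in one standard degree is neither necessary nor sufficient for membership in $\AA$. Polishchuk's argument instead runs through the affine characterization \eqref{enum:deftstruct}: one covers $S$ by affines, uses that $p_*\EE|_{X\times U}\in\AA^{qc}$ on each piece, and lets ampleness intervene through a \v{C}ech-type argument (note also that the threshold $n\gg 0$ depends on $\EE$, which matters since you want a characterization, not just an implication). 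Finally, be aware that \cite{Abramovich-Polishchuk:t-structures} handles smooth $S$ and the extension to arbitrary finite-type (possibly singular) $S$ --- which the paper genuinely needs --- is the content of \cite{Polishchuk:families-of-t-structures}; your reduction to affine covers is in the right spirit but does not engage with where singularities of $S$ make the gluing and Noetherianity checks harder.
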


The following lemma is essentially \cite[Proposition 2.3.7]{Polishchuk:families-of-t-structures} (see also \cite[Corollary 3.3.3]{Abramovich-Polishchuk:t-structures} for the smooth case):

\begin{Lem}
Let $\EE \in \Db(S \times X)$ be a flat family of objects in $\AA$.
Then $\EE \in \AA_S$.
\end{Lem}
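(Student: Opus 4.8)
The plan is to prove the Lemma by controlling the cohomology objects of $\EE$ with respect to the constant t-structure and showing that the fiberwise hypothesis forces them to be concentrated in degree $0$. Since, by Lemma \ref{lem:Yukinobualgebraic} and \cite[Proposition 5.0.1]{Abramovich-Polishchuk:t-structures}, we may assume $\AA$ Noetherian, the heart $\AA_S$ of Theorem \ref{thm:APP} defines a \emph{bounded} t-structure, so $\EE$ has finitely many nonzero cohomology objects $\HH^i_{\AA_S}(\EE) \in \AA_S$, and $\EE \in \AA_S$ is equivalent to their vanishing for $i \neq 0$. By the sheaf-of-t-structures property, Theorem \ref{thm:APP}\eqref{enum:sheaf}, the statement is local on $S$; I would therefore reduce to $S$ affine and base-change freely along maps $S' \to S$ where convenient.

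The first main input is that derived base change, and in particular derived restriction $\EE \mapsto \EE_s$ to a fiber over a closed point $s \in S$, is \emph{right t-exact} for the constant t-structures: this is built into the Abramovich--Polishchuk construction, since $\AA_S$ behaves like $\Coh S$ in the $S$-direction and pullback of sheaves is right t-exact. Granting this, let $b$ be the largest index with $H := \HH^b_{\AA_S}(\EE) \neq 0$. The truncation triangle $\tau^{<b}\EE \to \EE \to H[-b]$ restricts over $s$ to a triangle in $\Db(X)$; right t-exactness places $(\tau^{<b}\EE)_s$ in $\AA^{\le b-1}$, so taking $\AA$-cohomology in degree $b$ identifies $\HH^b_\AA(\EE_s)$ with the right-exact ($\HH^0$) restriction of $H$ to the fiber. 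Since $\EE_s \in \AA$ by flatness, this restriction vanishes for every $s$ whenever $b > 0$. The second input is a Nakayama/support statement—again part of the $\Coh S$-like behaviour of $\AA_S$—that a nonzero object of $\AA_S$ has a nonzero fiber somewhere; this forces $b \le 0$, i.e.\ $\HH^{>0}_{\AA_S}(\EE) = 0$.

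The genuinely delicate point, which I expect to be the main obstacle, is the lower bound $\HH^{<0}_{\AA_S}(\EE) = 0$: here right t-exactness of restriction gives no information, exactly as for the classical statement that a perfect complex with all fibers concentrated in degree zero is a flat sheaf. For this I would use that $\EE$ is $S$-perfect (Definition \ref{def:flatness}), hence of finite Tor-amplitude over $S$, and argue by Noetherian induction on $S$: over the generic points the established upper bound, finite Tor-amplitude, and the local criterion of flatness force $\EE$ into $\AA_S$ generically, and one then propagates this to all of $S$. This is precisely the content of \cite[Proposition 2.3.7]{Polishchuk:families-of-t-structures} (cf.\ \cite[Corollary 3.3.3]{Abramovich-Polishchuk:t-structures} in the smooth case), so in the write-up I would isolate the two t-exactness and support inputs above and then invoke that proposition for the lower bound, rather than reproving the flatness criterion from scratch.
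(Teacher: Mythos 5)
Your proposal has a genuine gap at the step where you dispose of the lower bound by citing \cite[Proposition 2.3.7]{Polishchuk:families-of-t-structures}. As that proposition is used here (and as its proof requires), it takes as \emph{input} the membership $\EE_T\in\AA_T$ for zero-dimensional \emph{locally complete intersection} subschemes $T\subset S$, and returns membership $\EE_U\in\AA_U$ on an open cover; it does not accept the hypothesis you actually have, namely that the derived restrictions $\EE_s$ to \emph{reduced} closed points lie in $\AA$. On a smooth $S$ a reduced closed point is itself an lci subscheme, which is exactly why the smooth case (\cite[Corollary 3.3.3]{Abramovich-Polishchuk:t-structures}) follows at once; but the Lemma is needed for singular $S$ (the paper applies it to arbitrary integral projective curves mapping to moduli stacks), where a closed point need not be lci in $S$ and must be thickened to a non-reduced zero-dimensional lci subscheme $T$. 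Passing from $\EE_s\in\AA$ to $\EE_T\in\AA_T$ for such a fat point is precisely the content a proof must supply, and your proposal never supplies it. Your fallback sketch (Noetherian induction plus ``the local criterion of flatness'') does not repair this: the classical criterion rests on minimal free resolutions and Nakayama in $\Coh$, for which there is no analogue over an arbitrary Noetherian heart $\AA$; producing a substitute is the whole difficulty.

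The paper fills exactly this gap before ever invoking Proposition 2.3.7: it first proves the Lemma when $S$ is zero-dimensional of finite length $l$ with unique closed point $s$, by filtering $\OO_S$ with successive quotients $k(s)$, tensoring with $\EE$ to obtain a tower $0=G_0\to G_1\to\cdots\to G_l=\EE$ whose cones are isomorphic to (the pushforward of) $\EE_s\in\AA$, deducing $(p_X)_*G_i\in\AA$ by induction, and concluding $\EE\in\AA_S$ by the projective characterization in part \eqref{enum:Sprojective} of Theorem \ref{thm:APP}; only then does it apply Proposition 2.3.7 to the fat lci points and conclude by the sheaf property \eqref{enum:sheaf}. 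By contrast, your upper-bound argument (right t-exactness of derived restriction plus a Nakayama-type support statement for $\AA_S$) rests on two inputs that you assert are ``built into'' the Abramovich--Polishchuk construction but which are not stated in the cited references and would themselves require proof; and even granting them, your argument never reaches the hypotheses of Proposition 2.3.7, so the proof does not close.
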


\begin{Prf}
We first claim the statement when $S$ is a zero-dimensional scheme of finite length $l>0$,
with a unique closed point $s \in S$.
Choose a filtration
\[
0=F_0\subset F_1\subset\ldots \subset F_l=\OO_S,
\]
of the structure sheaf in $\Coh S$ with $F_i/F_{i-1}\cong k(s)$ for all $i$.
After pull-back to $S \times X$ and tensoring with $\EE$ 
we get a sequence of morphisms in $\Db(S\times X)$
\[
0=G_0\to G_1\to \ldots \to G_l=\EE,
\]
such that $\mathrm{cone}(G_{i-1}\to G_i)\cong \EE_s$ for all $i$.
By induction on $i$ we obtain $(p_X)_*G_i\in\AA$; then part \eqref{enum:Sprojective} of Theorem
\ref{thm:APP} implies $\EE\in\AA_S$.

For general $S$, any closed point $s \in S$ is contained in a local zero-dimensional subscheme $T \subset S$
that is a local complete intersection in $S$.
The previous case shows $\EE_T \in \AA_T$, and by \cite[Proposition 2.3.7]{Polishchuk:families-of-t-structures} we can cover $S$ by open sets $U$ with $\EE_U \in \AA_U$.
By the sheaf property \eqref{enum:sheaf}, this shows $\EE \in \AA_S$.
\end{Prf}

\begin{Lem} \label{lem:OCninAA}
Let $C$ be an integral projective curve, and $\EE \in D^b(C \times X)$ be a family of $\sigma$-semistable
objects in $\PP(1)$. Then, there exists $n_0>0$ such that
\[
\Phi_\EE \left(L \right) \in \AA,
\]
for all line bundles $L$ on $C$ with degree $\deg(L)\geq n_0$.
\end{Lem}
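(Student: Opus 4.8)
The plan is to exploit the characterization of $\AA_S$ in Theorem~\ref{thm:APP}\eqref{enum:Sprojective} together with the previous Lemma (that a flat family $\EE$ of objects in $\AA$ lies in $\AA_S$). Since $C$ is projective and $\EE$ is a flat family of $\sigma$-semistable objects in $\PP(1) \subset \AA$, the previous Lemma gives $\EE \in \AA_C$. Now I would apply part~\eqref{enum:Sprojective} of Theorem~\ref{thm:APP} with $S = C$ and $\OO_C(1)$ an ample line bundle on $C$: this says precisely that
\[
(p_X)_*\bigl(\EE \otimes p_C^* \OO_C(n)\bigr) \in \AA \quad \text{for all } n \gg 0.
\]
But by definition of the Fourier--Mukai functor, $(p_X)_*(\EE \otimes p_C^*\OO_C(n)) = \Phi_\EE(\OO_C(n))$. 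So for $n \gg 0$ we obtain $\Phi_\EE(\OO_C(n)) \in \AA$, which is already the statement for the particular line bundles $\OO_C(n)$.

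\emph{The main obstacle} is upgrading this from the single sequence $\OO_C(n)$ to \emph{arbitrary} line bundles $L$ of sufficiently large degree, since $\Pic(C)$ need not be generated by a single ample class and can have a nontrivial continuous (Jacobian) part. The key observation here is that the conclusion $\Phi_\EE(L) \in \AA$ depends only on the \emph{degree} of $L$, not on $L$ itself, because any two line bundles of the same degree on the integral projective curve $C$ differ by a degree-zero line bundle, and the induced family of objects varies within the closed locus of $\sigma$-semistable objects of phase $1$. More concretely, I would argue as follows: tensoring $\EE$ by $p_C^* M$ for a line bundle $M$ on $C$ does not change membership in $\AA_C$ (as noted in the excerpt, $\AA_S$ is invariant under tensoring with line bundles pulled back from $S$). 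Thus $\EE \otimes p_C^* M \in \AA_C$ for every $M \in \Pic(C)$, and applying Theorem~\ref{thm:APP}\eqref{enum:Sprojective} again yields $\Phi_{\EE \otimes p_C^* M}(\OO_C(n)) = \Phi_\EE(M(n)) \in \AA$ for $n \gg 0$.

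The remaining point is to extract a \emph{uniform} bound $n_0$ independent of the individual line bundle. For this I would use that the degree function $\deg \colon \AA \to \R$ associated to $-\Re Z$ is bounded: for a family of semistable objects of fixed class $v$ and fixed phase, the relevant central charges $Z(\Phi_\EE(L))$ depend on $\deg(L)$ in a controlled (essentially affine-linear) way via the computation in Proposition and Definition~\ref{def:basic}, namely $[\Phi_\EE(L)]$ changes by $\deg(L) \cdot v$ as one varies the degree. Since being in $\PP(1)$ (equivalently, having the appropriate sign of $\Im Z$ and lying in $\AA$) is an open/closed condition that stabilizes once the degree is large enough, the value $n_0$ can be chosen to depend only on the class $v$, the stability condition $\sigma$, and the family $\EE$, and not on which specific line bundle of degree $\ge n_0$ is chosen. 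Putting these together gives $\Phi_\EE(L) \in \AA$ for every line bundle $L$ with $\deg(L) \ge n_0$, as claimed.
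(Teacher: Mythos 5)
Your first half coincides exactly with the paper's proof: the previous lemma gives $\EE \in \AA_C$, and Theorem \ref{thm:APP}\eqref{enum:Sprojective} then produces an $m_0$ with $\Phi_\EE(\OO_C(n)) \in \AA$ for all $n \ge m_0$. The problem is the second half. Your reduction via $\EE \otimes p_C^* M \in \AA_C$ only shows that for each \emph{fixed} $M \in \Pic(C)$ there is a bound $n_0(M)$ (coming from another application of Theorem \ref{thm:APP}\eqref{enum:Sprojective}, whose implicit bound depends on the object, hence on $M$) with $\Phi_\EE(M(n)) \in \AA$ for $n \ge n_0(M)$. Since $\Pic^0(C)$ is positive-dimensional (and not even proper when $C$ is singular), you cannot take a maximum over $M$, so this does not yield a uniform $n_0$. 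Your proposed fix for uniformity is not a proof: the observation that $[\Phi_\EE(L)] = [\Phi_\EE(\OO_C)] + \deg(L)\cdot v$ controls only the \emph{numerical class}, hence the central charge, of $\Phi_\EE(L)$, but membership in the heart $\AA$ is not a numerical condition --- two objects with the same class in $K_{\num}(X)$ need not both lie in $\AA$. The assertion that being in $\PP(1)$ ``is an open/closed condition that stabilizes once the degree is large enough'' is exactly the statement to be proved, and no argument is given for it.

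The paper closes this gap with a short geometric device that you are missing: choose $n_0$ large enough that every line bundle $L$ with $\deg(L) \ge n_0$ satisfies $H^0(C, L(-m_0)) \neq 0$ (possible by Riemann--Roch on the integral curve $C$, with $n_0$ depending only on $m_0$ and the arithmetic genus). A nonzero section gives a short exact sequence
\[
0 \to \OO_C(m_0) \to L \to T \to 0
\]
with $T$ of zero-dimensional support. Applying $\Phi_\EE$ yields an exact triangle whose outer terms lie in $\AA$: the first because $n = m_0$ is admissible, and $\Phi_\EE(T)$ because $T$ is a successive extension of skyscrapers $k(c)$, each of which maps to $\EE_c \in \PP(1) \subset \AA$, and $\AA$ is extension-closed. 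Extension-closedness of $\AA$ then forces $\Phi_\EE(L) \in \AA$, with a bound $n_0$ that is manifestly uniform in $L$. This uniformity is the actual content of the lemma, since it is later applied (in Lemma \ref{lem:JH-in-families}) to line bundles such as $\LL_0(c_1 + \dots + c_{r+1})$ that are not powers of a fixed polarization.
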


\begin{Prf}
By the previous lemma, we have $\EE \in \AA_C$.
Fix an ample divisor $\OO_C(1)$ on $C$.
By the statement \eqref{enum:Sprojective} of Theorem \ref{thm:APP}, there exists $m_0>0$ such that
\[
\Phi_\EE \left(\OO_C(n)\right) = (p_X)_* (\EE \otimes p_C^* \OO_C(n))\in\AA,
\]
for all $n\geq m_0$.
Fix $n_0>0$ such that, for a line bundle $L$ with $\deg(L)\geq n_0$, we have $H^0(C,L(-m_0))\neq0$.
Then, consider the exact sequence
\[
0\to \OO(m_0)\to L \to T \to 0,
\]
where $T$ has zero-dimensional support.
By applying $\Phi_\EE$ we get our claim.
\end{Prf}

Lemma \ref{lem:OCninAA} directly implies the first claim of Positivity Lemma \ref{Positivity}:
For a curve $C \to \MMM_{\sigma}(v)$ with universal family $\EE \in \Db(X \times C)$, we have
\[
\LLL_\sigma.C = \Im Z \left(\Phi_\EE(\OO_C)\right) = \Im Z \left(\Phi_\EE (\OO_C(n))\right) \ge 0,
\]
by the basic positivity property of the central charge $Z$ in equation
\eqref{eq:Zpositivity}.

It remains to prove the second claim.

\begin{Lem} \label{lem:finiteJH} 
Let $\EE \in \Db(S \times X)$ be a flat family of semistable objects in $\AA$ over an irreducible
scheme $S$ of finite type over $\C$. Assume that the union of all Jordan-H\"older factors of
$\EE_s$ over all closed points $s \in S$ is finite. Then all the objects $\EE_s$ are $S$-equivalent
to each other, and we can choose a Jordan-H\"older filtration for every $\EE_s$ such that the order
of their stable filtration quotients does not depend on $s$.
\end{Lem}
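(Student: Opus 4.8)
The plan is to reduce the whole statement to the constancy of the multiplicities of the stable Jordan--Hölder factors, and then to extract that constancy from the positivity of $Z$ together with a single semicontinuity input. Let $A_1,\dots,A_k$ be the finite set of stable objects occurring as JH factors of some $\EE_s$; all lie in $\PP(1)$. For $E\in\PP(1)$ with factors among the $A_i$, write $m_i(E)\in\Z_{\ge 0}$ for the multiplicity of $A_i$ in a JH filtration of $E$; by Jordan--Hölder this is well defined, and the $S$-equivalence class of $E$ is precisely the vector $(m_1(E),\dots,m_k(E))$. Thus the first assertion reduces to showing that each $s\mapsto m_i(\EE_s)$ is constant on $S$. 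Since every $A_i$ has phase $1$ we have $Z([A_i])=-r_i$ with $r_i\in\R_{>0}$, and because $[\EE_s]=v$ with $Z(v)=-1$ one gets
\[
\sum_{i=1}^k m_i(\EE_s)\,r_i = -Z(v) = 1 \qquad\text{for every } s .
\]
This is the crucial rigidity: a weighted sum of the multiplicities, with strictly positive weights $r_i$, is independent of $s$ (and in particular $m_i(\EE_s)\le r_i^{-1}$, so all fibers have length $\le N$ for a fixed $N$).

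Granting for the moment that each $s\mapsto m_i(\EE_s)$ is upper semicontinuous, constancy is immediate. As $S$ is irreducible, the finite intersection of the loci where the $m_i$ attain their minimum is a nonempty dense open $U\subseteq S$; set $m_i^0:=m_i|_U$. Upper semicontinuity gives $m_i(\EE_s)\ge m_i^0$ for every closed $s\in S$, while $\sum_i m_i(\EE_s)\,r_i = 1 = \sum_i m_i^0\,r_i$ together with $r_i>0$ forces $m_i(\EE_s)=m_i^0$ for all $i$ and all $s$. Hence the multiplicity vector is constant and all the $\EE_s$ are $S$-equivalent.

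It remains to prove that $m_i$ is upper semicontinuous, and this is where the main work lies. The point is that $m_i(E)$ cannot be read off from $\dim\Hom(A_i,E)$, which only detects the $A_i$-socle; one must instead realize $m_i$ as a genuine $\Hom$-dimension into $\EE_s$. Working inside the finite-length abelian subcategory $\CC\subset\PP(1)$ generated by $A_1,\dots,A_k$, I would construct, as an iterated extension within $\CC$ (hence as an honest object $P_i\in\Db(X)$), a ``projective up to length $N$'': a finite-length object $P_i$ with $\dim\Hom(P_i,E)=m_i(E)$ for every $E\in\CC$ of length $\le N$. Since $P_i$ is then a fixed object of $\Db(X)$, the complex $(p_S)_*\RlHom(p_X^*P_i,\EE)\in\Db(S)$ is perfect, and the semicontinuity theorem yields upper semicontinuity of $s\mapsto\dim\Hom(P_i,\EE_s)=m_i(\EE_s)$. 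The delicate point, and the main obstacle, is the construction of $P_i$: because the $A_i$ all share the phase $1$, the $\Ext^1$-quiver between them may contain oriented cycles, so $\CC$ need not possess honest projective covers; one therefore truncates at length $N$ and checks that $\Hom(P_i,-)$ computes the multiplicity on objects of bounded length. (Alternatively, one can induct on length, peeling off the $A_i$-socle over the dense open where $\dim\Hom(A_i,\EE_s)$ is constant — hence flat — and reducing to a shorter family, at the cost of first shrinking $S$.)

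For the ordering statement, fix the now-constant multiplicity vector and choose a JH filtration of the fiber over the generic point, giving an order $A_{\iota(1)},\dots,A_{\iota(n)}$ of the stable factors. This filtration spreads out, over a dense open $U\subseteq S$, to a filtration of $\EE$ by flat subfamilies in $\AA_U$ whose successive quotients are flat families of the stable objects $A_{\iota(j)}$ (necessarily constant, since only finitely many stable objects occur). Taking flat limits of the terms of this filtration along a trait through any $s_1\notin U$ produces a filtration of $\EE_{s_1}$ with semistable quotients of the prescribed classes; refining each quotient and invoking the already-established constancy of the multiplicities, each limiting quotient must again be the single stable object $A_{\iota(j)}$. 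Thus every $\EE_s$ admits a JH filtration whose stable quotients appear in the one fixed order $\iota$, as required.
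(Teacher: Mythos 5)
Your proposal splits into two halves that deserve different verdicts. For the first claim ($S$-equivalence) you take a genuinely different route from the paper, and it works. The paper's own proof is a short top-down induction: among the finitely many stable objects, some $F$ occurs as the \emph{final} Jordan-H\"older quotient of $\EE_s$ for a dense set of $s$ (irreducibility of $S$), upper semicontinuity of $s\mapsto\dim\Hom(\EE_s,F)$ then yields a necessarily surjective map $\EE_s\onto F$ for \emph{every} $s$, and induction on the length of a fixed fiber gives $S$-equivalence and the common ordering simultaneously. Your replacement---constancy of the multiplicity vector via the identity $\sum_i m_i(\EE_s)\,r_i=\abs{Z(v)}$ with $r_i>0$, combined with upper semicontinuity of each $m_i$---is sound, and the construction you flag as the main obstacle does go through: the iterated universal extensions $0\to\bigoplus_j \Ext^1(P^{(n)},A_j)^*\otimes A_j\to P^{(n+1)}\to P^{(n)}\to 0$ satisfy, by a double induction, that $\Ext^1(P^{(n)},E')\to\Ext^1(P^{(n+\ell)},E')$ vanishes for $E'$ of length $\ell$, whence $\dim\Hom(P^{(N)},E)=m_i(E)$ for all $E$ of length at most $N$; since $P^{(N)}$ is a fixed perfect complex and $\EE$ is $S$-perfect, semicontinuity applies. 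What your route buys is a statement the paper does not give (semicontinuity of multiplicities in an arbitrary flat family, with irreducibility used only at the very end), and it sidesteps the one delicate point of the paper's argument, namely that ``the kernels'' of $\EE_s\onto F$ do not literally form a flat family over $S$. One warning: your parenthetical alternative (socle-peeling over dense opens) is \emph{not} a substitute, since it yields only generic constancy and no inequality $m_i(\EE_s)\ge m_i^0$ at special points, which is exactly what the weighted-sum argument consumes.

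The genuine gap is in the ordering statement. Setting aside that a ``JH filtration of the fiber over the generic point'' is undefined in this setting (the stability condition lives on $\Db(X)$ over $\C$; to produce a filtration of $\EE|_U$ with constant stable quotients over a dense open you should instead peel off a common stable subobject using semicontinuity, as in the paper's Lemma \ref{lem:JH-in-families}), the decisive step fails as justified: your counting argument shows that each limiting quotient $Q_j$ at a point $s_1\notin U$ is stable and that the \emph{multiset} $\{Q_j\}$ equals $\{A_{\iota(j)}\}$, but it does not show $Q_j\cong A_{\iota(j)}$ \emph{in order}. If two distinct stable factors share the same numerical class---which nothing excludes---the limiting filtration could realize a class-preserving permutation of the generic order, and then no common ordering has been exhibited. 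The repair is a rigidity argument over the trait: extend the generic isomorphism between the constant family with fiber $A_{\iota(j)}$ and $\overline{\Gamma}_j/\overline{\Gamma}_{j-1}$ to a morphism of families after clearing a power of the uniformizer, so that it is nonzero on the special fiber (this is \cite[Proposition 2.2.3]{Lieblich:mother-of-all}, which the paper invokes for the same purpose), and then use that a nonzero map between stable objects of the same phase and class is an isomorphism. You also implicitly need the Abramovich--Polishchuk machinery to know that the filtration terms admit flat extensions over the trait with flat quotients. With these supplements your argument closes; as written, the second half is incomplete.
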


\begin{Prf}
If we choose a Jordan-H\"older filtration of $\EE_s$ for every closed point $s$, then there will
be a stable object $F \in \AA$ that appears as the final quotient $\EE_s \onto F$ of the filtration
for infinitely many $s \in S$. In particular, $\Hom(\EE_s, F)$ is non-zero for infinitely many
$s \in S$; by semi-continuity, this implies that for \emph{every} $s \in S$ there is a (necessarily
surjective) morphism $\EE_s \onto F$ in $\AA$. The same argument applied to the kernel
of $\EE_s \onto F$ implies the claim by induction on the length of $\EE_{s_0}$ for a fixed chosen
point $s_0 \in S$.
\end{Prf}

\begin{Lem} \label{lem:simple}
Let $F \in \AA$ be a simple object. Then any subobject of $p_X^* F$ in
$\AA_S$ is of the form $I \boxtimes F$ for some ideal sheaf $I \subset \OO_S$ on $S$.
\end{Lem}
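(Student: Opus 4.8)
The plan is to work over an affine base $S=\Spec R$ (with $R$ Noetherian, as guaranteed once we restrict to an algebraic stability condition) and to read off the ideal from the fact that, for a simple object, the constant family $p_X^*F$ has endomorphism ring $R$. Reduction to the affine case is harmless: since $\AA_S$ is a sheaf of t-structures over $S$ by the sheaf property \eqref{enum:sheaf} of Theorem \ref{thm:APP}, and the ideal will be defined intrinsically, it suffices to construct it on an affine cover and glue. The key computation is $\End_{\AA_S}(p_X^*F)=R$. Indeed $\Hom$ in the heart $\AA_S$ agrees with $\Hom$ in $\Db(S\times X)$, and by the adjunction $p_X^*\dashv (p_X)_*$, the projection formula, and compactness of $F$ on the smooth projective variety $X$, one gets $\Hom_{\Db(S\times X)}(p_X^*F,p_X^*F)=\Hom_{\Db(X)}(F,F\otimes_\C R)=\End_\AA(F)\otimes_\C R$, which equals $R$ because simplicity of $F$ forces $\End_\AA(F)=\C$. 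Under this identification $r\in R$ corresponds to multiplication $m_r$ by $p_S^*r$, whose image is $(r)\boxtimes F$.

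Next, because $\AA_S$ is $R$-linear, every subobject $\GG\into p_X^*F$ is stable under the $R$-action, and composing any $\phi\in\Hom_{\AA_S}(p_X^*F,\GG)$ with the inclusion $\GG\into p_X^*F$ lands in $\End_{\AA_S}(p_X^*F)=R$. Since the inclusion is monic, this gives an injection $\Hom_{\AA_S}(p_X^*F,\GG)\into R$ of $R$-modules, whose image is an ideal $I\subseteq R$; unwinding definitions (using $\im m_r=(r)\boxtimes F$) shows $I=\{\,r\in R:(r)\boxtimes F\subseteq\GG\,\}$, and in particular $I\boxtimes F\subseteq\GG$. I would take $I$ (sheafified over the cover) as the desired ideal sheaf.

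It remains to prove the reverse inclusion, i.e. that $T:=\GG/(I\boxtimes F)$ vanishes. A short check with the preimage of subobjects shows that, over $\Spec(R/I)$, the subobject $T\into p_X^*F$ again satisfies $\Hom_{\AA_{\Spec(R/I)}}(p_X^*F,T)=0$, so the whole statement reduces to the claim: \emph{a subobject $T\subseteq p_X^*F$ in $\AA_S$ with $\Hom_{\AA_S}(p_X^*F,T)=0$ is zero.} To prove this I would argue by contradiction: if $T\neq0$, pick a point $\mathfrak p$ minimal in $\mathrm{Supp}(T)$. Then the localization $T_{\mathfrak p}$ is a nonzero object of $\AA_{\Spec R_{\mathfrak p}}$ supported at the closed point, hence of finite length, and by simplicity of $F$ all its simple subquotients are isomorphic to $F_K:=\kappa(\mathfrak p)\boxtimes F$. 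Its socle therefore gives an inclusion $F_K\into T_{\mathfrak p}$; composing with the surjection $p_X^*F\onto F_K$ coming from $R_{\mathfrak p}\onto\kappa(\mathfrak p)$ produces a nonzero map $p_X^*F\to T_{\mathfrak p}$, i.e. a nonzero element of $\Hom_{\AA_S}(p_X^*F,T)_{\mathfrak p}$. This contradicts $\Hom_{\AA_S}(p_X^*F,T)=0$, so $T=0$ and $\GG=I\boxtimes F$; the local ideals glue to a global ideal sheaf.

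The formal category theory above (the endomorphism computation, $R$-linearity, and the construction of $I$) is routine; the genuine obstacle is the crux claim, and underlying it the structural facts about Polishchuk's constant family $\AA_S$ that I am invoking: that $\AA_S$ is compatible with localization on $S$ (so that $\Hom$-sheaves localize and restriction to $\Spec R_{\mathfrak p}$ is exact), that it carries a sensible support theory detecting nonvanishing, and that the simplicity of $F$ together with $\End_\AA(F)=\C$ survives base change to residue fields so that $F_K$ remains simple. These are precisely the ``$\AA_S$ behaves like $\Coh S$ along $S$'' properties established in \cite{Polishchuk:families-of-t-structures}, and verifying that the finite-length/socle argument can be run inside $\AA_{\Spec R_{\mathfrak p}}$ is the delicate point I would need to justify carefully.
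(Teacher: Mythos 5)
Your first two paragraphs are sound, and they take a genuinely different route from the paper: you stay inside $\AA_S$, compute $\End_{\AA_S}(p_X^*F)=R$ by adjunction and compactness, and extract the ideal $I=\{r\in R:(r)\boxtimes F\subseteq\GG\}$; the reduction to your crux claim (compose a map $(R/I)\boxtimes F\to T$ with the monomorphism $T\into(R/I)\boxtimes F$, identify the composite with some $m_{\bar r}$, and use the definition of $I$ to force $\bar r=0$) also checks out, granted the exactness of $M\mapsto M\boxtimes F$, which holds because $p_X^*F$ is flat over $S$. The genuine gap is the crux claim itself. Your argument for it invokes localization of $\AA_S$ at a non-closed point $\mathfrak{p}$, a support and d\'evissage theory inside $\AA_S$, finite length of objects supported over the closed point, socles, and simplicity of $\kappa(\mathfrak{p})\boxtimes F$ --- none of which is available. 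Concretely: Theorem \ref{thm:APP} (Polishchuk's constant t-structure) is stated only for schemes of finite type over $\C$, and neither $\Spec R_{\mathfrak{p}}$ nor $\Spec\kappa(\mathfrak{p})$ is of finite type, so the hearts $\AA_{\Spec R_{\mathfrak{p}}}$ you want to run the argument in are not even defined by the cited results; $\kappa(\mathfrak{p})$ is in general a transcendental extension of $\C$, so simplicity of $F_K$ after base change is a genuine assertion requiring proof, not a formality; and there is no a priori reason that an object of the heart supported over the closed point has finite length, since $\AA$ (hence $\AA_S$) is only Noetherian, not Artinian. So the ``delicate point'' you flag at the end is a real hole, and closing it along the lines you sketch would require rebuilding a substantial amount of theory.

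The missing idea --- which is exactly how the paper proves the lemma --- is to use part \eqref{enum:deftstruct} of Theorem \ref{thm:APP} and push down to $X$: for affine $S$ the functor $(p_X)_*$ is exact and conservative (hence faithful) and carries $\AA_S$ into $\AA^{qc}$, and it sends $p_X^*F$ to $\OO_S(S)\otimes_\C F$, a coproduct of copies of the simple object $F$. A subobject of such a coproduct compatible with the $R$-module structure (compatibility is \cite[Proposition 3.3.7]{Polishchuk:families-of-t-structures}) is necessarily of the form $I\otimes_\C F$, since every nonzero subobject meets some finite sub-coproduct $V\otimes_\C F$, whose subobjects are of the form $W\otimes_\C F$ by simplicity of $F$ and $\End(F)=\C$; faithfulness of $(p_X)_*$ then transports the answer back to $\GG=I\boxtimes F$. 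In particular, your crux claim follows in one line from this point of view: $\Hom_{\AA_S}(p_X^*F,T)\cong\Hom\bigl(F,(p_X)_*T\bigr)$ by adjunction, and a nonzero subobject of a coproduct of copies of $F$ always receives a nonzero map from $F$. With that substitution your architecture does work, but it then reproduces the paper's proof with extra steps.
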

\begin{Prf}
By the sheaf property of $\AA_S$, it is sufficient to treat the case where
$S$ is affine. By the characterization \eqref{enum:deftstruct} in Theorem
\ref{thm:APP} of $\AA_S$, a subobject $G \subset p_X^* F$ in $\AA_S$ gives a subobject
$(p_X)_* G$ of $(p_X)_* p_X^* F = \OO_S \otimes_\C F$ in $\AA^{qc}$ that is compatible
with the $\OO_S$-module structure (see also \cite[Proposition 3.3.7]{Polishchuk:families-of-t-structures}).
Since $F$ is simple, such a subobject must be of
the form $I \otimes_\C F$ for some ideal $I \subset \OO_S$.
\end{Prf}

\begin{Lem} \label{lem:JH-in-families}
Let $\EE \in \Db(C \times X)$ be a family of semistable objects over an integral
curve $C$. Assume that for general $c, c' \in C$, the objects
$\EE_c, \EE_{c'}$ are $S$-equivalent to each other.
Then there exist line bundles $\LL_1, \dots, \LL_n $ 
on $C$ and a filtration
\[
0=\Gamma_0\subset\Gamma_1\subset\ldots\subset\Gamma_n=\EE
\]
in $\AA_S$ such that, for all $i=1,\ldots,n$,
\[
\Gamma_i/\Gamma_{i-1} \cong F_i\boxtimes\LL_i
\]
and such the restrictions of the $\Gamma_i$ to the fibers $\{c\} \times X$ induces the
Jordan-H\"older filtration of $\EE_c$ for all but finitely many $c \in C$.
\end{Lem}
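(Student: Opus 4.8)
The plan is to construct the filtration inductively, peeling off one stable Jordan--Hölder factor at a time, by globalizing over $C$ the surjection onto the final stable quotient that we obtained pointwise in Lemma \ref{lem:finiteJH}. First I would invoke Lemma \ref{lem:finiteJH}: since by assumption the generic fibers $\EE_c$ are all $S$-equivalent, their Jordan--Hölder factors form a finite set, so all the $\EE_c$ are $S$-equivalent and we may fix a single stable object $F = F_n \in \AA$ that occurs as the final quotient $\EE_c \onto F_n$ for all $c$ in a dense open subset of $C$. The key idea is to promote this family of pointwise surjections into a single morphism in $\AA_C$ of the form $\EE \to F_n \boxtimes \LL_n$ for a suitable line bundle $\LL_n$ on $C$, and then to show its kernel $\Gamma_{n-1}$ is again a flat family of semistable objects whose generic fibers are $S$-equivalent, so that the induction hypothesis applies.

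The heart of the argument is the globalization step, and this is where Lemma \ref{lem:simple} enters. Consider $\lHom_{\AA_C}(\EE, F_n \boxtimes \OO_C)$, or more usefully the dual picture: a pointwise surjection $\EE_c \onto F_n$ is an element of $\Hom(\EE_c, F_n)$, and $F_n$ being simple controls these Hom-spaces. Concretely I would study morphisms from $\EE$ to $p_X^* F_n$ (equivalently use that $F_n$ is stable, hence simple in $\AA$) and apply Lemma \ref{lem:simple} to identify subobjects of $p_X^* F_n$ in $\AA_C$ as twists $I \boxtimes F_n$ by ideal sheaves on $C$. Taking the pushforward $(p_C)_* \lHom(\EE, p_X^* F_n)$ yields a coherent sheaf on $C$ whose fiber at general $c$ is $\Hom(\EE_c, F_n) \neq 0$; after twisting by a sufficiently positive line bundle and using that the generic rank of this Hom-sheaf is one (because $F_n$ appears with multiplicity one as the top factor for general $c$, by stability and $S$-equivalence), one extracts a distinguished global surjection. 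The line bundle $\LL_n$ arises precisely as the line bundle whose sections give this family of surjections; equivalently $\LL_n$ is determined by the push-forward sheaf $(p_C)_*\lHom(F_n \boxtimes \OO_C, \EE)$ up to the torsion/twisting correction.

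Having produced the surjection $\EE \onto F_n \boxtimes \LL_n$ in $\AA_C$, I would set $\Gamma_{n-1} := \ker(\EE \onto F_n \boxtimes \LL_n)$, which lies in $\AA_C$ since $\AA_C$ is abelian and $F_n \boxtimes \LL_n \in \AA_C$. Restricting to a general fiber $\{c\} \times X$ is right-exact and, on the dense open locus where the map $\EE_c \onto F_n$ is the genuine Jordan--Hölder quotient, the restriction of $\Gamma_{n-1}$ is exactly the kernel of that quotient, hence the next term of the Jordan--Hölder filtration of $\EE_c$. Thus $\Gamma_{n-1}$ is a flat family of semistable objects of strictly smaller length whose general fibers remain $S$-equivalent, and the induction on the (constant, by Lemma \ref{lem:finiteJH}) length of $\EE_c$ closes the argument, with the filtration $0 = \Gamma_0 \subset \dots \subset \Gamma_n = \EE$ and quotients $\Gamma_i/\Gamma_{i-1} \cong F_i \boxtimes \LL_i$ assembled from the successive steps.

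The main obstacle I anticipate is the globalization in the second paragraph: turning the \emph{pointwise} surjections $\EE_c \onto F_n$, which a priori exist only over a dense open subset and are each defined only up to scalar, into a \emph{single} morphism of families in $\AA_C$ with a prescribed line-bundle target. This requires controlling the Hom-sheaf $(p_C)_*\lHom(\EE, p_X^*F_n)$ carefully --- showing it is a line bundle (or a rank-one sheaf modulo torsion) over the generic point and that its canonical evaluation map $\EE \to F_n \boxtimes \LL_n$ is fiberwise surjective away from finitely many $c$ --- and it is exactly here that the simplicity of $F_n$ (via Lemma \ref{lem:simple}) and the constancy of the Jordan--Hölder multiplicities (via Lemma \ref{lem:finiteJH}) must be combined. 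Once that single global quotient is in hand, the kernel construction and the inductive descent are formal.
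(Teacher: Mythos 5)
Your inductive scheme (peel off one stable factor, take the kernel or cokernel, induct) mirrors the paper's proof but runs in the opposite direction: the paper extracts a common stable \emph{subobject} $F_1 \subset \EE_c$ and builds an injection $F_1 \boxtimes \LL_1 \into \EE$, whereas you extract the common stable \emph{quotient} $F_n$ and want a surjection $\EE \onto F_n \boxtimes \LL_n$. The genuine gap is exactly the step you yourself flag as the ``main obstacle'': you never produce the global morphism. Your sketch --- form $(p_C)_*\RlHom(\EE, p_X^*F_n)$, claim its fiber at general $c$ is $\Hom(\EE_c,F_n)\neq 0$, claim generic rank one, twist and evaluate --- does not close it. First, $\dim\Hom(\EE_c,F_n)$ is only \emph{upper} semicontinuous, so non-vanishing at closed points does not by itself give non-vanishing of the generic fiber of the Hom-sheaf; identifying fibers of the pushforward with $\Hom(\EE_c,F_n)$ for general $c$ needs a generic base-change statement for this complex, which you neither state nor prove. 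Second, the rank-one claim is false in general: nothing prevents $F_n$ from occurring with multiplicity greater than one among the Jordan-H\"older factors, so $\Hom(\EE_c,F_n)$ can have dimension greater than one (the claim is also unnecessary, since any generically non-vanishing section would do). The paper's missing idea is an elementary counting trick that bypasses all sheaf-theoretic base change: by adjunction $\Hom_{C\times X}(F\boxtimes\LL^*,\EE)\cong\Hom_X(F,\Phi_\EE(\LL))$, and applying $\Phi_\EE$ to $0\to\LL_0\to\LL_0(c_1+\cdots+c_{r+1})\to\oplus_i\OO_{c_i}\to 0$, with $r=\dim\Ext^1(F,\Phi_\EE(\LL_0))$ and $\LL_0$ of degree $n_0$ as in Lemma \ref{lem:OCninAA}, gives a short exact sequence in $\AA$ in which $\dim\Hom(F,\oplus_i\EE_{c_i})\ge r+1>r$ forces a non-zero lift $F\to\Phi_\EE(\LL)$. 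Note that this adjunction only works cleanly in the paper's direction (maps from a box product \emph{into} $\EE$); maps \emph{out of} $\EE$ into a box product go through the right adjoint $p_X^!=p_X^*(\blank)\otimes p_C^*\omega_C[1]$, whose shift lands you in $\Ext^1$ rather than $\Hom$, which is a structural reason your orientation is harder.

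Even granting the global morphism $\phi\colon\EE\to F_n\boxtimes\LL_n$, your top-down orientation has a second problem: you need $\phi$ to be an epimorphism in $\AA_C$ in order to have $\Gamma_n/\Gamma_{n-1}\cong F_n\boxtimes\LL_n$, and generic fiberwise surjectivity does not give this. By a twisted version of Lemma \ref{lem:simple}, the image of $\phi$ has the form $F_n\boxtimes(I\otimes\LL_n)$ for some ideal sheaf $I\subset\OO_C$; a cokernel supported over the finitely many points of $V(I)$ cannot be excluded by looking at general fibers, and since $C$ is only assumed integral (possibly singular), $I\otimes\LL_n$ need not be a line bundle --- so your filtration quotients are a priori box products with rank-one torsion-free sheaves, weaker than the statement (though still enough for the application in the Positivity Lemma \ref{Positivity}, where only $\chi(C,\LL_i)$ enters). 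The paper's bottom-up arrangement is designed to avoid precisely this asymmetry: the box product $F_1\boxtimes\LL_1$ is the subobject by construction, so Lemma \ref{lem:simple} is needed only to prove \emph{injectivity} of $\phi$, and there a non-zero kernel, being of the form $(I\otimes\LL_1)\boxtimes F_1$, would restrict isomorphically to general fibers and force $\phi_c=0$ for all but finitely many $c$, contradicting the fiberwise non-vanishing supplied by \cite[Proposition 2.2.3]{Lieblich:mother-of-all}. In short: your route is a genuinely dual strategy, but both of its crucial steps --- globalizing the pointwise quotient maps and upgrading generic surjectivity to surjectivity in $\AA_C$ --- are left open, and the first of them is exactly where the paper's proof does its real work.
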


\begin{Prf}
The same arguments as in the previous lemma show that any two $\EE_c, \EE_{c'}$ are
$S$-equivalent to each other, and that there is a common stable subobject
$F:= F_1 \subset \EE_s$ for all $s$. 

We first claim that there exists a line bundle $\LL_1$ on $C$
with a non-zero morphism
$\phi \colon F \boxtimes \LL_1 \to \EE$ on $C \times X$; equivalently, we need to show that for 
$\LL := \LL_1^*$, we have
\[
0\neq \Hom_X\left(F,(p_X)_*(\EE\otimes p_C^*\LL)\right)
= \Hom_{X}\left(F,\Phi_{\EE}(\LL)\right).
\]

Let $n_0$ be as in Lemma \ref{lem:OCninAA}, and fix a line bundle $\LL_0$ on $C$ of degree $n_0$.
Set $r := \dim \Ext^1(F, \Phi_{\EE}(\LL_0))$. Pick $r+1$ distinct smooth points
$c_1, \dots, c_{r+1} \in C$, and set $\LL:= \LL_0(c_1 + \dots + c_{r+1})$.
Consider the short exact sequence
\[ 
0 \to \LL_0 \to \LL \to \OO_{c_1} \oplus \dots \oplus \OO_{c_{r+1}} \to 0
\]
in $\Coh C$.  By Lemma \ref{lem:OCninAA}, it induces a short sequence
\[
0 \to \Phi_\EE(\LL_0) \to \Phi_\EE(\LL) \to
\EE_{c_1} \oplus \dots \oplus \EE_{c_{r+1}} \to 0
\]
in $\AA$. Since
$\dim \Hom(F, \EE_{c_1} \oplus \dots \oplus \EE_{c_{r+1}}) \ge r+1
> r = \dim \Ext^1(F, \Phi_\EE(\LL_0))$, there exists a non-zero morphism from $F$ to the direct sum that
factors via $\Phi_\EE(\LL)$, which proves the existence of the morphism $\phi$ as claimed.

It follows from \cite[Proposition 2.2.3]{Lieblich:mother-of-all} that the
restriction $\phi_c \colon F_1 \to \EE_c$ is non-zero for all but finitely many closed
points $c \in C$.
Since $F_1$ is stable, this morphism is necessarily injective. To proceed by induction,
it remains to show that $\phi$ is an injective morphism in $\AA_C$. Otherwise, by Lemma \ref{lem:simple}
the kernel of $\phi$ is of the form $I \otimes \LL_1 \boxtimes F_1$ for some ideal sheaf $I \subset \OO_C$.
In particular, the derived restriction
of $\ker \phi \into \LL_1 \boxtimes F_1$ to $\{c\} \times X$ is an isomorphism for all
but finitely many $c$, in contradiction to the injectivity of $\phi_c$.
\end{Prf}

\begin{Prf} (Positivity Lemma \ref{Positivity})
Let $C$ be an integral projective curve, and let $\EE\in\MMM_{\sigma}(v)(C)$.
As we observed before, we only need to prove the second claim.

``$\Leftarrow$'': Assume that $\LLL_\sigma.C = 0$.
We will show that all objects $\EE_c$, for smooth points $c \in C$, are $S$-equivalent to each other.

By Lemma \ref{lem:OCninAA}, for a line bundle $L$ of large degree on $C$, and for $c\in C$ a smooth point, the short exact sequence
\[
0\to L(-c) \to L \to k(c) \to 0
\]
induces a short exact sequence in $\AA$
\[
0\to \Phi_\EE(L(-c)) \to \Phi_\EE(L) \to \EE_c \to 0.
\]
Since $\LLL_\sigma. C = 0$, we have $Z(\Phi_\EE(L)) \in \R_{<0}$, and so $\Phi_\EE(L) \in \PP(1)$ is semistable, of the same phase as $\EE_c$.
It follows that the Jordan-H\"older factors of $\EE_c$ are a subset of the Jordan-H\"older factors
of $\Phi_\EE(L)$, which of course do not depend on $c$.
Lemma \ref{lem:finiteJH} implies the claim.

``$\Rightarrow$'': Assume that, for general $c \in C$, all objects $\EE_c$ are $S$-equivalent to
each other. Using Lemma \ref{lem:JH-in-families} and
the projection formula, we obtain:
\begin{align*}
\LLL_{\sigma}.C
&= \Im Z\left([\Phi_\EE(\OO_C)] \right)
= \sum_{i=1}^n \Im Z \left( [(p_X)_* F_i \boxtimes \LL_i]\right) \\
&= \sum_{i=1}^n \Im Z \left( [F_i \otimes H^\bullet(C, \LL_i)]\right)
= \sum_{i=1}^n \chi(C, \LL_i) \cdot \Im Z \left( [F_i]\right)
= 0
\end{align*}
\end{Prf}

\section{A natural nef divisor class on the moduli space, and comparison}\label{sec:DivisorClassModuliSpace}

Let $S$ be a proper algebraic space of finite type over $\C$, let $\sigma = (Z, \AA) \in\Stab(X)$, and let
$\EE\in\mathrm{D}_{S\text{-}\mathrm{perf}}(S\times X)$ be a flat family of semistable objects of
class $v$ with $Z(v) = -1$. Note that $S$ is allowed to have arbitrary singularities.

By restriction of the family, our construction in the previous section assigns a number
$\LLL_\sigma.C$ to every curve $C \subset S$. Our first goal is to prove that this induces
a nef divisor class on $S$, as claimed in Theorem \ref{thm:main1}; the following theorem gives a more
complete statement:

\begin{Thm} \label{thm:nefdivisor}
The assignment $C \mapsto \LLL_\sigma.C$ only depends on the numerical curve class $[C] \in N_1(S)$,
and is additive on curve classes. It defines a nef divisor class
$\ell_{\sigma,\EE} \in N^1(S)$, which is invariant under tensoring the family $\EE$ with
a line bundle pulled back from $S$.
Additionally, for a curve $C\subseteq S$ we have $\ell_{\sigma,\EE}.C > 0$ if and only
if for two general closed points $c, c' \in C$, the corresponding objects $\EE_c, \EE_{c'}\in\Db(X)$ are not
$S$-equivalent.
\end{Thm}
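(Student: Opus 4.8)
The key point is that Theorem~\ref{thm:nefdivisor} promotes the curve-by-curve construction $\LLL_\sigma.C$ of the Positivity Lemma to a genuine numerical divisor class on an arbitrary proper algebraic space $S$. I would organize the proof around the single numerical identity
\begin{equation} \label{eq:chernDef}
\LLL_\sigma.C = \Im\left(-Z\bigl((p_X)_*(\EE|_{C\times X})\bigr)\right),
\end{equation}
and argue that the right-hand side is controlled by Euler characteristics of $\EE$ against classes pulled back from $X$, which manifestly only see the numerical curve class $[C]\in N_1(S)$. Concretely, writing $Z(w)=(\text{pairing of }w\text{ against fixed classes in }K_{\num}(X))$, the composition $C\mapsto Z\bigl(\Phi_\EE(\OO_C)\bigr)$ factors through the map $N_1(S)\to K_{\num}(X)_\R$ sending $[C]$ to $[\Phi_\EE(\OO_C)]$; additivity of the Fourier--Mukai transform on $K$-theory then gives additivity in $[C]$. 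The first task, therefore, is to make precise that $[C]\mapsto [(p_X)_*(\EE|_{C\times X})]$ is well-defined on $N_1(S)$ and additive.

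First I would reduce the well-definedness to a statement about $1$-cycles. Using Proposition and Definition~\ref{def:basic}, the quantity $\LLL_\sigma.C$ is insensitive to twisting $\EE$ by line bundles pulled back from $C$, so it depends only on $\EE|_{C\times X}$ up to such twists. I would then express $\Im Z\bigl((p_X)_*(\EE|_{C\times X})\bigr)$ as the intersection of $C$ with a fixed cohomology/$K$-theory class determined by $\EE$ and $\sigma$: by Grothendieck--Riemann--Roch applied to $p_X$, the class $[(p_X)_*(\EE|_{C\times X})]\in K_{\num}(X)_\R$ is a linear function of $[\OO_C]\in N_1(S)$ (equivalently of the numerical class of $C$), since $\Phi_\EE$ is exact and descends to $K$-theory. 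Because $Z$ and $\Im$ are $\R$-linear, formula~\eqref{eq:chernDef} then defines an $\R$-linear functional $N_1(S)\to\R$, i.e.\ an element $\ell_{\sigma,\EE}\in N^1(S)=\Hom(N_1(S),\R)$. Invariance under tensoring $\EE$ with a line bundle pulled back from $S$ follows the same way: such a twist changes $[\Phi_\EE(\OO_C)]$ by an integer multiple of $v$ (compute the twist fiberwise as in the proof of~\ref{def:basic}), and $\Im Z(v)=0$ by our normalization $Z(v)=-1$.

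Having constructed $\ell_{\sigma,\EE}\in N^1(S)$, the nefness and the characterization of the vanishing locus are \emph{not} reproved from scratch: they are transported directly from Positivity Lemma~\ref{Positivity}. For nefness I would note that every effective curve class is represented by an integral projective curve $C\subset S$ (after passing to components, using additivity just established), restrict $\EE$ to $C\times X$ to obtain a family in $\MMM_\sigma(v)(C)$, and invoke $\LLL_\sigma.C\ge 0$. The equivalence ``$\ell_{\sigma,\EE}.C>0$ iff two general points of $C$ are not $S$-equivalent'' is then exactly the second assertion of Lemma~\ref{Positivity}, applied to the restricted family; the only thing to check is that $S$-equivalence of $\EE_c,\EE_{c'}$ is intrinsic to the objects and unaffected by the restriction, which is immediate.

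\textbf{The main obstacle.}
The delicate step is the reduction to the Positivity Lemma over a \emph{proper algebraic space with arbitrary singularities}, rather than over the Noetherian heart setting of Section~\ref{sec:MainConstr}. Positivity Lemma~\ref{Positivity} was proved for an integral projective curve mapping to the stack $\MMM_\sigma(v)$, whereas here $S$ need not be projective or even a scheme. The real work is to verify that for any integral projective curve $C\subset S$ the restricted family $\EE|_{C\times X}$ is still an $S$-perfect, flat family of $\sigma$-semistable objects, so that it genuinely defines a map $C\to\MMM_\sigma(v)$ and the Abramovich--Polishchuk machinery (Theorem~\ref{thm:APP}) applies over $C$. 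I would handle this by restricting along the closed immersion $C\hookrightarrow S$, using that $S$-perfection and fiberwise membership in $\AA$ are preserved under derived restriction to a closed subspace, and that semistability is checked fiberwise and is therefore unchanged. The mild subtlety is that $C$ may not literally sit inside $S$ as a subscheme of a scheme; but since we only ever test $\ell_{\sigma,\EE}$ against classes of integral projective curves, and such curves admit finite maps $\nu\colon \tilde C\to C$ from smooth projective curves, I would if necessary pull the family back along $\nu$ and use the projection formula (exactly as in the ``$\Rightarrow$'' computation at the end of the proof of~\ref{Positivity}) to reduce to the already-established curve case. Once this reduction is in place, every assertion of the theorem is a formal consequence of Lemma~\ref{Positivity} together with the linearity encoded in~\eqref{eq:chernDef}.
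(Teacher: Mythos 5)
Your overall architecture matches the paper's: establish that $C \mapsto \LLL_\sigma.C$ descends to a linear functional on $N_1(S)$, then import nefness and the positivity criterion directly from Positivity Lemma \ref{Positivity} by restricting $\EE$ to $C \times X$ (this last reduction, which you treat as the main obstacle, is indeed immediate and is exactly how the paper concludes). The genuine gap is in the descent step itself: the factorization you assert is false. You claim that $C \mapsto Z(\Phi_\EE(\OO_C))$ factors through a map $N_1(S) \to K_{\num}(X)_\R$ sending $[C] \mapsto [\Phi_\EE(\OO_C)]$, and later that $[(p_X)_*(\EE|_{C \times X})]$ is a linear function of ``$[\OO_C] \in N_1(S)$''. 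No such map exists, because $C \mapsto [\OO_C] \in K_{\num}(S)$ does not factor through numerical equivalence of cycles: numerically equivalent curves $C_1 \equiv C_2$ satisfy $[\OO_{C_1}] - [\OO_{C_2}] = \bigl(\chi(\OO_{C_1}) - \chi(\OO_{C_2})\bigr)\cdot[k(s)]$ in $K_{\num}(S)$, and the arithmetic genera need not agree (in $\P^3$, a twisted cubic and a plane cubic are numerically equivalent). Since $\Phi_\EE$ sends the point class to $v \neq 0$, the classes $[\Phi_\EE(\OO_{C_1})]$ and $[\Phi_\EE(\OO_{C_2})]$ genuinely differ in $K_{\num}(X)_\R$, by a multiple of $v$. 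You have conflated the K-theory class $[\OO_C]$ with the cycle class $[C]$, so the statement you call ``manifest'' is precisely what must be proved.

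The missing idea --- and the paper's actual argument --- is that descent to $N_1(S)$ holds only after composing with $\Im Z$, and only because $\Im Z(v) = 0$: numerically equivalent curves have structure-sheaf classes differing by $m[k(s)]$; the functor $\Phi_\EE$ descends to numerical Grothendieck groups because $\EE$ is $S$-perfect (for $F \in \Db(X)$ one has $\chi\bigl(\Phi_\EE(G) \otimes F\bigr) = \chi\bigl(G \otimes (p_S)_*(\EE \otimes p_X^* F)\bigr)$, and $(p_S)_*(\EE \otimes p_X^* F)$ is perfect on $S$); hence $\Im Z(\Phi_\EE(\OO_{C_1})) - \Im Z(\Phi_\EE(\OO_{C_2})) = m\, \Im Z(v) = 0$. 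Ironically, you do invoke $\Im Z(v) = 0$, but only for invariance under twisting $\EE$ by line bundles from $S$, not at the one place where it is indispensable. Your alternative route --- expressing $\LLL_\sigma.C$ as the intersection of $C$ with a fixed class via GRR/determinants --- can also be made to work, but it needs the same input in disguise: one must check that $(p_S)_*(\EE \otimes p_X^* F)$, for $[F] = w_Z$, has rank $\chi(v \cdot w_Z) = \Im Z(v) = 0$, so that $\chi(C, \blank|_C)$ reduces to a degree with no $\rk \cdot \chi(\OO_C)$ term; this is exactly the content of the paper's Proposition \ref{prop:comparison}, proved there as a separate comparison statement rather than as part of Theorem \ref{thm:nefdivisor}.
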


Before the proof, let us recall that the real N\'eron-Severi group $N^1(S)$ is defined as the group of real Cartier
divisors modulo numerical equivalence; dually, $N_1(S)$ is the group of real 1-cycles modulo
numerical equivalence with respect to pairing with Cartier divisors (see, for example,
\cite[Sections 1.3 and 1.4]{Laz:Positivity1}). 

Similarly, the Euler characteristic gives a well-defined pairing
\[
\chi \colon K(\Db(S)) \times K(\Db_{\mathrm{perf}}(S)) \to \Z
\]
between the K-groups of 
the bounded derived categories of coherent sheaves $\Db(S)$ and of perfect complexes
$\Db_{\mathrm{perf}}(S)$. Taking the quotient with respect to the kernel of
$\chi$ on each side we obtain numerical Grothendieck groups
$K_{\num}(S)$ and $K_{\num}^{\mathrm{perf}}(S)$, respectively, with an induced
perfect pairing
\[
\chi \colon K_{\num}(S) \times K_{\num}^{\mathrm{perf}}(S) \to \Z.
\]

\begin{proof}[Proof of Theorem \ref{thm:nefdivisor}]
If $C_1, C_2$ are numerically equivalent, then $[\OO_{C_1}], [\OO_{C_2}] \in K_{\num}(S)$
only differ by multiples of the class of a skyscraper sheaf $k(s)$ of a closed point $s$.
Since $\EE$ is $S$-perfect, it induces a functor
$\Phi_\EE \colon \Db(S) \to \Db(X)$, and this functor preserves numerical equivalence. Together with
$\Im Z(\Phi_\EE(k(s))) = \Im Z(v) = 0$,
this proves the first claim. The additivity follows similarly, and all other claims
follow directly from the Positivity Lemma \ref{Positivity}.
\end{proof}

\begin{Ex}\label{ex:quintic}
Let $X\subset\P^4$ be a smooth quintic threefold, containing two disjoint lines $L_1, L_2$.
Consider the smooth proper algebraic space $X^+$ obtained as the flop of $X$ at the line
$L_1$.
Then, by a classical result of Bondal and Orlov \cite{BondalOrlov:Main}, we have an equivalence of
derived categories $\Db(X)\cong\Db(X^+)$. However, $X^+$ admits no numerically positive class,
as the flopped curve $\widetilde L_1$ is the negative of $L_2$ in $N_1(X^+)$.
By Theorem \ref{thm:main1}, $X^+$ cannot be isomorphic to a moduli space of (semi)stable complexes
on $\Db(X)$ with respect to a numerical stability condition on $X$.
\end{Ex}

There are many examples of non-projective flops of a projective variety, including Mukai flops
of holomorphic symplectic varieties. By the same reasoning, these non-projective flops cannot be
obtained by wall-crossing.

We will now compare our construction to the classical notion of a determinant divisor on $S$ associated to the family $\EE$ (see, e.g., \cite{Mukai:BundlesK3, Donaldson:Invariants,LePotier:Determinant,Faltings:Higgs, JunLi:Picard} and \cite[Section 8.1]{HL:Moduli}).

\begin{Def}\label{def:determinant}
We define a group homomorphism (called the \emph{Donaldson morphism})\footnote{In
the case of K3 surfaces, we will instead use a dual version, see
Definition \ref{def:MukaiHom} and Remark \ref{rem:DonaldsonMukai}.}
\[
\lambda_{\EE}:v^\sharp\to N^1(S)
\]
as the composition
\[
v^\sharp\xrightarrow{p_X^*}K_{\num}^{\perf}(S\times
X)_\R\xrightarrow{\cdot[\EE]}K_{\num}^{\perf}(S\times X)_\R\xrightarrow{(p_S)_*}K_{\num}^{\perf}(S)_\R\xrightarrow{\det}N^1(S),
\]
where
\[
v^\sharp:=\left\{w\in K_{\num}(X)_\R \colon \chi(v\cdot w)=0 \right\}.
\]
\end{Def}

Since the Euler characteristic $\chi$ gives a non-degenerate pairing, we can write
\[
\Im(Z(\blank))=\chi(w_Z\cdot \blank),
\]
for a unique vector $w_Z\in v^\sharp$.

\begin{Prop}\label{prop:comparison}
For an integral curve $C\subset S$, we have
\begin{equation}\label{eq:comparison}
\lambda_{\EE}(w_Z).C = \Im Z(\Phi_{\EE}(\OO_C)) =: \ell_{\sigma,\EE}.C.
\end{equation}
\end{Prop}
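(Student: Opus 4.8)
The plan is to show that both sides of \eqref{eq:comparison} define the same linear functional on numerical curve classes, and to do so by unwinding the definition of the Donaldson morphism $\lambda_\EE$ and matching it term-by-term with the Fourier--Mukai computation of $\Im Z(\Phi_\EE(\OO_C))$. The essential point is that both constructions factor through the same pairing on the numerical Grothendieck group of $S \times X$, so the identity should reduce to the associativity and functoriality of the Euler pairing $\chi$ together with the defining property of the vector $w_Z$, namely $\Im Z(\blank) = \chi(w_Z \cdot \blank)$.

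First I would fix an integral curve $C \subset S$ and compute the left-hand side directly from Definition \ref{def:determinant}. By definition, $\lambda_\EE(w_Z)$ is the image of $w_Z$ under $\det \circ (p_S)_* \circ (\cdot [\EE]) \circ p_X^*$, and intersecting a determinant line bundle $\det G$ with the curve $C$ recovers the Euler characteristic $\chi(S, [\OO_C] \cdot G) = \chi(C, G|_C)$ via the standard identity $\det(G).C = \chi(\OO_C \otimes G) - \chi(\OO_C)\cdot\mathrm{rk}(G)$ up to the terms killed by intersecting with a curve. The key is that $\det(\alpha).C = \chi_S([\OO_C]\cdot \alpha)$ for $\alpha \in K_{\num}^{\perf}(S)_\R$ with $\mathrm{rk}(\alpha)=0$, which holds here since $w_Z \in v^\sharp$ forces the relevant rank to vanish (this is exactly why the domain is restricted to $v^\sharp$). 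Thus I would obtain
\[
\lambda_\EE(w_Z).C = \chi_S\bigl([\OO_C] \cdot (p_S)_*(p_X^* w_Z \cdot [\EE])\bigr).
\]

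Next I would push this pairing on $S$ up to a pairing on $S \times X$ using the projection formula and adjunction for $\chi$: since $(p_S)_*$ is left adjoint-compatible with the pullback $p_S^*$ under the Euler pairing, the above becomes
\[
\chi_{S\times X}\bigl(p_S^*[\OO_C] \cdot p_X^* w_Z \cdot [\EE]\bigr)
= \chi_{S\times X}\bigl(p_X^* w_Z \cdot ([\EE] \cdot p_S^*[\OO_C])\bigr).
\]
Now I recognize $[\EE]\cdot p_S^*[\OO_C] = [\EE|_{C\times X}]$ as the class whose pushforward along $p_X$ is precisely $[\Phi_\EE(\OO_C)] = [(p_X)_*(\EE \otimes p_S^*\OO_C)]$. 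Applying adjunction in the $X$-direction, the pairing collapses to
\[
\chi_X\bigl(w_Z \cdot [\Phi_\EE(\OO_C)]\bigr) = \Im Z\bigl(\Phi_\EE(\OO_C)\bigr),
\]
the last equality being the defining property of $w_Z$. This is exactly the right-hand side of \eqref{eq:comparison}, and the definition of $\ell_{\sigma,\EE}.C$ from \eqref{eq:firstellDef}.

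I expect the main obstacle to be the careful bookkeeping of the determinant map and of the two adjunctions: one must verify that $\det(\alpha).C$ genuinely equals the Euler pairing $\chi_S([\OO_C]\cdot\alpha)$ on the rank-zero part, and that the restriction to $v^\sharp$ (equivalently $\chi(v \cdot w_Z)=0$) is exactly what guarantees the relevant leading terms drop out so that only the $\Im Z$-contribution survives. The projection formula and the compatibility of $\chi$ with $(p_S)_*$, $p_S^*$, $(p_X)_*$, $p_X^*$ are the structural inputs; once these are in place the identity is formal, so the proof is essentially a matter of threading the class $w_Z$ through the composition defining $\lambda_\EE$ and recognizing the Fourier--Mukai transform at the end.
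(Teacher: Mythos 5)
Your proposal is correct and follows essentially the same route as the paper's proof: the rank of $(p_S)_*(p_X^*w_Z\cdot[\EE])$ vanishes because $\chi(v\cdot w_Z)=0$, Riemann--Roch on $C$ then converts the degree of the determinant into an Euler characteristic, and the projection formula reduces everything to $\chi\bigl(w_Z\cdot[\Phi_{\EE}(\OO_C)]\bigr)=\Im Z(\Phi_{\EE}(\OO_C))$. The only cosmetic difference is that the paper restricts to $C$ first and invokes cohomology and base change to identify $\LL(F)|_C$ with $(p_C)_*\bigl(p_X^*F\otimes\EE|_C\bigr)$, whereas you absorb that step into the K-theoretic projection formula on $S\times X$, which is equivalent.
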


\begin{Prf}
It is enough to prove \eqref{eq:comparison} when $w_Z=[F]\in K_{\num}(X)$, for some $F\in\Db(X)$.
We define
\[
\LL(F):=(p_{S})_*\left(p_X^*F\otimes\EE\right).
\]
As in the classical case, the assumption $w_Z\in v^\sharp$ and the projection formula show
that the rank of $\LL(F)$ must be equal to $0$:
\[
\rk \LL(F)= \chi(S, \OO_s \otimes \LL(F)) = \chi(S \times X, \OO_{\{s\} \times X} \otimes \EE \otimes p_X^* F) 
= \chi(\EE_s \otimes F) = \chi(v \cdot w_Z) = 0.
\]
The Riemann-Roch Theorem gives
\[
\lambda_{\EE}(w_Z).C = \mathrm{deg}\, \LL(F)|_C = \chi(C,\LL(F)|_C).
\]
By Cohomology and Base Change (see, e.g., \cite[Corollary 2.23]{Kuznetsov:Hyperplane}) we deduce that
\[
\LL(F)|_C = (p_C)_*\left( p_X^*F\otimes\EE |_C\right).
\]
Using the projection formula again gives
\[
\chi(C,\LL(F)|_C)=\chi(S,F\otimes(p_X)_*\EE |_C)=\chi(w_Z\cdot\Phi_{\EE}(\OO_C))=\Im Z(\Phi_{\EE}(\OO_C)).
\]
\end{Prf}

The basic properties of $\ell_{\sigma,\EE}$ are in \cite[Lemma 8.1.2]{HL:Moduli}.
In particular, we recall the following: Let $\NN$ be a vector bundle on $S$ of rank $n$.
Then
\begin{equation}\label{eq:VectorBundle}
\ell_{\sigma,\EE\otimes p_S^*\NN}=n\cdot \ell_{\sigma,\EE}.
\end{equation}

By Theorem \ref{thm:nefdivisor}, we have a well-defined positive divisor class on a fine moduli space of stable complexes.
To extend this to the case when a universal family exists only \'etale locally on a coarse moduli
space, we recall the following definition from \cite{Mukai:BundlesK3}.

\begin{Def}\label{def:quasi-univ}
Let $T$ be an algebraic space of finite-type over $\C$.
\begin{enumerate}
\item A flat family $\EE$ on $T\times X$ is called a \emph{quasi-family} of objects in
$\MMM_{\sigma}(v)$ if, for all closed points $t\in T$, there exist an integer $\rho>0$ and an
element $E\in\MMM_{\sigma}(v)(\C)$ such that $\EE |_{\{t\}\times X} \cong E^{\oplus \rho}$. If $T$ is connected, the positive integer $\rho$ does not depend on $t$ and it is called the \emph{similitude} of $\EE$.
\item Two quasi-families $\EE$ and $\EE'$ on $T \times X$ are called \emph{equivalent} if there exist vector bundles $V$ and $V'$ on $T$ such that $\EE'\otimes p_T^*V \cong \EE \otimes p_T^*V'$.
\item A quasi-family $\EE$ is called \emph{quasi-universal} if, for every scheme $T'$ and for any quasi-family $\TT$ on $T \times X$, there exists a unique morphism $f\colon T'\to T$ such that $f^*\EE$ and $\TT$ are equivalent.
\end{enumerate}
\end{Def}

Let $\EE$ be a quasi-family of objects in $\MMM_{\sigma}(v)$ of similitude $\rho$ over a proper
algebraic space $T$.  We can define a divisor class $\ell_{\sigma}$ on $T$ by $\ell_{\sigma} :=
\frac{1}{\rho} \cdot \ell_{\sigma, \EE}$; by equation \eqref{eq:VectorBundle} it only depends on the
equivalence class of the quasi-universal family.

\begin{Rem} \label{rem:quasidivisor}
By \cite[Theorem A.5]{Mukai:BundlesK3}, if $\MMM_{\sigma}(v)$ consists only of stable (and
therefore, simple) complexes, and
if $\MMM_{\sigma}(v)$ is a
$\mathbb{G}_m$-gerbe over an algebraic space $M_\sigma(v)$ of finite-type over $\C$ (i.e., over its coarse moduli
space), then there exists a quasi-universal family on $M_{\sigma}(v)\times X$, unique up to
equivalence.

Therefore, the above construction produces a well-defined divisor class $\ell_\sigma$ on the coarse
moduli space $M_\sigma(v)$. 
Theorem \ref{thm:nefdivisor} holds similarly for $\ell_\sigma$;
in particular, it has the same positivity properties.
\end{Rem}

\section{Review: Moduli spaces for stable sheaves on K3 surfaces}\label{sec:ReviewGieseker}

In this section we give a summary on stability for sheaves on K3 surfaces.
We start by recalling the basic lattice-theoretical structure, given by the Mukai lattice.
We then review slope and Gieseker stability, the existence and non-emptiness for moduli spaces of semistable sheaves, and the structure of their N\'eron-Severi groups.
Finally, we mention briefly how all of this generalizes to twisted K3 surfaces.

\subsection*{The algebraic Mukai lattice}
Let $X$ be a smooth projective K3 surface.
We denote by $H^*_{\alg}(X,\Z)$ the algebraic part of the whole cohomology of X, namely
\begin{equation}\label{eq:AlgebraicMukaiLattice}
H^*_\alg(X,\Z) = H^0(X,\Z) \oplus \mathrm{NS}(X) \oplus H^4(X,\Z).
\end{equation}

Let $v \colon K_{\num}(X) \xrightarrow{\sim} H^*_{\alg}(X, \Z)$ be the Mukai vector given by $v(E) = \ch(E) \sqrt{\td(X)}$.
We denote the Mukai pairing $H^*_{\alg}(X, \Z) \times H^*_{\alg}(X, \Z) \to \Z$ by $(\blank, \blank)$; it can be
defined by $(v(E), v(F)) := - \chi(E, F)$.
According to the decomposition \eqref{eq:AlgebraicMukaiLattice}, we have
\[
\left( (r,c,s),(r',c',s')\right) = c.c' - rs' - r's,
\]
for $(r,c,s),(r',c',s')\in H^*_\alg(X,\Z)$.

Given a Mukai vector $v\in H^*_{\alg}(X, \Z)$, we denote
its orthogonal complement by
\[
v^\perp:=\left\{w\in H^*_{\alg}(X, \Z)\colon (v,w)=0 \right\}.
\]
We call a Mukai vector $v$ \emph{primitive} if it is not divisible in $H^*_\alg(X,\Z)$.

\subsection*{Slope stability}
Let $\omega,\beta\in\NS(X)_\Q$ with $\omega$ ample.
We define a slope function
$\mu_{\omega, \beta}$ on $\Coh X$ by
\begin{equation} \label{eq:muomegabeta}
\mu_{\omega, \beta}(E) = 
\begin{cases}
\frac{\omega.(c_1(E) - \beta)}{r(E)} & \text{if $r(E) > 0$,} \\
+\infty & \text{if $r(E) = 0$.}
\end{cases}
\end{equation}
This gives a notion of slope stability for sheaves, for which Harder-Narasimhan filtrations exist (see \cite[Section 1.6]{HL:Moduli}).
We will sometimes use the notation $\mu_{\omega,\beta}$-stability.

Set-theoretical moduli spaces of torsion-free slope semistable sheaves were constructed by Le Potier and Li (see \cite{LePotier:Determinant,JunLi:Uhlenbeck} and \cite[Section 8.2]{HL:Moduli}).

\subsection*{Gieseker stability}
Let $\omega,\beta\in\NS(X)_\Q$ with $\omega$ ample.
We define the \emph{twisted Hilbert polynomial} by
\[
P(E,m) := \int_X e^{-\beta} . (1,m\omega,\frac{m^2\omega^2}{2}) . v(E),
\]
for $E\in\Coh(X)$.
This gives rise to the notion of $\beta$-twisted $\omega$-Gieseker stability for sheaves, introduced first in \cite{MatsukiWenthworth:TwistedVariation}.
When $\beta=0$, this is nothing but Gieseker stability.
We refer to \cite[Section 1]{HL:Moduli} for basic properties of Gieseker stability.

\subsection*{Moduli spaces of stable sheaves}
Let $\omega,\beta\in\NS(X)_\Q$ with $\omega$ ample.
We fix a Mukai vector $v\in H^*_{\alg}(X,\Z)$.
We denote by $\MMM_{\omega}^{\beta}(v)$ the moduli stack of flat families of $\beta$-twisted $\omega$-Gieseker semistable sheaves with Mukai vector $v$.
By the work of Mumford, Gieseker, Maruyama, and Simpson among others (see \cite[Section 4]{HL:Moduli} and \cite{MatsukiWenthworth:TwistedVariation}), there exists a projective variety $M_{\omega}^{\beta}(v)$ which is a coarse moduli space parameterizing $S$-equivalence classes of semistable sheaves.
The open substack $\MMM_{\omega}^{\beta,s}(v)\subseteq \MMM_{\omega}^{\beta}(v)$ parameterizing stable sheaves is a $\mathbb{G}_m$-gerbe over the open subset $M_{\omega}^{\beta, s}(v)\subseteq M_{\omega}^{\beta}(v)$.
When $\beta=0$, we will denote the corresponding objects by $\MMM_\omega(v)$, etc.

The following is the main result on moduli spaces of stable sheaves on K3 surfaces.
In its final form it is proved by Yoshioka in \cite[Theorems 0.1 \& 8.1]{Yoshioka:Abelian} (see also \cite[Section 2.4]{KLS:SingSymplecticModuliSpaces}, where the condition of positivity is discussed more in detail);
it builds on previous work by Mukai and Kuleshov, among others.
We start by recalling the notion of positive vector, following \cite[Definition 0.1]{Yoshioka:Abelian}.

\begin{Def}\label{def:YoshiokaPositive}
Let $v_0=(r,c,s)\in H^*_{\alg}(X, \Z)$ be a primitive class.
We say that $v_0$ is \emph{positive} if $v_0^2\geq-2$ and
\begin{itemize}
\item either $r>0$,
\item or $r=0$, $c$ is effective, and $s\neq0$,
\item or $r=c=0$ and $s>0$.
\end{itemize}
\end{Def}

\begin{Thm}[Yoshioka]\label{thm:YoshiokaNonEmptyness}
Let $v\in H^*_{\alg}(X, \Z)$.
Assume that $v=mv_0$, with $m\in\Z_{>0}$ and $v_0$ a primitive positive vector.
Then $M_{\omega}^{\beta}(v)$ is non-empty for all $\omega,\beta$.
\end{Thm}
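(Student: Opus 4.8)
The plan is to prove Theorem \ref{thm:YoshiokaNonEmptyness} by a standard reduction strategy: first reduce the general case $v = m v_0$ to the primitive case $v_0$, and then establish non-emptiness for a primitive positive vector by reducing to the most elementary situation via the action of Fourier--Mukai equivalences and spherical twists on the algebraic Mukai lattice $H^*_\alg(X,\Z)$.

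\textbf{Reduction to the primitive case.} First I would observe that if $v = m v_0$ with $v_0$ primitive and positive, it suffices to exhibit a single semistable sheaf of Mukai vector $v$. Given a $\beta$-twisted $\omega$-Gieseker stable sheaf $E$ with $v(E) = v_0$, the direct sum $E^{\oplus m}$ is visibly $S$-equivalent to a semistable sheaf of Mukai vector $m v_0 = v$, so $M_\omega^\beta(v) \neq \emptyset$ provided $M_\omega^\beta(v_0) \neq \emptyset$. (One must be slightly careful to produce a \emph{stable} sheaf of class $v_0$, or at least a polystable one, but for non-emptiness the direct sum suffices once the primitive case is known.) Thus the heart of the matter is the primitive positive case.

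\textbf{The primitive case via lattice-theoretic normalization.} For $v_0$ primitive and positive with $v_0^2 \ge -2$, the strategy is to use the transitivity of the natural group action on the set of primitive vectors of fixed square. The group generated by Fourier--Mukai autoequivalences of $\Db(X)$ (in particular shifts, tensoring by line bundles, and spherical twists $\ST_S$ at spherical objects $S$) acts on $H^*_\alg(X,\Z)$ preserving the Mukai pairing, and also acts on the space of (weak) stability data. Since these equivalences carry $\beta$-twisted Gieseker-semistable sheaves for one polarization to Bridgeland- or Gieseker-semistable objects for a transformed stability datum, and since non-emptiness of moduli is preserved under such equivalences, I would reduce $v_0$ to a normal form. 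The cases split according to $v_0^2$: when $v_0^2 = -2$ the vector is (up to the action) the class of a spherical object, realized for instance by a line bundle or a rigid bundle, which exists by explicit construction; when $v_0^2 \ge 0$ one reduces, after acting by suitable autoequivalences, to a vector with small rank (say rank $0$ or $1$), where non-emptiness is classical --- e.g. the ideal sheaf of points on the surface, or a torsion sheaf supported on a curve in an effective class, handled by the positivity conditions in Definition \ref{def:YoshiokaPositive}.

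\textbf{Main obstacle.} The principal difficulty is controlling the \emph{preservation of stability} under the Fourier--Mukai transforms used in the normalization, and ensuring that the normal-form vectors genuinely satisfy the positivity hypotheses so that the elementary constructions apply. Concretely, one must check that the transformed polarization still lies in the ample cone (or can be chosen generic), and that semistability is not destroyed; this is where the bulk of Yoshioka's technical work lies. Since this theorem is quoted from \cite[Theorems 0.1 \& 8.1]{Yoshioka:Abelian}, I would in practice cite that result directly rather than reprove it; the sketch above indicates the structure of the argument, whose hard core is the deformation- and wall-crossing-invariance of non-emptiness combined with the transitivity of the monodromy/autoequivalence action on primitive vectors of fixed square in the Mukai lattice.
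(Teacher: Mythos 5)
The paper offers no proof of this statement: it is a review item, quoted directly from \cite[Theorems 0.1 \& 8.1]{Yoshioka:Abelian}, so your bottom line --- cite Yoshioka rather than reprove the theorem --- is exactly the paper's treatment, and your reduction of $v = mv_0$ to the primitive case via $E^{\oplus m}$ is correct.

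However, one caveat about your sketch of the primitive case, in case you intended it as more than heuristic: the normalization-by-autoequivalences strategy cannot prove \emph{this} theorem, and inside this paper it would be circular. Fourier--Mukai equivalences and spherical twists do not preserve Gieseker (semi)stability, so to run that reduction one needs an invariant of $v$ that is insensitive to the polarization and equivariant under $\Aut(\Db(X))$. The paper has such a tool only at the level of Bridgeland stability, namely Toda's wall-crossing-invariant $J(v)$ from \cite{Toda:K3}, and it uses precisely the argument you sketch to prove the \emph{Bridgeland} analogue, Theorem \ref{thm:nonempty}: act by the shift, line bundle twists, and the spherical twist $\ST_{\OO}$ to make $v$ positive, pass to the large volume limit (Theorem \ref{thm:BridgelandToda}) to identify $J(v)$ with the invariant of a Gieseker moduli space, and then invoke Theorem \ref{thm:YoshiokaNonEmptyness} as the base case. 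In other words, the autoequivalence argument derives Theorem \ref{thm:nonempty} \emph{from} the present theorem rather than proving it. Yoshioka's actual proof of the Gieseker statement is of a different nature: one deforms the polarized K3 surface (and the Mukai vector) to special, e.g.\ elliptic, K3 surfaces, constructs semistable sheaves there explicitly --- using Fourier--Mukai transforms along the elliptic fibration to change the rank, with a genuine stability-preservation analysis --- and propagates non-emptiness back via properness of the relative moduli space over the deformation base. So if you want more than the citation, it is this deformation step, not lattice transitivity under autoequivalences, that carries the argument.
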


\begin{Rem}\label{rmk:SmoothnessFactorialityModuliSpace}
We keep the assumptions of Theorem \ref{thm:YoshiokaNonEmptyness}.
We further assume that $\omega$ is \emph{generic}\footnote{We refer to \cite{OGrady:ModuliVectBundles} for the notion of a generic polarization; it always
exists when $v_0$ is positive.} with respect to $v$.
\begin{enumerate}
\item\label{enum:KLS} By \cite{OGrady,KLS:SingSymplecticModuliSpaces, PeregoRapagnetta:Factoriality}, $M_{\omega}^{\beta}(v)$ is then a normal irreducible projective variety with $\Q$-factorial singularities.
\item\label{enum:DefoEquivalent} If $m=1$, then by \cite{Yoshioka:Abelian}, based on previous work by Mukai, O'Grady, and Huybrechts among others, $M_{\omega}^{\beta,s}(v)=M_{\omega}^{\beta}(v)$ is a smooth projective irreducible symplectic manifold of dimension $v^2+2$, deformation equivalent to the Hilbert scheme of points on a K3 surface.
\end{enumerate}
\end{Rem}


Let us briefly recall some relevant properties of the Beauville-Bogomolov form on $\NS(M)$ for a 
smooth projective irreducible symplectic manifold $M$.
It is a bilinear form $\NS(M) \times \NS(M) \to \R$. If $\varrho \in H^0(M, \Omega^2_M)$
is a global non-degenerate two-form, then there is a constant $c$ such that 
\[
(D_1, D_2) = c\int_M D_1 D_2 (\varrho \bar \varrho)^{\frac 12 \dim M -1}.
\]
Its associated quadratic
form is denoted by $q(D) = (D, D)$. It determines the volume of $D$ by
$c' \int_M D^{\dim M} = q(D)^{\frac 12 \dim M}$, for some constant $c' \in\R_{>0}$.
In particular, a nef divisor $D$ is big if and only if
$q(D) > 0$ (see also \cite[Corollary 3.10]{Huybrechts:compactHyperkaehlerbasic}).

As mentioned in the footnote to Definition \ref{def:determinant}, in the case of a K3 surface
we will always consider a dual version to the Donaldson morphisms.

\begin{Def}\label{def:MukaiHom}
Let $v\in H^*_{\alg}(X, \Z)$ be a positive primitive vector and let $\omega\in \mathrm{NS}(X)_\Q$ be an ample divisor which is generic with respect to $v$.
We define the \emph{Mukai homomorphism}
$ \theta_v \colon v^\perp \to \mathrm{NS}(M_{\omega}^{\beta}(v))$ by
\begin{equation} \label{eq:MukaiMukai}
 \theta_v(w).C = \frac 1{\rho}(w, \Phi_{\EE}(\OO_C)) 
\end{equation}
where $\EE$ is a quasi-universal family of similitude $\rho$.
\end{Def}

\begin{Rem} \label{rem:DonaldsonMukai}
With the same arguments as in Proposition \ref{prop:comparison}, we can identify the Mukai
homomorphism with the Donaldson morphism of Definition \ref{def:determinant} as follows:
given a class $w \in K_{\num}(X)$, we have
\[
\theta_v(v(w)) = -\lambda_{\EE}(w^*).
\]
\end{Rem}

The following result is proved in \cite[Sections 7 \& 8]{Yoshioka:Abelian} (see also \cite[Section 1.5]{GoettscheNakajimaYoshioka:KThDonaldson}):

\begin{Thm}[Yoshioka]\label{thm:YoshiokaSheaves}
Let $v\in H^*_{\alg}(X, \Z)$ be a positive primitive vector with $v^2 \ge 0$.
Let $\omega\in \mathrm{NS}(X)_\Q$ be an ample divisor which is generic with respect to $v$.
Then the Mukai homomorphism induces an isomorphism
\begin{itemize}
\item $\theta_v\colon v^\perp \xrightarrow{\sim} \mathrm{NS}(M_{\omega}^{\beta}(v))$, if $v^2>0$;
\item $\theta_v\colon v^\perp/v \xrightarrow{\sim} \mathrm{NS}(M_{\omega}^{\beta}(v))$, if $v^2=0$.
\end{itemize}
Under this isomorphism, the quadratic Beauville-Bogomolov form for $\mathrm{NS}(M_{\omega}^{\beta}(v))$
coincides with the quadratic form of the Mukai pairing on $X$.
\end{Thm}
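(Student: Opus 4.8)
The plan is to deduce the statement from the sharper claim that the Mukai homomorphism underlies a Hodge isometry onto $H^2(M,\Z)$, where $M := M_{\omega}^{\beta}(v)$ and $v^\perp$ is now taken inside the full Mukai lattice $H^*(X,\Z)$ equipped with its weight-two Hodge structure: concretely, that $\theta_v$ is surjective and preserves the pairings, with kernel equal to the radical of the Mukai form on $v^\perp$, which is $0$ when $v^2>0$ and $\Z v$ when $v^2=0$. The theorem as stated then follows by restricting to algebraic classes, since the $(1,1)$-classes in $v^\perp$ are exactly $v^\perp\cap H^*_\alg(X,\Z)$ and $H^2(M,\Z)\cap H^{1,1} = \NS(M)$, the radical contributing the quotient by $v$ in the case $v^2=0$. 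As a preliminary I would check that $\theta_v$ is well-defined and additive: by Remark \ref{rem:DonaldsonMukai} it agrees, up to sign and dualization, with the Donaldson morphism $\lambda_\EE$ of Definition \ref{def:determinant}, so its independence of the chosen quasi-universal family follows from \eqref{eq:VectorBundle} exactly as for $\ell_{\sigma,\EE}$, and additivity in $w$ is immediate from \eqref{eq:MukaiMukai}. Since $M$ is a smooth projective irreducible symplectic manifold of dimension $v^2+2$ by Remark \ref{rmk:SmoothnessFactorialityModuliSpace}(\ref{enum:DefoEquivalent}), its second Betti number matches the rank of the (quotiented) source, so the expected isomorphism is at least numerically consistent.

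Next I would establish the compatibility of the quadratic forms, since this simultaneously yields injectivity. The tangent space to $M$ at a stable sheaf $[E]$ is $\Ext^1_X(E,E)$, and the holomorphic symplectic form $\varrho$ is the composition of the Yoneda product $\Ext^1(E,E)\times\Ext^1(E,E)\to\Ext^2(E,E)$ with the trace $\Ext^2(E,E)\cong\C$. Using the integral description of the Beauville-Bogomolov form recalled before Definition \ref{def:MukaiHom}, I would compute $q(\theta_v(w))$ as an integral over $M$ of Chern classes of the universal family against $(\varrho\bar\varrho)^{\frac12\dim M - 1}$, and evaluate it by Grothendieck-Riemann-Roch on $X\times M$: the fibrewise ($X$-direction) integration converts the Mukai pairing $(w,w)$ on $X$ into the corresponding intersection number on $M$, the division by the similitude $\rho$ absorbing the only ambiguity. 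This would give $q(\theta_v(w)) = (w,w)$ on the nose, pinning down the normalization of the Beauville-Bogomolov form; and since both the Mukai pairing and the Beauville-Bogomolov form are deformation invariants (with $\theta_v$ varying in families, see below), it suffices to carry the computation out in any one model. Because the Mukai pairing is non-degenerate on $v^\perp$ when $v^2 > 0$ and has radical exactly $\Z v$ when $v^2 = 0$, the isometry property immediately forces $\theta_v$ to be injective on $v^\perp$, respectively on $v^\perp/v$.

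The remaining and genuinely hard point is surjectivity, which I would obtain by a deformation argument, viewing $\theta_v$ as a morphism of local systems. Over a base parameterizing deformations of the triple $(X, v, \omega)$ that keep $\omega$ generic with respect to $v$, the moduli spaces assemble into a smooth proper family (O'Grady, Huybrechts, Yoshioka), their degree-two integral cohomology forms a local system, and a relative quasi-universal family patches the individual Mukai homomorphisms into a single morphism of local systems $v^\perp \to H^2$. An integral morphism of local systems is surjective (indeed an isomorphism modulo its constant kernel) as soon as it is so on one fibre, so it suffices to verify surjectivity at a single convenient base point in each deformation class. By Remark \ref{rmk:SmoothnessFactorialityModuliSpace}(\ref{enum:DefoEquivalent}) the deformation class depends only on $v^2$, so I would take $v = (1,0,1-n)$ with $v^2 = 2n-2$, for which $M \cong \Hilb^n(X)$ and Beauville's description $H^2(\Hilb^n X,\Z)\cong H^2(X,\Z)\oplus\Z\delta$ exhibits the image of $\theta_v$ as all of $H^2$. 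The main obstacle is concentrated exactly in setting up this deformation: one must produce a family that stays inside the generic-polarization locus, connects the given $(X,v,\omega)$ to the Hilbert-scheme model within one deformation class, and carries a relative family inducing $\theta_v$ compatibly with the integral structure of the local system --- this is the technical core of Yoshioka's proof and the part that resists reduction to a formal computation. Granting it, surjectivity propagates from the Hilbert scheme to the general case and, together with the isometry and injectivity from the previous step, yields the theorem.
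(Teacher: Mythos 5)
First, a point of framing: the paper does not prove Theorem \ref{thm:YoshiokaSheaves} at all. It is stated as a quoted result, with the proof attributed to \cite[Sections 7 \& 8]{Yoshioka:Abelian} (see also \cite[Section 1.5]{GoettscheNakajimaYoshioka:KThDonaldson}); its only role in the paper is as an input, later transported through Fourier--Mukai equivalences in Theorem \ref{thm:Yoshioka}. So there is no internal proof to measure your attempt against, and the relevant comparison is with Yoshioka's own argument. Your outline does reproduce the shape of that argument: promote $\theta_v$ to a Hodge isometry of $v^\perp \subset H^*(X,\Z)$ onto $H^2(M,\Z)$, recover the N\'eron--Severi statement by restricting to $(1,1)$-classes, obtain injectivity from non-degeneracy of the Mukai pairing on $v^\perp$ (respectively $v^\perp/v$), and obtain surjectivity by deforming to the Hilbert-scheme model $v=(1,0,1-n)$, where Beauville's description of $H^2(\Hilb^n(X),\Z)$ finishes the job.

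However, as a proof your proposal has a genuine gap, and you flag it yourself: the entire weight rests on the deformation step --- a smooth proper family of moduli spaces over a connected base joining an arbitrary triple $(X,v,\omega)$, with $\omega$ generic, to the Hilbert-scheme point, carrying a relative quasi-universal family that assembles the fibrewise Mukai homomorphisms into a morphism of local systems --- and this is precisely what you ``grant''. That step is the mathematical content of \cite[Sections 7 \& 8]{Yoshioka:Abelian}, building on O'Grady and Huybrechts: it requires controlling non-emptiness and irreducibility along the family, staying inside the locus where the polarization remains generic with respect to $v$ as $X$ deforms, and dealing with the fact that quasi-universal families exist only up to equivalence and a Brauer-type obstruction, so globalizing one over the base is not formal. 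With that step assumed, what remains in your first two paragraphs is bookkeeping, so the proposal is a correct roadmap of the literature rather than a proof. Two secondary issues: your Grothendieck--Riemann--Roch computation cannot yield $q(\theta_v(w))=(w,w)$ ``on the nose'' without first fixing the Fujiki normalization of the Beauville--Bogomolov form, which is itself part of what must be proved (in Yoshioka's treatment the form comparison is likewise deduced from the Hilbert-scheme case by deformation invariance, not by a direct fibre integral); and the isotropic case $v^2=0$ is not reached by your deformation scheme, since the target model degenerates to $\Hilb^1(X)=X$ --- there $M$ is a K3 surface and the statement is Mukai's Hodge isometry for Fourier--Mukai partners \cite{Mukai:BundlesK3}, which needs its own argument.
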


\subsection*{Twisted K3 surfaces}
The results in the previous sections can be generalized to twisted K3 surfaces\footnote{For the basic theory of twisted K3 surfaces, we refer, for example, to \cite{Caldararu:Thesis}}.
Let $(X,\alpha)$ be a twisted K3 surface, with $\alpha\in\mathrm{Br}(X)$.

We denoted by $\Coh(X,\alpha)$ the category of $\alpha$-twisted coherent sheaves on $X$.
A twisted Chern character for elements of $\Coh(X,\alpha)$ has been defined\footnote{To be precise, the twisted Chern character and the twisted Mukai lattice depend on the choice of a B-field lift $\beta_0\in H^2(X,\Z)$ of $\alpha$. See also \cite[Remark 3.3]{HMS:generic_K3s} for further comments on this.} in \cite{HuybrechtsStellari:Twisted}.
It takes values in the algebraic part of the \emph{twisted Mukai lattice}; this will be denoted by $H^*_\alg(X,\alpha,\Z)$.

Given $\omega,\beta\in\NS(X)_\Q$ with $\omega$ ample, we can define a notion of stability as in the untwisted case.
Moduli of $\beta$-twisted $\omega$-Gieseker semistable $\alpha$-twisted sheaves share the same properties as untwisted sheaves (see \cite{Yoshioka:TwistedStability,Lieblich:Twisted}).
In particular, the twisted version of Theorem \ref{thm:YoshiokaNonEmptyness} is proved in \cite[Theorem 3.16]{Yoshioka:TwistedStability}, while Theorem \ref{thm:YoshiokaSheaves} (and Remark \ref{rmk:SmoothnessFactorialityModuliSpace},\eqref{enum:DefoEquivalent}) is proved in \cite[Theorem 3.19]{Yoshioka:TwistedStability}. Remark \ref{rmk:SmoothnessFactorialityModuliSpace},\eqref{enum:KLS} can be proved in a similar way.

\section{Review: Stability conditions and moduli spaces for objects on K3 surfaces}
\label{sec:reviewK3}

In this section we give a brief review of Bridgeland's results on stability conditions for K3
surfaces in \cite{Bridgeland:K3}, and of results by Toda, Yoshioka and others related to moduli spaces of
Bridgeland-stable objects.

\subsection*{Space of stability conditions for a K3 surface}
Let $X$ be a smooth projective K3 surface.
Fix $\omega,\beta\in\NS(X)_\Q$ with $\omega$ ample.

Let $\TT(\omega,\beta) \subset \Coh X$ be the subcategory of sheaves whose HN-filtrations factors (with respect to slope-stability)
have $\mu_{\omega, \beta} > 0$, and $\FF(\omega,\beta)$ the subcategory of sheaves with
HN-filtration factors satisfying $\mu_{\omega, \beta} \le 0$.
Next, consider the abelian category
\begin{equation*} 
\AA(\omega,\beta):=\left\{E\in\Db(X):\begin{array}{l}
\bullet\;\;\HH^p(E)=0\mbox{ for }p\not\in\{-1,0\},\\\bullet\;\;
\HH^{-1}(E)\in\FF(\omega,\beta),\\\bullet\;\;\HH^0(E)\in\TT(\omega,\beta)\end{array}\right\}
\end{equation*}
and the $\C$-linear map
\begin{equation} \label{eq:ZK3}
Z_{\omega,\beta} \colon K_{\num}(X)\to\C,\qquad E\mapsto(\exp{(\beta+\sqrt{-1}\omega)},v(E)).
\end{equation}
If $Z_{\omega,\beta}(F)\notin\R_{\leq0}$ for all spherical sheaves $F\in\Coh(X)$ (e.g., this holds
when $\omega^2>2$), then by \cite[Lemma 6.2, Prop.\ 7.1]{Bridgeland:K3},
the pair $\sigma_{\omega,\beta}=(Z_{\omega,\beta},\AA(\omega,\beta))$ defines a stability condition.
For objects $E \in \AA(\omega, \beta)$, we will denote their phase with respect to $\sigma_{\omega,
\beta}$ by $\phi_{\omega, \beta}(E) = \phi(Z(E)) \in (0, 1]$.
By using the support property, as proved in \cite[Proposition 10.3]{Bridgeland:K3}, we can extend the above and define stability conditions $\sigma_{\omega,\beta}$, for $\omega,\beta\in\NS(X)_\R$.

Denote by $U(X)\subset\Stab(X)$ the open subset consisting of the stability conditions
$\sigma_{\omega,\beta}$ just constructed up to the action of $\widetilde{\GL}_2(\R)$. It can also be
characterized as the open subset $U(X) \subset \Stab(X)$ consisting of stability conditions for
which the skyscraper sheaves $k(x)$ of points are stable of the same phase.
Let $\Stab^{\dagger}(X) \subset \Stab(X)$ be the connected component containing $U(X)$.
Let $\PP(X)\subset H^*_{\alg}(X)_\C$ be the subset consisting of vectors whose real and imaginary
parts span positive definite two-planes in $H^*_{\alg}(X)_\R$ with respect to the Mukai pairing.
It has two connected components, corresponding to the induced orientation of the two-plane.
Choose $\PP^+(X)\subset\PP(X)$ as the connected component containing the vector
$(1,i\omega,-\omega^2/2)$, for $\omega\in\mathrm{NS}(X)_\R$ the class of an ample divisor.
Furthermore, let $\Delta(X):=\{ s \in H^*_{\alg}(X, \Z) \colon s ^2=-2\}$ be the set of spherical 
classes, and, for $s \in\Delta$,
\[
s ^\perp_\C:=\stv{ \Omega\in H^*_{\alg}(X)_\C}{(\Omega,s)=0}.
\]
Finally, set
\[
\PP_0^+(X):=\PP^+(X) \setminus \underset{s\in\Delta(X)}{\bigcup}s^\perp_\C\subset H^*_\alg(X)_\C.
\]

Since the Mukai pairing $(\blank,\blank)$ is non-degenerate, we can define
$\eta(\sigma)\in H^*_{\alg}(X)_\C$ 
for a stability condition $\sigma=(Z,\PP)\in\Stab^\dagger(X)$ by 
\[
\ZZ (\sigma) (\blank) = \left(\blank,\eta(\sigma)\right).
\]

\begin{Thm}[Bridgeland]\label{thm:BridgelandK3}
The map $\eta\colon \Stab^\dagger(X)\to H^*_{\alg}(X)_\C$ is a covering map onto its image $\PP^+_0(X)$.
\end{Thm}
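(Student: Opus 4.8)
The plan is to deduce the statement from three facts: that $\eta$ is a local homeomorphism, that its image is contained in $\PP_0^+(X)$, and that the resulting local homeomorphism onto $\PP_0^+(X)$ is proper enough to lift paths. First I would note that, under the identification $\Hom(K_{\num}(X),\C)\cong H^*_{\alg}(X)_\C$ supplied by the non-degeneracy of the Mukai pairing, the map $\eta$ is exactly the map $\ZZ$ of Theorem \ref{thm:Bridgeland-deform} followed by this isomorphism. Hence $\eta$ is a local homeomorphism, and $\Stab^\dagger(X)$ is a complex manifold of the expected dimension. It therefore remains to pin down the image and to promote ``local homeomorphism'' to ``covering map''.

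Second, I would check the inclusion $\eta(\Stab^\dagger(X))\subseteq\PP^+(X)$. Writing $\eta(\sigma)=x+iy$ with $x,y\in H^*_{\alg}(X)_\R$, one has $\Re Z=(\blank,x)$ and $\Im Z=(\blank,y)$, so the claim is that the Mukai form is positive definite on the two-plane $\langle x,y\rangle$. On the open set $U(X)$ this is an explicit computation, since $\eta(\sigma_{\omega,\beta})=\exp(\beta+\sqrt{-1}\omega)$ has real and imaginary parts spanning a positive-definite plane lying in the chosen component $\PP^+(X)$ (for $\beta=0$ this is exactly the reference vector $(1,i\omega,-\omega^2/2)$). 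Because the Mukai form has signature $(2,\mathrm{rk}\,\NS(X))$ on $H^*_{\alg}(X)_\R$, and the span $\langle x,y\rangle$ cannot degenerate without contradicting the existence of Harder--Narasimhan filtrations and the support property, positive-definiteness is an open and closed condition on the connected component, hence holds throughout $\Stab^\dagger(X)$.

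Third, and this is the main obstacle, I would prove that $\eta\colon\Stab^\dagger(X)\to\PP_0^+(X)$ is a covering map, which simultaneously forces the image to avoid every spherical hyperplane $s^\perp_\C$ and to fill out all of $\PP_0^+(X)$. Since $\eta$ is a local homeomorphism and $\PP_0^+(X)$ is connected and locally contractible, it suffices to establish path-lifting. So let $\Omega_t$ be a path in $\PP_0^+(X)$ with a lift $\sigma_t$ defined on $[0,t_0)$; I must show $\sigma_t$ converges in $\Stab(X)$ as $t\to t_0$, after which the local homeomorphism property extends the lift. The heart of the argument is a \emph{uniform mass bound}: using the support property of condition (3) in the definition of a stability condition, together with the local finiteness of the set $\Delta(X)$ of spherical classes, one controls the Harder--Narasimhan filtrations along the path and shows that the mass of any semistable object of bounded class stays bounded precisely because $\Omega_{t_0}$ lies off $\bigcup_{s\in\Delta(X)}s^\perp_\C$. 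The only way a family of stability conditions can fail to converge is that the central charge $Z(s)$ of some spherical class tends to the real axis, so that the corresponding rigid spherical object destabilizes with unbounded mass; avoiding the spherical walls is exactly what excludes this. Conversely, the same mechanism shows that an endpoint $\Omega_{t_0}\in s^\perp_\C$ can never be reached, so $\eta$ does not map into the walls; surjectivity onto $\PP_0^+(X)$ then follows because the image is open (local homeomorphism) and closed (by the convergence just proved) in the connected set $\PP_0^+(X)$, and is nonempty.

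I expect the uniform mass estimate near the spherical walls to be the genuinely hard part: one must show that the support-property constant can be chosen uniformly along the lifted path, so that no class acquires unbounded mass and no new spherical object can destabilize in the limit unless $\Omega_t$ approaches some $s^\perp_\C$. This propagation of the support property, together with the accompanying control of how Harder--Narasimhan filtrations vary, is where the special geometry of the Mukai lattice of a K3 surface --- in particular the rigidity $\Ext^1(S,S)=0$ of spherical objects --- enters decisively.
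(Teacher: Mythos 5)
You should first be aware that the paper does not prove this statement at all: Theorem \ref{thm:BridgelandK3} is quoted from \cite{Bridgeland:K3}, and the only indication given of its proof is the remark that it ``relies on an explicit description of the boundary $\partial U(X)$ of $U(X)$'', i.e.\ on Theorem \ref{thm:BoundaryU}. Your topological skeleton --- local homeomorphism via Theorem \ref{thm:Bridgeland-deform}, path lifting to get the covering property, an open--closed argument for surjectivity --- is a reasonable way to organize such a proof, and your explicit check that $\eta(\sigma_{\omega,\beta})=\exp(\beta+\sqrt{-1}\omega)$ spans a positive-definite plane on $U(X)$ is correct. The problem is that both places where the actual content of the theorem lives are asserted rather than proved. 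In your step 2, positive-definiteness of the plane is an open condition but is \emph{not} closed for free: a limit of positive-definite planes can degenerate, and a degenerate plane only produces a \emph{real} vector in $\ker Z$. The positivity \eqref{eq:Zpositivity} does rule out a nonzero \emph{rational} class $w$ with $Z(w)=0$ (the HN factors of any object of class $w$ would have central charges in $\H$ summing to zero, which is impossible), but it says nothing about irrational degenerations, so ``open and closed on the component'' is not an argument. In your step 3, the claim that the lifted path can only fail to converge if $Z(s)$ degenerates for some spherical class $s$, together with the ``uniform mass bound'' away from the walls $s^\perp_\C$, \emph{is} the theorem; you correctly identify it as the hard part, but nothing in your sketch indicates where such a uniform support-property estimate would come from.

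In Bridgeland's actual proof these gaps are filled by concrete K3-geometric input that is absent from your proposal. First, for every $\sigma$ in the component and every spherical class $s\in\Delta(X)$ there exists a $\sigma$-semistable object of class $\pm s$ (this uses the existence of spherical sheaves in every $(-2)$-class and Mukai's inequality, $\mathrm{ext}^1$ of an object dominating that of its HN/JH factors, applied to rigid objects); combined with the support property this keeps $Z(s)$ bounded away from $0$ along any path, which is what actually rules out the walls $s^\perp_\C$. Second --- and this is the ingredient the paper explicitly flags --- the classification of generic boundary points of the geometric chamber $U(X)$ into types $(A^+)$, $(A^-)$, $(C_k)$ of Theorem \ref{thm:BoundaryU}, together with the observation that the associated autoequivalences (the spherical twists $\ST_A^{\pm1}$, resp.\ twists attached to the rational curve $C$) carry a neighborhood of such a boundary point back into $U(X)$. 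This shows that $\Stab^\dagger(X)$ is covered by translates of $\overline{U(X)}$ under autoequivalences, and on these translates positive-definiteness, avoidance of the spherical walls, and the lifting statements can be verified explicitly; Bridgeland then gets the covering property from a uniform local-injectivity criterion rather than a bare path-lifting argument (which, as stated, also needs point-set care). Without this boundary analysis, or a genuine substitute for the uniform estimate you defer, your proposal is a plan rather than a proof.
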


The proof of Theorem \ref{thm:BridgelandK3} relies on an explicit description of the boundary
$\partial U(X)$ of $U(X)$, see \cite[Theorem 12.1]{Bridgeland:K3}:

\begin{Thm}\label{thm:BoundaryU}
Suppose that $\sigma=(Z,\PP)\in\partial U(X)$ is a generic point of the boundary of $U(X)$.
Then exactly one of the following possibilities holds.
\begin{enumerate}
\item[$(A^+)$] There is a rank $r$ spherical vector bundle $A$ such that the only stable factors of
the objects $\{k(x)\colon x\in X\}$ in the stability condition $\sigma$ are $A$ and $\ST_A(k(x))$.
Thus, the Jordan-H\"older filtration of each $k(x)$ is given by
\[
0\to A^{\oplus r} \to k(x) \to \ST_A(k(x)) \to 0.
\]
\item[$(A^-)$] There is a rank $r$ spherical vector bundle $A$ such that the only stable factors of
the objects $\{k(x)\colon x\in X\}$ in the stability condition $\sigma$ are $A[2]$ and $\ST_A^{-1}(k(x))$.
Thus, the Jordan-H\"older filtration of each $k(x)$ is given by
\[
0\to \ST_A^{-1}(k(x)) \to k(x) \to A^{\oplus r}[2] \to 0.
\]
\item[$(C_k)$] There are a nonsingular rational curve $C\subset X$ and an integer $k$ such that $k(x)$ is stable in the stability condition $\sigma$, for $x\notin C$, and such that the Jordan-H\"older filtration of $k(x)$, for $x\in C$, is given by
\[
0 \to \OO_C(k+1) \to k(x) \to \OO_C(k)[1] \to 0.
\]
\end{enumerate}
\end{Thm}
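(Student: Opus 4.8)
The plan is to analyze the boundary $\partial U(X)$ as a wall for the point class $v=(0,0,1)$, and to read off the Jordan--H\"older filtration of the skyscrapers from the numerics of the Mukai pairing. First I would normalize, using the $\widetilde{\GL}^+_2(\R)$-action, so that $Z(k(x)) = -1$; then every skyscraper has phase $1$, and $U(X)$ is exactly the locus where all $k(x)$ are stable. Since all skyscrapers share the Mukai vector $(0,0,1)$, the decomposition of $\Stab^\dagger(X)$ into walls and chambers for this class (Proposition \ref{prop:chambers}) places $U(X)$ inside a single chamber, and its boundary inside the wall-set. Because semistability is a closed condition, a generic $\sigma \in \partial U(X)$ still has all $k(x)$ \emph{semi}stable of phase $1$, but at least one $k(x)$ now admits a Jordan--H\"older filtration with at least two stable factors, all of phase $1$. (Theorem \ref{thm:BridgelandK3} guarantees that such boundary points are genuine stability conditions in $\Stab^\dagger(X)$, so this analysis makes sense.)

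The key step is to constrain these stable factors numerically. The class $(0,0,1)$ is isotropic, $v(k(x))^2 = 0$. Writing the distinct stable factors as $S_1,\dots,S_m$ with multiplicities $n_1,\dots,n_m$, so that $v(k(x)) = \sum_j n_j\, v(S_j)$, I would use Serre duality on the K3 surface together with the fact that distinct stable objects of the same phase admit no nonzero maps: this gives $v(S_j)^2 \ge -2$ for each $j$ (with equality exactly when $S_j$ is spherical) and $(v(S_j), v(S_l)) = \mathrm{ext}^1(S_j, S_l) \ge 0$ for $j \ne l$. Expanding $0 = v(k(x))^2 = \sum_{j,l} n_j n_l (v(S_j), v(S_l))$, the only negative contributions are the terms $-2n_j^2$ coming from spherical factors; at a generic, i.e. codimension-one, wall this forces exactly one spherical stable factor $A$ (appearing with some multiplicity) together with a single complementary stable factor. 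Thus a spherical $\sigma$-stable object $A$ of phase $1$ destabilizes $k(x)$, and its existence and identification I would extract from Yoshioka's results (Theorem \ref{thm:YoshiokaNonEmptyness}) together with the rigidity of spherical classes.

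The trichotomy then comes from the geometric type of $A$. If $\rk A = r > 0$, then $A$ is a spherical vector bundle, and a semicontinuity argument (using Serre duality) shows it destabilizes \emph{every} skyscraper: one computes $\Hom^\bullet(A, k(x)) = \C^r$ concentrated in degree $0$, so that the defining triangle of the spherical twist from the Notation section reads $A^{\oplus r} \to k(x) \to \ST_A(k(x))$, and the numerical identity $(v(k(x)) - r\, v(A))^2 = 0$ confirms that the quotient $\ST_A(k(x))$ is again a phase-$1$ stable object; this is case $(A^+)$, while the Serre-dual configuration, where $A[2]$ is the extremal quotient, is case $(A^-)$. If instead $\rk A = 0$, then $A$ is supported on a curve, and sphericity forces that curve to be a single nonsingular rational curve $C$: by adjunction a line bundle on a curve of genus $g$ has self-intersection $2g-2$, which equals $-2$ only for $g = 0$, so $A \cong \OO_C(k)$ for some $k$. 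Here only the skyscrapers with $x \in C$ are destabilized, and the standard short exact sequence $0 \to \OO_C(k+1) \to k(x) \to \OO_C(k)[1] \to 0$ is the Jordan--H\"older filtration, giving case $(C_k)$.

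Finally, exclusivity and exhaustiveness follow because genericity of $\sigma$ confines it to a single wall, hence to a single spherical factor and exactly one of the three cases; the distinction between $(A^+)$ and $(A^-)$ reflects the two orientations built into the definition of $\PP^+(X)$. I expect the main obstacle to be the middle step: extracting from the isotropy identity $v(k(x))^2 = 0$ that precisely one spherical factor is responsible at a generic wall, and then proving the corresponding $x$-independence statement in cases $(A^\pm)$ --- namely that the \emph{same} bundle $A$ destabilizes all skyscrapers, with the exact multiplicity $r = \rk A$ dictated by $\dim \Hom(A, k(x)) = r$ --- as opposed to the purely curve-local behaviour in $(C_k)$.
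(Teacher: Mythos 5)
A preliminary remark on the comparison itself: the paper does not prove Theorem \ref{thm:BoundaryU} at all. It is quoted as a review statement from \cite[Theorem 12.1]{Bridgeland:K3}, and the only comment the paper makes about its proof is Remark \ref{rmk:twisted}, which notes that the sole geometric input to Bridgeland's argument is \cite[Proposition 2.14]{Mukai:BundlesK3}: a spherical torsion-free sheaf on a K3 surface is automatically locally free. So your proposal must be measured against Bridgeland's original argument. Its opening skeleton you reproduce correctly: on the boundary all $k(x)$ remain semistable of one phase, the distinct stable factors $S_j$ of that phase satisfy $v(S_j)^2 \ge -2$ (with equality exactly for spherical factors) and $(v(S_j),v(S_l)) = \mathrm{ext}^1(S_j,S_l) \ge 0$, and one expands $0 = v(k(x))^2$.

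The middle step, however --- which you yourself flag as the main obstacle --- contains a genuine error and several unproved claims. Your assertion that genericity forces ``exactly one spherical stable factor $A$ together with a single complementary stable factor'' is contradicted by case $(C_k)$ of the very theorem you are proving: there \emph{both} stable factors $\OO_C(k+1)$ and $\OO_C(k)[1]$ are spherical, with Mukai vectors of square $-2$ pairing to $+2$, which is exactly how the expansion of $v(k(x))^2=0$ can vanish. The lattice computation alone also does not exclude the configuration with no spherical factor at all (all factors isotropic and mutually orthogonal); ruling this out, and pinning down the precise list of configurations, is where Bridgeland's genericity analysis does real work that cannot be replaced by arithmetic of the Mukai pairing. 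Second, you assume the positive-rank spherical factor is a vector bundle, but a stable factor is a priori only an object of the heart, i.e.\ a two-term complex: Bridgeland must first show it is a torsion-free sheaf and only then invoke Mukai's proposition to get local freeness --- precisely the step the paper singles out in Remark \ref{rmk:twisted}; the rank-zero case likewise needs purity and support arguments before sphericity plus adjunction identifies the factor as $\OO_C(k)$ on a smooth rational curve. Third, stability of $\ST_A(k(x))$ does not follow from the numerical identity $(v(k(x)) - r\,v(A))^2 = 0$; isotropy of a class never implies stability of an object. Two smaller points: invoking Theorem \ref{thm:YoshiokaNonEmptyness} for the ``existence'' of $A$ is misplaced (the destabilizing stable factor exists tautologically once $k(x)$ is strictly semistable; the issue is its identification), and the distinction between $(A^+)$ and $(A^-)$ has nothing to do with the two orientations in the definition of $\PP^+(X)$ --- the orientation is fixed once and for all on $\Stab^\dagger(X)$; the two cases are distinguished by whether $A$ occurs as a subobject of $k(x)$ or $A[2]$ as a quotient.
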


A \emph{generic point of the boundary} is a stability condition
which lies on one wall only (in the sense of Proposition \ref{prop:chambers}). 
We also recall that $\ST_A$ denotes the spherical twist functor of \cite{Seidel-Thomas:braid} associated to
the spherical object $A$.

\begin{Rem} \label{rmk:Cknegative}
In the boundary of type $(C_k)$, the Mukai vectors of the stable factors of
$k(x)$ span a negative semi-definite plane in $H^*_{\alg}(X)_\R$.
\end{Rem}

\begin{Rem}\label{rmk:twisted}
As proven in \cite[Section 3.1]{HMS:generic_K3s}, the
results stated in this section extend without any difference to the case of twisted K3 surfaces:
Let $(X,\alpha)$ be a twisted K3 surface, $\alpha\in\mathrm{Br}(X)$.
Following \emph{loc. cit.}, we define the following objects analogously to the untwisted case:
$\Stab^\dagger(X,\alpha)$, $U(X,\alpha)$, $\PP_0^+(X,\alpha)$.
The statement corresponding to Theorem \ref{thm:BridgelandK3} is \cite[Proposition 3.10]{HMS:generic_K3s}.
Only Theorem \ref{thm:BoundaryU} is not treated explicitly in \cite[Section 3.1]{HMS:generic_K3s}.
However, the only geometric statement used in the proof in \cite[Section 12]{Bridgeland:K3} is 
\cite[Proposition 2.14]{Mukai:BundlesK3}, which states that a spherical torsion-free sheaf on a
K3 surface is automatically locally free. Mukai's proof carries over without change.
\end{Rem}

\subsection*{Moduli stacks of semistable objects}\label{subsec:ModuliStacks}

Fix $\sigma=(Z,\AA)\in\Stab(X)$, $\phi\in\R$, and $v\in H^*_{\alg}(X,\Z)$.
We let $\mathfrak{M}_{\sigma}(v,\phi)$ be the moduli stack of flat families of
$\sigma$-semistable objects of class $v$ and phase $\phi$:
its objects are given by complexes $\EE\in\mathrm{D}_{S\text{-}\mathrm{perf}}(S\times X)$
whose restrictions $\EE_s$ belong to $\PP(\phi)$ and have Mukai vector $v$, for all closed points
$s\in S$.  We will often omit $\phi$ from the notation; in fact, by acting with an element of $\C$, as in Remark \ref{rmk:GroupAction}, and by using Lemma \ref{lem:Yukinobualgebraic}, we can always assume $\phi=1$ and $\sigma$ algebraic.

Based on results in \cite{Inaba, Lieblich:mother-of-all} on the stack of objects in $\Db(X)$,
the following theorem is proved in \cite[Theorem 1.4 and Section 3]{Toda:K3}:

\begin{Thm}[Toda]\label{thm:TodaThmA}
Let $X$ be a K3 surface and let $\sigma\in\Stab^\dagger(X)$.
Then $\sigma$-stability is an open property and $\mathfrak{M}_{\sigma}(v,\phi)$ is an Artin stack of finite type over $\C$.
\end{Thm}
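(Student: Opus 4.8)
The plan is to exhibit $\mathfrak{M}_{\sigma}(v,\phi)$ as an open substack of finite type inside the stack $\mathfrak{M}$ of all objects $E\in\Db(X)$ with $\Ext^{<0}(E,E)=0$, which is already known to be an Artin stack locally of finite type over $\C$ by the work of Lieblich \cite{Lieblich:mother-of-all} and Inaba \cite{Inaba}. Two things then remain: that $\sigma$-(semi)stability is an open condition in flat families, and that the resulting substack is of finite type, i.e.\ that the $\sigma$-semistable objects of fixed class $v$ form a bounded family. By Remark \ref{rmk:GroupAction} and Lemma \ref{lem:Yukinobualgebraic} I would first act by $\C\subset\widetilde{\GL}^+_2(\R)$ and replace $\sigma$ by a nearby algebraic stability condition, so that $\phi=1$, $Z(v)=-1$, the heart $\AA$ is Noetherian by \cite[Proposition~5.0.1]{Abramovich-Polishchuk:t-structures}, and the image of $Z$ is a discrete lattice in $\C$; this costs nothing, since it does not alter the set of (semi)stable objects of class $v$.

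For openness, let $\EE$ be a flat family over a finite-type base $S$ with at least one $\sigma$-semistable fiber of class $v$. That all fibers lie in the heart is encoded by $\EE\in\AA_S$ through the constant family of t-structures of Theorem \ref{thm:APP}, so this is the locus I start from. For semistability I use the discreteness of $\im Z$ together with the support property: if $F\hookrightarrow\EE_s$ destabilizes, then $\Im Z(F)$ lies between $0$ and $\Im Z(v)$, and the support property bounds $\lVert F\rVert\le C\lvert Z(F)\rvert$, so only finitely many classes $w$ can occur as classes of destabilizing sub-objects. For each such $w$ the locus of $s\in S$ for which $\EE_s$ admits a sub-object of class $w$ and strictly larger phase is closed; this is the heart of the openness statement and follows, in the Noetherian family $\AA_S$, by the same semicontinuity and relative-$\Hom$ techniques as in \cite[Section~9]{Bridgeland:K3}. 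Being a finite union of closed sets, the destabilizing locus is closed, so the semistable locus is open; the stable locus is cut out by the further open condition that no proper sub-object has equal phase, and openness of stability follows.

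The remaining, and genuinely harder, point is boundedness. Since $\Stab^{\dagger}(X)$ is connected and the autoequivalence and $\widetilde{\GL}^+_2(\R)$ actions carry $\mathfrak{M}_{\sigma}(v)$ isomorphically onto a moduli stack for a translated stability condition and class, I would reduce to the geometric chamber, i.e.\ to $\sigma=\sigma_{\omega,\beta}$ with heart $\AA(\omega,\beta)$. For such $\sigma$ a semistable object $E$ has cohomology $\HH^{-1}(E)\in\FF(\omega,\beta)$ and $\HH^0(E)\in\TT(\omega,\beta)$, and the support property together with $Z(E)=-1$ bounds the Mukai vectors of these two sheaves and of the slope-Harder--Narasimhan factors of each; boundedness then follows from the classical boundedness of slope-semistable (hence Gieseker-semistable) sheaves with bounded invariants. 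To pass from the geometric chamber to an arbitrary $\sigma\in\Stab^{\dagger}(X)$ I would propagate boundedness along a path, controlling the Harder--Narasimhan and Jordan--H\"older factors that can appear by their (bounded) central charges.

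I expect this boundedness step to be the main obstacle: unlike for Gieseker stability there is no GIT input, and one must bound genuine two-term complexes uniformly as $\sigma$ moves and crosses walls. The support property (condition (3) in the definition of a full stability condition) is the decisive tool, since it converts the a priori bound on $\lvert Z(-)\rvert$ for semistable objects into a bound on all numerical invariants, and hence prevents the class of any destabilizing or Harder--Narasimhan factor from escaping to infinity.
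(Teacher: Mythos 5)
The paper does not actually prove this statement: it is quoted as Toda's theorem, with the proof attributed to \cite[Theorem 1.4 and Section 3]{Toda:K3}, building on the results of Inaba and Lieblich on the stack of objects of $\Db(X)$. So the only meaningful comparison is with that cited argument, and your outline follows its architecture faithfully: realize $\MMM_{\sigma}(v,\phi)$ as an open, finite-type substack of the Lieblich--Inaba stack, prove openness of (semi)stability, and prove boundedness by reducing to the geometric/Gieseker chamber and then propagating along paths in $\Stab^\dagger(X)$. Your identification of boundedness as the crux, and of the support property as the tool that confines the classes of HN/JH factors to a finite set, is exactly right; note that the pure group-action reduction to $U(X)$ is not available (that would be Bridgeland's conjecture), so the path-propagation you mention is not optional but is the actual mechanism.

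There is, however, a gap in your openness step. After your normalization $Z(v)=-1$, $\phi=1$, any object of the heart $\AA$ with class $v$ is \emph{automatically} semistable: its HN factors have non-negative imaginary central charge summing to $\Im Z(v)=0$, hence all have phase $1$, hence there is only one of them. Consequently, openness of semistability \emph{within flat families} --- which is what your argument addresses, since you ``start from'' $\EE\in\AA_S$ --- is vacuous, and a subobject of ``strictly larger phase'' never exists inside the heart. The genuine content of the claim is that the locus $\stv{s\in S}{\EE_s\in\PP(1),\ v(\EE_s)=v}$ is open for an \emph{arbitrary} $S$-perfect family $\EE$, i.e.\ that membership in the heart is an open condition on fibers; this is precisely what is needed for $\MMM_{\sigma}(v,\phi)$ to be an open substack of the Lieblich--Inaba stack, and it is where the real work (via the explicit tilted hearts $\AA(\omega,\beta)$ and propagation across $\Stab^\dagger(X)$) lies. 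A second, smaller point: to get finitely many possible classes of proper subobjects $F\subsetneq \EE_s$ of equal phase, you need $\abs{Z(F)}$ bounded, not just $\Im Z(F)$; this does hold, since $F$ and $\EE_s/F$ both lie in $\AA$ with $\Im Z=0$, hence both are semistable of phase $1$ with real negative central charge, forcing $\Re Z(F)\in(-1,0)$ --- but that half of the estimate is missing from your argument as written, and without it the support property gives no finiteness.
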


Let $\MMM^s_{\sigma}(v,\phi)\subseteq\MMM_{\sigma}(v,\phi)$ be the open substack parameterizing stable objects.
Inaba proved in \cite{Inaba} that $\MMM^s_{\sigma}(v,\phi)$ is a $\mathbb{G}_m$-gerbe over a
symplectic algebraic space $M^s_{\sigma}(v,\phi)$. Toda's proof is based on the following statement,
which will also need directly:

\begin{Lem}\label{lem:properness}
Fix $\phi\in\R$ and $v\in H^*_{\alg}(X, \Z)$.
\begin{enumerate}
\item The moduli stack $\MMM_\sigma(v,\phi)$ satisfies the valuative criterion of universal
closedness.
\item\label{case:ProperToda} Assume that $\MMM_\sigma(v,\phi)=\MMM^s_\sigma(v,\phi)$. Then the coarse moduli space $M_{\sigma}(v,\phi)$ is a proper algebraic space.
\end{enumerate}
\end{Lem}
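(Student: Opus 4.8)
The plan is to follow the strategy of Toda \cite{Toda:K3}, handling the two parts separately. By Lemma \ref{lem:Yukinobualgebraic} and the $\C$-action of Remark \ref{rmk:GroupAction} I may assume that $\sigma=(Z,\AA)$ is algebraic and $\phi=1$; then $\AA$ is Noetherian and the image $Z(K_{\num}(X))$ is a discrete lattice in $\C$.

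For part (1) I would verify the valuative criterion of universal closedness directly. Let $R$ be a DVR with fraction field $K$ and residue field $k$, and let $\EE_K$ be a flat family over $\Spec K$ of $\sigma$-semistable objects of class $v$ and phase $1$; since base change by a finite ramified extension $R\subset R'$ is permitted, it suffices to extend $\EE_K$ to a family over $\Spec R$ whose central fibre is $\sigma$-semistable. As the stack of all objects of $\Db(X)$ is algebraic and locally of finite type \cite{Lieblich:mother-of-all,Inaba}, the family $\EE_K$ extends, after such a base change, to some $\EE\in\Db(X\times\Spec R)$; applying the Abramovich--Polishchuk constant t-structure of Theorem \ref{thm:APP} and replacing $\EE$ by its cohomology, I may further assume $\EE\in\AA_{\Spec R}$. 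The point is that membership in $\AA_{\Spec R}$ does \emph{not} force the central fibre $\EE_k$ to lie in $\AA$, i.e.\ $\EE$ need not be flat in the sense of Definition \ref{def:flatness}. Once $\EE_k\in\AA$, however, $\sigma$-semistability of phase $1$ is automatic: since $Z(\EE_k)=Z(v)\in\R_{<0}$, the positivity property \eqref{eq:Zpositivity} forces every subobject and quotient of $\EE_k$ in $\AA$ to have phase exactly $1$, so that $\EE_k\in\PP(1)$.

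It thus remains to modify $\EE$, without changing its generic fibre, until $\EE_k$ lies in $\AA$. This is achieved by a Langton-type sequence of elementary modifications along the central fibre, each reducing the contribution $\HH^{-1}_{\AA}(\EE_k)$ of $\AA[1]$ to the central fibre. The crux, and the main obstacle, is to prove that this process terminates: one must produce a bounded monovariant that strictly decreases at each step. The two inputs are the discreteness of the lattice $Z(K_{\num}(X))$ for algebraic $\sigma$, and the boundedness of the classes of the objects occurring in the modifications, supplied by the support property; together these bound the possible central charges of the modifying subobjects and force the procedure to stop after finitely many steps, producing a family whose central fibre is flat and hence $\sigma$-semistable.

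For part (2), under the hypothesis $\MMM_\sigma(v,\phi)=\MMM^s_\sigma(v,\phi)$ the stack is a $\mathbb{G}_m$-gerbe over the algebraic space $M_\sigma(v,\phi)$, so it is enough to check that $M_\sigma(v,\phi)$ is of finite type, universally closed, and separated. Finite type is part of Theorem \ref{thm:TodaThmA}, and universal closedness descends from part (1). For separatedness I would verify the valuative criterion: given two families $\EE,\EE'$ over $\Spec R$ with $\EE_K\cong\EE'_K$ and with $\sigma$-stable central fibres, the generic isomorphism defines a nonzero homomorphism over $K$; spreading it out over $R$ and restricting to $k$ gives a nonzero map $\EE_k\to\EE'_k$ between $\sigma$-stable objects of the same phase and Mukai vector $v$. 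Since a nonzero morphism between $\sigma$-stable objects of the same phase is an isomorphism, $\EE$ and $\EE'$ agree on the central fibre and the limit is unique. Hence $M_\sigma(v,\phi)$ is separated, and being of finite type and universally closed as well, it is proper.
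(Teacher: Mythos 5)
Before anything else, be aware of what the paper's own proof is: after the same normalization you make ($\sigma$ algebraic, $\phi=1$, $Z(v)\in\R_{<0}$, so that $\AA$ is Noetherian by \cite[Proposition 5.0.1]{Abramovich-Polishchuk:t-structures}), it consists of a single citation of \cite[Theorem 4.1.1]{Abramovich-Polishchuk:t-structures}, which is precisely the valuative criterion (existence of semistable limits, uniqueness of stable ones) for a Noetherian heart with discrete central charge. Your proposal is therefore an attempt to \emph{reprove} the cited theorem, and it stalls exactly at the step you yourself call the crux. The reduction is fine, and your observation that once $\EE_k\in\AA$ semistability is automatic (since $Z(\EE_k)=Z(v)\in\R_{<0}$ forces every subobject and quotient to have phase $1$) is correct and is indeed why the problem becomes purely one of flatness of the limit. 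But the termination of the Langton-type modification procedure is asserted, not proved. Discreteness of $Z(K_{\num}(X))$ does show that the charges of the possible modifying objects lie in a \emph{finite} set (they are negative reals between $Z(v)$ and $0$), yet finiteness of the available charges does not by itself prevent an infinite, possibly cyclic, sequence of modifications: one must exhibit a quantity that strictly decreases at each step and is bounded, or show that the chain $\EE=\EE^0\supset\EE^1\supset\cdots$ of modifications stabilizes. This is the entire mathematical content of part (1); in \cite{Abramovich-Polishchuk:t-structures} it is done by a delicate analysis of the descending chain inside the constant heart over $\Spec R$, using Noetherianity in an essential way and then deriving a contradiction with semistability of the generic fibre. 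Saying that ``discreteness and the support property force the procedure to stop'' restates the conclusion; it is not an argument.

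Two smaller points. First, Theorem \ref{thm:APP} as stated in the paper applies only to base schemes of finite type over $\C$, whereas $\Spec R$ for a DVR $R$ (e.g.\ $R=\C[[t]]$) is not of finite type; the constant t-structure over a DVR is available in \cite{Abramovich-Polishchuk:t-structures}, but it is not what \ref{thm:APP} asserts, so your appeal to it is out of scope as written. Second, in your separatedness argument, spreading out the generic isomorphism to a morphism $f\colon\EE\to\EE'$ over $R$ does \emph{not} automatically give a map that is nonzero on the closed fibre; $f|_k$ may vanish. One must first divide $f$ by the largest power of the uniformizer $\pi$, using that $\Hom(\EE,\EE')$ is a finitely generated torsion-free $R$-module together with the exact triangle $\EE'\xrightarrow{\pi}\EE'\to\EE'_k$, before invoking the fact that a nonzero morphism between stable objects of the same phase and class is an isomorphism. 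This fix is standard, but the step as you wrote it can fail.
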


\begin{Prf}
As remarked above, we can assume that $\phi=1$, $Z(v)=-1$, and that $\sigma$ is algebraic.
As a consequence, $\AA$ is Noetherian. 
In this case, \cite[Theorem 4.1.1]{Abramovich-Polishchuk:t-structures} implies
the lemma.
\end{Prf}

\subsection*{Moduli spaces of semistable objects}

We generalize the results in Section \ref{sec:ReviewGieseker} to Bridgeland stability.
The key fact is a comparison between Bridgeland and Gieseker stability when the polarization is ``large''.

\begin{Thm}[{\cite[Proposition 14.1]{Bridgeland:K3} and \cite[Section 6]{Toda:K3}}] \label{thm:BridgelandToda}
Let $v\in H^*_\alg(X,\Z)$, and let $\beta\in\NS(X)_\Q$, $H\in\NS(X)$ be classes with $H$ ample and $\mu_{H, \beta}(v) > 0$.
If we set $\omega=tH$, then $M_{\sigma_{\omega, \beta}}(v) = M_H^\beta(v)$ for $t\gg0$. 
\end{Thm}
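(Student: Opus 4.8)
The plan is to prove this as a \emph{large-volume limit} statement: as $t\to\infty$ the heart stays fixed while the central charge degenerates, and the induced order of phases refines to the lexicographic order (first slope, then twisted Hilbert polynomial) that defines $\beta$-twisted $H$-Gieseker stability.

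First I would record the reductions. Since $\mu_{tH,\beta}$-slope stability on $\Coh X$ depends only on the ray $\R_{>0}\cdot H$, the tilted heart $\AA(tH,\beta)$ does not depend on $t>0$; write $\AA$ for it. Expanding the central charge \eqref{eq:ZK3} with $\omega=tH$, an object with Mukai vector $(r,c,s)$ satisfies
\[
\Im Z_{tH,\beta}=t\,H.(c-r\beta),\qquad \Re Z_{tH,\beta}=\frac{t^2H^2}{2}\,r+O(1),
\]
with the $O(1)$ term a fixed expression in the invariants. Note that $\mu_{H,\beta}(v)>0$ forces $r(v)>0$.

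Next I would carry out the asymptotic comparison of phases. For two objects of $\AA$ with positive imaginary central charge, the Bridgeland slope $-\Re Z_{tH,\beta}/\Im Z_{tH,\beta}$ is, to leading order in $t$, an increasing function of the $\beta$-twisted slope $\mu_{H,\beta}$, and when the slopes coincide its next term compares the degrees $s$, i.e.\ the subleading coefficient of the twisted Hilbert polynomial. Hence for each fixed pair of classes the $\sigma_{tH,\beta}$-order of phases agrees, for $t\gg0$, with the Gieseker order. From this I would deduce: (i) every $\beta$-twisted $H$-Gieseker semistable sheaf $E$ of class $v$ lies in $\TT(\omega,\beta)\subset\AA$ (its slope-HN factors have positive slope) and is $\sigma_{tH,\beta}$-semistable for $t\gg0$; and (ii) conversely any $\sigma_{tH,\beta}$-semistable object $E$ of class $v$ is a genuine sheaf for $t\gg0$. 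For (ii), if $\HH^{-1}(E)\neq0$ then $\HH^{-1}(E)[1]$ is a nonzero subobject of $E$ in $\AA$ whose phase tends to $1$ (its ``rank'' is negative, so $\Re Z_{tH,\beta}\to-\infty$), while $\phi_{\omega,\beta}(E)\to0$ because $r(v)>0$; this contradicts semistability, forcing $\HH^{-1}(E)=0$.

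The substance of the argument --- and the step I expect to be the main obstacle --- is to make all of this \emph{uniform} in $t$: for a fixed pair of classes large $t$ is immediate, but a priori the invariants of a destabilizing sub- or quotient object are unbounded, so one must exclude an infinite sequence of walls accumulating at $t=\infty$. I would handle this exactly as in \cite[Prop.\ 14.1]{Bridgeland:K3} and \cite[Section 6]{Toda:K3}: the support property (equivalently a Bogomolov-type inequality) bounds the discriminants, hence the finitely many numerical classes, of possible $\sigma_{tH,\beta}$-destabilizers of $v$, so the walls met by the ray $\{\sigma_{tH,\beta}\}_{t>0}$ are bounded above (Proposition \ref{prop:chambers}); past the largest one the set of semistable objects is constant. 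Together with boundedness of the family of Gieseker-semistable sheaves of class $v$, this produces a single $t_0$ beyond which the two notions of semistability, their Jordan--H\"older filtrations, and hence their $S$-equivalence relations coincide (each stable Jordan--H\"older factor again has positive twisted slope, so is itself a sheaf by (ii)). Finally I would identify the coarse spaces: once all $\sigma_{tH,\beta}$-semistable objects are sheaves, the two moduli functors coincide --- a flat family in the sense of Definition \ref{def:flatness} with sheaf fibers is precisely a flat family of sheaves --- so the projective coarse moduli space $M_H^\beta(v)$ from Section \ref{sec:ReviewGieseker} is also $M_{\sigma_{tH,\beta}}(v)$ for $t\gg0$. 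The phase expansion and the cohomology-sheaf vanishing are formal; only the boundedness of walls as $t\to\infty$ requires real input.
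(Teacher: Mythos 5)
The paper offers no proof of this statement: it is quoted as a review theorem directly from \cite[Proposition 14.1]{Bridgeland:K3} and \cite[Section 6]{Toda:K3}, and your sketch reproduces precisely the argument of those sources --- the large-$t$ expansion of $Z_{tH,\beta}$ matching the lexicographic (slope, then reduced twisted Hilbert polynomial) order, the $\HH^{-1}(E)[1]$-subobject argument forcing $\sigma_{tH,\beta}$-semistable objects of class $v$ to be sheaves, and the support-property/Bogomolov-type bound guaranteeing that the walls met by the ray $\{\sigma_{tH,\beta}\}_{t>0}$ do not accumulate at $t=\infty$. Your approach is therefore essentially the same as the one the paper relies on; note that the paper later makes the crucial uniformity step you flagged explicit in one direction, via the finite set $D_v$ of Definition \ref{def:Dv} and Lemma \ref{lem:Giesekerchamber} (Kawatani's argument), which turns the qualitative ``$t\gg 0$'' into the quantitative bound of Corollary \ref{Cor:amplecone}.
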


We will give a precise bound for $t$ in Corollary \ref{Cor:amplecone}.
The following generalizes Theorem \ref{thm:YoshiokaNonEmptyness}:

\begin{Thm}[Toda, Yoshioka]\label{thm:nonempty}
Let $v\in H^*_{\alg}(X, \Z)$.
Assume that $v=mv_0$, with $m\in\Z_{>0}$ and $v_0$ a primitive vector with $v_0^2 \ge -2$.
Then $\MMM_\sigma(v,\phi)(\C)$ is non-empty for all $\sigma = (Z, \AA) \in\Stab^\dagger(X)$ and
all $\phi \in \R$ with $Z(v) \in \R_{>0}\cdot e^{i\phi \pi}$.
\end{Thm}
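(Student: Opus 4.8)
The plan is to show that, for the relevant phase, the locus
\[
V(v):=\{\sigma\in\Stab^\dagger(X):\MMM_\sigma(v,\phi)(\C)\neq\emptyset\}
\]
is all of the connected manifold $\Stab^\dagger(X)$, by producing one non-empty instance through Gieseker theory and then propagating across walls. First I would make three reductions. Since a $\sigma$-semistable object $E$ of class $v_0$ yields the strictly semistable object $E^{\oplus m}$ of class $mv_0$, it suffices to treat $m=1$, i.e. $v=v_0$ primitive with $v_0^2\ge-2$. Using the $\C$-action of Remark \ref{rmk:GroupAction} together with Lemma \ref{lem:Yukinobualgebraic}, I may assume $\phi=1$, $Z(v)=-1$, and $\sigma$ algebraic. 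Finally, non-emptiness of $\MMM_\sigma(v)$ is invariant under the replacement $(\sigma,v)\mapsto(\Phi(\sigma),\Phi_*v)$ for any autoequivalence $\Phi$ preserving $\Stab^\dagger(X)$, so I would apply a suitable composition of shifts and spherical twists $\ST_S$ (which do preserve $\Stab^\dagger(X)$) to arrange that $v_0$ is a \emph{positive} vector in the sense of Definition \ref{def:YoshiokaPositive}. This rests on the lattice-theoretic fact that the Weyl group generated by reflections in spherical $(-2)$-classes, together with $\pm\id$, carries any class of square $\ge-2$ into the positive cone.

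With $v_0$ positive, the base case is immediate. By Theorem \ref{thm:BridgelandToda}, for $\omega=tH$ with $t\gg0$ and $\beta$ chosen so that $\mu_{H,\beta}(v)>0$, the Bridgeland moduli space $M_{\sigma_{\omega,\beta}}(v)$ coincides with the Gieseker moduli space $M_H^\beta(v)$, which is non-empty by Yoshioka's Theorem \ref{thm:YoshiokaNonEmptyness}. Hence $V(v)$ contains the entire large-volume chamber, and in particular $V(v)\neq\emptyset$.

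To propagate I would show that $V(v)$ is both closed and open, so that connectedness of $\Stab^\dagger(X)$ forces $V(v)=\Stab^\dagger(X)$. Closedness is formal: by the local finiteness of the wall-and-chamber decomposition (Proposition \ref{prop:chambers}), the set of $\sigma$-semistable objects is constant on each chamber $\CC$, and since semistability of a fixed object is a closed condition in $\Stab(X)$, an object semistable on $\CC$ stays semistable on the closure $\overline{\CC}$; thus $V(v)$ is a locally finite union of the closed sets $\overline{\CC}$. For openness it is enough to cross a single wall: given $\sigma_0$ on a wall $W$ carrying a $\sigma_0$-semistable object of class $v$, I must produce $\sigma_\pm$-semistable objects of class $v$ in the two adjacent chambers. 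When there is a $\sigma_0$-\emph{stable} object of class $v$, this is easy, since stability is open (Theorem \ref{thm:TodaThmA}) and such an object remains stable on both sides.

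The main obstacle is exactly the remaining case of a \emph{totally semistable} wall, where every $\sigma_0$-semistable object of class $v$ is strictly semistable. Here one must reconstruct a $\sigma_\pm$-semistable object of class $v$ from the finitely many $\sigma_0$-stable Jordan--H\"older factors $A_1,\dots,A_k$ of such an object; these stay $\sigma_\pm$-stable near $W$ but acquire distinct phases, so no naive iterated extension of them is $\sigma_\pm$-semistable. The classes $[A_i]$ span a sublattice of $H^*_{\alg}(X,\Z)$ of rank at most two on which the Mukai form is hyperbolic or negative (semi-)definite (compare Remark \ref{rmk:Cknegative}), and the construction of the required object of class $v$ is controlled by the spherical classes inside this sublattice, typically via an iterated spherical twist of one of the factors. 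This is the technical heart of the argument, and is the same phenomenon analysed systematically in \cite{BM:walls}; once it is in place, openness follows and the proof is complete.
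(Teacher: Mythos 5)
There is a genuine gap at exactly the step you flag as ``the technical heart'': openness of $V(v)$ at a totally semistable wall. Your open-and-closed argument works only if, at every wall $W$ and generic $\sigma_0 \in W$, you can produce $\sigma_\pm$-semistable objects of class $v$ from the $\sigma_0$-semistable ones; when no $\sigma_0$-stable object of class $v$ exists you have no construction, and deferring to \cite{BM:walls} is not available: that is later work which builds on the present paper, and the treatment of totally semistable walls there is itself a substantial theory of spherical twists associated to walls, not a lemma you can quote inside this proof. As written, your proposal establishes non-emptiness only on the union of chambers that can be connected to the Gieseker chamber without crossing a totally semistable wall, which is strictly weaker than the theorem.

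The paper avoids wall-by-wall propagation entirely, and this is the idea your proposal is missing: it uses Toda's counting (Joyce) invariant $J(v)$. After reducing to $v$ primitive and $\sigma$ generic with respect to $v$ (so that every semistable object is stable and the coarse moduli space $M_{\sigma}(v)$ is a proper algebraic space, by Lemma \ref{lem:properness}), the invariant $J(v)$ is the motivic invariant of $M_{\sigma}(v)$. By \cite[Theorem 1.4]{Toda:K3}, $J(v)$ is independent of $\sigma$ --- this deformation invariance is precisely what handles totally semistable walls, since a wall-crossing-invariant quantity cannot be non-zero in one adjacent chamber and zero in the other --- and it is also invariant under autoequivalences of $\Db(X)$. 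The paper then applies shifts, line-bundle twists and the spherical twist $\ST_{\OO}$ to make $v$ positive, uses Theorem \ref{thm:BridgelandToda} to identify $J(v)$ with the motivic invariant of the Gieseker moduli space $M_H(v)$ for generic $H$, and invokes Theorem \ref{thm:YoshiokaNonEmptyness} to conclude $M_H(v) \neq \emptyset$, hence $J(v) \neq 0$, hence $\MMM_\sigma(v,\phi)(\C) \neq \emptyset$ for every $\sigma$. Your reductions (to $m=1$, and to a positive vector via autoequivalences) do match the paper's, but without a wall-crossing-invariant counting invariant the propagation step of your argument cannot be completed.
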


\begin{Prf}
Since we are interested in semistable objects, we can assume that $v=v_0$ is primitive.
Also, since being semistable is a closed condition on $\Stab(X)$, we can assume that $\sigma$ is
generic with respect to $v$, so that every $\sigma$-semistable object of class $v$ is stable.
Then the Joyce invariant $J(v)$ of \cite{Toda:K3} is the motivic invariant of the 
proper coarse moduli space $M_{\sigma}(v)$.

By \cite[Theorem 1.4]{Toda:K3}, $J(v)$ does not depend on $\sigma$, and it is invariant under autoequivalences of $\Db(X)$.
Hence, up to acting by the shift functor, tensoring with a line bundle, and the spherical twist 
$\ST_{\OO}$, we can assume that $v$ is positive,
and, by using Theorem \ref{thm:BridgelandToda}, that $J(v)$ is equal to the motivic invariant of the moduli space $M_H(v)$ of Gieseker
stable sheaves on $X$ with Mukai vector $v$, for a generic polarization $H$.
Since $v$ is positive, Theorem \ref{thm:YoshiokaNonEmptyness} shows that $M_H(v)$ is non-empty.
Hence, $J(v)$ is non-trivial, and so $\MMM_\sigma(w,\phi)(\C)$ is non-empty for all $\sigma$.
\end{Prf}

By Theorem \ref{thm:nonempty} and \cite{Inaba:Symplectic}, we get the following corollary, which generalizes Remark \ref{rmk:SmoothnessFactorialityModuliSpace},\eqref{enum:DefoEquivalent}.

\begin{Cor}\label{cor:nonempty}
Let $v\in H^*_{\alg}(X, \Z)$ be a primitive vector with $v^2 \ge -2$, and let $\sigma\in\Stab^\dagger(X)$ be a generic stability condition with respect to $v$.
Then $M_{\sigma}(v)$ is non-empty, consists of stable objects, and it is a smooth proper symplectic algebraic space of dimension $v^2+2$.
\end{Cor}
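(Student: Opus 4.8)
The plan is to assemble the four assertions of the corollary (non-emptiness, stability, properness, and smoothness together with the symplectic structure and the dimension count) from the results already collected above, treating each in turn. The key preliminary observation is that, since $v$ is primitive and $\sigma$ is generic with respect to $v$, the remark following Proposition \ref{prop:chambers} guarantees that there are no strictly $\sigma$-semistable objects of class $v$: the existence of one would force $\sigma$ to lie on a wall. Hence every $\sigma$-semistable object of class $v$ is in fact $\sigma$-stable, so that $\MMM_\sigma(v) = \MMM^s_\sigma(v)$; this settles the stability claim and will be used in the remaining steps. Non-emptiness is then immediate from Theorem \ref{thm:nonempty}, applied with $m = 1$ and $v_0 = v$ (legitimate since $v^2 \ge -2$).

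Next I would deduce properness. Because $\MMM_\sigma(v) = \MMM^s_\sigma(v)$, Lemma \ref{lem:properness}\eqref{case:ProperToda} applies directly and shows that the coarse moduli space $M_\sigma(v)$ is a proper algebraic space. Recall that, by Inaba's result quoted above, $\MMM^s_\sigma(v)$ is a $\mathbb{G}_m$-gerbe over a symplectic algebraic space $M^s_\sigma(v)$ of finite type over $\C$, so a coarse space exists and carries the symplectic structure; combined with the stability already established, this identifies $M_\sigma(v) = M^s_\sigma(v)$ as the proper algebraic space in question.

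The remaining content—smoothness, the symplectic form, and the dimension formula—is local on the moduli space and can be read off from the deformation theory of a single stable object, using that $X$ is a K3 surface so $\omega_X \cong \OO_X$. For a $\sigma$-stable object $E$ of Mukai vector $v$, stability forces $E$ to be simple, so $\Hom(E,E) = \C$, and Serre duality gives $\Ext^2(E,E) \cong \Hom(E,E)^* = \C$ together with a non-degenerate alternating pairing on $\Ext^1(E,E)$, which is the origin of the holomorphic symplectic form. Smoothness follows because the obstruction space, namely the trace-free part of $\Ext^2(E,E)$, vanishes; this is precisely the content imported from \cite{Inaba:Symplectic}. The dimension is then computed from the Euler characteristic and the Mukai pairing:
\[
\dim \Ext^1(E,E) - \dim \Hom(E,E) - \dim \Ext^2(E,E) = -\chi(E,E) = (v,v) = v^2,
\]
so the tangent space $\Ext^1(E,E)$ has dimension $v^2 + 2$, and smoothness yields $\dim M_\sigma(v) = v^2 + 2$.

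The only genuinely external inputs are the non-emptiness of Theorem \ref{thm:nonempty}—whose proof deforms to a Gieseker moduli space on a Fourier--Mukai partner via deformation-invariance of the Joyce counting invariant—and the smoothness and symplectic structure of \cite{Inaba:Symplectic}. With these in hand, the remaining steps are the standard Mukai-style bookkeeping sketched above, so I expect no serious obstacle: the conceptual difficulty has already been absorbed into the two cited results, and the corollary follows by combining them.
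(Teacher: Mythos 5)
Your proposal is correct and follows essentially the same route as the paper, which derives the corollary directly from Theorem \ref{thm:nonempty} (non-emptiness), the remark after Proposition \ref{prop:chambers} (primitivity plus genericity excludes strictly semistable objects), Lemma \ref{lem:properness} (properness of the coarse space), and Inaba's results in \cite{Inaba:Symplectic} (smoothness, the symplectic form, and the dimension $v^2+2$). The only difference is that you spell out the deformation-theoretic bookkeeping ($\Hom(E,E)=\Ext^2(E,E)=\C$ by stability and Serre duality, hence $\dim\Ext^1(E,E)=v^2+2$) that the paper leaves implicit in the citation of \cite{Inaba:Symplectic}.
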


Finally, we need the following re-writing of Theorem \ref{thm:YoshiokaSheaves}.
The Mukai homomorphism, as defined in Definition \ref{def:MukaiHom}, is well-defined for Bridgeland stability as well, and denoted in the same way, $\theta_v \colon v^\perp \to \mathrm{NS}(M_{\sigma}(v))$.

\begin{Thm}\label{thm:Yoshioka}
Let $(X,\alpha)$ be a twisted K3 surface.
Let $v\in H^*_{\alg}(X,\alpha,\Z)$ be a primitive vector with $v^2 \ge 0$.
Let $\sigma\in\Stab^\dagger(X,\alpha)$ be a generic stability condition with respect to $v$.
Assume that there exist a K3 surface $X'$, a Brauer class $\alpha'\in\mathrm{Br}(X')$, a polarization
$H'\in\mathrm{NS}(X')$, and a derived equivalence $\Phi\colon \Db(X,\alpha)\to\Db(X',\alpha')$ such that
\begin{enumerate}
\item $v'=\Phi(v)$ is positive,
\item $H'$ is generic with respect to $v'$, and
\item $M_{\Phi(\sigma)}(v')$ consists of twisted $H'$-Gieseker stable sheaves on $(X',\alpha')$.
\end{enumerate}
Then $M_{\sigma}(v)$ is an irreducible symplectic projective manifold, and the Mukai homomorphism induces an isomorphism
\begin{itemize}
\item $\theta_v\colon v^\perp \xrightarrow{\sim} \mathrm{NS}(M_{\sigma}(v))$, if $v^2>0$;
\item $\theta_v\colon v^\perp/v \xrightarrow{\sim} \mathrm{NS}(M_{\sigma}(v))$, if $v^2=0$.
\end{itemize}
Under this isomorphism, the quadratic Beauville-Bogomolov form for $\mathrm{NS}(M_{\sigma}(v))$
coincides with the quadratic form of the Mukai pairing on $X$.
\end{Thm}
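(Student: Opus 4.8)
The plan is to transport the statement through the derived equivalence $\Phi$ to the space $M_{\Phi(\sigma)}(v')$, which by hypothesis parameterizes twisted Gieseker-stable sheaves, and then apply the twisted analogue of Theorem~\ref{thm:YoshiokaSheaves}. First I would show that $\Phi$ induces an isomorphism $M_{\sigma}(v) \xrightarrow{\sim} M_{\Phi(\sigma)}(v')$. By the description of the $\Aut$-action on $\Stab(X,\alpha)$ in Remark~\ref{rmk:GroupAction}, an object $E$ is $\sigma$-(semi)stable of class $v$ if and only if $\Phi(E)$ is $\Phi(\sigma)$-(semi)stable of class $v'=\Phi(v)$. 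Since $\Phi$ is a Fourier--Mukai equivalence it operates on families, carrying a Bridgeland-flat family $\EE$ over $S$ to a Bridgeland-flat family of $\Phi(\sigma)$-semistable objects of class $v'$ over $S$, and the quasi-inverse carries families back; this yields an isomorphism of the Artin moduli stacks $\MMM_{\sigma}(v) \cong \MMM_{\Phi(\sigma)}(v')$.

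By hypothesis (3), every object of $\MMM_{\Phi(\sigma)}(v')$ is a twisted $H'$-Gieseker stable sheaf. I would check that the Bridgeland and Gieseker moduli problems then coincide as stacks, not merely set-theoretically: a Bridgeland-flat family all of whose fibers are sheaves is an $S$-flat family of sheaves, so the two moduli functors agree. Consequently $\MMM_{\Phi(\sigma)}(v')$ is a $\mathbb{G}_m$-gerbe over the projective Gieseker moduli space, and, by the isomorphism of stacks, so is $\MMM_{\sigma}(v)$; hence $M_{\sigma}(v)$ exists and is isomorphic to $M_{\Phi(\sigma)}(v')$. Hypotheses (1) and (2)---that $v'$ is positive and $H'$ generic with respect to $v'$---now let me invoke the twisted version of Theorem~\ref{thm:YoshiokaSheaves} (see \cite[Theorem 3.19]{Yoshioka:TwistedStability} together with Remark~\ref{rmk:SmoothnessFactorialityModuliSpace}): $M_{\Phi(\sigma)}(v')$ is an irreducible symplectic projective manifold, $\theta_{v'}$ is an isomorphism onto $\NS$ (onto $\NS$ modulo $v'$ when $(v')^2=0$), and its quadratic form is the Mukai pairing. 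Transporting this structure back proves the corresponding statement for $M_{\sigma}(v)$.

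For the Mukai homomorphism of $M_{\sigma}(v)$ itself, I would use that a derived equivalence is an isometry of the twisted Mukai lattices, so $\Phi$ restricts to an isomorphism $v^\perp \xrightarrow{\sim} (v')^\perp$ (respectively $v^\perp/v \to (v')^\perp/v'$, since $v^2=(v')^2$). If $\EE$ is a quasi-universal family on $M_{\sigma}(v)\times X$, then its image under $\Phi$ is one on $M_{\Phi(\sigma)}(v')\times X'$ and $\Phi_{\Phi(\EE)}(\OO_C)=\Phi\bigl(\Phi_{\EE}(\OO_C)\bigr)$, so the defining formula \eqref{eq:MukaiMukai} gives, for $w\in v^\perp$,
\[
\theta_{v'}(\Phi(w)).C = \frac{1}{\rho}\bigl(\Phi(w),\Phi(\Phi_{\EE}(\OO_C))\bigr) = \frac{1}{\rho}\bigl(w,\Phi_{\EE}(\OO_C)\bigr) = \theta_v(w).C.
\]
Thus $\theta_v=\theta_{v'}\circ\Phi$ under the identification of N\'eron--Severi groups from the first step, and since $\theta_{v'}$ is an isomorphism and $\Phi$ an isometry, $\theta_v$ is an isomorphism with the stated source whose quadratic form is the Mukai pairing.

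The step I expect to be the main obstacle is the first one: verifying carefully that the Fourier--Mukai equivalence genuinely induces an isomorphism of the moduli \emph{stacks}---in particular the compatibility of Bridgeland-flatness with $\Phi$ and its quasi-inverse, and the identification of a Bridgeland moduli space all of whose points are sheaves with a Gieseker moduli space as a stack. Once this geometric identification is secured, the lattice-theoretic and Beauville--Bogomolov assertions follow formally from $\Phi$ being an isometry.
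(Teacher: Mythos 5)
Your proposal is correct and is essentially the paper's own argument: the paper's entire proof is the one-line observation that everything (moduli stacks, the Mukai homomorphism, the Beauville--Bogomolov form) is compatible with Fourier--Mukai equivalences, so the statement follows by transporting the twisted version of Theorem \ref{thm:YoshiokaSheaves} through $\Phi$ to $M_{\Phi(\sigma)}(v')$, exactly as you do. The one ingredient you use without justification is that the abstract derived equivalence $\Phi$ operates on families, i.e., is representable by a kernel; this is precisely what the paper's citations to Orlov's representability theorem and its twisted analogue by Canonaco--Stellari supply, and you should include that reference at the start of your first step.
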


\begin{Prf}
Since everything is compatible with Fourier-Mukai equivalences, this follows from \cite{Orlov:representability,Canonaco-Stellari}.
\end{Prf}

\section{K3 surfaces: Projectivity of moduli spaces}
\label{sec:ProjK3}

Let $X$ be a smooth projective K3 surface, and let $v\in H^*_{\alg}(X,\Z)$.

In the recent preprint \cite{MYY2}, Minamide, Yanagida, and Yoshioka proved the following: if $\NS(X)\cong\Z$
and $\sigma \in \Stab^\dagger(X)$ is a generic stability condition with respect to $v$,
then there exist another K3 surface $Y$, a Brauer class $\alpha\in\mathrm{Br}(Y)$, and a derived equivalence
$\Phi\colon \Db(X)\to\Db(Y,\alpha)$ such that the moduli stack $\MMM_{\sigma}(v)$ is isomorphic to a moduli stack of (twisted) Gieseker semistable sheaves on $(Y,\alpha)$ via $\Phi$.

In this section, we improve their argument, and we remove the assumption on the rank of the
N\'eron-Severi group.  As a consequence, the divisor class $\ell_{\sigma}$ will give an ample divisor on
the coarse moduli space.

We write $v=mv_0\in H^*_{\alg}(X, \Z)$, where $m\in\Z_{>0}$, and $v_0=(r,c,s)$ is primitive with $v_0^2
\ge -2$.  We start by examining the cases in which $v_0^2\leq0$.

\begin{Lem}\label{lem:spherical}
Assume that $v_0^2=-2$.
Then, for all $\sigma\in\Stab^\dagger(X)$ generic with respect to $v$, $\MMM_\sigma(v)$ admits a coarse moduli space $M_{\sigma}(v)$ consisting of a single point.
\end{Lem}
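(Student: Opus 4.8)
The plan is to show that for generic $\sigma$ every $\sigma$-semistable object of class $v=mv_0$ is $S$-equivalent to $A^{\oplus m}$ for a single $\sigma$-stable spherical object $A$ of class $v_0$; since the closed points of a coarse moduli space are exactly the $S$-equivalence classes, a single class forces $M_\sigma(v)$ to be a reduced point, and existence is then trivial (the one-point space $\Spec\C$ satisfies the universal property). After normalizing $Z(v_0)=-1$ via the $\C$-action of Remark \ref{rmk:GroupAction}, Theorem \ref{thm:nonempty} provides at least one $\sigma$-semistable object $E$ of class $v$ and phase $1$, so I would reduce everything to analyzing its Jordan--H\"older filtration. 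Its stable factors $A_1,\dots,A_k$ lie in $\PP(1)$, hence share the phase of $v_0$, and their classes $w_i:=v(A_i)$ satisfy $\sum_i w_i=mv_0$ with each $Z(w_i)\in\R_{>0}\cdot Z(v_0)$.

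The crux — and the step I expect to be the main obstacle — is deducing from genericity that each $w_i$ is a positive integer multiple of $v_0$. Indeed, if some $w_i$ were not proportional to $v_0$, then $w_i$ and $v_0$ would be linearly independent in $H^*_\alg(X)_\R$, and the equation $\Im\bigl(Z(w_i)\overline{Z(v_0)}\bigr)=0$ would cut out a genuine real-codimension-one locus through $\sigma$ on which $A_i$ and $E$ have equal phase. By Theorem \ref{thm:Bridgeland-deform} the central charge deforms freely, so off this locus the factors of $E$ no longer all have the same phase and $E$ becomes unstable, exhibiting $\sigma$ as a point of a wall for $v$ in the sense of Proposition \ref{prop:chambers}, contradicting Definition \ref{def:generic}. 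The delicate point I must make precise here is that a subquotient proportional to $v_0$ never produces a wall (it keeps the phase of $v$ on both sides), so that ``non-proportional to $v_0$'' is exactly the wall-defining condition for the non-primitive class $v$. Granting this, $w_i\in\Q_{>0}\cdot v_0$, and primitivity of $v_0$ together with integrality of $w_i$ gives $w_i=n_iv_0$ with $n_i\in\Z_{>0}$ and $\sum_i n_i=m$.

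The remaining steps are purely numerical. Each $A_i$ is $\sigma$-stable, hence simple, so $\dim\Hom(A_i,A_i)=1$ and, by Serre duality on the K3 surface, $\dim\Ext^2(A_i,A_i)=1$; combined with $\chi(A_i,A_i)=-(n_iv_0,n_iv_0)=2n_i^2$ this yields $\dim\Ext^1(A_i,A_i)=2-2n_i^2\ge 0$, forcing $n_i=1$, so every factor is a $\sigma$-stable object of class $v_0$. Finally, for any two $\sigma$-stable objects $A,A'$ of class $v_0$ and phase $1$ one has $\chi(A,A')=-(v_0,v_0)=2$, whence $\dim\Hom(A,A')+\dim\Hom(A',A)\ge 2>0$ by Serre duality; since a nonzero morphism between stable objects of equal phase is an isomorphism, $A\cong A'$. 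Thus all factors $A_i$ are isomorphic to one spherical object $A$, every semistable $E$ of class $v$ is $S$-equivalent to $A^{\oplus m}$, and $M_\sigma(v)$ consists of a single point.
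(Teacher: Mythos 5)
Your proof is correct, and its skeleton is the same as the paper's: produce a unique $\sigma$-stable spherical object $A$ of class $v_0$ and show that every semistable object of class $v$ has all its Jordan--H\"older factors isomorphic to $A$. The difference is in the supporting inputs. The paper's proof is shorter because it quotes Corollary \ref{cor:nonempty} for the primitive class $v_0$: since $v_0^2+2=0$, that corollary (which packages Toda--Yoshioka non-emptiness together with Inaba's smoothness result) immediately gives that $\MMM_\sigma(v_0)$ is a $\mathbb{G}_m$-gerbe over a single point, with $E_0$ spherical and rigid; for $m>1$ it observes that $v^2<-2$ excludes stable objects of class $v$, and concludes by induction that every semistable object is literally $E_0^{\oplus m}$, using $\Ext^1(E_0,E_0)=0$ to split all iterated extensions. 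You instead invoke only Theorem \ref{thm:nonempty} (non-emptiness for $v$ itself) and rederive by hand what the corollary provides: uniqueness of the stable object of class $v_0$ via $\chi(A,A')=2>0$, Serre duality and Schur's lemma, and exclusion of factors of class $n_iv_0$, $n_i>1$, via $\mathrm{ext}^1(A_i,A_i)=2-2n_i^2\ge 0$. Your route buys independence from Inaba's theorem at the cost of some elementary Mukai-pairing computations; both are standard and correct.

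Two remarks on the step you rightly flag as delicate, namely that genericity forces every Jordan--H\"older factor class $w_i$ to lie in $\Q_{>0}\cdot v_0$. First, the paper uses exactly the same fact, silently, inside its ``by induction'' step, so you have not introduced a gap that the paper's proof avoids. Second, your justification that ``$E$ becomes unstable'' after deforming off the alignment locus is only immediate when the offending factor sits at the sub- or quotient-end of the filtration (then $\Hom(A_1,E)\neq 0$, resp.\ $\Hom(E,A_k)\neq 0$, contradicts the phase inequality); for a middle factor a nonzero map to or from $E$ is not automatic. The airtight phrasing uses the sentence following Proposition \ref{prop:chambers}: within the open chamber containing $\sigma$ the Jordan--H\"older filtration of $E$ is constant, so the alignment $Z'(w_i)\in\R_{>0}\cdot Z'(v)$ holds for an open set of central charges $Z'$ (Theorem \ref{thm:Bridgeland-deform}), which forces $w_i$ and $v$ to be linearly dependent. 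Finally, once all factors are identified with $A$, it is worth adding the paper's last observation: $\Ext^1(A,A)=0$ makes every semistable object isomorphic, not merely $S$-equivalent, to $A^{\oplus m}$, so $\MMM_\sigma(v)$ has a single geometric point and the universal property of $\Spec \C$ as coarse moduli space is immediate.
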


\begin{Prf}
Corollary \ref{cor:nonempty} shows that for all generic $\sigma\in\Stab^\dagger(X)$, the stack
$\MMM_{\sigma}(v_0)=\MMM^s_{\sigma}(v_0)\neq\emptyset$ is a $\mathbb{G}_m$-gerbe over a point.  The
corresponding object $E_0$ is spherical, and in particular admits no non-trivial self-extensions.
If $m>1$, then $v^2 < -2$ shows that there cannot exist any stable object with vector $v$. By
induction, every semistable object with Mukai vector $v$ must be of the form $E_0^{\oplus m}$.
\end{Prf}

\begin{Lem}\label{lem:isotropic}
Assume that $v_0^2=0$. Let $\sigma\in\Stab^\dagger(X)$ be a generic stability condition with respect to $v$.
\begin{enumerate}
\item\label{case:vprimit} For $m=1$, $M_{\sigma}(v_0)$ is a smooth projective K3 surface, and there exist a class $\alpha\in\mathrm{Br}(M_{\sigma}(v_0))$ and a derived equivalence
\[
\Phi_{\sigma,v_0}\colon \Db(X)\xrightarrow{\sim}\Db(M_{\sigma}(v_0),\alpha).
\]
\item\label{case:vnonprimit} For $m>1$, a coarse moduli space $M_{\sigma}(v)$ exists and
\[
M_{\sigma}(v) \cong \mathrm{Sym}^m\left( M_{\sigma}(v_0) \right).
\]
\end{enumerate}
\end{Lem}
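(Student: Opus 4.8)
The plan is to handle the primitive case $m=1$ and the non-primitive case $m>1$ separately, using Corollary \ref{cor:nonempty} as the common starting point: since $v_0$ is primitive with $v_0^2=0$ and $\sigma$ is generic, $M_\sigma(v_0)$ is a non-empty smooth proper symplectic algebraic space of dimension $v_0^2+2=2$ consisting entirely of stable objects.

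For \eqref{case:vprimit}, write $M:=M_\sigma(v_0)$. The $\mathbb{G}_m$-gerbe $\MMM^s_\sigma(v_0)\to M$ from Corollary \ref{cor:nonempty} determines a Brauer class $\alpha\in\mathrm{Br}(M)$ and an $\alpha$-twisted (quasi-)universal family $\EE$ on $M\times X$, hence a Fourier--Mukai functor $\Phi_\EE\colon\Db(M,\alpha)\to\Db(X)$ with $\Phi_\EE(k(y))=E_y$, the stable object parameterized by $y$. The crucial input is that the family $\{E_y\}_{y\in M}$ is \emph{orthogonal}: for $y\neq y'$ we have $\Hom(E_y,E_{y'})=0$ (distinct stable objects of equal phase) and, by Serre duality on the K3 surface $X$, $\Ext^2(E_y,E_{y'})=\Hom(E_{y'},E_y)^\vee=0$, while $\chi(E_y,E_{y'})=-(v_0,v_0)=0$ then forces $\Ext^1(E_y,E_{y'})=0$; for $y=y'$ stability gives $\Hom(E_y,E_y)=\C$. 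These vanishings are exactly the hypotheses of the spanning-class criterion for Fourier--Mukai kernels, so $\Phi_\EE$ is fully faithful, and since $\omega_X\cong\OO_X$ it is an equivalence. Setting $\Phi_{\sigma,v_0}:=\Phi_\EE^{-1}$ gives the asserted equivalence $\Db(X)\xrightarrow{\sim}\Db(M,\alpha)$. I would then conclude that $M$ is a projective K3 surface: it is a smooth proper symplectic surface, so $\omega_M\cong\OO_M$, and being a twisted Fourier--Mukai partner of the K3 surface $X$ forces its Hodge structure to be that of a K3 rather than an abelian surface, while projectivity follows from the ample class produced by the Positivity Lemma \ref{Positivity}, which is strictly positive on every curve because distinct stable objects are never $S$-equivalent.

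For \eqref{case:vnonprimit}, I would bootstrap from the equivalence of \eqref{case:vprimit}. Writing $Y:=M_\sigma(v_0)$, the equivalence $\Phi_{\sigma,v_0}$ sends each stable object $E_y$ of class $v_0$ to the twisted skyscraper $k(y)$; hence the transported stability condition $\tau:=\Phi_{\sigma,v_0}(\sigma)$ makes all $k(y)$ stable of a common phase, i.e.\ $\tau\in U(Y,\alpha)$ in the sense of Remark \ref{rmk:twisted}. Under such a geometric $\tau$ the semistable objects of class $m v_0$ correspond to the length-$m$ zero-dimensional (twisted) sheaves of phase one, and by Theorem \ref{thm:BridgelandToda} their moduli coincides with a Gieseker moduli space of zero-dimensional sheaves. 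Every such sheaf is $S$-equivalent to a polystable one $\bigoplus_i k(y_i)^{\oplus a_i}$ with $\sum_i a_i=m$, so the $S$-equivalence classes are parameterized by $\mathrm{Sym}^m Y$; the coarse moduli space therefore exists and, transporting back, $M_\sigma(mv_0)\cong\mathrm{Sym}^m\bigl(M_\sigma(v_0)\bigr)$.

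The main obstacle is the Fourier--Mukai step in \eqref{case:vprimit}: proving cleanly, in the twisted setting, that the orthogonality computation yields an honest equivalence and that the resulting algebraic space $M_\sigma(v_0)$ is a \emph{projective} K3 surface rather than merely a smooth proper symplectic algebraic space. Once this is in place part \eqref{case:vnonprimit} is essentially formal, its only delicate point being the upgrade of the set-theoretic bijection $M_\sigma(mv_0)\leftrightarrow\mathrm{Sym}^m Y$ to an isomorphism of coarse moduli spaces.
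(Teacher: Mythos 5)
Your route is essentially the paper's own: Corollary \ref{cor:nonempty}, Lieblich's Brauer class with a twisted quasi-universal family, and the Bondal--Orlov/Bridgeland criterion applied to the pairwise-orthogonal family $\{E_y\}$ for part \eqref{case:vprimit} (the paper cites \cite{Mukai:BundlesK3}, \cite{Caldararu:NonFineModuliSpaces}, \cite{Lieblich:mother-of-all}, \cite{BondalOrlov:Main}, \cite{Bridgeland:EqFMT} for precisely the computation you write out), followed for part \eqref{case:vnonprimit} by transport along $\Phi_{\sigma,v_0}$ to zero-dimensional twisted sheaves of length $m$, as in \cite{MYY2}.

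The genuine gap is your projectivity step. The Positivity Lemma \ref{Positivity} gives a \emph{nef} class $\ell_\sigma$ that is positive on every curve (this part is correct: stable objects of the same phase are $S$-equivalent only if isomorphic), but on a surface ``nef and positive on every curve'' does not imply ample: Nakai--Moishezon also requires $\ell_\sigma^2>0$, and Mumford's classical example of a divisor on a ruled surface with positive degree on every curve but self-intersection zero shows this hypothesis cannot be dropped. Nothing in your argument excludes this possibility, and you cannot invoke Theorem \ref{thm:Yoshioka} to get $q(\ell_\sigma)=w_\sigma^2>0$, because its hypotheses presuppose an identification of $M_\sigma(v_0)$ with a projective moduli space of Gieseker-stable sheaves --- the very thing being established. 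The repair, which is what the paper implicitly uses, needs no positivity input at all: by Corollary \ref{cor:nonempty}, $M_\sigma(v_0)$ is a smooth proper two-dimensional algebraic space, hence a scheme, and every smooth proper surface is projective. A second, minor, inaccuracy: Theorem \ref{thm:BridgelandToda} is a large-volume-limit statement for classes of positive rank and slope, so it does not apply verbatim to your geometric stability condition $\tau$ and the rank-zero class $\Phi_{\sigma,v_0}(mv_0)$; the statement you actually need --- that $\tau$-semistable objects of class $(0,0,m)$ and phase one are exactly the length-$m$ zero-dimensional sheaves --- follows directly from the definition of the heart $\AA(\omega',\beta')$, so this requires only re-routing the citation, not a new idea.
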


\begin{Prf}
Corollary \ref{cor:nonempty} shows the non-emptiness.
The fact that $M_{\sigma}(v_0)$ is a smooth projective K3 surface and the derived equivalence is a
classical result of Mukai and C{\u{a}}ld{\u{a}}raru \cite{Mukai:BundlesK3, Caldararu:NonFineModuliSpaces} for stable sheaves.
This can be generalized to stable complexes as follows.
Again by Corollary \ref{cor:nonempty}, $M_{\sigma}(v_0)$ is smooth projective symplectic surface.
By \cite{Lieblich:mother-of-all}, it also comes equipped with a torsion class in its Brauer group.
The equivalence $\Phi_{\sigma,v_0}$ follows now from \cite{BondalOrlov:Main, Bridgeland:EqFMT}.
This shows \eqref{case:vprimit}.

The proof of \eqref{case:vnonprimit} follows now as in \cite{MYY2}.
Indeed, clearly $\MMM_{\sigma}(v)\neq\emptyset$, and the derived equivalence $\Phi$ maps any complex in $\MMM_{\sigma}(v)(\C)$ in a torsion sheaf on $M_{\sigma}(v_0)$ of dimension $0$ and length $m$.
\end{Prf}

We can now prove Theorem \ref{thm:ProjK3}, based on an idea of Minamide, Yanagida, and Yoshioka.  By
Lemma \ref{lem:spherical} and Lemma \ref{lem:isotropic}, we can restrict to the case $v^2>0$.

The following result is proved in \cite[Sections 4.1 and 3.4]{MYY2} for abelian surfaces, and for
K3 surfaces of Picard rank one. For the convenience of the reader, we give a self-contained proof
for arbitrary K3 surfaces:

\begin{Lem}\label{lem:MYY}
Let $\sigma = (Z_\sigma, \AA_\sigma)$ be a generic stability condition with respect to $v$,
lying inside a chamber $\CC$ with respect to $v$.
Then $\CC$ contains a dense subset of 
stability conditions $\tau = (Z_{\tau}, \AA_\tau)$ for which there
exists a primitive Mukai vector $w$ with $w^2=0$ such that:
\begin{enumerate}
\item \label{enum:isotropic}
$Z_{\tau}(w)$ and $Z_{\tau}(v)$ lie on the same ray in the complex plane.
\item \label{enum:stable}
All $\tau$-semistable objects with Mukai vector $w$ are stable, and
$M_{\tau}(w)$ is a smooth projective K3 surface.
\end{enumerate}
\end{Lem}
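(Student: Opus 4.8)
The plan is to pass to the period domain and combine an arithmetic density statement for isotropic classes with an elementary transversality argument. By Theorem~\ref{thm:BridgelandK3}, $\eta$ identifies a neighbourhood of any stability condition in $\Stab^\dagger(X)$ with an open subset of $\PP_0^+(X)\subset H^*_{\alg}(X)_\C$, so I would work throughout with the period $\Omega=\eta(\tau)=x+iy$, where $x=\Re\Omega$ and $y=\Im\Omega$ span a positive-definite plane $P_\Omega$ for the Mukai pairing. Under this identification $Z_\tau(u)=(u,\Omega)$, and the phase of $Z_\tau(u)$ is the argument of the orthogonal projection of $u$ onto the oriented plane $P_\Omega$; condition~\eqref{enum:isotropic} therefore says precisely that $w$ and $v$ have the same projection ray in $P_\Omega$. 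For condition~\eqref{enum:stable} it suffices, by Lemma~\ref{lem:isotropic}\eqref{case:vprimit}, to produce a \emph{primitive} isotropic $w$ for which $\tau$ is generic; since walls for $w$ are locally finite by Proposition~\ref{prop:chambers}, genericity with respect to $w$ is an open dense condition and will be arranged at the end, so the real content is the existence of a primitive isotropic $w$ satisfying~\eqref{enum:isotropic} for $\tau$ dense in $\CC$.

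The key arithmetic input I would isolate is that \emph{primitive integral isotropic classes are dense, in the real topology, in the real isotropic cone} $\{u\in H^*_{\alg}(X)_\R : (u,u)=0\}$. This holds because the Mukai lattice is already isotropic over $\Q$ --- for example $(1,0,0)^2=0$ --- so the projective quadric $\{(u,u)=0\}$ carries a rational point; projecting from that point yields a $\Q$-birational parametrisation of the quadric by a projective space, and the density of rational points among real points of projective space pulls back to density of rational points on the quadric. Rescaling a rational isotropic class to its primitive integral generator preserves isotropy, which gives the claim.

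Now fix any $\tau_0\in\CC$ with period $\Omega_0$, normalise $Z_{\tau_0}(v)=-1$ using the action of Remark~\ref{rmk:GroupAction}, and fix a small neighbourhood $B$ of $\Omega_0$; I must produce an admissible $\tau$ with period in $B$. The classes projecting to the ray of $v$ in $P_{\Omega_0}$ fill the positive half of the hyperplane $\Pi_{\Omega_0}=R^\perp$, where $R\in P_{\Omega_0}$ is the vector obtained by rotating $\mathrm{proj}_{P_{\Omega_0}}(v)$ by a right angle inside $P_{\Omega_0}$; since $R^2>0$, the space $\Pi_{\Omega_0}$ has signature $(1,\rho)$ with $\rho=\rk\NS(X)\ge 1$, so its real isotropic cone is nonempty. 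I would select a real isotropic $w_0\in\Pi_{\Omega_0}$ in the positive half, approximate its direction by a rational one using the density above, and take the primitive integral isotropic representative $w$. After the normalisation, condition~\eqref{enum:isotropic} reads $f_w(\Omega):=\Im\bigl((w,\Omega)\overline{(v,\Omega)}\bigr)=0$ together with a positivity inequality, and $f_w(\Omega_0)$ is as close to $f_{w_0}(\Omega_0)=0$ as we wish. A one-line computation shows that the gradient of $f_w$ in the $\Im\Omega$ directions is, at $\Omega_0$, the class $(w,x)\,v+w$, which is nonzero because $v$ and $w$ are linearly independent (as $v^2>0$ while $w^2=0$). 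Thus $f_w$ is a submersion at $\Omega_0$, the hypersurface $\{f_w=0\}$ meets $B$, and on it the positivity inequality persists by continuity; choosing a point of $\{f_w=0\}\cap B$ generic with respect to $w$ yields the desired $\tau$, and letting $\tau_0$ and $B$ vary shows such $\tau$ are dense in $\CC$.

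The main obstacle is the coupling of the two ingredients: the transversality calculation itself is routine, but it only yields density once one knows that rational isotropic directions are abundant enough for the hypersurfaces $\{f_w=0\}$ to accumulate everywhere in $\CC$ --- which is exactly what the $\Q$-isotropy of the Mukai lattice supplies. A secondary point to verify is that the normalisation $Z_{\tau_0}(v)=-1$ and the subsequent genericity adjustment with respect to $w$ can both be carried out while remaining inside the chamber $\CC$ of $v$.
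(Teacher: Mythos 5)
Your treatment of condition~(1) is essentially the paper's own argument: both use the fact that the quadric $\{w^2=0\}$ in $H^*_{\alg}(X)_\R$ contains a rational point, hence has dense rational points, to approximate a real isotropic solution of the alignment equation by a primitive integral class, and then deform the stability condition so that the alignment becomes exact. The gap is in your treatment of condition~(2). You claim that genericity of $\tau$ with respect to $w$ ``is an open dense condition and will be arranged at the end'' by picking a generic point of $\{f_w=0\}\cap B$. But walls for $w$ are themselves of real codimension one, and the hypersurface $\{f_w=0\}$ is precisely the locus where $Z(v)$ and $Z(w)$ are $\R$-proportional --- which is exactly the defining equation of a potential wall for $w$, namely the one coming from destabilizing classes lying in the span of $v$ and $w$. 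So it can perfectly well happen that the entire hypersurface $\{f_w=0\}$, near the points you construct, is \emph{contained} in a wall for $w$; in that case no point of $\{f_w=0\}\cap B$ is generic with respect to $w$, Lemma~\ref{lem:isotropic} does not apply, and your argument stops. Local finiteness of walls in $\Stab(X)$ does not help, because you are constrained to a codimension-one locus that may coincide with a wall.

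This is not a hypothetical worry --- it is the case the paper spends the second half of its proof on. There, if the codimension-one submanifold where (1) holds contains no point generic for $w$, one places $\tau$ at a generic point of that wall, applies the Fourier--Mukai transform associated to $M_\rho(w)$ (for $\rho$ nearby and generic with respect to $w$) to reduce to the case $w=(0,0,1)$, so that $\tau$ sits at a generic boundary point of the geometric chamber $U(X)$, and then invokes the classification of Theorem~\ref{thm:BoundaryU}: in cases $(A^\pm)$ one \emph{replaces} $w$ by the isotropic vector $w'=v(\ST_A(k(x)))$, for which $\tau$ is generic and $Z(w')$ is still aligned with $Z(v)$; in case $(C_k)$ one derives a contradiction with $v^2>0$ using Remark~\ref{rmk:Cknegative}. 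Some argument of this kind --- either producing a substitute isotropic class or excluding the bad configuration --- is needed to close your proof; the transversality and density steps alone cannot do it.
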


\begin{Prf}
Let us consider claim \eqref{enum:isotropic}. We may assume $Z_{\sigma}(v) = -1$ and restrict
our attention to stability conditions $\tau$ with $Z_{\tau}(v) = -1$.
Let $Q \subset H^*_{\alg}(X)_\R$ be the 
quadric defined by $w^2 = 0$. Due to the signature of the Mukai pairing, there is a real
solution $w_r$ to the pair of equations $\Im Z_{\sigma}(w) = 0$ and $w^2 = 0$.
Since $Q$ has a rational point, rational points are dense in $Q$, i.e., there exists
$w_q \in H^*_{\alg}(X)_\Q$ arbitrarily close to $w_r$ with $w_q^2 = 0$. If $w_q$ is sufficiently
close, and since $w_q$ must be linearly independent of $v$, there will be $\tau = (Z_\tau,
\AA_\tau)$ nearby $\sigma$ such that $\Im Z_\tau(v) = \Im Z_\tau(w_q) = 0$ and
$\Re Z_\tau = \Re Z_\sigma$. Replacing $w_q$ by the unique primitive integral class $w \in \R \cdot w_q$
with $\Re Z_{\tau}(w) < 0$ finishes the proof of the first claim.

It remains to show that claim \eqref{enum:stable} holds,
after possibly replacing $w$ and a further deformation of $\tau$.
Note that small deformations of $\tau$ in a codimension one submanifold of $\Stab(X)$ will
keep property \eqref{enum:isotropic} intact. If this contains a stability condition
generic with respect to $w$, our claim follows from Lemma \ref{lem:isotropic}.
Otherwise, we can assume that $\tau$ is on a generic point of a wall, and that
for $u \in H^*_{\alg}(X, \Z)$, the complex number $Z(u)$ has the same phase as
$Z(v)$ and $Z(w)$ if and only if $u$ is a linear combination of $v$ and $w$.

Using the Fourier-Mukai transform associated to $M_{\rho}(w)$ for $\rho$ nearby $\tau$ and
generic, we can further assume that $w = (0, 0, 1)$ is the Mukai vector of a point, and 
that $\rho$ is in a generic boundary point of the geometric chamber $U(X)$ as described
in Theorem \ref{thm:BoundaryU}. If $M_{\rho}(v)$ is not a fine moduli space, we need to consider $M_{\rho}(v)$ as a twisted K3 surface; see Remark \ref{rmk:twisted}.

In the case of a wall of type $(A^+)$, let $w'$ be the Mukai vector of $\ST_A(k(x))$.
Since the objects $\ST_A(k(x))$ are $\tau$-stable, the stability condition $\tau$ is generic
with respect to $w'$, we have ${w'}^2 = 0$, and 
$Z(w')$ has the same phase as $Z(v)$. The case $(A^-)$ is analogous.

If we are in case $(C_k)$, then as pointed out in Remark \ref{rmk:Cknegative}, the Mukai
pairing is negative semi-definite on the linear span
$\langle w, v(\OO_C(k+1)) \rangle$. However, since $Z(\OO_C(k+1))$ has the same phase
as $Z(v)$, this linear span is equal to the linear span 
$\langle v, w \rangle$ of $v, w$, in contradiction to $v^2>0$.
\end{Prf}

Let $w$ be the Mukai vector from Lemma \ref{lem:MYY}.
Let $Y:=M_\tau(w)$, and let $\alpha\in\mathrm{Br}(Y)$ be a Brauer class so that the choice of a (quasi-)universal family induces a derived equivalence $\Phi:\Db(X)\xrightarrow{\sim}\Db(Y,\alpha)$.
Consider the stability condition $\tau':=\Phi(\tau)\in\Stab(Y,\alpha)$.
By \cite[Section 5]{HuybrechtsStellari:Twisted}, we can assume that $\tau'\in\Stab^\dagger(Y,\alpha)$.
Then, by construction, for all $F\in\MMM_{\tau}(w)(\C)$, $\Phi(F)\cong k(y)$, for some $y\in Y$.
Therefore the skyscraper sheaves are all $\tau'$-stable with the same phase, namely $\tau'\in U(Y,\alpha)$.
Up to acting by $\widetilde\GL^+_2(\R)$, we can assume that $\tau'=\sigma_{\omega',\beta'}$, for some $\omega',\beta'\in\NS(Y)_\Q$, with $\omega'$ ample.

Since $Z_\tau(v)$ and $Z_\tau(w)$ lie on the same ray in the complex plane, we have $Z_{\omega',\beta'}(\Phi(v)) \in \R_{<0}$.
Note that by the construction of Lemma \ref{lem:MYY}, the stability condition $\tau'$ is
still generic with respect to $\Phi(v)$.
Since $w$ does not lie in a wall for $v$, $k(y)$ is not a stable factor (with respect to $\tau'$) for $\Phi(E)$, for all $E\in\MMM_{\tau}(v)(\C)$.
By definition of the category $\AA(\omega',\beta')$, this shows that $\Phi(E)[-1]$ is a $\alpha$-twisted locally-free sheaf on $Y$, which is $\mu_{\omega',\beta'}$-semistable.

We claim that $\omega'$-slope (semi)stability for sheaves of class $-\Phi(v)$ is equivalent to
twisted $\omega'$-Gieseker (semi)stability: indeed, assume that a sheaf $E'$ with
$v(E') = -\Phi(v)$ is slope-semistable.
If $F' \subset E'$ is a saturated subsheaf of the same slope, then
$F'[1]$ is a subobject of $E'[1]$ in $\PP_{\omega', \beta'}(1)$; since $\tau'$ is generic
with respect to $\Phi(v)$, this
means that $v(F')$ is proportional to $v(E')$; hence the twisted Hilbert polynomial of $F'$ is
proportional to the twisted Hilbert polynomial of $E'$, and this will hold independently of the
twist $\beta'$. In particular, twisted Gieseker stability on $Y$
for $-\Phi(v)$ is equivalent to untwisted Gieseker-stability.
This shows that $\Phi\circ[-1]$ induces an isomorphism of stacks
\begin{equation} \label{eq:isomtoGieseker}
\MMM_{\tau}(v)\xrightarrow{\sim}\MMM_{\omega'}(-\Phi(v)),
\end{equation}
where, as in Section \ref{sec:ReviewGieseker}, $\MMM_{\omega'}(-\Phi(v))$ is the moduli stack of Gieseker semistable sheaves on $(Y,\alpha)$.
Moreover, the isomorphism preserves $S$-equivalence classes.
Hence, a coarse moduli space for $\MMM_{\tau}(v)$ exists, since it exists for $\MMM_{\omega'}(-\Phi(v))$, and it is a normal irreducible projective variety with $\Q$-factorial singularities, by Remark \ref{rmk:SmoothnessFactorialityModuliSpace},\eqref{enum:KLS}.
This concludes the proof of the first part of Theorem \ref{thm:ProjK3}.

We can now show the second part of Theorem \ref{thm:ProjK3}, namely that $\ell_\sigma$ is well-defined on the coarse moduli space $M_{\sigma}(v)$ and it is ample.

\begin{Lem}\label{lem:PositiveInters}
Let $\sigma=(Z,\PP)\in\Stab^\dagger(X)$ be such that $Z(v)=-1$, and let $w_{\sigma}:=\Im(\eta(\sigma))$.
Then $w_{\sigma}^2>0$.
\end{Lem}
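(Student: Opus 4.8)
The plan is to read the inequality off directly from Bridgeland's description of the stability manifold, with essentially no computation. By the defining relation $\ZZ(\sigma)(\blank) = (\blank, \eta(\sigma))$, the vector $\eta(\sigma) \in H^*_{\alg}(X)_\C$ records the central charge $Z$, and its imaginary part is by definition $w_\sigma$. First I would invoke Theorem \ref{thm:BridgelandK3}: since $\sigma \in \Stab^\dagger(X)$, its image satisfies $\eta(\sigma) \in \PP^+_0(X) \subset \PP^+(X) \subset \PP(X)$.

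I would then simply unwind the definition of $\PP(X)$. By construction, a vector belongs to $\PP(X)$ precisely when its real and imaginary parts span a two-plane in $H^*_{\alg}(X)_\R$ on which the Mukai pairing is positive definite. Applied to $\eta(\sigma) = \Re(\eta(\sigma)) + i\, w_\sigma$, this shows that $\Re(\eta(\sigma))$ and $w_\sigma$ span such a positive-definite two-plane, and hence that the nonzero vector $w_\sigma$ lying inside it satisfies $w_\sigma^2 = (w_\sigma, w_\sigma) > 0$. As a sanity check, for a geometric stability condition $\sigma_{\omega,\beta}$ one has $\eta(\sigma_{\omega,\beta}) = \exp(\beta + i\omega)$, so that $w_\sigma = (0, \omega, \beta.\omega)$ and $w_\sigma^2 = \omega^2 > 0$ directly.

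The closest thing to an obstacle is the mild point that positive-definiteness applies to $w_\sigma$ only if $w_\sigma \ne 0$; this is automatic, since membership in $\PP(X)$ forces the real and imaginary parts to be linearly independent (they span a genuine two-plane), so in particular $w_\sigma \ne 0$. Finally, I note that the normalization $Z(v) = -1$ is not used for the stated conclusion: it only records the vanishing $(v, w_\sigma) = \Im Z(v) = 0$, i.e.\ $w_\sigma \in v^\perp$, which is the property that makes $w_\sigma$ relevant for the N\'eron-Severi lattice of the moduli space in the argument that follows.
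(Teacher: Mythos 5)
Your proof is correct and is exactly the paper's argument: the paper's proof is the one-line observation that $\eta(\sigma)\in\PP^+_0(X)$ by Theorem \ref{thm:BridgelandK3}, whence $w_\sigma=\Im(\eta(\sigma))$ lies in a positive definite two-plane and so $w_\sigma^2>0$. Your additional remarks (non-vanishing of $w_\sigma$, the role of the normalization $Z(v)=-1$ being only to place $w_\sigma\in v^\perp$) are accurate elaborations of the same reasoning.
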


\begin{Prf}
This follows directly from Theorem \ref{thm:BridgelandK3}, since $\eta(\sigma)\in\PP^+_0(X)$.
\end{Prf}

We first deal with the case when $v$ is primitive.
Since $\sigma$ generic with respect to $v$, $\MMM_{\sigma}(v)=\MMM^s_{\sigma}(v)$ is a $\mathbb{G}_m$-gerbe over $M_\sigma(v)$.
Moreover, by the first part of Theorem \ref{thm:ProjK3} and Remark \ref{rmk:SmoothnessFactorialityModuliSpace},\eqref{enum:KLS}, $M_\sigma(v)$ is a smooth projective irreducible symplectic manifold.
Hence, by Remark \ref{rem:quasidivisor}, the divisor class $\ell_{\sigma}$ is well-defined on $M_{\sigma}(v)$.



\begin{Cor}
Let $v\in H^*_{\alg}(X, \Z)$ be a primitive vector with $v^2 \ge 2$, and $\sigma\in\Stab^\dagger(X)$ be generic with respect to $v$.
Then the divisor $\ell_{\sigma}$ is ample.
\end{Cor}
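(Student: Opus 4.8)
The plan is to combine the nefness already supplied by the Positivity Lemma with a bigness statement extracted from the isometry property of the Mukai homomorphism, and then to promote ``nef and big, strictly positive on all curves'' to ``ample.'' First I would establish that $\ell_{\sigma}$ is strictly positive on every curve. Since $\sigma$ is generic with respect to the primitive vector $v$, Corollary \ref{cor:nonempty} shows that every $\sigma$-semistable object of class $v$ is stable; hence two such objects are $S$-equivalent if and only if they are isomorphic, i.e.\ if and only if they define the same point of the coarse space $M_{\sigma}(v)$. For an integral curve $C\subseteq M_{\sigma}(v)$, two general points $c,c'$ are distinct in $M_{\sigma}(v)$, so $\EE_c,\EE_{c'}$ are non-isomorphic and therefore not $S$-equivalent; by Theorem \ref{thm:nefdivisor} (the coarse-space form of Positivity Lemma \ref{Positivity}, applied to a quasi-universal family) this gives $\ell_{\sigma}.C>0$. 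Thus $\ell_{\sigma}$ is nef and strictly positive on all curves.

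Next I would show that $\ell_{\sigma}$ is big. Writing $Z=(\blank,\eta(\sigma))$ and $w_{\sigma}:=\Im\eta(\sigma)$, the normalization $Z(v)=-1$ gives $(v,w_{\sigma})=\Im Z(v)=0$, so $w_{\sigma}\in v^{\perp}$; moreover $\ell_{\sigma}.C=\Im Z(\Phi_{\EE}(\OO_C))=(w_{\sigma},\Phi_{\EE}(\OO_C))$, which by Definition \ref{def:MukaiHom} (compare Proposition \ref{prop:comparison} and Remark \ref{rem:DonaldsonMukai}) identifies $\ell_{\sigma}=\theta_v(w_{\sigma})$. Since $\theta_v$ identifies the Mukai pairing on $v^{\perp}$ with the Beauville--Bogomolov form by Theorem \ref{thm:Yoshioka}, we obtain $q(\ell_{\sigma})=w_{\sigma}^2$, which is strictly positive by Lemma \ref{lem:PositiveInters}. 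As $M_{\sigma}(v)$ is irreducible symplectic and $\ell_{\sigma}$ is nef, positivity of its Beauville--Bogomolov square forces $\ell_{\sigma}$ to be big.

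Finally I would upgrade these properties to ampleness. Because $M_{\sigma}(v)$ is smooth with trivial canonical class, the divisor $a\ell_{\sigma}-K_{M_{\sigma}(v)}=a\ell_{\sigma}$ is nef and big for every $a>0$, so the base-point-free theorem applies and $\ell_{\sigma}$ is semiample; it therefore induces a morphism $\phi\colon M_{\sigma}(v)\to Y$ with $\ell_{\sigma}$ pulled back from an ample class on $Y$. A curve is $\phi$-contracted precisely when it meets $\ell_{\sigma}$ in degree zero, and by the first step no such curve exists, so $\phi$ is finite; being also birational (as $\ell_{\sigma}$ is big) onto a normal variety, it is an isomorphism by Zariski's main theorem, whence $\ell_{\sigma}$ is ample. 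The step I expect to be most delicate is exactly this last upgrade: strict positivity on curves alone does not imply ampleness, so the bigness produced via Lemma \ref{lem:PositiveInters} and the isometry $\theta_v$ is indispensable, and one must pass through semiampleness rather than merely invoke Kleiman's criterion.
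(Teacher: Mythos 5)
Your proposal is correct and follows essentially the same route as the paper: the paper likewise combines $q(\ell_{\sigma}) = w_{\sigma}^2 > 0$ (via Theorem \ref{thm:Yoshioka} and Lemma \ref{lem:PositiveInters}) with the strict positivity on curves from Theorem \ref{thm:main1}, and then uses triviality of the canonical bundle and the Base Point Free Theorem to get semiampleness, hence ampleness. Your write-up merely makes explicit some steps the paper leaves implicit (the identification $\ell_{\sigma}=\theta_v(w_{\sigma})$, and the finiteness/Zariski argument upgrading semiample to ample).
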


\begin{Prf}
By Theorem \ref{thm:Yoshioka}, $q(\ell_{\sigma}) = w_{\sigma}^2$.  By Lemma \ref{lem:PositiveInters}, $w_\sigma^2 >
0$, and so $\ell_{\sigma}$ is big and has the strong positivity property of Theorem \ref{thm:main1}.  As a symplectic projective manifold, $M_{\sigma}(v)$ has trivial
canonical bundle; so the Base Point Free Theorem \cite[Theorem 3.3]{KollarMori} implies that (a multiple of)
$\ell_{\sigma}$ is globally generated, and hence ample (see also \cite[Proposition 6.3]{Huybrechts:compactHyperkaehlerbasic}).
\end{Prf}

The case in which $v$ is not primitive is more delicate, since we
do not have a version of Theorem \ref{thm:Yoshioka} available.
Instead, we have to use an explicit comparison with determinant line bundles and rely on the
GIT construction for dealing with properly semistable objects; we use \cite[Section 8.1]{HL:Moduli}
as a reference for the classical construction.

By the openness and convexity of the ample cone, it is sufficient to prove the ampleness
of $\ell_\sigma$ for a dense subset of stability conditions in a given chamber.
We can therefore assume that $\sigma$ satisfies the properties of
the stability condition $\tau$ in Lemma \ref{lem:MYY}; 
let $\Phi$ be the induced derived equivalence $\Phi \colon \Db(X) \to \Db(Y, \alpha)$.

We will first assume $\alpha = 0$.
By \eqref{eq:isomtoGieseker}, $\MMM_{\Phi(\sigma)}(-\Phi(v))$ consists of $\omega'$-Gieseker semistable sheaves on $Y$, where $\omega'$ is a generic polarization.
By \cite[Theorem 8.1.5]{HL:Moduli} and \cite[Th\'eor\`eme 5 \& Proposition 6]{LePotier:StrangeDuality}, we know that $\ell_{\Phi(\sigma)}$ defines a divisor class on the coarse moduli space $M_{\Phi(\sigma)}(-\Phi(v))$; this class depends only on $\Phi(\sigma)$, it has the positive property as in Theorem \ref{thm:main1}, and it is compatible with $\ell_{\Phi(\sigma),\EE}$ via pull-back.
Since, by \cite{Orlov:representability,Canonaco-Stellari}, the equivalence $\Phi$ is of Fourier-Mukai type, and the construction of $\ell_{\sigma}$ is compatible with the
convolution of the Fourier-Mukai kernels, $\ell_{\sigma}$ gives a well-defined class on the coarse moduli space $M_{\sigma}(v)$ as well.

We write $-\Phi(v) = (r, c, s)$,
Let $\LL_0, \LL_1$ be as defined in \cite[Definition 8.1.9]{HL:Moduli}; 
after identifying $h$ of \cite[Section 8.1]{HL:Moduli} with $\omega'$,
then in our notation we have $\LL_0 = \theta_{-\Phi(v)}((-r, 0, s))$ and
$\LL_1 = \theta_{-\Phi(v)}((0, r\omega', \omega'.c))$.
It is immediate to check that, up to rescaling and the functor $\Phi\circ[-1]$, $\ell_{\sigma}$ coincides with the class $\LL_1$.
By Theorem \ref{thm:main1}, $\ell_{\sigma}$ is nef.
By \cite[Theorem 8.1.11 \& Remark 8.1.12]{HL:Moduli}, the line bundle $\LL_0\otimes\LL_1^{\otimes m}$ is ample for $m\gg0$.
Moreover, $\LL_0\otimes\LL_1^{\otimes m}$ for $m \gg 0$ are (up to rescaling) induced by a stability
conditions arbitrarily close to $\sigma$.
Hence we have found a dense subset of stability conditions for which $\ell_{\sigma}$ is ample.

Finally, in case $\alpha \neq 0$, one can use Proposition 2.3.3.6 and Lemma 2.3.2.8 of
\cite{Lieblich:Twisted} to reduce to the case $\alpha = 0$.
This finishes the proof of Theorem \ref{thm:ProjK3}.

\section{Flops via wall-crossing}
\label{sec:flops}

In this section, we will first discuss the possible phenomena at walls in $\Stab(X)$, 
and then proceed to prove Theorem \ref{thm:MMP} .

Let $X$ be a smooth projective K3 surface, let $v$ be a primitive Mukai vector with $v^2 \ge -2$.
Consider a wall $W \subset \Stab(X)$ with respect to $v$ in the sense of Proposition~\ref{prop:chambers}.

Let $\sigma_0=(Z_0,\AA_0) \in W$ be a generic point on the  wall.
Let $\sigma_+=(Z_{+},\AA_{+}),\sigma_{-}=(Z_{-},\AA_{-})$ be two algebraic stability conditions 
in the two adjacent chambers.
By the results of the previous section, the two moduli spaces $M_{\sigma_{\pm}}(v)$ are non-empty, irreducible symplectic projective manifolds.
If we choose (quasi-)universal families $\EE_{\pm}$ on $M_{\sigma_{\pm}}(v)$ of $\sigma_{\pm}$-stable objects, we obtain
in particular (quasi-)families of $\sigma_0$-semistable objects.
Hence, Theorem \ref{thm:nefdivisor} gives us nef divisor classes
$\ell_{\sigma_0,\EE_{\pm}}$ on $M_{\sigma_{\pm}}(v)$.

There are several possible phenomena at the wall, depending on the codimension of the locus
of strictly $\sigma_0$-semistable objects, and depending on whether there are curves $C \subset
M_{\sigma_{\pm}}(v)$ of $S$-equivalent objects with respect to $\sigma_0$, i.e., curves with
$\ell_{\sigma_0,\EE_{\pm}}.C = 0$. We call the wall $W$
\begin{enumerate}
\item a \emph{fake wall} there are no curves in $M_{\sigma_{\pm}}(v)$ of objects that are $S$-equivalent
to each other with respect to $\sigma_0$,
\item a \emph{totally semistable wall}, if $M^s_{\sigma_0}(v)=\emptyset$,
\item a \emph{flopping wall}, if $W$ is not a fake wall and
$M^s_{\sigma_0}(v)\subset M_{\sigma_{\pm}}(v)$ has complement of codimension at least two, 
\item a \emph{bouncing wall}, if there is an isomorphism $M_{\sigma_{+}}(v) \cong M_{\sigma_{-}}(v)$ that maps
$\ell_{\sigma_0,\EE_{+}}$ to $\ell_{\sigma_0,\EE_{-}}$, and 
there are divisors $D_\pm \subset M_{\sigma_{\pm}}(v)$ that are covered by curves of
objects that are $S$-equivalent to each other with respect to $\sigma_0$.
\end{enumerate}
Note that a wall can be both fake and totally semistable.
In the case of a fake wall, $W$ does not get mapped to a wall of the nef cone. In the case
of a bouncing wall, the map $l_{+} \colon \overline{\CC}_{+} \to N^1(M_{\sigma_{+}}(v))$ sends $W$ to a boundary
of the nef cone of $M_{\sigma_{+}}(v) = M_{\sigma_{-}}(v)$; and so does $l_{-}$. Hence the image of a path crossing the wall $W$ under
$l_{\pm}$ will bounce back into the ample cone once it hits the boundary of the nef cone
in $N^1$.  We will see examples of every type of wall in Section \ref{sec:Hilbert}.

We should point out that the behavior at fake walls and bouncing walls can exhibit different
behaviors than the possibilities observed in \cite{Alastair-Ishii} in a different context: in general,
the two universal families over $M_{\sigma_+}(v) , M_{\sigma_-}(v)$ do not seem to be related via a derived autoequivalence
of the moduli space $M_{\sigma_+}(v) = M_{\sigma_-}(v)$.

We can assume that $\sigma_0$ is algebraic, $Z_0(v)=-1$, and $\phi=1$.
By Theorem \ref{thm:Yoshioka} and Lemma \ref{lem:PositiveInters}, $\ell_{\sigma_0,\EE_{\pm}}$ has positive self-intersection.
Since both $M_{\sigma_{\pm}}(v)$ have trivial canonical bundles, we can apply the Base Point Free Theorem
\cite[Theorem 3.3]{KollarMori}, which shows that $\ell_{\sigma_0,\EE_{\pm}}$ are both semi-ample.

We denote the induced contraction morphism (cf.~\cite[Theorem 2.1.27]{Laz:Positivity1}) by
\[
\pi_{\sigma_{\pm}} \colon M_{\sigma_{\pm}}(v) \to Y_{\pm},
\]
where $Y_{\pm}$ are normal irreducible projective varieties.
We denote the induced ample divisor classes on $Y_{\pm}$ by $\ell_{0,\pm}$.
Note that $\pi_{\sigma_{\pm}}$ is an isomorphism if and only if the wall $W$ is a fake wall,
a divisorial contraction if $W$ is a bouncing wall, and a small contraction if $W$ is a flopping wall.


We would like to say that $Y_{+} = Y_-$, and that they are (an irreducible component of) the coarse
moduli space of $\sigma_0$-semistable objects. The best statement we can prove in general is the
following:

\begin{Prop}\label{prop:CoarseModuli}
The spaces $Y_{\pm}$ have the following universal property: For any proper irreducible scheme
$S$ over $\C$, and for any family $\EE\in\MMM_{\sigma_0}(v)(S)$ such that there exists a
closed point $s\in S$ for which $\EE_s=\EE |_{\{s\}\times X}\in\MMM_{\sigma_{\pm}}(v)(\C)$, there
exists a finite morphism $q\colon T\to S$ and a natural morphism $f_{q^*\EE}\colon T\to Y_{\pm}$.
\end{Prop}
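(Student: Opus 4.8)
The plan is to extend the classifying map from the $\sigma_{\pm}$-stable locus across the wall using the valuative criterion together with the Positivity Lemma, the only genuinely new input being a separatedness statement for $\sigma_0$-semistable objects. First I would isolate the locus where the family lands honestly in $M_{\sigma_{\pm}}(v)$. Let $S^{\circ}\subseteq S$ be the set of $s$ with $\EE_s$ being $\sigma_{\pm}$-stable. Since $v$ is primitive and $\sigma_{\pm}$ is generic, $\sigma_{\pm}$-semistability coincides with $\sigma_{\pm}$-stability, so the hypothesis gives $S^{\circ}\neq\emptyset$; by openness of stability (Theorem~\ref{thm:TodaThmA}) it is open, and it is compatible with $\EE\in\MMM_{\sigma_0}(v)(S)$ because $\sigma_{\pm}$-stable objects are $\sigma_0$-semistable. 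The restriction $\EE|_{S^{\circ}}$ is then a family of $\sigma_{\pm}$-stable objects, hence defines a morphism $\phi^{\circ}\colon S^{\circ}\to M_{\sigma_{\pm}}(v)$ to the coarse space; I set $g^{\circ}:=\pi_{\sigma_{\pm}}\circ\phi^{\circ}\colon S^{\circ}\to Y_{\pm}$.

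Next I would build $q$ and $f$ by taking graph closures. Let $T$ be the normalization of the closure of the graph of $g^{\circ}$ in $S\times Y_{\pm}$, with projections $q\colon T\to S$ and $f\colon T\to Y_{\pm}$; then $q$ is proper, restricts to an isomorphism over $S^{\circ}$, and $q^{*}\EE$ is a family of $\sigma_0$-semistable objects on $T$ from which $f=f_{q^{*}\EE}$ is built. It suffices to prove that $q$ is finite, for then $q$ is a finite morphism and $f$ is the sought map. As $q$ is proper, finiteness is equivalent to $q$ contracting no curve.

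The core of the argument is the following contradiction. Suppose $C\subseteq T$ is a curve with $q(C)=\{s_0\}$ but $f(C)$ a curve in $Y_{\pm}$. Running the valuative criterion of universal closedness for $\MMM_{\sigma_{\pm}}(v)$ (Lemma~\ref{lem:properness}) along arcs in $T^{\circ}$ degenerating to general points $c\in C$, one obtains, after a finite base change, a family $\GG$ of $\sigma_{\pm}$-stable objects over a finite cover $\widetilde{C}\to C$ whose classifying map $\widetilde{C}\to M_{\sigma_{\pm}}(v)$ composes with $\pi_{\sigma_{\pm}}$ to the non-constant map onto $f(C)$. Consequently $\pi_{\sigma_{\pm}}$ does not contract $\GG$, so $\ell_{\sigma_0,\EE_{\pm}}$ has positive degree on its image and, by the Positivity Lemma~\ref{Positivity} (via Theorem~\ref{thm:nefdivisor}), the general fibres of $\GG$ are pairwise \emph{not} $\sigma_0$-$S$-equivalent. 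On the other hand each $\GG_{t}$ is a $\sigma_{\pm}$-stable limit of an arc whose special fibre is $\EE_{s_0}$, so $\GG_{t}$ and $\EE_{s_0}$ are two $\sigma_0$-semistable degenerations of one and the same generic object; granting that such degenerations are $\sigma_0$-$S$-equivalent, all the $\GG_{t}$ are $S$-equivalent to the fixed object $\EE_{s_0}$, hence to one another — contradicting the Positivity Lemma. Thus no contracted curve exists and $q$ is finite.

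The main obstacle is precisely the separatedness input used above: that two flat families over a DVR with identified generic fibres and $\sigma_0$-semistable special fibres have $\sigma_0$-$S$-equivalent special fibres. This is the Bridgeland analogue of the classical GIT fact that $S$-equivalent sheaves are identified in the coarse moduli space, and I expect it to be the hardest point, since Lemma~\ref{lem:properness} supplies existence of limits but not their uniqueness up to $S$-equivalence. To establish it I would either argue through the determinant line bundle of Definition~\ref{def:determinant}, showing that the theta-type sections attached to test objects $G\in v^{\perp}$ cannot separate the two special fibres, or reduce to Gieseker stability on a twisted Fourier-Mukai partner exactly as in the proof of Theorem~\ref{thm:ProjK3}, where $S$-equivalence is controlled by classical GIT; the relative Jordan-H\"older filtration of Lemma~\ref{lem:JH-in-families} is the natural tool for organizing the stable factors over the finite cover $\widetilde{C}$.
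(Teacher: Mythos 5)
Your overall strategy (graph closure of the rational map $S \dashrightarrow Y_{\pm}$, then ruling out $q$-contracted curves by playing positivity against uniqueness of semistable limits) is genuinely different from the paper's proof, but as written it has a genuine gap, and it is exactly the one you flag: the separatedness statement that two flat families of $\sigma_0$-semistable objects over a DVR which agree over the generic point have $S$-equivalent special fibres. Your entire contradiction rests on ``granting that such degenerations are $\sigma_0$-$S$-equivalent'', and neither of your proposed routes to this statement works as stated. The Fourier--Mukai reduction to (twisted) Gieseker stability in the proof of Theorem \ref{thm:ProjK3} is available only for stability conditions that are \emph{generic} with respect to $v$: Lemma \ref{lem:MYY} is proved inside a chamber, while $\sigma_0$ lies on a wall. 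Indeed, if that reduction applied to $\sigma_0$, one would obtain a projective coarse moduli space of $\sigma_0$-semistable objects, which is precisely what the paper says it cannot prove in general --- Proposition \ref{prop:CoarseModuli} is designed as a substitute for that statement. (The separatedness input is in fact provable with tools already in the paper: by \cite[Lemma 4.2.3]{Abramovich-Polishchuk:t-structures}, as used in Lemma \ref{lem:continuous}, the two families differ by a chain of elementary modifications $\EE_{i-1} \into \EE_i \onto (i_{c})_*Q_i$ with $Q_i \in \PP_0(1)$, and each such modification replaces the special fibre by an object with the same stable factors, since both $Q_i$ and the kernel of $(\EE_i)_{c} \onto Q_i$ lie in $\PP_0(1)$; but it is this argument, not the ones you propose, that you would need to supply.) There is a second, smaller gap: ``running the valuative criterion along arcs degenerating to general points $c \in C$'' does not produce a family $\GG$ over a cover of $C$ --- limits along individual arcs need not fit together, and the possibility that different arcs through $s_0$ have different limits is the very phenomenon you are trying to rule out. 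The correct construction is to resolve the indeterminacy of the rational map $T \dashrightarrow M_{\sigma_{\pm}}(v)$ (possible because $M_{\sigma_{\pm}}(v)$ is projective by Theorem \ref{thm:ProjK3}) and restrict to an irreducible curve in the resolution dominating $C$; this is fixable, but it is not what you wrote.

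For comparison, the paper's proof avoids both issues and needs no separatedness at all: it resolves the rational map $f_{\EE} \colon S \dashrightarrow Y_{+}$ by $c \colon \widetilde S \to S$ and $g \colon \widetilde S \to Y_{+}$, observes that the pulled-back family satisfies $\ell_{\sigma_0, \widetilde \EE} = g^* \ell_{0,+}$, and notes that any curve in a fibre of $c$ carries a constant family, hence has $\ell_{\sigma_0,\widetilde\EE}$-degree zero by Theorem \ref{thm:nefdivisor}; ampleness of $\ell_{0,+}$ then forces $g$ to contract every such curve, so $g$ factors through the Stein factorization of $c$, which provides the finite morphism $T \to S$ and the map $T \to Y_{+}$. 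In short, the paper converts the whole problem into a statement about degrees of the nef divisor class, where the Positivity Lemma alone suffices, whereas your route requires a new moduli-theoretic input (uniqueness of semistable limits up to $S$-equivalence at the wall $\sigma_0$) that the paper deliberately circumvents.
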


\begin{Prf}
We prove the statement only for $Y_+$; the proof for $Y_{-}$ is analogous.
Let $S$ be a proper scheme, and let $\EE$ be a family as above.
We can assume $S$ is normal.
By Toda's result, Theorem \ref{thm:TodaThmA}, there exists an open subset $S'\subseteq S$ such that $\EE_s$ is $\sigma_{+}$-stable, for all $s\in S'$.
By the universal property for $M_{\sigma_{+}}(v)$, there exists a natural morphism $f_{\EE}'\colon S'\to M_{\sigma_+}(v)$.
This induces a rational morphism $f_{\EE}\colon S \dashrightarrow Y_{+}$.

Consider a resolution of singularities for $f_{\EE}$,
\[
\xymatrix{& \widetilde{S}\ar[dl]_{c}\ar[dr]^{g} & \\
S\ar@{-->}[rr]^{f_{\EE}} && Y_{+}.
}
\]
Then, the family $\widetilde{\EE}\colon =(c\times\id)^*\EE$ on $\widetilde{S}$ gives rise to a divisor class $\ell_{\sigma_0,\widetilde{\EE}}$ on $\widetilde{S}$ such that
\[
\ell_{\sigma_0,\widetilde{\EE}} = g^*\ell_{0,+}.
\]
Since $\ell_{0,+}$ is ample, $\ell_{\sigma_0,\widetilde{\EE}}$ is semi-ample.
On the other hand, by Theorem \ref{thm:nefdivisor}, a curve $C\subseteq\widetilde{S}$ satisfies $\ell_{\sigma_0,\widetilde{\EE}}.C=0$ if and only if $C$ parameterizes properly $\sigma_0$-semistable objects, generically with the same Jordan-H\"older filtration.
But every curve in a fiber of $c$ has this property.
Hence, up to considering its Stein factorization, the morphism $g$ factorizes through $f_{\EE}$, as wanted.
\end{Prf}

If we can explicitly describe $\sigma_0$-semistable objects, Proposition \ref{prop:CoarseModuli}
shows that $Y_{+}$ and $Y_{-}$ are actually irreducible components of a coarse moduli space for
$\MMM_{\sigma_0}(v)$.  We will see this in some examples in Sections \ref{sec:K3sheaves} and
\ref{sec:Hilbert}.

\begin{Prf} (Theorem \ref{thm:MMP})
It remains to prove assertion \eqref{enum:flop} of Theorem \ref{thm:MMP}:
in this case, $\MMM_{\sigma_0}^s(v)$ is non-empty, and we can restrict to the case where
$\ell_{\sigma_0,\EE_{\pm}}$ is not ample.
By openness of stability, all objects in $\MMM_{\sigma_0}^s(v)(\C)$ are stable with respect to $\sigma_{\pm}$.
Write $M_{\sigma_{\pm}}^0(v)$ for the open subsets of $M_{\sigma_{\pm}}(v)$ consisting of those objects.
By assumption, we also have $\mathrm{codim}(M_{\sigma_{\pm}}^0(v),M_{\sigma_{\pm}}(v))\geq2$. (Note that since
$M_{\sigma_{\pm}}(v)$ are smooth and symplectic, the two conditions
$\mathrm{codim}(M_{\sigma_{+}}(v) \setminus M_{\sigma_{+}}^0(v),M_{\sigma_{+}}(v))\geq2$ and
$\mathrm{codim}(M_{\sigma_{-}}(v) \setminus M_{\sigma_{-}}^0(v),M_{\sigma_{-}}(v))\geq2$ are equivalent.)

Consider the birational map
\[
f_{\sigma_0} \colon M_{\sigma_{+}}(v) \dashrightarrow M_{\sigma{-}}(v)
\]
induced by the isomorphism $M^0_{\sigma_+}(v)\xrightarrow{\sim}M^0_{\sigma_-}(v)$.

Since $\mathrm{codim}(M_{\sigma_{\pm}}(v) \setminus M_{\sigma_{\pm}}^0(v),M_{\sigma_{\pm}}(v))\geq2$, and since $M_{\sigma_{\pm}}(v)$ are projective,
numerical divisor classes on $M_{\sigma_{\pm}}(v)$ are determined by their intersection numbers with curves
contained in $M_{\sigma_{\pm}}^0(v)$.
Since we can choose (quasi-)universal families $\EE_{\pm}$ on $M_{\sigma_{\pm}}(v)$ that agree 
on the open subset $M_{\sigma_{\pm}}^0(v)$, this implies that the maps
$l_{\pm} \colon \overline{\CC}^{\pm} \to \NS(M_{\sigma_{\pm}}(v))$ are identical, up to analytic continuation and
identification of the N\'eron-Severi groups via $f_{\sigma_0}$;
more precisely, we have the following equality in $\NS(M_{\sigma_{+}}(v))$:
\begin{equation} \label{eq:flopcomparison}
f_{\sigma_0}^*\ell_{\sigma_{-},\EE_-} = \ell_{Z_{-},\EE_+},
\end{equation}
where the RHS is given by
\[
\ell_{Z_{-},\EE_+}\colon  [C]\mapsto \Im \left(-\frac{Z_{-}(\Phi_{\EE_{+}}(\OO_C))}{Z_{-}(v)}\right),
\]
for all curves $C\subset M_{\sigma_{+}}(v)$.
Since $\ell_{\sigma_0,\EE_+}$ is not ample, $\ell_{Z_{-},\EE_+}$ is big and not nef.
Hence, the map $f_{\sigma_0}$ does not extend to an isomorphism $M_{\sigma_+}(v)\xrightarrow{\sim}
M_{\sigma_-}(v)$. On the other hand, the comparison \eqref{eq:flopcomparison} implies
\[
f_{\sigma_0}^*\ell_{\sigma_0,\EE_-} = \ell_{\sigma_0,\EE_+}.
\]
As a consequence,  we have $Y_+=Y_{-}$, and the following diagram
commutes:
\begin{equation*}
\xymatrix{ M_{\sigma_{+}}(v)\ar@{-->}[rr]^{f_{\sigma_0}}\ar[dr]_{\pi_{\sigma_+}} && M_{\sigma_{-}}(v)\ar[dl]^{\pi_{\sigma_-}}\\
& Y_+=Y_{-} &
},
\end{equation*}
\end{Prf}

\section{Stable sheaves on K3 surfaces}
\label{sec:K3sheaves}

In this section we discuss the three main theorems for moduli space of
stable sheaves on a K3 surface $X$; for surfaces with Picard group of rank one, some of these examples can also be
deduced by \cite[Section 4.3]{MYY2}. We will see that our results, when combined with well-known
methods for explicit wall-crossing computations, already capture a large amount of their geometry.
The section is organized as follows: after providing some auxiliary results, we
discuss the relation to Lagrangian fibrations; we then study moduli spaces of vector bundles in
general, and with complete results for some rank 2 cases; finally, we give a general bound for the
ample cone in terms of the Mukai lattice. We start with by recalling the simplest possible case:

\begin{Ex}\label{ex:SkyscrK3surfaces}
The simplest case is a primitive vector $v$ with $v^2=0$.
Then Lemma \ref{lem:isotropic} and Theorem \ref{thm:BoundaryU} give a complete picture of the possible
wall-crossing phenomena.  For a generic stability condition, the moduli space is a fixed smooth
projective K3 surface $Y$ with a Brauer class $\alpha$, such that $(Y, \alpha)$ is derived equivalent to $X$. The possible walls are derived equivalent
to the cases given in Theorem \ref{thm:BoundaryU}: In the cases $(A^+)$ and $(A^-)$, we have a totally
semistable fake wall. In the case $(C_k)$, we get a bouncing wall: the contraction induced by the
wall is the divisorial contraction of rational $(-2)$-curves. After we cross the wall, the moduli
space is still isomorphic to $Y$, but the universal family gets modified by applying the spherical
twist at a line bundle supported on $C$; in $\NS(Y)$, this has the effect of a reflection at
$[C]$.
\end{Ex}

\subsection*{Auxiliary results}
We first give an explicit formula for the Mukai vector $w_{\sigma}$ associated to a stability condition.

\begin{Lem}\label{lem:ExplicitFormula}
Let $X$ be a smooth projective K3 surface.
Let $v=(r,c,s)\in H^*_{\alg}(X, \Z)$ be a primitive Mukai vector with $v^2 \ge -2$, and
let $\sigma_{\omega,\beta}\in U(X)$ be a generic stability condition with respect to $v$.
Then the divisor class $\ell_{\sigma_{\omega,\beta}}\in N^1(M_{\sigma_{\omega,\beta}}(v))$ is a
positive multiple of $\theta_v(w_{\sigma_{\omega,\beta}})$, where
$w_{\sigma_{\omega,\beta}}=(R_{\omega,\beta}, C_{\omega,\beta}, S_{\omega,\beta})$ is given by
\begin{align*}
& R_{\omega,\beta}= c.\omega-r\beta.\omega\\
& C_{\omega,\beta}= \left(c.\omega-r\beta.\omega\right)\beta + \left(s-c.\beta + r\,\frac{\beta^2-\omega^2}{2} \right) \omega\\
& S_{\omega,\beta}= c.\omega\, \frac{\beta^2-\omega^2}{2} + s \beta.\omega - (c.\beta) \cdot (\beta.\omega).
\end{align*}
\end{Lem}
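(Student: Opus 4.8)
The plan is to express $\ell_{\sigma_{\omega,\beta}}$ directly in terms of the Mukai homomorphism $\theta_v$ of Definition \ref{def:MukaiHom} and then read off the three components of the resulting vector. First I would record the decomposition of $\eta(\sigma_{\omega,\beta})$: by the defining property $\ZZ(\sigma)(\blank)=(\blank,\eta(\sigma))$, the symmetry of the Mukai pairing, and formula \eqref{eq:ZK3}, one has $\eta(\sigma_{\omega,\beta})=\exp(\beta+\sqrt{-1}\,\omega)$. Expanding in $H^*_{\alg}(X)_\C$ and separating real and imaginary parts gives $\eta(\sigma_{\omega,\beta})=a+\sqrt{-1}\,b$ with
\[
a=\left(1,\beta,\tfrac{\beta^2-\omega^2}{2}\right),\qquad b=\left(0,\omega,\beta.\omega\right).
\]
Thus $\Im Z_{\omega,\beta}(\blank)=(b,\blank)$, and writing $P:=(a,v)$ and $Q:=(b,v)$ we have $Z_{\omega,\beta}(v)=P+\sqrt{-1}\,Q$, where $P=c.\beta-s-r\tfrac{\beta^2-\omega^2}{2}$ and $Q=c.\omega-r\beta.\omega$.

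The key reduction is the following. For a curve $C$ with $w:=v(\Phi_{\EE}(\OO_C))$, rationalizing the denominator in \eqref{eq:firstellDef} gives
\[
\ell_{\sigma_{\omega,\beta},\EE}.C=\Im\left(-\frac{Z(w)}{Z(v)}\right)=\frac{\Im\bigl(-Z(w)\,\overline{Z(v)}\bigr)}{\lvert Z(v)\rvert^2}=\frac{(a,w)\,Q-(b,w)\,P}{\lvert Z(v)\rvert^2}=\frac{(Qa-Pb,\,w)}{\lvert Z(v)\rvert^2}.
\]
Comparing with $\theta_v(u).C=\tfrac1\rho(u,\Phi_{\EE}(\OO_C))$ and $\ell_{\sigma_{\omega,\beta}}=\tfrac1\rho\,\ell_{\sigma_{\omega,\beta},\EE}$, this identifies $\ell_{\sigma_{\omega,\beta}}=\lvert Z(v)\rvert^{-2}\,\theta_v(Qa-Pb)$, a positive multiple of $\theta_v(Qa-Pb)$. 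I would check that $Qa-Pb\in v^\perp$, so that $\theta_v$ applies; this is immediate, since $(Qa-Pb,v)=Q(a,v)-P(b,v)=QP-PQ=0$.

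It then remains to set $w_{\sigma_{\omega,\beta}}:=Qa-Pb$ and compute its three components from the explicit $a,b,P,Q$ above. The rank is $Q\cdot 1-P\cdot 0=c.\omega-r\beta.\omega=R_{\omega,\beta}$; the $\NS$-component is $Q\beta-P\omega=C_{\omega,\beta}$, using $-P=s-c.\beta+r\tfrac{\beta^2-\omega^2}{2}$; and the degree component is $Q\tfrac{\beta^2-\omega^2}{2}-P\,\beta.\omega$, in which the two terms carrying the factor $r\,\beta.\omega\,\tfrac{\beta^2-\omega^2}{2}$ cancel, leaving precisely $S_{\omega,\beta}$.

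The computations are routine, so the only real care is needed with the conventions: the sign and normalization in passing from $\Im(-Z(\blank)/Z(v))$ to a pairing against the single vector $Qa-Pb$ — in particular the identity $\Im\bigl(-Z(w)\overline{Z(v)}\bigr)=(Qa-Pb,w)$ — together with the bookkeeping of $\bigl((r_1,c_1,s_1),(r_2,c_2,s_2)\bigr)=c_1.c_2-r_1 s_2-r_2 s_1$ when separating the three components. As a consistency check, in the normalization $Z(v)=-1$ (so $P=-1$, $Q=0$) this recovers $w_{\sigma_{\omega,\beta}}=b=\Im(\eta(\sigma_{\omega,\beta}))$, matching the normalization used in Lemma \ref{lem:PositiveInters}.
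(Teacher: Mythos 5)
Your proof is correct and takes essentially the same route as the paper's: both identify $\ell_{\sigma_{\omega,\beta}}$ with $\theta_v$ applied to $\Im\frac{e^{\beta+i\omega}}{-(e^{\beta+i\omega},v)}$, rationalize the denominator to get, up to a positive scalar, $-\Im\bigl(\overline{(e^{\beta+i\omega},v)}\cdot e^{\beta+i\omega}\bigr)=Qa-Pb$, and then read off the three components from the decomposition $e^{\beta+i\omega}=\bigl(1,\beta,\tfrac{\beta^2-\omega^2}{2}\bigr)+i\bigl(0,\omega,\beta.\omega\bigr)$. The paper compresses the final component computation (and the check that the resulting vector lies in $v^\perp$) into ``the claim follows immediately''; these are precisely the routine verifications you carry out explicitly.
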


\begin{Prf}
Using the Definition of $\ell_{\sigma_{\omega,\beta}}$ in equation \eqref{eq:firstellDef}, and
the compatibility of $\theta_v$ with the Mukai pairing given in equation \eqref{eq:MukaiMukai}, we see
that the vector is given by
\[
w_{\sigma_{\omega, \beta}} = \Im \frac{e^{i\omega + \beta}}{-(e^{i\omega+ \beta}, v)}
\sim_{\R^+} -\Im \bigl(\overline{(e^{i\omega + \beta}, v)} \cdot e^{i\omega + \beta}\bigr).
\]
(Here and in the following $\sim_{\R^+}$ will mean that the vectors are positive scalar multiples of
each other.) Then the claim follows immediately from
\[
e^{i\omega+ \beta} = \left(1, \beta, \frac{\beta^2-\omega^2}2\right) + i\left(0, \omega,
\omega.\beta\right).
\]
\end{Prf}

If we write $\omega = t\cdot H$, for an ample divisor $H\in\mathrm{NS}(X)$, we can let $t$ go to zero or $+\infty$.
If we take the limit $t\to 0$ up to rescaling, we obtain a vector $w_{0\cdot H,\beta}$ with components
\begin{align*}
& R_{0\cdot H,\beta}= c.H-r\beta.H\\
& C_{0\cdot H,\beta}= \left(c.H-r\beta.H\right)\beta + \left(s-c.\beta + r\,\frac{\beta^2}{2} \right) H\\
& S_{0\cdot H,\beta}= c.H\, \frac{\beta^2}{2} + s \beta.H - (c.\beta) \cdot (\beta.H).
\end{align*}

If we similarly take the limit $t\to +\infty$, we obtain a
vector $w_{\infty\cdot H,\beta}$ with components
\begin{align*}
& R_{\infty\cdot H,\beta}= 0\\
& C_{\infty\cdot H,\beta}= -r\, \frac{H^2}{2} H\\
& S_{\infty\cdot H,\beta}= -c.H\,\frac{H^2}{2}.
\end{align*}

We will also use the following two observations several times; for Lemma \ref{lem:MinimalImPart} see, e.g., \cite[Section 7.2]{BMT:3folds-BG}; for Lemma \ref{lem:localP2} see \cite[Lemma 5.9]{localP2}.

\begin{Lem}\label{lem:MinimalImPart}
Let $\sigma=(Z,\AA)\in\Stab(X)$ be a stability condition such that
\[
\gamma:=\mathrm{inf}\left\{ \Im(Z(E))>0\colon E\in\AA\right\}>0.
\]
Then an object $E\in\AA$ with $\Im(Z(E))= \gamma$ is $\sigma$-stable if and only if $\Hom(\PP(1),E)=0$.
\end{Lem}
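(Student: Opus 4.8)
The plan is to reduce everything to the behaviour of the rank function $\Im Z \colon \AA \to \R_{\ge 0}$ on subobjects of $E$, and to exploit the minimality of $\gamma$. First I would record the dictionary between $\PP(1)$ and rank-zero objects. Since $\Im Z$ is non-negative and additive on short exact sequences, a nonzero $F \in \AA$ has $\Im Z(F) = 0$ exactly when $Z(F) \in \R_{<0}$, by the positivity \eqref{eq:Zpositivity}; moreover every subobject of such an $F$ again has rank zero and hence phase $1$, so $F$ is automatically semistable of phase $1$. This identifies $\PP(1) \cup \{0\}$ with $\{F \in \AA : \Im Z(F) = 0\}$, a subcategory closed under sub- and quotient objects in $\AA$. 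Additivity together with $\Im Z(E) = \gamma$ then yields the key dichotomy: for any subobject $F \subseteq E$ both $\Im Z(F)$ and $\Im Z(E/F)$ lie in $\{0, \gamma\}$, since each is non-negative, at most $\gamma$, and $\gamma$ is the smallest positive value attained by $\Im Z$.

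For the implication that stability forces $\Hom(\PP(1), E) = 0$, suppose $\psi \colon F \to E$ is nonzero with $F \in \PP(1)$, and let $I = \im \psi \subseteq E$ be its image in $\AA$. As a quotient of $F$ the object $I$ again lies in $\PP(1)$, so $\phi(I) = 1$; and $I \neq E$ because $\Im Z(I) = 0 \neq \gamma$. Since $\Im Z(E) = \gamma > 0$ forces $\phi(E) < 1$, the object $I$ is a proper nonzero subobject of strictly larger phase, contradicting stability of $E$.

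Conversely, assuming $\Hom(\PP(1), E) = 0$, I would take an arbitrary proper nonzero subobject $0 \neq F \subsetneq E$ and show $\phi(F) < \phi(E)$. If $\Im Z(F) = 0$ then $F \in \PP(1)$, and the inclusion is a nonzero map in $\Hom(\PP(1), E)$, contrary to hypothesis; so by the dichotomy $\Im Z(F) = \gamma$ and $\Im Z(E/F) = 0$, whence $E/F \in \PP(1)$ and $Z(E/F) \in \R_{<0}$. Then $\Re Z(E) = \Re Z(F) + \Re Z(E/F) < \Re Z(F)$, and since $F$ and $E$ share the same strictly positive imaginary part, the more negative real part of $E$ corresponds to the larger argument, i.e.\ $\phi(F) < \phi(E)$. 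As this holds for every such $F$, the object $E$ is stable. The crux of the argument is the identification $\PP(1) = \{\Im Z = 0\}$ combined with the rank dichotomy; the only step requiring care is this final phase comparison, where with equal positive imaginary parts stability is governed entirely by the real part, so one must track the direction of the inequality correctly.
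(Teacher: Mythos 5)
Your proof is correct. Note that the paper does not actually prove this lemma itself: it cites \cite[Section 7.2]{BMT:3folds-BG}, and your argument is precisely the standard one used there — identify $\PP(1)\cup\{0\}$ with $\{F\in\AA : \Im Z(F)=0\}$ via the positivity condition \eqref{eq:Zpositivity}, use the minimality of $\gamma$ to get the dichotomy $\Im Z(F)\in\{0,\gamma\}$ for subobjects $F\subseteq E$, and compare phases (equal positive imaginary parts, so the phase inequality is governed by the real parts). So your write-up correctly supplies the proof the paper delegates to the reference, with no gaps.
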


The previous lemma applies in particularly when $\Im(Z)\in \Z \cdot \gamma$, for some constant $\gamma>0$.
In this case, if an object $E\in\AA$ with $\Hom(\PP(1),E)=0$ and $\Im(Z(E))=2\gamma$ is not $\sigma$-stable,
then it must be destabilized by a short exact sequence $A\to E\to B$ where $A$ and $B$ are $\sigma$-stable with $\Im(Z)=\gamma$.

\begin{Lem}\label{lem:localP2}
Let $E \in \Db(X)$ and $\sigma \in \Stab(X)$ be a stability condition such that 
$E$ is $\sigma$-semistable.
Assume that there is a Jordan-H\"older filtration
$M^{\oplus r} \into E \onto N$ of $E$ such that $M, N$ are $\sigma$-stable, $\Hom(E, M) = 0$, and
$[E]$ and $[M]$ are linearly independent classes in $K_{\num}(X)$.
Then $\sigma$ is in the closure of the set of stability conditions where $E$ is stable.
\end{Lem}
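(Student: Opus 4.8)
The plan is to deform $\sigma$ slightly so as to break the equality of phases between the two Jordan--Hölder factors $M$ and $N$, and then to verify by a direct case analysis that $E$ becomes stable. Write $\phi$ for the common phase of $M$, $N$ and $E$ at $\sigma$, so that $M, N \in \PP(\phi)$ are distinct simple objects of the finite-length abelian category $\PP(\phi)$ and $[E] = r[M] + [N]$. Note that $M \not\cong N$: since $[E]$ and $[M]$ are linearly independent, so is $[N] = [E] - r[M]$ from $[M]$. I will write $\phi_t$ for the phase function of a deformed stability condition $\sigma_t$.

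First I would produce the deformation. Because $[M]$ and $[E]$ span a two-dimensional subspace of $K_{\num}(X)_\R$, Theorem \ref{thm:Bridgeland-deform} allows me to choose a path $\sigma_t = (Z_t, \AA_t)$ with $\sigma_0 = \sigma$ and, for small $t > 0$,
\[
\phi_t(M) < \phi_t(E) < \phi_t(N);
\]
concretely, keep $Z_t(E) = Z(E)$ fixed and rotate $Z_t(M)$ to strictly smaller phase, so that the relation $Z_t(E) = r\,Z_t(M) + Z_t(N)$ forces $\phi_t(N) > \phi_t(E)$ automatically. For $t$ small, $M$ and $N$ remain stable since stability is an open condition (Theorem \ref{thm:TodaThmA}).

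Next I would argue that $E$ is $\sigma_t$-stable for all small $t > 0$. Suppose not; then there is a nonzero proper subobject $A_t \into E$ with $\phi_t(A_t) \ge \phi_t(E)$, which we may take to be $\sigma_t$-semistable. Using the support property together with the local finiteness of walls (Proposition \ref{prop:chambers}), the classes $[A_t]$ range over a finite set, so after passing to a subsequence $[A_t]$ is constant; the inequalities $\phi_t(A_t) \ge \phi_t(E) \to \phi$ combined with $\sigma$-semistability of $E$ then squeeze the limiting phase to exactly $\phi$, so that $A_t$ defines a genuine subobject of $E$ inside $\PP(\phi)$. Hence $[A_t] = a[M] + b[N]$ with $0 \le a \le r$ and $b \in \{0,1\}$, and the quotient $Q := E/A_t$ lies in $\PP(\phi)$ with class $(r-a)[M] + (1-b)[N]$. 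If $b = 0$, then $Z_t(A_t) = a\,Z_t(M)$ with $a \ge 1$, so $\phi_t(A_t) = \phi_t(M) < \phi_t(E)$, contradicting that $A_t$ is destabilizing. If $b = 1$ and $a < r$, then $Q$ is a nonzero quotient of $E$ all of whose Jordan--Hölder factors equal $M$; composing the surjection $E \onto Q$ with the surjection of $Q$ onto a simple top quotient $M$ produces a nonzero element of $\Hom(E, M)$, contradicting the hypothesis $\Hom(E, M) = 0$. The only remaining option, $a = r$ and $b = 1$, forces $A_t = E$, which is excluded. Thus $E$ is $\sigma_t$-stable, and since $\sigma_t \to \sigma$, the condition $\sigma$ lies in the closure of the locus where $E$ is stable.

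I expect the main obstacle to be the limiting argument of the third paragraph: making rigorous that the destabilizing subobject $A_t$ converges, as $t \to 0$, to an honest subobject of $E$ in $\PP(\phi)$ of phase exactly $\phi$, rather than drifting to a different class or escaping the phase $\phi$. This is precisely where the support property and the local finiteness of the wall-and-chamber decomposition are indispensable; once it is in place, the remaining case analysis is pure phase bookkeeping driven by the single input $\Hom(E, M) = 0$, and the linear independence of $[E]$ and $[M]$ is used only to guarantee that the phase-separating deformation exists.
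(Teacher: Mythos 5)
Your proposal is correct and follows essentially the same route as the proof this paper relies on: the paper does not reprove the lemma but cites \cite[Lemma 5.9]{localP2}, whose argument is exactly your phase-separating deformation (made possible by the linear independence of $[E]$ and $[M]$) followed by the Jordan--H\"older case analysis $[A]=a[M]+b[N]$, with $b=0$ excluded by the phase inequality, $b=1, a<r$ excluded by $\Hom(E,M)=0$, and $a=r, b=1$ excluded since then $A=E$. The one step you leave as a sketch---upgrading a destabilizing subobject at $\sigma_t$ to a genuine subobject of $E$ in $\PP(\phi)$---is precisely where the cited proof invokes the wall-and-chamber structure (Proposition \ref{prop:chambers}): choosing $\sigma_t$ in a chamber whose closure contains $\sigma$, the HN factors of $E$ at $\sigma_t$ remain semistable throughout the chamber and are, in the limit, $\sigma$-semistable of phase exactly $\phi$, which makes your limiting argument rigorous.
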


\subsection*{Lagrangian fibrations}

\begin{Ex}\label{ex:rk0}
In the previous notation, assume $v=(0,c,s)$, for $v$ a primitive vector, $v^2\geq0$, and $c$ a non-zero effective divisor.
We assume that $H$ is a generic polarization with respect to $v$.
Then, by Theorem \ref{thm:BridgelandToda}, for $t$ sufficiently large, $M_{\sigma_{t\cdot H,\beta}}(v)=M_H^\beta(v)$ is a Lagrangian fibration.
The semi-ample nef divisor associated to this fibration is given by
$w_{\infty\cdot H,\beta} \sim_{\R^+} (0,0,-1)$.

The fact that $M_H^\beta(v)$ is a Lagrangian fibration can be seen by using the divisor $\theta_v(w_{\infty\cdot H,\beta})$ as follows.
By Le Potier's construction, see \cite[Section 1.3]{LePotier:StrangeDuality}, for all $x\in X$, we can construct a section $s_x\in H^0(M_H^\beta(v),\theta_v(w_{\infty\cdot H,\beta}))$ via its zero-locus
\[
V(s_x) = \stv{E\in M_H^\beta(v)}{\Hom(E,k(x))\neq0}.
\]
For any $x$ not contained in the support of $E$, the section $s_x$ does not vanish at the point $[E]
\in M_H^\beta(v)$; therefore $\theta_v(w_{\infty\cdot H,\beta})$ is globally generated by the sections $\{s_x\}_{x\in X}$.
The induced morphism contracts the locus of sheaves with fixed support, and thus the image has lower
dimension.  By Matsushita's Theorem \cite{Matsushita:Fibrations,Matsushita:Addendum}, the morphism
is a Lagrangian fibration.
\end{Ex}

\begin{Rem}
The previous example shows a general phenomenon for nef divisors obtained as an image of a wall in the space of Bridgeland stability conditions.
Indeed, by Lemma \ref{lem:PositiveInters} and Theorem \ref{thm:Yoshioka}, a divisor $D$ coming from a wall in $\Stab(X)$ must have $q(D)>0$.
To obtain a nef divisor $D$ with $q(D)=0$ (which conjecturally corresponds to a Lagrangian
fibration),
we necessarily have to look at ``limit points'' in $\Stab(X)$, for example $w_{0\cdot H,\beta}$, or $w_{\infty\cdot H,\beta}$.
We will use these limit points in Examples \ref{ex:Sawon} and \ref{ex:AB}.
\end{Rem}

\subsection*{Nef cones} In this subsection, we will use wall-crossing for moduli spaces of vector
bundles; in particular, in Example \ref{ex:Rank2}, we will see that for rank two,
our main Theorems with an explicit wall-crossing analysis can be sufficient to determine the nef
cone of the moduli space.

\begin{Ex}\label{ex:Dragos}
Let $X$ be a K3 surface with $\mathrm{Pic}(X)=\Z\cdot H$, for $H$ an ample line bundle with
$H^2=2d$, $d\geq1$.
Let $v=(r,cH,s)$ be a primitive Mukai vector, with $r,c,s\in\Z$, $r\geq 0$, $v^2 \ge -2$.
We assume that there exist $A,B\in\Z$, $A>0$, such that $Ac-Br=1$.

Consider the family of stability conditions $\sigma_{t, \frac BA}:=\sigma_{\omega,\beta}$ on $\Db(X)$, with $\omega = t\cdot H$ and $\beta := \frac{B}{A}\cdot H$, for $t>0$.
As long as $\sigma_{t, \frac BA}$ exists, the moduli space $M_{\sigma_{t, \frac BA}}(v)$ is the moduli space of Gieseker
stable sheaves $M_H(v)$: Indeed, we have
\[
\Im(Z_{t, \frac BA}(\blank))\in \frac{2td}{A}\cdot\Z,
\]
and $\Im(Z_{t, \frac BA}(v))=\frac{2td}{A}$.
So Lemma \ref{lem:MinimalImPart} shows that Gieseker-stable sheaves are $\sigma_{t, \frac BA}$-stable.

We distinguish two cases, according to whether $\frac{dB^2+1}{A}$ is integral or not.
Its relevance is explained by the fact that $w= (A, B\cdot H, \frac{dB^2+1}{A}) \in H^*_{\alg}(X)_\Q$
is a class with $w^2 = -2$ and $\Im Z_{t, \frac BA}(w) = 0$; since
$A, B$ are coprime, there exists an integral class with these two properties 
if and only if $\frac{dB^2+1}{A}$ is integral. 

{\bf Case 1:} $\frac{dB^2+1}{A}\notin\Z$.
Then there exists no spherical object with $\Im(Z_{t, \frac BA}(v))=0$.
By \cite[Proposition 7.1]{Bridgeland:K3}, all values of $t>0$ produce a stability condition.
This gives an explicit region of the ample cone of $M_H(v)$:
\[
\langle \theta_v(w_{\sigma_{t, \frac BA}})\colon t>0\rangle \subset \mathrm{Amp}(M_H(v)).
\]
An explicit computation is in Example \ref{ex:Sawon}.

{\bf Case 2:} $\frac{dB^2+1}{A}\in\Z$.
Then there exists a stable spherical vector bundle $U$ satisfying $\Im(Z_{t, \frac BA}(U))=0$.
We let $t_0>0$ be such that $\Re(Z_{t_0}(U))=0$.
Then $t>t_0$ produces a line segment in the ample cone of $M_H(v)$:
\[
\langle \theta_v(w_{\sigma_{t, \frac BA}})\colon t>t_0\rangle \subset \mathrm{Amp}(M_H(v)).
\]
The question now becomes to understand when $\Hom(U,F)\neq0$, for $F$ a Gieseker stable sheaf with Mukai vector $v$.
An explicit computation is in the following example.
\end{Ex}

\begin{Ex}\label{ex:Rank2}
In the notation of the previous Example \ref{ex:Dragos}, we take
\begin{equation*}
d=1,\quad v=(2,H,s)\,\,\, (s\leq0),\quad A=1,\quad B=0.
\end{equation*}
Then, the spherical vector bundle $U$ is nothing but $\OO_X$, and $t_0 = 1$.
Up to rescaling, the vector $w_{\sigma_{t, \frac BA}}$ becomes
\[
w_{\sigma_{t, 0}} = \left(2t,(-2t^3+st)H,-2t^3 \right).
\]
We will see that wall-crossing along this path will naturally lead to contractions of
Brill-Noether loci, i.e., loci of sheaves $F$ where $h^0(F)$ is bigger than expected.
These loci and contractions have been studied in \cite{Yoshioka:BN_for_sheaves}. 
We distinguish 3 cases.

{\bf Case 1:} $s=0$. We claim that the nef cone $\mathrm{Nef}(M_H(v))$ is generated by
\[
\theta_v(w_{0\cdot H,0}) \simpos \theta_v(1,0,0)\quad \text{ and }\quad \theta_v(w_{\infty\cdot H,0})
\simpos \theta_v(0,-H,-1).
\]
First of all, observe that any torsion sheaf $T \in \MMM_H(0, H, -2)(\C)$ is a line bundle of 
degree -1 on a curve of genus 2; it follows that there is a short exact sequence
\[
0 \to \OO_X^{\oplus 2} \to \ST_{\OO_X}^{-1}(T) \to T \to 0.
\]
It easy to see that $F := \ST_{\OO_X}^{-1}(T)$ is slope-stable with $v(F) = (2, H, 0)$; hence
$\ST_{\OO_X}^{-1}$ induces an injective morphism $M_H(0, H, -2) \to M_H(v)$,
which must be an isomorphism (as they have the same dimension).
Hence every $F \in \MMM_H(v)(\C)$ is of this form, and
 $\Hom(\OO_X,F) = \C^2$, for all $F\in \MMM_H(v)(\C)$.

To compute how the divisor class $\ell_{\sigma_{t, 0}}$ varies when we cross $t=1$, we will use
Lemma \ref{lem:localP2}.
For $0<t<1$, we consider the stability condition $\overline{\sigma}_{t,0}$ in the boundary of $U(X)$ of type $(A^+)$ (see Theorem \ref{thm:BoundaryU}).
The heart $\AA$ for $\overline{\sigma}_{t,0}$ can be explicitly described (see, e.g.,
\cite[Proposition 2.7]{Yoshioka:StabilityFM} or \cite[Proposition 5.6]{localP2}).
In particular, $\PP(1)$ is generated by $k(x)$ for $x \in X$, by $\OO_X$, and by all objects of the
form $G[1]$, where $G$ is any $\mu$-semistable sheaves of slope $0$ satisfying $\Hom(\OO_X, G) = 0$.
Hence, both $\OO_X$ and, by Lemma \ref{lem:MinimalImPart}, any $T \in M_H(0, H, -2)$
are $\overline{\sigma}_{t, 0}$-stable for all $0<t<1$.
Similarly, the short exact sequence
\[
0 \to T \to \ST_{\OO_X}(T) \to \OO_X^{\oplus 2} \to 0
\]
and Lemma \ref{lem:MinimalImPart} show that $\ST_{\OO_X}(T) = \ST_{\OO_X}^2(F)$ is 
$\overline{\sigma}_{t, 0}$-stable for all $0<t<1$.

In particular, $M_{\overline{\sigma}_{t, 0}}(v)$ for $0 < t < 1$ is isomorphic to
$M_H(v) = M_{\sigma_{t, 0}}(v)$ for $1 < t$. The universal families are related by an application
of $\ST_{\OO_X}^2$; as this acts trivially on the $K$-group, the two families induces the
same Mukai homomorphism $v^\perp \to N^1(M_H(v))$.

To understand the wall between the two corresponding chambers, 
we now consider the path $\sigma_{t, -\epsilon}$, where $\epsilon > 0$ is sufficiently small such
that $\OO_X$ and all $T \in \MMM_H(0, H, -1)(\C)$ are both
$\sigma_{\frac 12, -\epsilon}$-stable and $\sigma_{2, -\epsilon}$-stable. Note that the subcategory
$\AA_{t, -\epsilon}$ does not depend on $t$; it is then straightforward to check that 
$\OO_X$ and all $T$ are also $\sigma_{t, -\epsilon}$-stable for all $t \in [\frac 12, 2]$: indeed,
the imaginary part of $Z_{t, -\epsilon}(w)$ for any Mukai vector $w$ is of the form
$t \cdot \mathrm{const}$, and the real part is of the form $\mathrm{const} +\mathrm{const}
\cdot t^2$. Then the inequality $\phi_{t, -\epsilon}(w) \le \phi_{t, -\epsilon}(w')$ is equivalent
to an equation of the form $\mathrm{const} \cdot t^2 \ge \mathrm{const}$.

Let $t_0 \in [\frac 12, 2]$ be such that $\OO_X$ and $T \in \MMM_H(0, H, -1)(\C)$ have the same phase
with respect to $\sigma_{t_0, -\epsilon}$.
Lemma \ref{lem:localP2} shows that $F = \ST_{\OO_X}^{-1}(T)$ is stable for $t > t_0$,
and that $\ST_{\OO_X}^2(F) = \ST_{\OO_X}(T)$ is stable for $t < t_0$.  This is a totally semistable
and fake wall.

For $t\to0$, the contraction induced by $w_{0\cdot H,0}$ is precisely the Jacobian fibration induced
by $\ST_{\OO_X}$.  The wall at $\beta=1/2\cdot H$ corresponds instead to the Uhlenbeck
compactification: the corresponding divisorial contraction is induced precisely by $w_{\infty\cdot
H,0}$ (see also \cite{Jason:Uhlenbeck}).

{\bf Case 2:} $s=-1$. The nef cone $\mathrm{Nef}(M_H(v))$ is generated by
\[
\theta_v(w_{H,0})=\theta_v(2,-3H,-2)\quad \text{ and }\quad \theta(w_{\infty\cdot H,0})=\theta_v(0,-H,-1).
\]

Similarly to Case 1, the Riemann-Roch Theorem and stability show $\Hom(\OO_X,F)\neq0$,
for all $F\in \MMM_H(v)(\C)$.
We can use a similar argument as before to find a wall near the singular point
$\sigma_{1,0}$ where the Jordan-H\"older filtration of $F$ is given by
\[
H^0(F) \otimes \OO_X \xrightarrow{\ev} F \to \cone(\ev).
\]
There is no stable object with Mukai vector $v$ with respect to $\sigma_{t_0,\epsilon}$, hence we are still in the case of a totally semistable wall.
Unlike in the previous case, we do have curves of $S$-equivalent objects that get contracted by
$w_{H, 0}$: there is a $\P^1$ parameterizing extensions 
\begin{equation}\label{eq:Columbus020512}
0\to \OO_X\to F\to I_\Gamma(H)\to 0,
\end{equation}
for any zero-dimensional subscheme $\Gamma\subset X$ of length $4$ contained in a curve
$C\in |H|$.

{\bf Case 3:} $s\leq -2$. The nef cone $\mathrm{Nef}(M_H(v))$ is generated by
\[
\theta_v(w_{H,0})=\theta_v(2,(-2+s)H,-2)\quad \text{ and }\quad \theta(w_{\infty\cdot
H,0})=\theta_v(0,-H,-1).
\]

Indeed, in this case, we will always have both stable objects at $\sigma_{t_0,\epsilon}$ (by a dimension count), and strictly semistable ones (corresponding to extensions as in \eqref{eq:Columbus020512}, with $\Gamma\subset X$ of length $3-s$).
\end{Ex}

\subsection*{General bound for the ample cone}
Finally, we proceed to give an explicit bound for the walls of the ``Gieseker chamber'' for
any Mukai vector $v$, i.e., the chamber for which Bridgeland stability of objects of class $v$ is
equivalent to $\beta$-twisted Gieseker stability. In principle, this has been well-known, as all the
necessary arguments are already contained in \cite[Proposition 14.2]{Bridgeland:K3}; see also 
\cite[Section 6]{Toda:K3}, \cite[Proposition 4.1]{large-volume}, \cite[Section 2]{Minamide-Yanagida-Yoshioka:wall-crossing},
\cite[Theorem 4.4]{LoQin:miniwalls}; the most explicit results can be found in
\cite[Sections 2 and 3]{Maciocia:walls} (with regards to a slightly different form of the central
charge) and \cite{Kawatani:Gieseker_vs_Bridgeland}; what follows is essentially a short summary of
Kawatani's argument. Corollary \ref{Cor:amplecone} deduces a general bound for the ample cone from
this analysis.

We want to give a bound that is as explicit as possible for the form of the central
charge given in \eqref{eq:ZK3}. 
Fix a class $\beta \in \NS(X)_\Q$, and let $\omega$ vary on a ray in the ample cone.
Given a class $v \in H^*_\alg(X,\Z)$ with positive rank and slope, Bridgeland and Toda showed
that for $\omega \gg 0$, stable objects of class $v$ are exactly the twisted-Gieseker stable
sheaves, see Theorem \ref{thm:BridgelandToda}.
We want to give an explicit bound in terms of $\omega^2$ and $\beta, v$ that only depends
on the Mukai lattice $H^*_\alg(X, \Z)$.

\begin{Def} \label{def:discrepancy}
Given divisor classes $\omega, \beta$ with $\omega$ ample, 
and given a class $v = (r, c, s) \in H^*_\alg(X,\Z)$ with $v^2 \ge -2$, we 
write $(r, c_\beta, s_\beta) = e^{-\beta} (r, c, s)$ and define its
\emph{slope} $\mu_{\omega, \beta}(v) = \frac{\omega.c_\beta}r$ as in Section \ref{sec:ReviewGieseker}, equation \eqref{eq:muomegabeta}, and its
\emph{discrepancy} $\delta_{\omega, \beta}(v)$ by
\begin{align} 
\delta_{\omega, \beta}(v)
& = -\frac {s_\beta}r + 1 + \frac 12 \frac{\mu_{\omega, \beta}(v)^2}{\omega^2}
\end{align}
\end{Def}

Observe that rescaling $\omega$ will rescale $\mu_{\omega, \beta}$ by the same factor, while
leaving $\delta_{\omega, \beta}$ invariant.  A torsion-free
sheaf $F \in \Coh X$ is $\beta$-twisted Gieseker stable if
for every subsheaf $G \subset F$ we have
\begin{align*}
\mu_{\omega, \beta}(G) &\le \mu_{\omega, \beta}(F), \quad \text{and} \\
\mu_{\omega, \beta}(G) &= \mu_{\omega, \beta}(F) \Rightarrow
\delta_{\omega, \beta}(G) > \delta_{\omega, \beta}(F).
\end{align*}

Combining the Hodge Index theorem with the assumption $v^2 \ge -2$
shows 
\begin{align*} 
\delta_{\omega, \beta}(v) &\ge -\frac{s_\beta}r + 1 + \frac{c_\beta^2}{2r^2}
= \frac{v^2 + 2}{2 r^2} + \left(1 - \frac 1{r^2}\right) \ge 0.
\end{align*}

Given a class $v$ with $r > 0$, we can write the central charge of equation
\eqref{eq:ZK3} as
\begin{equation} \label{eq:Zrewrite}
\frac 1r Z_{\omega, \beta}(v) =
i \mu_{\omega, \beta}(v) + \frac{\omega^2}2 - \frac{s_\beta}r
= i \mu_{\omega, \beta}(v) + \frac{\omega^2}2 - 1 - \frac {\mu_{\omega, \beta}(v)^2}{2 \omega^2} +
\delta_{\omega, \beta}(v)
\end{equation}

We now fix a class $v \in H^*_\alg(X, \Z)$ with
$r(v) > 0$ and $\mu_{\omega, \beta}(v) > 0$.
\begin{Lem} \label{lem:bounddelta}
Assume $\omega^2 > 2$.
Any class $w \in H^*_\alg(X, \Z)$
with $r(w) > 0$, $0 < \mu_{\omega, \beta}(w) < \mu_{\omega, \beta}(v)$ such that
the phase of $Z_{\omega, \beta}(w)$ is bigger or equal to the phase of
$Z_{\omega, \beta}(v)$ satisfies
$\delta_{\omega, \beta}(w) < \delta_{\omega, \beta}(v)$.
\end{Lem}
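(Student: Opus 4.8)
The plan is to read off the phase inequality directly from the normalized form \eqref{eq:Zrewrite} of the central charge, turn it into a single polynomial inequality in the slopes and discrepancies, and then extract the bound on $\delta_{\omega,\beta}(w)$ using the hypothesis $\omega^2 > 2$ together with the positivity $\delta_{\omega,\beta}(v) \ge 0$.

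First I would abbreviate $\mu_w = \mu_{\omega,\beta}(w)$, $\mu_v = \mu_{\omega,\beta}(v)$, $\delta_w = \delta_{\omega,\beta}(w)$, $\delta_v = \delta_{\omega,\beta}(v)$, and set $C := \frac{\omega^2}{2} - 1$. Since $r(w), r(v) > 0$ and $\mu_w, \mu_v > 0$, the rewriting \eqref{eq:Zrewrite} shows $\Im Z_{\omega,\beta}(w) = r(w)\mu_w > 0$ and likewise for $v$, so both central charges lie strictly in the upper half plane and their phases lie in $(0,1)$. For two complex numbers with positive imaginary part the phase is a strictly decreasing function of the quotient (real part)/(imaginary part); hence the hypothesis that the phase of $Z_{\omega,\beta}(w)$ is at least that of $Z_{\omega,\beta}(v)$ is equivalent to
\[
\frac{\Re Z_{\omega,\beta}(w)}{\Im Z_{\omega,\beta}(w)} \le \frac{\Re Z_{\omega,\beta}(v)}{\Im Z_{\omega,\beta}(v)},
\]
and by \eqref{eq:Zrewrite} the left-hand quotient equals $\bigl(C - \frac{\mu_w^2}{2\omega^2} + \delta_w\bigr)/\mu_w$, and similarly for $v$.

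Next I would clear denominators (legitimate since $\mu_w,\mu_v > 0$) and simplify. Cross-multiplying, the slope cross-terms combine as $\mu_v\mu_w^2 - \mu_w\mu_v^2 = \mu_w\mu_v(\mu_w - \mu_v)$, so the inequality collapses to
\[
(\mu_v - \mu_w)\left(C + \frac{\mu_w\mu_v}{2\omega^2}\right) + \mu_v\delta_w - \mu_w\delta_v \le 0.
\]
Here $\mu_v - \mu_w > 0$ by hypothesis, and the assumption $\omega^2 > 2$ gives $C = \frac{\omega^2}{2} - 1 > 0$, while $\frac{\mu_w\mu_v}{2\omega^2} > 0$; thus the first summand is strictly positive, forcing the strict inequality $\mu_v\delta_w < \mu_w\delta_v$.

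Finally I would invoke the standing positivity $\delta_v \ge 0$, which follows from $v^2 \ge -2$ and the Hodge index estimate established just before the lemma. Since $0 < \mu_w < \mu_v$, this gives $\mu_w\delta_v \le \mu_v\delta_v$, whence $\mu_v\delta_w < \mu_w\delta_v \le \mu_v\delta_v$; dividing by $\mu_v > 0$ yields $\delta_w < \delta_v$, as claimed. The only genuinely delicate point is bookkeeping: getting the direction of the phase inequality right and tracking strict versus non-strict inequalities. The positivity of the coefficient $C + \frac{\mu_w\mu_v}{2\omega^2}$ — which is precisely where the hypothesis $\omega^2 > 2$ enters — is what upgrades the non-strict phase hypothesis to the strict conclusion $\delta_w < \delta_v$.
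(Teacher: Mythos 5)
Your proof is correct, and in substance it is the same argument as the paper's, executed algebraically rather than geometrically. The paper's proof consists of two monotonicity observations read off from equation \eqref{eq:Zrewrite}: at fixed slope the phase increases as the discrepancy decreases, and at fixed discrepancy the phase increases with the slope, because classes of fixed $\delta_{\omega,\beta}$ trace a leftward-opening parabola meeting the positive real axis (this is where $\omega^2>2$ and $\delta_{\omega,\beta}(v)\ge 0$ enter); these two facts, illustrated in fig.~\ref{fig:deltabound}, immediately force $\delta_{\omega,\beta}(w)<\delta_{\omega,\beta}(v)$. Your cotangent comparison and cross-multiplication is precisely a quantitative verification of those statements: the positivity of your coefficient $(\mu_v-\mu_w)\bigl(C+\tfrac{\mu_w\mu_v}{2\omega^2}\bigr)$ is exactly the content of the parabola picture, and it is where $\omega^2>2$ is used in both versions. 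A small merit of your rendering is that it makes explicit that the last step needs $\delta_{\omega,\beta}(v)\ge 0$ (the Hodge-index estimate preceding the lemma, using $v^2\ge -2$), which the paper's argument uses only implicitly in asserting that the parabola through $\tfrac1r Z(v)$ meets the positive real axis; what the paper's geometric formulation buys in exchange is reusability, since the same picture is invoked again in the proof of Lemma \ref{lem:Giesekerchamber}.
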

\begin{Prf}
By equation \eqref{eq:Zrewrite}, it is evident that 
decreasing $\delta_{\omega, \beta}(v)$ while keeping $\mu_{\omega, \beta}(v)$ fixed will increase the phase
of the complex number $Z_{\omega, \beta}(v)$. The same equation also shows that objects
with fixed $\delta_{\omega, \beta}$ lie on a parabola, symmetric to the real axis, which intersects
the positive real axis; in particular, increasing $\mu_{\omega, \beta}(v)$ while keeping
$\delta_{\omega, \beta}(v)$ fixed will also increase the phase of $Z_{\omega, \beta}(v)$; see also
fig.~\ref{fig:deltabound}.
\end{Prf}

\begin{figure}
\begin{centering}
\definecolor{zzttqq}{rgb}{0.27,0.27,0.27}
\definecolor{qqqqff}{rgb}{0.33,0.33,0.33}
\definecolor{uququq}{rgb}{0.25,0.25,0.25}
\definecolor{xdxdff}{rgb}{0.66,0.66,0.66}
\begin{tikzpicture}[line cap=round,line join=round,>=triangle 45,x=1.0cm,y=1.0cm]
\draw[->,color=black] (-2.3,0) -- (3.5,0);
\foreach \x in {-2,2}
\draw[shift={(\x,0)},color=black] (0pt,2pt) -- (0pt,-2pt);
\draw[->,color=black] (0,-1.5) -- (0,2.5);
\foreach \y in {,2}
\draw[shift={(0,\y)},color=black] (2pt,0pt) -- (-2pt,0pt);
\clip(-2.3,-1.5) rectangle (3.5,2.5);
\fill[color=zzttqq,fill=zzttqq,fill opacity=0.1] (0,0) -- (0.97,0.84) -- (-2.4,0.84) -- (-2.4,0) --
cycle;
\draw [samples=50,rotate around={-270:(3,0)},xshift=3cm,yshift=0cm] plot (\x,\x*\x);
\draw (1.54,2.24) node[anchor=north west] {$ \frac 1r Z(v) $};
\draw (1.45,1.25)-- (0,0);
\draw (1.46,-0.04) node[anchor=north west] {$\delta = \textrm{const}$};
\draw [line width=0.4pt,domain=-2.3:3.5] plot(\x,{(--1.25-0*\x)/1});
\draw [domain=-2.3:3.5] plot(\x,{(--0.84-0*\x)/1});
\draw (-0.96,1.88) node[anchor=north west] {$ \mu_{\omega, \beta}(v) $};
\draw (-0.98,1.42) node[anchor=north west] {$ \mu_{\omega, \beta}(w) $};
\draw (-0.96,0.94) node[anchor=north west] {$ \frac 1r Z(w) $};
\draw [color=zzttqq] (0,0)-- (0.97,0.84);
\draw [color=zzttqq] (0.97,0.84)-- (-2.4,0.84);
\draw [color=zzttqq] (-2.4,0.84)-- (-2.4,0);
\draw [color=zzttqq] (-2.4,0)-- (0,0);
\begin{scriptsize}
\fill [color=xdxdff] (1.45,1.25) circle (1.5pt);
\fill [color=uququq] (0,0) circle (1.5pt);
\fill [color=qqqqff] (0.16,0.4) circle (1.5pt);
\end{scriptsize}
\end{tikzpicture}
\caption{Destabilizing subobjects must have smaller $\delta$}
\label{fig:deltabound}
\end{centering}
\end{figure}
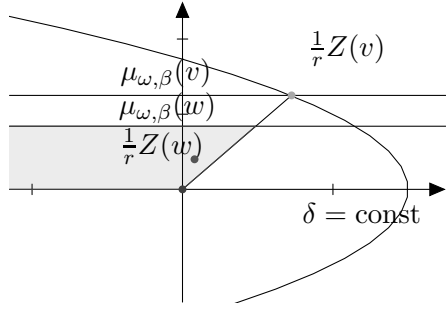

\begin{Def} \label{def:Dv}
Define $D_v \subset H^*_\alg(X, \Z)$ as the subset
\[
\stv{w}
{0 < r(w) \le r(v), w^2 \ge -2, 
0 < \mu_{\omega, \beta}(w) < \mu_{\omega, \beta}(v),
\delta_{\omega, \beta}(w) < \delta_{\omega, \beta}(v)}.
\]
\end{Def}
The set $D_v$ is finite: the Hodge Index theorem and
$r(w)^2\delta_{\omega, \beta}(w) < r(v)^2 \delta_{\omega, \beta}(v)$
bound the norm of the orthogonal projection of $c_\beta(w)$ to $\omega^{\perp} \subset H^{1, 1}_\alg(X)_\R$;
the inequality $0 <  c_{\beta}(w) < r(v) c_{\beta}(v)$ bounds  the projection of $c_\beta(w)$
to $\R \cdot v$; and, finally, $w^2 \ge -2$ and $\delta_{\omega, \beta}(w) < \delta_{\omega,
\beta}(w)$ give bounds for $s_\beta(w)$.
We also observe that $D_v$ does not change when we rescale $\omega$ in the ray
$\R_{>0}\cdot \omega$.

\begin{Def} \label{def:mumax}
We define $\mu^{\max}(v)$ by
\[
\mu^{\max}(v) := \max
\stv{\mu_{\omega, \beta}(w)}{w \in D_v} \cup \left\{ \frac{r(v)}{r(v)+1}\cdot \mu_{\omega, \beta}(v)
\right\}.
\]
\end{Def}

\begin{Lem} \label{lem:Giesekerchamber}
Let $E$ be a $\beta$-twisted Gieseker stable sheaf with $v(E) = v$.
If $\omega^2 > 2 + \frac{2\mu^{\max}(v)}{\mu_{\omega, \beta}(v) - \mu^{\max}(v)}\delta_{\omega,\beta}(v)$,
then $E$ is $Z_{\omega, \beta}$-stable.
\end{Lem}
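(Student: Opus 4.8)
The plan is to argue by contradiction. Since $\mu_{\omega,\beta}(v) > 0$ and $E$ is $\beta$-twisted Gieseker stable, hence $\mu_{\omega,\beta}$-semistable, $E$ lies in $\TT(\omega,\beta)\subset\AA(\omega,\beta)$ and has phase $\phi_{\omega,\beta}(E)\in(0,1)$; note also that the hypothesis forces $\omega^2>2$, so $\sigma_{\omega,\beta}$ is a genuine stability condition. Suppose $E$ were not $Z_{\omega,\beta}$-stable, and let $F\subset E$ be the maximal destabilizing subobject in $\AA(\omega,\beta)$; it is $Z_{\omega,\beta}$-semistable with $\phi_{\omega,\beta}(F)\ge\phi_{\omega,\beta}(E)$. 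First I would check that $F$ is a sheaf: the inclusion in $\AA(\omega,\beta)$ gives $\HH^{-1}(F)\hookrightarrow\HH^{-1}(E)=0$, so $F\in\TT(\omega,\beta)$, and passing to cohomology of $0\to F\to E\to Q\to 0$ yields a sheaf sequence $0\to K\to F\to G\to 0$ together with $G\hookrightarrow E$, where $K=\HH^{-1}(Q)\in\FF(\omega,\beta)$ and $G=\im(F\to E)\subset E$ is a nonzero subsheaf lying in $\TT(\omega,\beta)$.

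The heart of the argument is to bound $\mu_{\omega,\beta}(F)\le\mu^{\max}(v)$. Since $\mu_{\omega,\beta}(K)\le 0<\mu_{\omega,\beta}(G)\le\mu_{\omega,\beta}(v)$ (the last inequality by $\mu_{\omega,\beta}$-semistability of $E$ applied to $G$), and $\mu_{\omega,\beta}(F)$ is the weighted average of $\mu_{\omega,\beta}(K)$ and $\mu_{\omega,\beta}(G)$ with weights $r(K),r(G)$, we get $\mu_{\omega,\beta}(F)\le\mu_{\omega,\beta}(v)$. Equality forces $K=0$ and $\mu_{\omega,\beta}(G)=\mu_{\omega,\beta}(v)$, so $F=G$ is a proper subsheaf of equal slope; Gieseker stability then gives $\delta_{\omega,\beta}(F)>\delta_{\omega,\beta}(v)$, and by the rewriting \eqref{eq:Zrewrite} (equal slope, larger $\delta$) this means $\phi_{\omega,\beta}(F)<\phi_{\omega,\beta}(E)$, a contradiction. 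Hence $\mu_{\omega,\beta}(F)<\mu_{\omega,\beta}(v)$, and I would split on the rank. If $r(F)>r(v)$, then from $r(G)\le r(E)=r(v)$ we get $r(K)\ge 1$, and discarding the nonpositive contribution of $K$ gives
\[
\mu_{\omega,\beta}(F)\le\frac{r(G)}{r(K)+r(G)}\,\mu_{\omega,\beta}(G)\le\frac{r(v)}{r(v)+1}\,\mu_{\omega,\beta}(v)\le\mu^{\max}(v).
\]
If instead $r(F)\le r(v)$, then $Z_{\omega,\beta}$-semistability of $F$ gives $v(F)^2\ge-2$ (a consequence of the support property), and Lemma \ref{lem:bounddelta} applied to $w=v(F)$, using $\phi_{\omega,\beta}(F)\ge\phi_{\omega,\beta}(E)$, yields $\delta_{\omega,\beta}(F)<\delta_{\omega,\beta}(v)$; thus $v(F)\in D_v$ and $\mu_{\omega,\beta}(F)\le\mu^{\max}(v)$ by Definitions \ref{def:Dv} and \ref{def:mumax}.

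Finally I would compare phases directly. Writing $\tfrac1{r(F)}Z_{\omega,\beta}(F)=a_F+i\mu_{\omega,\beta}(F)$ and $\tfrac1{r(v)}Z_{\omega,\beta}(E)=a_E+i\mu_{\omega,\beta}(v)$ with $a_\bullet=\tfrac{\omega^2}2-1-\tfrac{\mu_{\omega,\beta}(\bullet)^2}{2\omega^2}+\delta_{\omega,\beta}(\bullet)$ as in \eqref{eq:Zrewrite}, the inequality $\phi_{\omega,\beta}(F)<\phi_{\omega,\beta}(E)$ is equivalent to $\mu_{\omega,\beta}(v)\,a_F-\mu_{\omega,\beta}(F)\,a_E>0$. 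Expanding and discarding the manifestly nonnegative term $\tfrac{\mu_{\omega,\beta}(F)\,\mu_{\omega,\beta}(v)\,(\mu_{\omega,\beta}(v)-\mu_{\omega,\beta}(F))}{2\omega^2}$, and using $\delta_{\omega,\beta}(F)\ge 0$, it suffices to verify $\left(\tfrac{\omega^2}2-1\right)(\mu_{\omega,\beta}(v)-\mu_{\omega,\beta}(F))>\mu_{\omega,\beta}(F)\,\delta_{\omega,\beta}(v)$. Since $\mu_{\omega,\beta}(F)\le\mu^{\max}(v)<\mu_{\omega,\beta}(v)$ and $x\mapsto\tfrac{x}{\mu_{\omega,\beta}(v)-x}$ is increasing, this reduces exactly to the hypothesis $\omega^2>2+\tfrac{2\mu^{\max}(v)}{\mu_{\omega,\beta}(v)-\mu^{\max}(v)}\delta_{\omega,\beta}(v)$, yielding $\phi_{\omega,\beta}(F)<\phi_{\omega,\beta}(E)$ and the desired contradiction. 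I expect the main obstacle to lie not in the final numerical estimate, which is engineered to match the stated bound, but in the sheaf-theoretic bookkeeping of $F$ — controlling the kernel $K\in\FF(\omega,\beta)$, verifying that the maximal destabilizer is a sheaf, and correctly handling the rank dichotomy $r(F)\gtrless r(v)$ that justifies the two pieces in the definition of $\mu^{\max}(v)$.
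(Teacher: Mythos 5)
Your proof follows essentially the same strategy as the paper's: replace the destabilizer by a sheaf, bound its slope by $\mu^{\max}(v)$ through the dichotomy between the small-rank case (membership in $D_v$, via Lemma \ref{lem:bounddelta}) and the large-rank case (the bound $\frac{r(v)}{r(v)+1}\mu_{\omega,\beta}(v)$), and then close with a phase computation that is exactly what the hypothesis on $\omega^2$ was engineered for. Your sheaf bookkeeping (the sequence $0 \to K \to F \to G \to 0$ with $K \in \FF(\omega,\beta)$ and $G \subset E$, the equal-slope case via Gieseker stability, the large-rank case via $r(K)\ge 1$) is correct, and your closing algebraic inequality is a valid reformulation of the paper's geometric comparison with the auxiliary point $z$; the paper's only structural difference is that it passes to the slope-HN factors $A_i$ of the destabilizer instead of working with the destabilizer and its image in $E$.

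There is, however, one genuine gap: in the case $r(F)\le r(v)$ you assert that ``$Z_{\omega,\beta}$-semistability of $F$ gives $v(F)^2\ge -2$ (a consequence of the support property).'' This is false for semistable objects, and it is not what the support property says. If $S$ is a $\sigma$-stable spherical object, then $S^{\oplus m}$ is semistable with $v(S^{\oplus m})^2 = -2m^2$, and nothing prevents the maximal destabilizing subobject of $E$ from having this form. The inequality $v^2 \ge -2$ holds only for \emph{stable} objects, by Mukai's argument: stability forces $\Hom(F,F)=\C$, Serre duality on the K3 gives $\Ext^2(F,F)\cong\Hom(F,F)^*=\C$, hence $v(F)^2 = -\chi(F,F) = \mathrm{ext}^1(F,F)-2 \ge -2$. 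Without $v(F)^2\ge -2$ you cannot place $v(F)$ in $D_v$ (which requires it by Definition \ref{def:Dv}), so the bound $\mu_{\omega,\beta}(F)\le\mu^{\max}(v)$ --- and with it the final contradiction --- collapses precisely in the critical case. The repair is local and leaves everything else untouched: take $F$ to be a $\sigma$-stable subobject of maximal phase, e.g.\ the first step of a Jordan-H\"older filtration of the maximal destabilizer (or of $E$ itself if $E$ is strictly semistable rather than unstable). Such an $F$ is still a proper subobject of $E$ in $\AA(\omega,\beta)$ with $\phi_{\omega,\beta}(F)\ge\phi_{\omega,\beta}(E)$ --- which is all that your sheaf analysis, your two rank cases, and your final computation actually use --- and now $v(F)^2\ge -2$ is justified by stability. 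This reduction to a stable (or slope-HN) piece is exactly the role played by the factors $A_i$ in the paper's own proof.
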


\begin{Prf}
Consider a destabilizing short exact sequence
$A \into E \onto B$ in $\AA(\omega, \beta)$ with $\phi_{\omega, \beta}(A) \ge \phi_{\omega,
\beta}(E)$.
By the long exact cohomology sequence, $A$ is a sheaf. Consider the
HN-filtration of $A$ with respect to $\mu_{\omega, \beta}$-slope stability in
$\Coh X$, and let $A_1, \dots, A_n$ be its HN-filtration factors.
Since $A \in \AA(\omega, \beta)$ we have $\mu_{\omega, \beta}(A_i) > 0$ for all $i$.
Since the kernel of $A \to E$ lies in $\FF(\omega, \beta)$, we also have
$\mu_{\omega, \beta}(A_i) \le \mu_{\omega, \beta}(A_1) \le \mu_{\omega, \beta}(v)$.

By the see-saw property, we can choose an $i$ such that 
$\phi_{\omega, \beta}(A_i) \ge \phi_{\omega, \beta}(v)$.

First assume $\mu_{\omega, \beta}(A_i) = \mu_{\omega, \beta}(v)$, in which case $i =
1$. Consider the composition $g \colon A_1 \into A \to E$ in $\Coh X$. If $g$ is not injective,
then $\ker g$ has the same slope $\mu_{\omega, \beta}(\ker g) = \mu_{\omega, \beta}(v)$. Since $\ker
g \into A$ factors via $\HH^{-1}(B) \into A$, this is a contradiction to $\HH^{-1}(B) \in \FF(\omega,
\beta)$. However, if $g$ is injective, $A_1 \subset E$ is a subsheaf with
$\mu_{\omega, \beta}(A_1) = \mu_{\omega, \beta}(E)$ and, by assumption 
and equation \eqref{eq:Zrewrite}, 
$\delta_{\omega, \beta}(A_1) \le \delta_{\omega, \beta}(E)$. This contradicts the assumption
that $E$ is $\beta$-twisted Gieseker stable.

We have thus proved $\mu_{\omega, \beta}(A_i) < \mu_{\omega, \beta}(v)$.
Let $w \in H^*_\alg(X, \Z)$ be the primitive
class such that $v(A_i)$ is a positive integer multiple of $w$.
We claim that in fact $\mu_{\omega, \beta}(w) = \mu_{\omega, \beta}(A_i) \le \mu^{\max}(v)$.
In case $r(w) \le r(v)$, this
follows from Lemma \ref{lem:bounddelta} and the definition of the set $D_v$. In case
$r(w) \ge r(v) + 1$, we observe 
\[ 
\omega.c_\beta(w) \le \omega.c_\beta(A_i) = \Im Z (A_i) \le \Im Z(A) \le \Im Z (E) 
= \omega.c_\beta(v)
\]
to conclude $\mu_{\omega, \beta}(w) \le \frac{r(v)}{r(v)+1}\cdot \mu_{\omega, \beta}(v)$.

We conclude the proof with a simple geometric argument, see also fig.~\ref{fig:Gieseker}:
By equation \eqref{eq:Zrewrite}, the phase of $Z(w)$ is less than or equal to the phase $\phi(z)$ of
\[
z:= i \mu^{\max}(v) + \frac {\omega^2}2 - 1 - \frac{\mu^{\max}(v)^2}{2\omega^2}.
\]
We have $\Im \frac{\mu_{\omega, \beta}(v)}{\mu^{\max}(v)} z = \Im \frac 1{r(v)} Z(v)$ and
\begin{align*}
\Re \frac{\mu_{\omega, \beta}(v)}{\mu^{\max}(v)} z &=
\frac {\omega^2}2 - 1 
+ \frac{\mu_{\omega, \beta}(v) - \mu^{\max}(v)}{\mu^{\max}(v)} \left(\frac{\omega^2}2 - 1\right)
- \frac{\mu_{\omega, \beta}(v) \mu^{\max}(v)}{2\omega^2} \\
& > \frac {\omega^2}2 - 1 - \frac {\mu_{\omega, \beta}(v)^2}{2 \omega^2} + \delta_{\omega, \beta}(v)= \Re \frac 1{r(v)} Z(v).
\end{align*}
and thus $\phi(z) < \phi_{\omega, \beta}(v)$. This leads to the contradiction
\[
\phi_{\omega, \beta}(E) \le \phi_{\omega, \beta}(A_i) \le \phi(z) < \phi_{\omega, \beta}(E).
\]
\end{Prf}

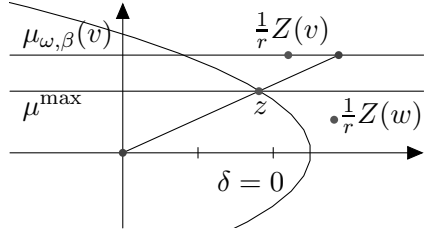
\begin{figure}
\begin{centering}
\definecolor{uququq}{rgb}{0.25,0.25,0.25}
\definecolor{qqqqff}{rgb}{0.33,0.33,0.33}
\begin{tikzpicture}[line cap=round,line join=round,>=triangle 45,x=1.0cm,y=1.0cm]
\draw[->,color=black] (-1.5,0) -- (4,0);
\foreach \x in {,2}
\draw[shift={(\x,0)},color=black] (0pt,2pt) -- (0pt,-2pt);
\draw[->,color=black] (0,-1) -- (0,2);
\foreach \y in {}
\draw[shift={(0,\y)},color=black] (2pt,0pt) -- (-2pt,0pt);
\clip(-1.5,-1) rectangle (4,2);
\draw [samples=50,rotate around={-270:(2.5,0)},xshift=2.5cm,yshift=0cm] plot (\x,\x*\x);
\draw [domain=-1.5:4] plot(\x,{(--1.3-0*\x)/1});
\draw (-1.47,1.9) node[anchor=north west] {$\mu_{\omega, \beta}(v)$};
\draw (1.12,-0.1) node[anchor=north west] {$\delta = 0$};
\draw [domain=-1.5:4] plot(\x,{(--0.82-0*\x)/1});
\draw (1.60,0.82) node[anchor=north west] {$z$};
\draw (-1.47,0.9) node[anchor=north west] {$\mu^{\textrm{max}}$};
\draw (2.87,1.3)-- (0,0);
\draw (2.75,0.86) node[anchor=north west] {$\frac 1r Z(w)$};
\draw (1.60,2.0) node[anchor=north west] {$\frac 1r Z(v)$};
\begin{scriptsize}
\fill [color=qqqqff] (2.2,1.3) circle (1.5pt);
\fill [color=qqqqff] (1.81,0.82) circle (1.5pt);
\fill [color=uququq] (0,0) circle (1.5pt);
\fill [color=uququq] (2.87,1.3) circle (1.5pt);
\fill [color=qqqqff] (2.81,0.44) circle (1.5pt);
\end{scriptsize}
\end{tikzpicture}
\caption{Phases of $Z(w)$, $z$ and $Z(v)$}
\label{fig:Gieseker}
\end{centering}
\end{figure}

\begin{Cor} \label{Cor:amplecone}
Let $v \in H^*_{\alg}(X, \Z)$ be a primitive Mukai vector with $v^2\geq 2$.
Let $\omega, \beta \in \NS(X)_\Q$ be generic with respect to $v$, and such that
$r(v) > 0$ and $\omega.c_\beta(v) > 0$.
Let $M_\omega^\beta(v)$ be the moduli space of $\beta$-twisted Gieseker
stable sheaves. If $\mu^{\max}(v)$ is as given in Definition \ref{def:mumax}
and $\omega$ satisfies 
$\omega^2 > 2 + \frac{2\mu^{\max}(v)}{\mu_{\omega, \beta}(v) - \mu^{\max}(v)}\delta_{\omega,\beta}(v)$, then
\[
\theta_v(w_{\sigma_{\omega, \beta}}) \subset \mathrm{Amp} (M_\omega^\beta(v)).
\]
\end{Cor}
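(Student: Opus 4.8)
The plan is to identify $M_\omega^\beta(v)$ with the Bridgeland moduli space $M_{\sigma_{\omega,\beta}}(v)$, apply the ampleness of $\ell_{\sigma_{\omega,\beta}}$ furnished by Theorem~\ref{thm:ProjK3}, and then rewrite this class via the explicit formula of Lemma~\ref{lem:ExplicitFormula}.

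First I would invoke Lemma~\ref{lem:Giesekerchamber}: the hypothesis $\omega^2 > 2 + \frac{2\mu^{\max}(v)}{\mu_{\omega,\beta}(v)-\mu^{\max}(v)}\delta_{\omega,\beta}(v)$ guarantees that every $\beta$-twisted Gieseker stable sheaf $E$ with $v(E)=v$ is $\sigma_{\omega,\beta}$-stable. Since ``being a sheaf'' is an open condition in a flat family of complexes (by semicontinuity of the cohomology sheaves) and Gieseker stability is open, while $\sigma_{\omega,\beta}$-stability is open by Theorem~\ref{thm:TodaThmA}, this realizes the stack of Gieseker stable sheaves of class $v$ as an open substack of $\MMM_{\sigma_{\omega,\beta}}(v)$. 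Passing to coarse spaces (both are $\mathbb{G}_m$-gerbes, as $v$ is primitive and $\omega,\beta$ are generic) yields an open immersion $M_\omega^\beta(v) \hookrightarrow M_{\sigma_{\omega,\beta}}(v)$.

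Next I would upgrade this to an isomorphism. Both spaces are irreducible and projective of dimension $v^2+2$, by Remark~\ref{rmk:SmoothnessFactorialityModuliSpace} and Corollary~\ref{cor:nonempty} respectively. Because $M_\omega^\beta(v)$ is proper and the target is separated, the image of the open immersion is closed; being also open and nonempty in an irreducible variety, it is all of $M_{\sigma_{\omega,\beta}}(v)$, and an open immersion that is surjective is an isomorphism. In particular $M_{\sigma_{\omega,\beta}}(v)$ contains no strictly semistable objects, so $\sigma_{\omega,\beta}$ is generic with respect to $v$ by the criterion in Proposition~\ref{prop:chambers}. The conclusion is then immediate: since $v$ is primitive with $v^2 \ge 2$ and $\sigma_{\omega,\beta}$ is generic, Theorem~\ref{thm:ProjK3} shows $\ell_{\sigma_{\omega,\beta}}$ is ample on $M_{\sigma_{\omega,\beta}}(v)=M_\omega^\beta(v)$, while Lemma~\ref{lem:ExplicitFormula} identifies $\ell_{\sigma_{\omega,\beta}}$ with a positive multiple of $\theta_v(w_{\sigma_{\omega,\beta}})$, which is therefore ample.

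I expect the main obstacle to be precisely the identification $M_\omega^\beta(v)=M_{\sigma_{\omega,\beta}}(v)$: Lemma~\ref{lem:Giesekerchamber} supplies only the implication ``Gieseker stable $\Rightarrow$ Bridgeland stable'', and the converse---that an arbitrary $\sigma_{\omega,\beta}$-stable object of class $v$ is an honest Gieseker-stable sheaf rather than a complex with nonzero $\HH^{-1}$---is not addressed directly. The properness-plus-irreducibility argument above is designed to bypass this cohomological analysis entirely; the only point requiring care is that the inclusion of stacks really is an \emph{open} immersion, which rests on the openness of the sheaf condition and of $\sigma_{\omega,\beta}$-stability.
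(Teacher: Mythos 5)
Your overall assembly is exactly the one the paper intends (it states the corollary without a separate proof, as a consequence of the preceding analysis): Lemma \ref{lem:Giesekerchamber} places the Gieseker-stable sheaves among the $\sigma_{\omega,\beta}$-stable objects, an open-plus-closed argument in an irreducible variety upgrades the inclusion $M_\omega^\beta(v) \hookrightarrow M_{\sigma_{\omega,\beta}}(v)$ to an isomorphism, and then Theorem \ref{thm:ProjK3} together with Lemma \ref{lem:ExplicitFormula} gives ampleness of $\theta_v(w_{\sigma_{\omega,\beta}})$. Your observation that this route bypasses the need for the converse implication ``$\sigma_{\omega,\beta}$-stable $\Rightarrow$ Gieseker stable'' is also the right one.

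There is, however, a circularity in the way you handle genericity. Every property of the Bridgeland side that your identification argument uses already presupposes that $\sigma_{\omega,\beta}$ is generic with respect to $v$ in the sense of Definition \ref{def:generic}: the claim that $\MMM_{\sigma_{\omega,\beta}}(v)$ is a $\mathbb{G}_m$-gerbe over a coarse space requires all semistable objects of class $v$ to be stable; Corollary \ref{cor:nonempty} is stated only for generic $\sigma$ (and, incidentally, it gives smoothness and properness but not irreducibility or projectivity --- those are Theorem \ref{thm:ProjK3}(1), which again requires generic $\sigma$). So you cannot then ``deduce'' genericity of $\sigma_{\omega,\beta}$ from the resulting isomorphism, as in your penultimate sentence; that conclusion was an input. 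There are two ways to repair this. The intended one is to read the hypothesis ``$\omega,\beta$ generic with respect to $v$'' as saying precisely that the stability condition $\sigma_{\omega,\beta}$ is generic (this is how the paper uses the same phrase in Lemma \ref{lem:ExplicitFormula}); then your middle steps are licensed by hypothesis and the final deduction should simply be deleted. Alternatively, if one insists that only the polarization is assumed generic in the Gieseker sense of Remark \ref{rmk:SmoothnessFactorialityModuliSpace}, then Bridgeland-genericity must be proved before your argument can start: note that the bound in Lemma \ref{lem:Giesekerchamber} is invariant under rescaling $\omega$ (both slopes scale linearly and $\delta_{\omega,\beta}$ is scale-invariant), so it holds along the entire ray $t\omega$, $t\ge 1$; Theorem \ref{thm:BridgelandToda} places the far end of this ray in the Gieseker chamber; and running your open-and-closed argument in each chamber met by the ray, combined with Proposition \ref{prop:chambers} (a wall requires an object semistable on one side and unstable on the other, while here the set of stable objects of class $v$ is the same set of Gieseker-stable sheaves in every chamber along the ray), shows the ray crosses no wall, so $\sigma_{\omega,\beta}$ is generic. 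With either repair your proof is complete.
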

This bound does not depend on the specific K3 surface $X$, other than on its lattice
$H^*_{\alg}(X, \Z)$. We could also make it completely independent of $X$ by considering the
full lattice $H^*(X, \Z)$ instead of $H^*_{\alg}(X, \Z)$ in Definition \ref{def:Dv}.

\section{Hilbert scheme of points on K3 surfaces}
\label{sec:Hilbert}

In this section, we study the behavior of our nef divisor
at walls for the Hilbert scheme of points on a K3 surface of Picard rank one. Its walls in $\Stab(X)$
 have been described partly in \cite[Section 3]{Aaron-Daniele}, and many of the arguments
are identical to the case of $\P^2$ treated in \cite{ABCH:MMP}.

We will see that our nef divisor can be deformed in one direction to recover the Hilbert-Chow
morphism in Example \ref{ex:HilbChow}. In the other direction, we study the first wall
in Example \ref{ex:firstwall}, leading to new results on the Mori cone,
see Proposition \ref{prop:Hilbnefcone} and Remark \ref{rem:HT_ER_conjecture}. Under more
specific numerical constraints, we can continue further to recover a known
case and a new case of a well-known conjecture on Lagrangian fibrations, see Theorem
\ref{thm:MarksushevichSawon} and Theorem \ref{thm:TBHTHS}.

Let $X$ be a K3 surface surface with $\mathrm{Pic}(X)=\Z\cdot H$, where $H$ is an
ample line bundle.
 We set $H^2=2d$ for some $d \in \Z, d \geq 1$. 
As in the previous section, we write $\sigma_{t,b} := \sigma_{\omega, \beta}$ 
for $\omega = tH, t > 0$ and $\beta = bH$. 
Let $v=(1,0,1-n)$. Theorem \ref{thm:BridgelandToda} implies
$M_{\sigma_{t, b}}(v) = \Hilb^n(X)$ for $t \gg 0$ and $b < 0$.
We let $\II$ be the universal family of ideals on $\Hilb^n(X)$.
We will denote by $\tilde H \subset \Hilb^n(X)$ the divisor of subschemes intersecting a given curve
$C$ in the linear system $\abs{H}$, and by $2B \subset \Hilb^n(X)$ the divisor of non-reduced
subschemes; $\tilde H$ and $B$ are a basis for $\NS(\Hilb^n(X))$. 

\begin{Ex} \label{ex:HilbChow}
One wall of the Gieseker chamber is always $b = 0$: In this case, for all $I_Y \in \Hilb^n(X)$, $I_Y[1]\in\AA(H,\beta=0)$,
$Z(I_Y[1]) \in \R_{<0}$, and the following 
short exact sequence in $\PP(1)$ makes $I_Y[1]$ strictly semistable:
\[
0 \to \OO_Y \to I_Y[1] \to \OO_X[1] \to 0
\]
Further, considering the filtrations of $\OO_Y, \OO_{Y'}$ by skyscraper sheaves of points,
we see that $I_Y[1]$ and $I_{Y'}[1]$ are $S$-equivalent if and only if $Y, Y'$ define the same point
in the Chow variety. It follows that the corresponding nef divisor $\ell_{\sigma_{t, 0},\II}$ contracts exactly
the curves that are contracted by the Hilbert-Chow morphism, and $\ell_{\sigma_{t, 0},\II} \simpos \tilde H$.

If $\sigma_{t, \epsilon}$ is a stability condition across the Hilbert-to-Chow wall, $\epsilon>0$, then the moduli space
$M_{\sigma_{t, \epsilon}}(v) \cong \Hilb^n(X)$ is unchanged, but the universal family is changed: the
object $I_Y$ is replaced by its derived dual $\RlHom(O_X, I_Y)[1]$. This change affects the
map $l$ in such a way that the image of a path crossing the Hilbert-to-Chow wall in $\Stab(X)$ will bounce
back into the ample cone once it reaches the ray $\R_{>0} \cdot \tilde H \subset \NS(\Hilb^n(X))_\R$.
\end{Ex}

\begin{Ex} \label{ex:firstwall}
We consider the path $\sigma_{t, b}$ with $b = -1 - \epsilon$
as $t \in (0, +\infty)$ varies;  here $\epsilon > 0$ is fixed and sufficiently small, and such that
there exists no spherical object $U$ with $\Im Z_{t, b}(U) = 0$.
Let $t_0$ be such that $Z_{t_0, b}(\OO(-H))$, and
$Z_{t_0, b}(v)$ have the same phase. A direct computation shows $t_0 = \sqrt{\frac 1d} + O(\epsilon)$,
and that for $t > t_0$, the phase of $Z_{t, b}(\OO(-H))$ is bigger than the phase of
$Z_{t, b}(v)$.

We claim that for $t > t_0$, any $I_Y \in \Hilb^n(X)$ is stable.
Consider any destabilizing subobject $A \subset I_Y$ in $\AA_{t, b} = \AA(\omega = H, \beta = -H -
\epsilon H)$; by the long exact cohomology sequence, $A$ is a torsion-free sheaf.
Indeed, if we denote by $B$ the quotient $I_Y/A$ in $\AA_{t,b}$, we have
\[
0 \to \HH^{-1}(B) \to \HH^0(A)=A \to I_Y \to \HH^0(B) \to 0,
\]
and $\HH^{-1}(B)$ is torsion-free by definition of the category $\AA_{t,b}$.

As in the proof of Lemma \ref{lem:Giesekerchamber}, consider any slope-semistable sheaf $A_i$ appearing
in the HN-filtration of $A$ with respect to ordinary slope-stability.
If we write $v(A_i) = (r, cH, s)$, we have $r > 0$ and 
\[
\Im Z_{t, b}(A_i) = t \cdot 2d \cdot\bigl(c + r  + r \epsilon\bigr)
\]
We must have
\[
\Im Z_{t, b}(A_i) < \Im Z_{t, b}(I_Y) = t\cdot2d \cdot (1 + \epsilon)
\]
and hence $c + r < 1$, or $c \le -r$.
This implies
\[ \Im Z_{t, b}(A_i) \le t \cdot 2d \cdot r \epsilon. \]
Unless $A_i \cong O(-H)^{\oplus r}$, the stable factors of $A_i$ have rank at least two, and
thus $\delta_{t, b}(A_i) \ge \frac 34$, where $\delta_{t, b} = \delta_{tH, bH}$ is given
in Definition \ref{def:discrepancy}.  The same computation that led
to equation \eqref{eq:Zrewrite} then shows
\[
\Re Z_{t, b}(A_i) \ge t^2d - 1 + \frac 34 + O(\epsilon).
\]
This shows that for $t> t_0$, the object $A_i$ has strictly smaller phase than
$I_Y$, and so $I_Y$ is stable for $t > t_0$.

On the other hand, whenever there is a curve $C \in \abs{H}$ containing $Y$,
the short exact sequence
\begin{equation}\label{eq:DestabilizingIdeals}
0 \to \OO(-H) \to I_Y \to \OO_C(-Y) \to 0
\end{equation}
will make $I_Y$ strictly semistable for $t = t_0$.

This wall is totally semistable when such a curve exists for any $Y$, that is
if and only if $n \le h^0(\OO(H)) = d+1$. To determine whether this is a fake wall or
corresponds to a wall in the nef cone, we would have to determine whether there exists a curve of
$S$-equivalent objects. Rather than answering this question in general, we just observe that if
there exists a curve $C \in \abs{H}$ for which there exists degree $n$ morphism
$g^1_n \colon C \to \P^1$, then the pullbacks $(g^1_n)^*(-x)$ for $x \in \P^1$ give a non-trivial family
of subschemes $Y \subset C$ for which $\OO_C(-Y)$ is constant. Thus the corresponding family
of ideal sheaves $I_Y$, fitting in the exact sequence \eqref{eq:DestabilizingIdeals}, have a filtration into semistable factors with constant filtration quotients,
and hence they are $S$-equivalent. 

By \cite[Corollary 1.4]{Lazarsfeld:BN-Petri} and general Brill-Noether theory, a $g^1_n$ exists for
any smooth curve $C \in \abs{H}$ if and only if $n \ge \frac {d+3}2$.

\begin{Prop} \label{prop:Hilbnefcone}
Let $X$ be a K3 surface with $\Pic X = \Z\cdot H$, $H^2=2d$, and $n \ge \frac {d+3}2$.
The nef cone of $\Hilb^n(X)$ is generated by $\tilde H$ and
$\tilde H - \frac{2d}{d+n} B$.
\end{Prop}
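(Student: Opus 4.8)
The plan is to exploit that $\NS(\Hilb^n(X))$ is two-dimensional, with basis $\tilde H, B$: the nef cone of the projective variety $\Hilb^n(X)$ is then a pointed, full-dimensional cone in $\R^2$, hence bounded by exactly two extremal rays. It therefore suffices to produce two non-proportional classes that are each \emph{nef but not ample}, i.e.\ each lying on the boundary of the nef cone. I claim these are $\tilde H$ and $\tilde H - \frac{2d}{d+n}B$, coming respectively from the two walls bounding the Gieseker chamber that were analyzed in Examples \ref{ex:HilbChow} and \ref{ex:firstwall}.

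For the first ray, Example \ref{ex:HilbChow} shows that at the wall $b=0$ the Positivity Lemma gives a nef class $\ell_{\sigma_{t,0},\II} \simpos \tilde H$, which is not ample because it contracts the fibers of the Hilbert--Chow morphism; so $\tilde H$ spans one extremal ray. For the second ray I would use the wall $\sigma_0 = \sigma_{t_0,b}$ of Example \ref{ex:firstwall}, where the ideal sheaves $I_Y$ with $Y$ lying on a curve $C\in\abs{H}$ become strictly semistable with Jordan--H\"older factors $\OO(-H)$ and $\OO_C(-Y)$. The hypothesis $n\geq\frac{d+3}2$ is exactly what guarantees, via a $g^1_n$ on every smooth $C\in\abs{H}$, a one-parameter family of such $I_Y$ with constant Jordan--H\"older factors; this gives a curve $C_1\subset\Hilb^n(X)$ of $\sigma_0$-$S$-equivalent objects, so Theorem \ref{thm:main1} yields $\ell_{\sigma_0,\II}.C_1=0$ and hence $\ell_{\sigma_0,\II}$ is nef but not ample.

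The heart of the argument is the identification of the ray $\ell_{\sigma_0}\simpos\theta_v(w_{\sigma_0})$. Rather than evaluating Lemma \ref{lem:ExplicitFormula} on the $\epsilon$-dependent wall, I would use that after normalizing $Z_0(v)=-1$ one has $\Im Z_0(u)=(u,w_{\sigma_0})$; since on the wall each Jordan--H\"older factor has the same phase as $v$, its central charge is real and so $w_{\sigma_0}$ is Mukai-orthogonal to both $a:=v(\OO(-H))=(1,-H,1+d)$ and $b:=v(\OO_C(-Y))=(0,H,-n-d)$. As $H^*_\alg(X,\Z)$ has rank three and $a,b$ are independent, this determines $w_{\sigma_0}$ up to scale independently of $\epsilon$; solving $(w,a)=(w,b)=0$ gives $w_{\sigma_0}\simpos(2d,-(n+d)H,2d(n-1))$. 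Using the identifications $\tilde H=\theta_v(0,-H,0)$ and $B=\theta_v(-1,0,1-n)$ — whose Beauville--Bogomolov squares $2d$ and $-2(n-1)$ and mutual orthogonality I would confirm against the isometry of Theorem \ref{thm:Yoshioka} — a short linear algebra computation rewrites this vector as $(n+d)\tilde H-2d\,B$, i.e.\ $\theta_v(w_{\sigma_0})\simpos\tilde H-\frac{2d}{d+n}B$.

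Finally I would assemble the conclusion: $\tilde H$ and $\tilde H-\frac{2d}{d+n}B$ are both nef and non-ample and are non-proportional (as $B\neq0$), so in the two-dimensional pointed nef cone they must span the two extremal rays, whence $\Nef(\Hilb^n(X))=\langle\tilde H,\,\tilde H-\frac{2d}{d+n}B\rangle$. The step I expect to be most delicate is pinning down the precise normalization and sign in the identification of $\theta_v$ on the geometric basis $\{\tilde H,B\}$ — in particular, that $B=\theta_v(-1,0,1-n)$ rather than its negative, since this is what fixes the sign of the $B$-coefficient of the second generator. I would settle this either by checking effectivity of $B$ against the Hilbert--Chow contraction, or, more robustly, by directly computing the intersection numbers of both candidate generators against the explicit contracted curves (a fiber of Hilbert--Chow and the curve $C_1$ above) and matching signs.
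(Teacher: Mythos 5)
Your proof is correct, and its outer skeleton (two walls give two nef, non-ample classes, which in the rank-two group $N^1(\Hilb^n(X))$ must span the two extremal rays of the nef cone) is the same as the paper's; but the key step --- identifying the second ray --- is done by a genuinely different route. The paper never computes $w_{\sigma_0}$ and never invokes the identification of $\theta_v$ on the basis $\{\tilde H, B\}$: instead it computes the intersection numbers of the contracted curve $R$ directly, namely $R.\tilde H = 2d$ (the subschemes in the $g^1_n$-family sweep through the $2d$ intersection points of $C$ with a general member of $\abs{H}$) and $R.(2B) = 2d+2n$ (the Riemann--Hurwitz formula for the degree-$n$ map $C \to \P^1$, where $C \in \abs{H}$ has genus $d+1$), and then pins down the generator as the unique nef ray orthogonal to $R$. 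You instead determine $w_{\sigma_0}$ lattice-theoretically as the rank-one Mukai-orthogonal complement of the Jordan--H\"older classes $(1,-H,d+1)$ and $(0,H,-n-d)$, and translate through $\theta_v$; your computation $(2d,-(n+d)H,2d(n-1)) = (n+d)(0,-H,0) - 2d(-1,0,1-n)$, hence $\theta_v(w_{\sigma_0}) \simpos \tilde H - \frac{2d}{d+n}B$, is correct. What the paper's method buys is self-containedness: it needs neither the facts $\theta_v(0,-H,0)=\tilde H$, $\theta_v(1,0,n-1)=-B$ (which the paper only cites afterwards, in Remark \ref{rem:HT_ER_conjecture}, from Hassett--Tschinkel) nor any sign normalization --- exactly the two points you flag as delicate. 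What your method buys is that it runs on wall data alone (orthogonality to the destabilizing classes), so it would identify the image of a wall even when no contracted curve is explicitly visible. Two caveats to make your version airtight: matching Beauville--Bogomolov squares and orthogonality does not by itself fix the $\theta_v$-identification (an isometry can flip signs), so you genuinely need either the citation or your intersection-number check --- and note that the latter fallback \emph{is} the paper's proof, so in the worst case the two arguments coincide; and you must also fix the overall sign in $w_{\sigma_0} \simpos (2d,-(n+d)H,2d(n-1))$, which follows for instance from Lemma \ref{lem:ExplicitFormula}, whose formula gives rank $R_{\omega,\beta} = -2dbt_0 > 0$ since $b<0$ on this wall.
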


\begin{Prf}
Since ideal sheaves are $\sigma_{t,b}$-stable, for $t>t_0$, the family $\II$ is a family of $\sigma_{t_0,b}$-semistable objects.
Under the assumptions, we have shown above that there exists a curve $R \cong \P^1 \subset
\Hilb^n(X)$ parametrizing objects that are $S$-equivalent with respect to $\sigma_{t_0, b}$.
Theorem \ref{thm:main1}, applied to $\Hilb^n(X)$ with $\II$ as a family of
$\sigma_{t_0, b}$-semistable objects, implies that $\ell_{\sigma_{t_0, b},\II}$ is a nef divisor 
with $\ell_{\sigma_{t_0, b},\II}.R=0$. 

The Hilbert-Chow morphism shows that $\tilde H$ is also an extremal ray of the nef cone; 
it remains to compute the class of $\ell_{\sigma_{t_0, b},\II}$.
Since $C$ intersects a general element in $\abs{H}$
in $2d$ points, and since the linear system given by $R$ will vanish at each point exactly once, we
have $R.\tilde H = 2d$. On the other hand, $R \cap 2B$ is the ramification divisor
of the map $g^1_n \colon C \to \P^1$; the Riemann-Hurwitz formula gives $R.2B = 2d + 2n$. 
Since $\ell_{\sigma_{t_0, b},\II}.R = 0$, this implies the claim.
\end{Prf}
\end{Ex}

For $n$ sufficiently big, the same result could possibly be obtained using the
technique of $k$-very ample vector bundles; see \cite[Section 3]{ABCH:MMP} and references therein.

The paper \cite{KnutsenCiliberto} discusses the existence of Brill-Noether divisors
on normalizations of curves with $\delta$ nodes in the linear system $\abs{H}$; the authors ask whether 
these produce extremal curves of the Mori cone, see \cite[Question 8.4]{KnutsenCiliberto}. 
The above proposition answers their question in the case $\delta = 0$; it would be
interesting to see whether Brill-Noether divisors on nodal curves could also produce curves of
$S$-equivalent objects for different walls in $\Stab(X)$ (in the case $n < \frac{d+3}2$).

\begin{Rem} \label{rem:HT_ER_conjecture}
The Beauville-Bogomolov form, along with the intersection pairing
\[
N_1(\Hilb^n(X)) \times N^1(\Hilb^n(X)) \to \R,
\]
induces an isomorphism $N_1 \cong \left(N^1\right)^\vee \cong N^1$.
Since $\theta_v(0, -H, 0) = \tilde H$ and 
$\theta_v(1, 0, n-1) = -B$, the Beauville-Bogomolov pairing is determined by
\[ \tilde H^2 = 2d, \quad B^2 = -2n+2, \quad \text{and} \quad (H, B) = 0.\]
(See also \cite[Section 1]{HassettTschinkel:ExtremalRays}.)
Thus, the computation in the proof of Proposition \ref{prop:Hilbnefcone} shows that
the isomorphism identifies $R$ with
$\tilde H + \frac{d+n}{2n-2} B$. 
So the self intersection of $R$ (with respect to the Beauville-Bogomolov form) is given by
\[
(R, R) = \tilde H^2 + \left(\frac{d+n}{2n-2}\right)^2 B^2 
= 2d - \frac{\left(d+n\right)^2}{2n-2} = -\frac{n+3}2 + \frac{(d+1)(2n-d-3)}{2n-2}.
\]
As pointed out to us by Eyal Markman, this does not seem fully consistent with a conjectural
description of the Mori cone by Hassett and Tschinkel, see \cite[Conjecture
1.2]{HassettTschinkel:ExtremalRays}. While $n \ge \frac{d+3}2$ implies $(R, R) \ge -\frac{n+3}2$
in accordance with their conjecture, in general the Mori cone is smaller than predicted. Let $h, b$ be the
primitive curve classes on the rays dual to $\tilde H, B$; they are characterized by
$h.\tilde H = 2d, b.B = 1$ and $h.B = b.\tilde H = 0$. Our extremal curve is give by
$R = h + (d+n)b$. If we let $\overline{R} = h + (d+n+1)b$, then 
\begin{align*}
(\overline{R},\overline{R}) = 2d - \frac{\left(d+n+1\right)^2}{2n-2} &> -\frac{n+3}2 \quad
\text{for $n \gg 0$} \\
\overline{R}.(\tilde H - \epsilon B) & > 0.
\end{align*}
However, since $\overline{R}.\left(\tilde H - \frac{2d}{d+n} B\right) < 0$, the class 
$\overline{R}$ cannot be contained in the Mori cone, in contradiction to \cite[Conjecture
1.2]{HassettTschinkel:ExtremalRays}. The smallest example is the case 
$d = 2$ and $n=5$ and $\overline{R} = h + 8b$, which had been obtained earlier
by Markman, \cite{Eyal:private}.
\end{Rem}

\begin{Ex}\label{ex:Sawon}
The previous example considered the case where $n$ is large compared to the genus.  Let us now
consider a case where the number of points is small compared to the genus: $d=k^2(n-1)$ for
some integers $k\geq 2$.

With the notation as in the previous examples, we now consider the path
of stability conditions $\sigma_{t, -\frac 1k}$ for $t > 0$. Then we are in the situation
of Example \ref{ex:Dragos} with $A = k$ and $B = -1$; more specifically, since
$\frac{d+1}{k}$ is not an integer, we are in Case 1,
and the moduli space $M_{\sigma_{t, -\frac 1k}}(v)$ is isomorphic to the Hilbert scheme
$\mathrm{Hilb}^n(X)$ for all $t>0$.
Markushevich and Sawon proved the following result:
\begin{Thm}[{\cite{Markushevich:Lagrangian, Sawon:LagrangianFibrations}}] \label{thm:MarksushevichSawon}
Let $X$ be a K3 surface with $\Pic X = \Z \cdot H$, $H^2 = 2d$, and 
$d = k^2(n-1)$ for integers $k \ge 2, n$.
\begin{enumerate}
\item $\mathrm{Nef}(\mathrm{Hilb}^n(X))$ is generated by
\[
\theta(w_{\infty\cdot H,-1/k})=\theta(0,-H,0) = \tilde H \quad
\text{ and }\quad \theta(w_{0\cdot H, -1/k})=\theta(k,-H,(n-1)k) = \tilde H - k B.
\]
\item All nef divisors are semi-ample. The morphism induced by $w_{\infty\cdot H,-1/k}$ is the Hilbert-to-Chow morphism, while the one induced by $w_{0\cdot H, -1/k}$ is a Lagrangian fibration.
\end{enumerate}
\end{Thm}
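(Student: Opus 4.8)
The plan is to run the wall-crossing analysis of Example~\ref{ex:Dragos} along the ray $\sigma_{t,-1/k}$, $t\in(0,\infty)$, and to read off the two boundaries of the nef cone as its two limit points. First I would record the setup: since $d=k^2(n-1)$ with $k\ge 2$, the number $\frac{dB^2+1}{A}=\frac{d+1}{k}$ is not an integer, so we are in Case~1 of Example~\ref{ex:Dragos}. Thus every $\sigma_{t,-1/k}$ with $t>0$ is a stability condition, there is no wall along the ray, and $M_{\sigma_{t,-1/k}}(v)\cong\mathrm{Hilb}^n(X)$ throughout, with universal family $\II$. By Theorem~\ref{thm:ProjK3} each $\ell_{\sigma_{t,-1/k},\II}$ is ample, so $t\mapsto\theta_v(w_{\sigma_{t,-1/k}})$ traces an arc of ample classes. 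Using the limit formulas of Example~\ref{ex:Dragos} (equivalently Lemma~\ref{lem:ExplicitFormula}) and the identifications $\theta_v(0,-H,0)=\tilde H$, $\theta_v(1,0,n-1)=-B$ of Remark~\ref{rem:HT_ER_conjecture}, one computes the two endpoints of this arc,
\[
\lim_{t\to\infty}\theta_v(w_{\sigma_{t,-1/k}})\simpos\tilde H,\qquad
\lim_{t\to 0}\theta_v(w_{\sigma_{t,-1/k}})\simpos\theta_v(k,-H,(n-1)k)=\tilde H-kB,
\]
and notes that $(k,-H,(n-1)k)^2=2d-2k^2(n-1)=0$, so that $q(\tilde H-kB)=0$ while $q(\tilde H)=2d>0$.

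For part~(1), both endpoints are nef, being limits of ample classes, and they span a two-dimensional cone inside $\Nef(\mathrm{Hilb}^n(X))$, which has Picard rank $2$. It remains to see that each lies on a boundary ray. Since $\Nef$ lies in the closure of the positive cone while the ample cone lies in its interior, the class $\tilde H-kB$ with $q=0$ cannot be ample, and so is extremal. For $\tilde H$ I would invoke the Hilbert-to-Chow morphism of Example~\ref{ex:HilbChow}, which contracts the $\P^1$'s in the exceptional locus and thereby exhibits a curve on which $\tilde H$ has degree zero; hence $\tilde H$ is extremal as well. As the two classes are not proportional, they generate the whole nef cone.

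For part~(2), semiampleness and the Hilbert-to-Chow description for $\tilde H$ are again Example~\ref{ex:HilbChow}. The crux is the class $D:=\tilde H-kB$: here the Base Point Free Theorem does not apply directly, as $q(D)=0$ means $D$ is nef but not big. The plan is to transport the problem to a moduli space of rank-zero sheaves, where the Lagrangian fibration is the support map of Example~\ref{ex:rk0}. Concretely, the vector $w:=(k,-H,(n-1)k)$ is primitive and isotropic with $(v,w)=0$; applying Lemma~\ref{lem:isotropic} to $w$ produces a Fourier--Mukai partner $(Y,\alpha)$ with $Y=M_\rho(w)$ and an equivalence $\Phi\colon\Db(X)\to\Db(Y,\alpha)$ sending $w$ to the point class $(0,0,1)$. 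Because $(v,w)=0$, the transformed vector $v':=\Phi(v)$ has rank $0$ and $v'^2=v^2=2n-2$. Pushing $\Phi$ through the construction as in the proof of Theorem~\ref{thm:ProjK3}, and using that Case~1 involves no intervening walls, one identifies $\mathrm{Hilb}^n(X)=M_{\sigma_{t,-1/k}}(v)$ with a moduli space of Gieseker-stable torsion sheaves of class $v'$ on $(Y,\alpha)$ for a suitable generic polarization; under this identification $D$ corresponds to the support-map divisor $\theta_{v'}(0,0,-1)$ of Example~\ref{ex:rk0}. That divisor is globally generated by Le~Potier's sections and induces, by Matsushita's Theorem, a Lagrangian fibration; transporting back along $\Phi$ shows $D$ is semiample and realizes this minimal model of $\mathrm{Hilb}^n(X)$ as a Lagrangian fibration.

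The main obstacle is exactly this last step: making the Fourier--Mukai identification precise. In particular one must verify that the transformed class $v'$ is \emph{positive} (effective support, nonzero degree) so that Example~\ref{ex:rk0} applies, check genericity of the induced polarization on $Y$, and confirm that the $t\to 0$ limit on the $X$-side is carried to the large-volume limit for $v'$ on the $Y$-side, so that the two boundary divisors match up under $\Phi$. By contrast, the remaining verifications — the explicit central-charge limits and the intersection-theoretic identification of the endpoints with $\tilde H$ and $\tilde H-kB$ — are routine given Lemma~\ref{lem:ExplicitFormula} and Remark~\ref{rem:HT_ER_conjecture}.
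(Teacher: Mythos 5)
Your route is the paper's own: restrict to the ray $\sigma_{t,-\frac 1k}$, note that $\frac{d+1}{k}\notin\Z$ puts you in Case~1 of Example~\ref{ex:Dragos} (no walls, $M_{\sigma_{t,-\frac 1k}}(v)\cong\Hilb^n(X)$ for all $t>0$, the whole arc ample), compute the endpoints $w_{\infty\cdot H,-1/k}\simpos(0,-H,0)$ and $w_{0\cdot H,-1/k}\simpos(k,-H,(n-1)k)$, and obtain the Lagrangian fibration by a Fourier--Mukai transform to a rank-zero moduli problem on the K3 surface $Y$ attached to the isotropic class. The parts you actually carry out are correct, and your argument for part~(1) (two non-proportional nef classes, neither ample --- one because $q=0$, one because Hilbert-to-Chow contracts curves --- must span the rank-two nef cone) is sound and in fact more explicit than what the paper writes down.

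However, what you label ``the main obstacle'' is not a deferrable verification: it is exactly the mathematical content of the paper's proof of part~(2), and leaving it open is a genuine gap. The paper closes it as follows. One takes $Y=M_{\sigma_{t,-\frac 1k}}(-w_{0\cdot H,-1/k})$ (the sign chosen so that $Z_{t,-\frac 1k}$ of this class lies in $\R_{<0}$, i.e.\ phase $1$). Since these objects are $\sigma_{t,-\frac 1k}$-stable for every $t$ and $\Phi$ carries them to the skyscraper sheaves of $Y$, all skyscrapers on $Y$ are $\Phi(\sigma_{t,-\frac 1k})$-stable of one phase, so by the characterization of the geometric chamber, $\Phi(\sigma_{t,-\frac 1k})\in U(Y,\alpha)$ is of the form $\sigma_{\widehat H_t,\widehat\beta_t}$ up to the $\widetilde\GL{}^+_2(\R)$-action. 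Minimality of the imaginary part of the central charge on $v$ is preserved under $\Phi$, and because $\Phi(v)$ has rank zero (so its imaginary part is insensitive to the B-field) this forces $\widehat\beta_t=\widehat\beta$ to be constant and $\widehat H_t=u(t)\widehat H$ for a single ray. Finally, as $t\to 0$ one has $u(t)\to 0$ or $u(t)\to\infty$, and $u(t)\to 0$ is excluded because Lemma~\ref{lem:ExplicitFormula}, applied on $Y$ to the rank-zero class $\Phi(v)$, shows $w_{0\cdot\widehat H,\widehat\beta}$ has nonzero rank component, hence is not proportional to the point class $(0,0,1)$ --- but that is (up to sign and scale) the image of $w_{0\cdot H,-1/k}$ under $\Phi$. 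Hence $t\to 0$ is carried to the large-volume limit on $Y$, so for small $t$ one sits in the Gieseker chamber for $\Phi(v)$, and Example~\ref{ex:rk0} produces the Lagrangian fibration. (Your remaining worries dissolve once this is done: the support class of $\Phi(v)$ is automatically effective because the rank-zero Gieseker moduli space is non-empty, being identified with $\Hilb^n(X)$, and genericity of the induced data follows from genericity of $\sigma_{t,-\frac 1k}$ with respect to $v$.)
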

The fact that $w_{\infty \cdot H,-1/k}$ induces the Hilbert-to-Chow morphism follows in our
setup simply from the equality $w_{\infty \cdot H,-1/k}=w_{\sigma_{t\cdot H,0}}$ and Example \ref{ex:HilbChow}.

To reprove the existence of the Lagrangian fibration, we can proceed exactly as in
\cite{Sawon:LagrangianFibrations}, except that Bridgeland stability guides and simplifies the
arguments: indeed, since $w_{0\cdot H, -\frac 1k}^2 = 0$, Lemma
\ref{lem:isotropic} shows that the moduli space $Y:=M_{\sigma_{t, -\frac 1k}}(-w_{0\cdot H, -1/k})$ is
a smooth K3 surface.
Let $\Phi$ denote the induced Fourier-Mukai transform
$\Phi \colon \Db(X)\to\Db(Y,\alpha)$; then $\Phi(v)$ has rank $0$.
Since $Z_{t, -\frac 1k}(-w_{0\cdot H, -1/k}) \in \R_{<0}$,
skyscraper sheaves on $Y$ are $\Phi(\sigma_{t, -\frac 1k})$-stable of phase $1$; hence the stability condition
$\Phi(\sigma_{t, -\frac 1k})$ is again of the form $\sigma_{\widehat{H}_t,\widehat{\beta}_t}$ constructed at the beginning of Section \ref{sec:reviewK3}, up to the action of $\widetilde{\GL}^+_2(\R)$.
Since $\Im Z_{t H, -\frac 1k H}(v)$ is minimal, this has to be true also for $\Im Z_{\widehat{H}_t,\widehat{\beta}_t}(\Phi(v))$.
But then, since the rank of $\Phi(v)$ is zero, this implies that $\widehat{\beta}_t=\widehat{\beta}$ is constant in $t$ and $\widehat{H}_t = u(t) \widehat{H}$, for some function $u(t)$.
We claim that for $t\mapsto 0$, we have $u(t)\mapsto\infty$.
Indeed, for $t\mapsto 0$, we must have either $u(t)\mapsto 0$, or $u(t)\mapsto\infty$, and if $u(t)\mapsto 0$, then, by Lemma \ref{lem:ExplicitFormula}, $w_{0\cdot \widehat{H},\widehat{\beta}}\neq(0,0,1)$, which is a contradiction.

Hence, for $t\mapsto 0$, via the equivalence $\Phi$, we are in the Gieseker chamber for $Y$.
It follows that the moduli space $M_{\sigma_{t, b}}(v)\cong M_{\Phi(\sigma_{t, b})}(\Phi(v))$ is isomorphic to
a moduli space of twisted Gieseker stable sheaves of rank 0; 
as is well-known and discussed in Example \ref{ex:rk0}, the latter admits a Lagrangian fibration.
\end{Ex}

\begin{Ex}\label{ex:AB}
The Hilbert scheme of $n$ points admits a divisor $D$ with $q(D) = 0$ if and only if
$h^2d=k^2(n-1)$ for integers $h, k$. The ``Tyurin-Bogomolov-Hassett-Tschinkel-Huybrechts-Sawon
Conjecture''
would imply that in this case, the Hilbert scheme admits a birational model with a Lagrangian
fibration; we refer to \cite{Verbitsky:HyperkaehlerSYZ} for some discussion of the history of
the conjecture, and \cite{Beauville:problemlist} for some context.

We now consider the first unknown case: 
\begin{Thm} \label{thm:TBHTHS}
Let $X$ be a K3 surface with $\Pic X = \Z \cdot H$ and
$H^2 = 2d$. Assume that there is an odd integer $k$ with $d = \frac{k^2}4(n-1)$ for some
integer $n$. Then:
\begin{enumerate}
\item \label{enum:movable}
The movable cone $\mathrm{Mov}(\mathrm{Hilb}^n(X))$ is generated by
\begin{align*}
& \theta(w_{\infty\cdot H,-\frac 2k \cdot H})=\theta(0,-H,0)=\tilde H\text{, and}\\
& \theta(w_{0\cdot H,-\frac 2k\cdot H})=\theta(k,-2\cdot H,(n-1)k)=2 \tilde H - k B.
\end{align*}
\item \label{enum:TBHTHS}
The morphism induced by $w_{\infty\cdot H,-2/k}$ is the Hilbert-to-Chow morphism, while the one induced by $w_{0\cdot H, -2/k}$ is a Lagrangian fibration on a minimal model for $\Hilb^n(X)$.
\item \label{enum:allMMP}
All minimal models for $\mathrm{Hilb}^n(X)$ arise as moduli spaces of stable objects in $\Db(X)$ and their birational transformations are induced by crossing a wall in $\Stab^\dagger(X)$.
\end{enumerate}
\end{Thm}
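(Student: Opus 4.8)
The plan is to study the one-parameter family of stability conditions $\sigma_{t,-2/k} := \sigma_{\omega,\beta}$ with $\omega = tH$ and $\beta = -\tfrac 2k H$, for $t \in (0,\infty)$, and to track the induced divisor class $\ell_{\sigma_{t,-2/k}} \simpos \theta_v(w_{\sigma_{t,-2/k}})$ on $M_{\sigma_{t,-2/k}}(v)$ as $t$ runs from the large-volume to the small-volume limit, where $v = (1,0,1-n)$. At the $t\to\infty$ end we are in the Gieseker chamber (Theorem \ref{thm:BridgelandToda}), so $M_{\sigma_{t,-2/k}}(v)\cong\Hilb^n(X)$, and the limit formula of Lemma \ref{lem:ExplicitFormula} gives $\ell \to \theta_v(w_{\infty\cdot H,-2/k}) = \theta_v(0,-H,0) = \tilde H$, which by Example \ref{ex:HilbChow} is the Hilbert-to-Chow divisor. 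At the $t\to 0$ end the same computation gives $\ell \to \theta_v(w_{0\cdot H,-2/k}) = \theta_v(k,-2H,(n-1)k) = 2\tilde H - kB$; the numerical hypothesis $d = \tfrac{k^2}4(n-1)$ is exactly what forces $w_{0\cdot H,-2/k}^2 = 0$, so by Theorem \ref{thm:Yoshioka} this class is isotropic for the Beauville--Bogomolov form and lies on the boundary of the positive cone.

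First I would establish part (2) following Example \ref{ex:Sawon}. Since $k$ is odd the class $w := (k,-2H,(n-1)k)$ is primitive and isotropic, so by Lemma \ref{lem:isotropic} the space $Y := M_\tau(w)$, for $\tau$ generic near the $t\to 0$ limit, is a smooth projective K3 surface carrying a Brauer class $\alpha$ and a derived equivalence $\Phi\colon\Db(X)\to\Db(Y,\alpha)$ with $\Phi(w)=(0,0,1)$; because $(v,w)=0$, the transformed class $\Phi(v)$ has rank $0$ and $\Phi(v)^2 = v^2 = 2n-2$. As in Example \ref{ex:Sawon}, I would then show that $\Phi(\sigma_{t,-2/k})$ is again geometric, of the form $\sigma_{\widehat H_t,\widehat\beta}$ up to the $\widetilde{\GL}^+_2(\R)$-action, with $\widehat\beta$ constant and $\widehat H_t = u(t)\widehat H$ where $u(t)\to\infty$ as $t\to 0$ (using minimality of $\Im Z$ to fix $\widehat\beta$ and rule out $u(t)\to 0$). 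Hence for small $t$ we are in the Gieseker chamber on $Y$, so $M_{\sigma_{t,-2/k}}(v)\cong M_{\widehat H}^{\widehat\beta}(\Phi(v))$ is a moduli space of rank-$0$ twisted Gieseker-stable sheaves, which by Example \ref{ex:rk0} admits a Lagrangian fibration with nef class $2\tilde H - kB$; this is a minimal model of $\Hilb^n(X)$, distinct from the Hilbert scheme itself precisely because a wall has been crossed along the path.

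For the movable cone in part (1), I would argue that the segment from $\tilde H$ to $2\tilde H - kB$ is exactly swept out by nef classes of birational models, while neither ray can be crossed. The ray $\tilde H$ is a divisorial (Hilbert--Chow) contraction and the ray $2\tilde H - kB$ is isotropic; a direct computation of $q(a\tilde H + cB) = 2da^2 - 2(n-1)c^2$ shows the two isotropic rays are $2\tilde H \pm kB$, so there is no isotropic ray strictly between $\tilde H$ and $2\tilde H - kB$, and both are genuine boundary faces. Along the interior of the path, each wall of $\Stab^\dagger(X)$ produces, by Theorem \ref{thm:MMP}, either an isomorphism or a flop, with $\ell$ passing into the nef cone of the next model; the decisive numerical feature separating this $h=2$ case from Example \ref{ex:Sawon} is that $\Im Z_{t,-2/k}(v) = \tfrac{4td}{k} = 2\gamma$ is exactly twice the minimal positive value $\gamma = \tfrac{2td}{k}$ of $\Im Z$ (here $k$ odd makes $kc+2r$ realize every integer). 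Thus any destabilizing sequence $A\to E\to B$ must have $\Im Z(A)=\Im Z(B)=\gamma$ with $A,B$ of minimal type, and analyzing these via Lemma \ref{lem:MinimalImPart} and the transform $\Phi$ should show the strictly semistable locus has codimension at least two and that stable objects persist, placing us in case \eqref{enum:flop} of Theorem \ref{thm:MMP}. Part (3) then follows formally: the nef cones of the various $M_\sigma(v)$ tile the movable cone $\langle\tilde H,\,2\tilde H - kB\rangle$, adjacent models differ by the flops of Theorem \ref{thm:MMP}, and so every minimal model of $\Hilb^n(X)$ is realized as some $M_\sigma(v)$ with birational maps induced by wall-crossing.

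The hardest part will be the interior wall analysis in part (1): verifying that every wall met along the path is a flopping wall rather than a totally semistable wall (which lies outside the scope of Theorem \ref{thm:MMP}, case \eqref{enum:flop}), and confirming that no divisorial contraction occurs strictly between the two boundary rays, so that the movable cone is exactly $\langle\tilde H,\,2\tilde H - kB\rangle$.
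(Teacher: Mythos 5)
Your setup, the identification of the two limiting divisors, and your treatment of part (2) via Example \ref{ex:Sawon} all match the paper's proof. But there is a genuine gap exactly at the point you flag as ``the hardest part'': showing that at every interior wall $t_i$ stable objects of class $v$ persist (so that no wall is totally semistable and each wall-crossing is a birational map that is an isomorphism in codimension one). You reduce correctly to destabilizing sequences $A \into E \onto B$ with $\Im Z_i(A) = \Im Z_i(B) = \gamma$ minimal, but ``analyzing these via Lemma \ref{lem:MinimalImPart} and the transform $\Phi$ should show\dots'' is precisely the missing argument, and it is the heart of the paper's proof. The paper proceeds as follows: Mukai's Lemma gives $\mathrm{ext}^1(E,E) \ge \mathrm{ext}^1(A,A) + \mathrm{ext}^1(B,B)$, and bilinearity of $\chi$ gives $2\,\mathrm{ext}^1(A,B) = 2 + \mathrm{ext}^1(E,E) - \mathrm{ext}^1(A,A) - \mathrm{ext}^1(B,B)$; one then splits into the cases $(v(A),v(B)) \ge 2$ and $(v(A),v(B)) = 1$. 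In the first case the destabilized objects sweep out rational curves (each extension in $\P\Ext^1(A,B)$ is stable on one side by Lemma \ref{lem:localP2}), and a $K$-trivial variety is not covered by rational curves, so generic objects survive. In the second case the universal extension gives an \emph{injective} morphism $M_{\sigma_i}(v(A)) \times M_{\sigma_i}(v(B)) \into M_{i+1}$ between projective varieties of the same dimension, which forces $A$ or $B$ to be spherical; the existence of such a spherical class $\xi$ (with $(v,\xi) = -1$, $(u,\xi) = \tfrac{n-1}{2}$, $\xi^2 = -2$) is then excluded by a lattice computation, Lemma \ref{lem:snotexist}, and this is where the hypothesis that $k$ is odd actually does its work. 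None of these steps appear in your proposal, and without them Theorem \ref{thm:MMP}\eqref{enum:flop} cannot be invoked.

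Two further points. First, your argument that no divisorial contraction occurs strictly between the two rays because ``there is no isotropic ray strictly between $\tilde H$ and $2\tilde H - kB$'' is not valid: bouncing walls/divisorial contractions are not detected by isotropic classes (the Hilbert--Chow divisor $\tilde H$ itself has $q(\tilde H) = 2d > 0$); the paper rules them out only as a consequence of the wall analysis above, which shows each wall-crossing identifies the two moduli spaces outside codimension two. Second, for part (3) you need the path $t \mapsto \ell_{\sigma_{t,-2/k}}$ to be \emph{continuous} across each wall after identifying the N\'eron--Severi groups of consecutive models; this is not automatic and is the content of Lemma \ref{lem:continuous} (proved via Abramovich--Polishchuk elementary modifications), which your sketch omits. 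With that continuity plus the Picard-rank-two observation, the ``tiling'' conclusion you state does follow, exactly as in the paper.
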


\begin{Prf}
We consider the family of stability conditions $\sigma_{t, -\frac 2k}$
for $t > 0$.
As in the previous case, the stability condition $\sigma_{t, -\frac 2k}$ exists for all $t > 0$,
since $\frac{4d+1}k\notin\Z$.
We will study the wall-crossing for the moduli spaces
$M_{\sigma_{t, -\frac 2k}}(v)$ for $v = (1, 0, 1-n)$.

Proceeding as in Example \ref{ex:Sawon}, we consider the smooth projective K3 surface
$Y := M_{\sigma_{t, -\frac 2k}}(-w_{0\cdot H,-2/k\cdot H})$ and the induced Fourier-Mukai transform 
$\Phi \colon \Db(X) \to \Db(Y, \alpha)$. 
The assumption $d = \frac{k^2}4(n-1)$ implies that $\Phi(v)$ has rank 0, and the same computation
as above shows that $t \mapsto 0$ on $X$ corresponds to $t \to +\infty$ via $\Phi$; in particular,
for $t \ll 1$ the stability condition $\sigma_{t, -\frac 2k}$ is in the Gieseker chamber
of $Y$ for $\Phi(v)$; thus $M_0 := M_{\sigma_{\epsilon, -\frac 2k}}(v)$
admits a Lagrangian fibration, as discussed in Example \ref{ex:rk0}.

This also shows that this path meets walls only at finitely many points $t_1, \dots, t_m \in \R$.
Denote the moduli stacks of stable objects by  $\MMM_i := \MMM_{\sigma{t_i + \epsilon, - \frac
2k}}(v)$, and their coarse moduli spaces by $M_i$. By the results in Section \ref{sec:ProjK3}, 
each $M_i$ is a smooth irreducible projective variety of dimension $2n$.
We will first prove by descending induction on $i$ that $M_i$
is birational to the Hilbert scheme $\Hilb^n(X) = M_{m+1}$; this will prove
claims \eqref{enum:movable} and \eqref{enum:TBHTHS}.

Let $(Z_i, \AA) = \sigma_i = \sigma_{t_i, -\frac 2k}$ be a stability condition on one of the walls.
We want to show that the set of $\sigma_i$-stable objects of class $v$ is non-empty.

Note that $\Im(Z_t)\in \frac 1k 2dt\cdot\Z$ and $\Im(Z_t(v))= \frac 1k 4dt$ for all $t \in \R$.
Thus, if an object $E\in\MMM_{i+1}(\C)$ is strictly $\sigma_i$-semistable, then it fits into a short exact
sequence $A \into E \onto B$
in $\AA$ where $A, B$ are stable of the same phase, and satisfy 
\begin{equation} \label{eq:ImAB}
\Im(Z_i(A))=\Im(Z_i(B))= \frac 1k 2dt_i.
\end{equation}
(In fact we have $Z_i(A) = Z_i(B) = \frac 12 Z_i(v)$, and there are only finitely many classes
$v(A), v(B)$ that are possible, see e.g. \cite[Lemma 3.15]{Toda:K3}.)
Mukai's Lemma, see \cite[Corallary 2.8]{Mukai:BundlesK3} and \cite[Lemma 5.2]{Bridgeland:K3}, shows
that 
\begin{equation} \label{eq:Mukaiineq}
\mathrm{ext}^1(E,E)\geq\mathrm{ext}^1(A,A)+\mathrm{ext}^1(B,B).
\end{equation}
Since $A \neq B$ are stable and of the same phase, we have $\Hom(A, B) = 0 = \Hom(B, A) = \Ext^2(A, B)$
and thus $-(v(A), v(B)) = \chi(A, B) = - \mathrm{ext^1}(A, B)$; in particular, $\mathrm{ext}^1(A, B)$ is constant
as $A, B$ vary in their moduli space of stable objects.
Since $A, B, E$ are simple, the bilinearity of $\chi$ then gives
\begin{equation} \label{eq:simpleineq}
2 \mathrm{ext}^1(A,B) = 2 + \mathrm{ext}^1(E,E)-\mathrm{ext}^1(A,A)-\mathrm{ext}^1(B,B).
\end{equation}

We can distinguish two cases:
\begin{enumerate}
\item\label{case:AllSemist} $(v(A), v(B)) = \mathrm{ext}^1(A,B)=1$.
\item\label{case:Mukai} $(v(A), v(B)) = \mathrm{ext}^1(A,B)\geq2$.
\end{enumerate}

In case \eqref{case:Mukai}, the point on $M_{i+1}$ corresponding to $E$ lies on a rational curve:
by Lemma \ref{lem:localP2}, each of the extensions parameterized by $\P \Ext^1(A, B)$ is 
$\sigma_{t_i + \epsilon, -\frac 2k}$-stable.
As $M_{i+1}$ is $K$-trivial, it is not covered by rational curves;
this means that a generic object $E \in \MMM_{i+1}(\C)$ cannot be destabilized
by objects of classes $v(A), v(B)$. 

We will now proceed to show that case \eqref{case:AllSemist} cannot appear. This will imply
that there is an open subset $U \subset M_{i+1}$ of objects that are still stable on the 
wall at $\sigma_i$, and thus $U$ gives a common open subset of $M_i, M_{i+1}$, inducing
a birational map.

So assume we are in case \eqref{case:AllSemist}. Then $\mathrm{ext}^1(A', B') = \C$ will hold
for any pair $A' \in \MMM_{\sigma_i}(v(A))(\C)$, $B' \in \MMM_{\sigma_i}(v(B))(\C)$.
By Lemma \ref{lem:localP2}, the objects $E'$ corresponding to the unique extension
$A' \into E' \onto B'$ will be $\sigma_{t_i + \epsilon, -\frac 2k}$-stable; thus 
the universal family of extensions over
$\MMM_{\sigma_i}(v(A)) \times \MMM_{\sigma_i}(v(B))$ induces an injective morphism
\[
M_{\sigma_i}(v(A)) \times M_{\sigma_i}(v(B)) \into M_{i+1}
\]
However, equations \eqref{eq:Mukaiineq} and \eqref{eq:simpleineq} also show that in 
case \eqref{case:AllSemist} we have
\[
\mathrm{ext}^1(E,E)=\mathrm{ext}^1(A,A)+\mathrm{ext}^1(B,B),
\]
in other words the above
morphism is an injective morphism between projective varieties of the same dimension. As each 
$M_{i+1}$ is an irreducible holomorphic symplectic variety, this is a contraction unless
one of the moduli spaces on the left is a point, i.e., unless $A$ or $B$ is a spherical object.

Let $\xi$ be the Mukai vector of this spherical object. Then $\xi^2 = -2$ and the assumption
of case \eqref{case:AllSemist} implies $(v-\xi, \xi) = 1$, and so $(v, \xi) = -1$.
If $u = (0, \frac Hk, 1-n)$, then $\Im Z_t(\blank) = t (u, \blank)$, and so  equation
\eqref{eq:ImAB} implies $(u, \xi)  = \frac 12 (u, v) = \frac 12 (n-1)$. By Lemma
\ref{lem:snotexist}, such a class $\xi \in H^*_{\alg}(X, \Z)$ does not exist. This finishes
the proof of claims \eqref{enum:movable} and \eqref{enum:TBHTHS}.

It remains to prove claim \eqref{enum:allMMP}. By the previous part, wall-crossing induces a chain
of birational maps
\[
M_0 \dashrightarrow M_1 \dashrightarrow \dots \dashrightarrow M_n = \Hilb^n(X)
\]
As each $M_i$ is a smooth $K$-trivial variety, the moduli spaces are isomorphic outside of
codimension two, and thus we can canonically identify their N\'eron-Severi groups:
\[
\NS(M_0) = \NS(M_1) = \dots = \NS(\Hilb^n(X)).
\]
Theorem \ref{thm:nefdivisor} produces
maps $l_i \colon I_i \to N^1(\Hilb^n(X))$, where $I_0 = (0, t_0]$, $I_1 = [t_0, t_1]$, etc.
By Lemma \ref{lem:continuous} below, the maps $l_i, l_{i+1}$ agree on the overlap $t_i$; hence they
produce a continuous path $l \colon (0, +\infty) \to \NS(\Hilb^n(X))$ with starting point at
a Lagrangian fibration, and endpoint at a divisorial contraction. Since $\Hilb^n(X)$ has
Picard rank two, this path must hit the nef cone of any minimal model of $\Hilb^n(X)$.
\end{Prf}
\end{Ex}

\begin{Lem}  \label{lem:snotexist}
Let $X$ be K3 surface with assumptions as in Theorem \ref{thm:TBHTHS}. 
Let $u, v \in H^*_{\alg}(X, Z)$ be given by
$v = (1, 0, 1-n)$ and $u = (0, \frac Hk, 1-n)$. Then there exists no class
$\xi \in H^*_{\alg}(X, Z)$ satisfying the following three equations:
\begin{align*}
(u, \xi) = \frac{n-1}2 \quad \text{and} \quad
(v, \xi)  = -1 \quad \text{and} \quad
\xi^2  = -2.
\end{align*}
\end{Lem}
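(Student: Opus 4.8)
The plan is a direct computation inside the rank-one algebraic Mukai lattice. Since $\Pic X = \Z\cdot H$, I would write a hypothetical solution as $\xi = (r, cH, s)$ with $r,c,s\in\Z$, and expand the three defining conditions using the Mukai pairing $\bigl((r,cH,s),(r',c'H,s')\bigr) = cc'\,H^2 - rs' - r's$ together with $H^2 = 2d$. The conditions $(v,\xi)=-1$, $(u,\xi)=\frac{n-1}2$ and $\xi^2=-2$ then become, respectively,
\[
s = r(n-1)+1, \qquad \frac{2cd}{k} + r(n-1) = \frac{n-1}2, \qquad rs = c^2 d + 1 .
\]

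First I would feed the hypothesis $d = \frac{k^2}{4}(n-1)$ into the second equation; this is the step that makes everything tractable. After clearing denominators it collapses to the single linear relation
\[
ck + 2r = 1 ,
\]
so that $c^2k^2 = (1-2r)^2 = 1 - 4r + 4r^2$.

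Next I would eliminate $s$ and $c$ from the remaining two equations. Substituting $s = r(n-1)+1$ and $n-1 = \frac{4d}{k^2}$ into $rs = c^2d+1$ gives $d\bigl(4r^2 - c^2k^2\bigr) = k^2(1-r)$; inserting $c^2k^2 = 1 - 4r + 4r^2$ makes the quadratic terms cancel and leaves $d(4r-1) = k^2(1-r)$. Reinserting $d = \frac{k^2}{4}(n-1)$ and dividing by $k^2$ yields a single equation for $r$, whose unique solution is
\[
r = \frac{n+3}{4n}.
\]
Since $n\ge 2$ under the hypotheses (as $d\ge 1$ forces $n-1\ge 1$), we have $0 < \frac{n+3}{4n} < 1$, so $r$ cannot be an integer. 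This contradiction shows that no integral $\xi$ satisfying all three equations exists.

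The computation uses all three constraints and the defining numerical relation exactly once, and there is no genuine obstacle beyond the bookkeeping; the only point requiring care is the first simplification, where one must verify that the hypothesis on $d$ really does reduce $(u,\xi)=\frac{n-1}2$ to the clean linear relation $ck+2r=1$. It is precisely this collapse that allows the quadratic terms to cancel later and forces $r$ into the open interval $(0,1)$. Note in particular that the oddness of $k$ is not even needed for this non-existence statement, so the $\emph{same}$ argument would apply verbatim whenever $d = \frac{k^2}{4}(n-1)$ with $d,n$ integers.
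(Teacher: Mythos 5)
Your proof is correct and is essentially the paper's own argument written in explicit coordinates: the paper parametrizes solutions of the two linear conditions as $\xi = \xi_0 + \alpha(2u - v)$ with $\xi_0 = (\tfrac12, 0, \tfrac12(n+1))$, notes that integrality of the rank forces $\alpha \in \tfrac12 + \Z$, and derives $(4\alpha - 1)\cdot n = -3$ from $\xi^2 = -2$ --- under the substitution $\alpha = \tfrac12 - r$ this is precisely your equation $4rn = n+3$. The only differences are cosmetic (coordinate elimination versus the orthogonal-complement parametrization), together with your observation that the contradiction already holds for every $n \ge 2$ and does not use the oddness of $k$, whereas the paper invokes $n \ge 5$.
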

\begin{Prf}
Observe that $v^2 = 2n- 2$, that $(u, v) = n-1$ and $u^2 = \frac{2d}{k^2} = \frac 12 (n-1)$. 
Hence the orthogonal complement of $u$ and $v$ is given by
$\bigl(\R \cdot u + \R \cdot v \bigr)^\perp = \R \cdot (2u-v) \subset H^*_{\alg}(X)_\R$.
As $\xi_0 = (\frac 12, 0, \frac 12(n+1))$ satisfies the linear equations of the lemma, any solution
to the linear equations above is given by $\xi = \xi_0 + \alpha (2u - v)$. Since the rank of
$\xi$ must be integral, we have $\alpha \in \frac 12 + \Z$.

Using $\xi_0 ^2 = -\frac 12(n+1)$ and $(\xi_0, 2u-v) = n$ as well as $(2u-v)^2 = 0$, we obtain
\[
\xi^2 = -2 \Leftrightarrow (4 \alpha - 1) \cdot n = -3.
\]
As $4\alpha \in \Z$, this is a contradiction to $n \ge 5$.
\end{Prf}

\begin{Lem} \label{lem:continuous}
Let $X$ be a projective K3 surface, and let $v \in H^*_{\alg}(X, \Z)$ be a primitive class with $v^2
\ge -2$. With notation as in Theorem \ref{thm:MMP}, assume that there
there exists a $\sigma_0$-stable objects of class $v$; identify 
the N\'eron-Severi groups of $M_{\sigma_{\pm}}(v)$ by extending the birational morphism $M_{\sigma_+}(v)
\dashrightarrow M_{\sigma_-}(v)$ induced by the common open subset $M_{\sigma_0}^s(v)$ to an isomorphism
outside of codimension two.

Under this identification, we have an equality $\ell_{\sigma_{0},\EE_+} = \ell_{\sigma_0,\EE_-}$ in the previous identification $N^1(M_{\sigma_+}(v)) = N^1(M_{\sigma_-}(v))$. 
\end{Lem}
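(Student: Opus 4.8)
The plan is to reduce the statement to a comparison of the two determinant line bundles on a common big open subset, and then to control their difference along the divisorial strictly-semistable locus using the vanishing of $\Im Z_0$ on Jordan--Hölder factors of phase one.

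First I would record that both $M_{\sigma_\pm}(v)$ are smooth projective irreducible holomorphic symplectic manifolds (Corollary \ref{cor:nonempty} together with Section \ref{sec:ProjK3}), and in particular have trivial canonical class. A birational map between smooth projective $K$-trivial varieties is an isomorphism in codimension one; hence $f_{\sigma_0}$ restricts to an isomorphism $U_+ \xrightarrow{\sim} U_-$ between open subsets whose complements have codimension at least two. This is precisely the identification of N\'eron--Severi groups referred to in the statement. Since the $M_{\sigma_\pm}(v)$ are smooth, the restriction maps $N^1(M_{\sigma_\pm}(v)) \to N^1(U_\pm)$ are isomorphisms, so it suffices to prove the equality $\ell_{\sigma_0,\EE_+} = \ell_{\sigma_0,\EE_-}$ after restriction to $U_+ \cong U_-$.

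Next I would pass to the determinant description of the class. By Proposition \ref{prop:comparison} and Definition \ref{def:determinant}, we have $\ell_{\sigma_0,\EE_\pm} = \lambda_{\EE_\pm}(w_{Z_0})$, the determinant line bundle attached to the fixed vector $w_{Z_0} \in v^\sharp$ characterized by $\Im Z_0(\blank) = \chi(w_{Z_0} \cdot \blank)$; crucially, the construction of $\lambda_{\EE}$ is local over the base, so the restriction of $\ell_{\sigma_0,\EE_\pm}$ to any open $V$ depends only on $\EE_\pm|_{V \times X}$. On the $\sigma_0$-stable locus $M^s_{\sigma_0}(v)$, which sits as a common open subset inside $U_+$ and, via $f_{\sigma_0}$, inside $U_-$, both $\EE_+$ and $f_{\sigma_0}^*\EE_-$ are quasi-universal families for the same $\sigma_0$-stable objects; by uniqueness of quasi-universal families \cite[Theorem A.5]{Mukai:BundlesK3} and equation \eqref{eq:VectorBundle}, the two normalized classes therefore agree on $M^s_{\sigma_0}(v)$.

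The main obstacle is that, in the codimension-one situation addressed by this lemma, $U_+ \setminus M^s_{\sigma_0}(v)$ may contain a divisor $D$, namely the locus of $\sigma_+$-stable objects that are strictly $\sigma_0$-semistable; there the families $\EE_+$ and $f_{\sigma_0}^*\EE_-$ genuinely differ, and a line bundle supported on $D$ pairs trivially with every curve avoiding $D$, so agreement on $M^s_{\sigma_0}(v)$ alone is not enough. To finish I would argue that the discrepancy $\ell_{\sigma_0,\EE_+} - f_{\sigma_0}^*\ell_{\sigma_0,\EE_-}$, which can only be supported on $D$, has multiplicity zero along each of its components. At a general point of such a component the object has a Jordan--Hölder filtration with $\sigma_0$-stable factors of phase one, and passing from $\EE_+$ to $f_{\sigma_0}^*\EE_-$ replaces one extension of these factors by another with the same associated graded. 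Since $\lambda_{\EE}(w_{Z_0})$ factors through the K-theory class $[\EE]$ and is additive along the filtration, the difference is a combination of the contributions of the individual factors, each weighted by $\Im Z_0$ of that factor; as every factor is $\sigma_0$-semistable of phase one, its central charge lies in $\R_{<0}$ and contributes $\Im Z_0 = 0$. Hence the multiplicity along $D$ vanishes, the two classes agree on $U_+$, and therefore on all of $M_{\sigma_+}(v)$. I expect this last step --- making the ``same associated graded forces equal determinant'' computation precise in families over $D$ --- to be the technical heart of the argument.
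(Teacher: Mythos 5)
Your global strategy matches the paper's up to the point you yourself flag as ``the technical heart'': identify the N\'eron--Severi groups via the codimension-two isomorphism, observe that the two classes agree on the $\sigma_0$-stable locus, and reduce everything to the vanishing of $\Im Z_0$ on phase-one objects. But the step you leave open is a genuine gap, and the mechanism you propose would not close it. First, the determinant class $\lambda_{\EE}(w_{Z_0})$ is not determined by the fiberwise K-theory classes (or Jordan--H\"older factors) of a family: it depends on the class $[\EE] \in K(V\times X)$, and two families with isomorphic fibers can differ in K-theory by classes pushed forward from fibers (already $\EE$ versus $\EE\otimes p^*\OO_C(nc)$ over a curve shows fiber data is insufficient). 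So ``same associated graded at general points of $D$'' does not by itself imply equality of determinants; one needs a comparison of the families, not of their fibers. Second, even the fiberwise claim is unjustified: the extension of $f_{\sigma_0}$ across the divisor $D$ is abstract --- it exists because both moduli spaces are $K$-trivial --- and carries no modular meaning over $D$, so the assertion that $(\EE_+)_d$ and $(f_{\sigma_0}^*\EE_-)_d$ are $S$-equivalent at a general $d\in D$ is itself a separatedness-type statement essentially as hard as the lemma.

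The paper closes exactly this gap with a curve-by-curve argument supplying the missing family-level comparison. For a smooth curve $C\subset V$ meeting $M^s_{\sigma_0}(v)$ (such curves determine every numerical class, since the complement of $V$ has codimension two), the Brauer group of $C$ is trivial, so there are honest universal families $\EE_{\pm,C}$ on $C\times X$; after twisting by a line bundle, the isomorphism $\EE_{+,C}\cong\EE_{-,C}$ over $U=C\cap M^s_{\sigma_0}(v)$ extends to a morphism $\varphi\colon\EE_{+,C}\to\EE_{-,C}$ by \cite[Proposition 2.2.3]{Lieblich:mother-of-all}. Both are flat families of objects of $\PP_0(1)$, so by \cite[Lemma 4.2.3]{Abramovich-Polishchuk:t-structures} the morphism $\varphi$ factors as a chain of elementary modifications $\EE_{i-1}\into\EE_i\onto (i_{c_i})_*Q_i$ with $c_i\in C\setminus U$ and $Q_i\in\PP_0(1)$; hence the two K-theory classes $[\Phi_{\EE_{\pm,C}}(\OO_C)]$, and therefore the two degrees, differ only by $\sum_i\Im Z_0(Q_i)=0$. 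This is precisely where your ``additivity along filtrations'' intuition gets made rigorous, and the tool doing the work --- Abramovich--Polishchuk's elementary modifications over a curve --- is what your proposal is missing.
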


The proof is based on the notion of \emph{elementary modification} in the derived category,
introduced by Abramovich and Polishchuk.

\begin{Prf}
First note that since $M_{\sigma_\pm}(v)$ are $K$-trivial, there exists a common open subset $V \subset M_{\sigma_\pm}(v)$
that contains $M_{\sigma_0}^s(v)$ and has complement of codimension 2; this gives the
identification of the N\'eron-Severi groups.

We can normalize the stability condition $\sigma_0 = (Z_0, \PP_0)$ such that $\MMM_{\sigma_\pm}(v)$ parameterizes
families of $\sigma_0$-semistable objects of phase 1; we can also assume that $\sigma_0$ is
algebraic.

Since $M_{\sigma_\pm}(v)$ are projective, $\ell_{\sigma_0, \EE_\pm}$ are determined by their degrees on smooth curves that
are contained in $V$ and intersect $M_{\sigma_0}^s(v)$; let $C \subset V$ be such a curve,
and let $U \subset C$ be the open subset $C \cap M_{\sigma_0}^s(v)$. 
Since the Brauer group of a curve is trivial, there exists a universal family
$\EE_{+,C}$ on $C$, corresponding to a lift $C \to \MMM_{\sigma_+}(v)$ of the map
$C \into V \into M_{\sigma_+}(v)$.
Similarly, we obtain a family $\EE_{-,C}$ via the embedding $C \into V \into M_{\sigma_-}(v)$. We can choose
$\EE_{\pm,C}$ such that $\EE_{+,C}|_U \cong \EE_{-,C}|_U$. Using further twists by a line bundle,
we can assume that this isomorphism extends to a morphism $\varphi \colon \EE_{+,C} \to \EE_{-,C}$
(this follows, e.g., from \cite[Proposition 2.2.3]{Lieblich:mother-of-all}).

We can think of $\EE_{\pm,C}$ as flat families of objects in $\PP_0(1)$.  It is proved in \cite[Lemma
4.2.3]{Abramovich-Polishchuk:t-structures} that the morphism $\varphi$ can be given by a sequence
\[
\EE_{+,C} = \EE_0 \to \EE_1 \to \dots \to \EE_k = \EE_{-,C}
\]
of ``elementary modifications'' $\EE_{i-1} \to \EE_{i}$: this means that there is a point $c_i \in C
- U$, an object $Q_i \in \PP_0(1)$ such that $\EE_{i-1}$ fits into the short exact sequence
\[
\EE_{i-1} \into \EE_i \onto (i_{c_i})_* Q_i
\]
(This is proved in \cite[Section 4.2]{Abramovich-Polishchuk:t-structures} in the case of a smooth
affine curve, and $U$ the complement of a point, but the proof extends directly to our case.)

Since $\Im Z_0(Q_i) = 0$, we obtain
\[
\ell_{\sigma_0, \EE_-}.C = \Im Z_0((p_X)_* \EE_{-,C}) = \Im Z_0((p_X)_* \EE_{+,C}) + \sum_i \Im Z_0 (Q_i)
= \ell_{\sigma_0, \EE_+}.C
\]
as claimed.
\end{Prf}

\begin{Ex}
Finally, let us point out that in the case of the Hilbert scheme of $n$ points on an 
abelian surface $T$ of Picard rank one, Yanagida-Yoshioka and Meachan (in the case of a principally polarized
abelian surface) have obtained examples with an infinite series of
walls inside the subset $U(T)$ of ``geometric'' stability conditions; see \cite[Section
4]{Meachan:thesis} and \cite[Section 5 and 6]{YY:abeliansurfaces}.
Their examples have in common that each series of walls contains an infinite sequence
of bouncing walls, corresponding to divisorial contractions.
Between two bouncing walls, there may be finitely many flopping walls; the corresponding 
line bundle $\ell_{\sigma}$ will traverse finitely many birational models of $\Hilb^n(T)$.
\end{Ex}

\bibliography{all}                      
\bibliographystyle{alphaspecial}     

\end{document}